\let\th@plain\relax
\pgfplotsset{compat=newest}
\DeclareFontFamily{U}{MnSymbolA}{}
\DeclareFontShape{U}{MnSymbolA}{m}{n}{
    <-6> MnSymbolA5
    <6-7> MnSymbolA6
    <7-8> MnSymbolA7
    <8-9> MnSymbolA8
    <9-10> MnSymbolA9
    <10-12> MnSymbolA10
    <12-> MnSymbolA12
}{}
\DeclareFontShape{U}{MnSymbolA}{b}{n}{
    <-6> MnSymbolA-Bold5
    <6-7> MnSymbolA-Bold6
    <7-8> MnSymbolA-Bold7
    <8-9> MnSymbolA-Bold8
    <9-10> MnSymbolA-Bold9
    <10-12> MnSymbolA-Bold10
    <12-> MnSymbolA-Bold12
}{}
\DeclareSymbolFont{MnSymA}{U}{MnSymbolA}{m}{n}
\DeclareMathSymbol{\lcirclearrowright}{\mathrel}{MnSymA}{252}
\DeclareMathSymbol{\lcirclearrowdown}{\mathrel}{MnSymA}{255}
\DeclareMathSymbol{\rcirclearrowleft}{\mathrel}{MnSymA}{250}
\DeclareMathSymbol{\rcirclearrowdown}{\mathrel}{MnSymA}{251}
\DeclareFontFamily{U}{MnSymbolC}{}
\DeclareSymbolFont{MnSyC}{U}{MnSymbolC}{m}{n}
\DeclareFontShape{U}{MnSymbolC}{m}{n}{
    <-6>  MnSymbolC5
    <6-7>  MnSymbolC6
    <7-8>  MnSymbolC7
    <8-9>  MnSymbolC8
    <9-10> MnSymbolC9
    <10-12> MnSymbolC10
    <12->   MnSymbolC12%
}{}
\DeclareMathSymbol{\powerset}{\mathord}{MnSyC}{180}
\DeclareMathSymbol{\righthalfcap}{\mathbin}{MnSyC}{186}
\DeclareMathAlphabet{\mathpzc}{OT1}{pzc}{m}{it}
\DeclareMathAlphabet{\blackboardfont}{U}{BOONDOX-ds}{m}{n}
\def\boolwahr{true}
\def\boolfalsch{false}
\def\boolleer{}
\let\boolinappendix\boolfalsch
\let\boolinmdframed\boolfalsch
\newlength\rtab
\newlength\gesamtlinkerRand
\newlength\gesamtrechterRand
\newlength\ownspaceabovethm
\newlength\ownspacebelowthm
\newlength\aboveequation
\newlength\belowequation
\def\secnumberingpt{.}
\def\secnumberingseppt{.}
\def\subsecnumberingseppt{}
\def\thmnumberingpt{.}
\def\thmnumberingseppt{}
\def\thmForceSepPt{.}
\definecolor{leer}{gray}{1}
\definecolor{boxgrau}{gray}{0.85}
\definecolor{dunkelgrau}{gray}{0.5}
\definecolor{maroon}{rgb}{0.6901961,0.1882353,0.3764706}
\definecolor{dunkelgruen}{rgb}{0.015625,0.363281,0.109375}
\definecolor{dunkelrot}{rgb}{0.5450980392,0,0}
\definecolor{dunkelblau}{rgb}{0,0,0.5450980392}
\definecolor{blau}{rgb}{0,0,1}
\definecolor{newresult}{rgb}{0.6,0.6,0.6}
\definecolor{improvedresult}{rgb}{0.9,0.9,0.9}
\definecolor{hervorheben}{rgb}{0,0.9,0.7}
\definecolor{starkesblau}{rgb}{0.1019607843,0.3176470588,0.8156862745}
\definecolor{achtung}{rgb}{1,0.5,0.5}
\definecolor{frage}{rgb}{0.5,1,0.5}
\definecolor{schreibweise}{rgb}{0,0.7,0.9}
\definecolor{axiom}{rgb}{0,0.3,0.3}
\definecolor{drawing_light_grey}{gray}{0.85}
\definecolor{background_light_grey}{gray}{0.95}
\def\let@name#1#2{
    \expandafter\let\csname #1\expandafter\endcsname\csname #2\endcsname\relax
}
\DeclareRobustCommand\crfamily{\fontfamily{ccr}\selectfont}
\DeclareTextFontCommand{\textcr}{\crfamily}
\def\ifthenelseleer#1#2#3{\ifthenelse{\equal{#1}{}}{#2}{#1#3}}
\def\bedingtesspaceexpand#1#2#3{\ifthenelseleer{\csname #1\endcsname}{#3}{#2#3}}
\def\hraum{\null\hfill\null}
\def\nvraum{\@ifnextchar\bgroup{\nvraum@c}{\nvraum@bes}}
    \def\nvraum@c#1{\vspace*{-#1\baselineskip}}
    \def\nvraum@bes{\vspace*{-\baselineskip}}
\def\forceaddspace{\relax\ifmmode\else\@\xspace\fi}
\def\forceremovespace{\relax\ifmmode\else\expandafter\@gobble\fi}
\def\send@toaux#1{\@bsphack\protected@write\@auxout{}{\string#1}\@esphack}
\def\rlabel#1[#2]#3#4#5{#5\rlabel@aux{#1}[#2]{#3}{#4}{#5}}
    \def\rlabel@aux#1[#2]#3#4#5{%
        \send@toaux{\newlabel{#1}{{\@currentlabel}{\thepage}{{\unexpanded{#5}}}{#2.\csname the#2\endcsname}{}}}\relax%
    }
\def\tag@rawscheme#1#2[#3]#4#5{\@ifnextchar[{\tag@rawscheme@{#1}{#2}[#3]{#4}{#5}}{\tag@rawscheme@{#1}{#2}[#3]{#4}{#5}[*]}}
    \def\tag@rawscheme@#1#2[#3]#4#5[#6]{\@ifnextchar\bgroup{\tag@rawscheme@@{#1}{#2}[#3]{#4}{#5}[#6]}{\tag@rawscheme@@{#1}{#2}[#3]{#4}{#5}[#6]{}}}
    \def\tag@rawscheme@@#1#2[#3]#4#5[#6]#7{%
        \ifthenelse{\equal{#6}{*}}{%
            \ifthenelse{\equal{#7}{\boolleer}}{\refstepcounter{#3}#4\csname the#3\endcsname#5}{#4#7#5}%
        }{%
            \refstepcounter{#3}#4%
            \ifthenelse{\equal{#7}{\boolleer}}{\rlabel{#6}[#3]{#1}{#2}{\csname the#3\endcsname}}{\rlabel{#6}[#3]{#1}{#2}{#7}}%
            #5%
        }%
    }
\def\tag@scheme#1#2[#3]{\tag@rawscheme{#1}{#2}[#3]{\upshape(}{\upshape)}}
\def\eqtag@post#1{\makebox[0pt][r]{#1}}
\def\eqtag@pre{\tag@scheme{Eq}{Equation}[equation]}
\def\eqtag{\@ifnextchar[{\eqtag@}{\eqtag@[*]}}
    \def\eqtag@[#1]{\@ifnextchar\bgroup{\eqtag@@[#1]}{\eqtag@@[#1]{}}}
    \def\eqtag@@[#1]#2{\eqtag@post{\eqtag@pre[#1]{#2}}}
\def\eqcref#1{\text{(\ref{#1})}}
\def\punktlabel#1{\label{it:#1:\beweislabel}}
\def\punktcref#1{\eqcref{it:#1:\beweislabel}}
\def\opfromto[#1]_#2^#3{\underset{#2}{\overset{#3}{#1}}}
\def\textoverset#1#2{\overset{\text{#1}}{#2}}
\def\eqcrefoverset#1#2{\textoverset{\eqcref{#1}}{#2}}
\def\mathclap#1{#1}
\def\oberunterset#1{\@ifnextchar^{\oberunterset@oben{#1}}{\oberunterset@unten{#1}}}
    \def\oberunterset@oben#1^#2_#3{\underset{\mathclap{#3}}{\overset{\mathclap{#2}}{#1}}}
    \def\oberunterset@unten#1_#2^#3{\underset{\mathclap{#2}}{\overset{\mathclap{#3}}{#1}}}
    \def\breitunderbrace#1_#2{\underbrace{#1}_{\mathclap{#2}}}
    \def\breitoverbrace#1^#2{\overbrace{#1}^{\mathclap{#2}}}
    \def\breitunderbracket#1_#2{\underbracket{#1}_{\mathclap{#2}}}
    \def\breitoverbracket#1^#2{\overbracket{#1}^{\mathclap{#2}}}
\def\generatenestedsecnumbering#1#2#3{%
    \expandafter\gdef\csname thelong#3\endcsname{%
        \expandafter\csname the#2\endcsname%
        \secnumberingpt%
        \expandafter\csname #1\endcsname{#3}%
    }%
    \expandafter\gdef\csname theshort#3\endcsname{%
        \expandafter\csname #1\endcsname{#3}%
    }%
}
\def\generatenestedthmnumbering#1#2#3{%
    \expandafter\gdef\csname the#3\endcsname{%
        \expandafter\csname the#2\endcsname%
        \thmnumberingpt%
        \expandafter\csname #1\endcsname{#3}%
    }%
    \expandafter\gdef\csname theshort#3\endcsname{%
        \expandafter\csname #1\endcsname{#3}%
    }%
}
\providecommand{\setcounternach}{}
\renewcommand{\setcounternach}[2]{\setcounter{#1}{#2}\addtocounter{#1}{-1}}
\providecommand{\textsubscript}{}
\renewcommand{\textsubscript}[1]{${}_{\textup{#1}}$}
\def\forcepunkt#1{#1\IfEndWith{#1}{.}{}{.}}
\def\matrix#1{\left(\begin{array}{#1}}
    \def\endmatrix{\end{array}\right)}
\def\smatrix{\left(\begin{smallmatrix}}
    \def\endsmatrix{\end{smallmatrix}\right)}
\def\multiargrekursiverbefehl#1#2#3#4#5#6#7#8{%
    \expandafter\gdef\csname#1\endcsname #2##1#4{\csname #1@anfang\endcsname##1#3\egroup}
    \expandafter\def\csname #1@anfang\endcsname##1#3{#5##1\@ifnextchar\egroup{\csname #1@ende\endcsname}{#7\csname #1@mitte\endcsname}}
    \expandafter\def\csname #1@mitte\endcsname##1#3{#6##1\@ifnextchar\egroup{\csname #1@ende\endcsname}{#7\csname #1@mitte\endcsname}}
    \expandafter\def\csname #1@ende\endcsname##1{#8}
}
\def\BeweisRichtung[#1]{\@ifnextchar\bgroup{\@BeweisRichtung@c[#1]}{\@BeweisRichtung@bes[#1]}}
    \def\@BeweisRichtung@bes[#1]{{\bfseries (#1)}}
    \def\@BeweisRichtung@c[#1]#2#3{#2~#1~#3}
\def\erzeugeBeweisRichtungBefehle#1#2{
    \expandafter\gdef\csname #1text\endcsname##1##2{\BeweisRichtung[#2]{##1}{##2}}
    \expandafter\gdef\csname #1\endcsname{%
        \@ifnextchar\bgroup{\csname #1@\endcsname}{\csname #1text\endcsname{}{}}%
    }
    \expandafter\gdef\csname #1@\endcsname##1##2{%
        \csname #1text\endcsname{\punktcref{##1}}{\punktcref{##2}}%
    }
}
\def\cal#1{\mathcal{#1}}
\def\mathfrak#1{\mbox{\usefont{U}{euf}{m}{n}#1}}
\def\rectangleblack{\text{\RectangleBold}}
\def\squareblack{\blacksquare}
\def\create@abbreviation#1#2{
    \expandafter\gdef\csname #1\endcsname{%
        #2\@ifnextchar.{%
            %% removes space and point
            \relax\ifmmode\else\expandafter\@gobble\fi%
        }{%
            %% allows space
            \relax\ifmmode\else\@\xspace\fi%
        }%
    }
}
\def\crefname@full#1#2#3#4#5{%
    \crefname{#1}{#2}{#3}
    \Crefname{#1}{#4}{#5}
}
\def\crefname@fullmod#1#2#3#4#5{%
    \crefname@full{#1}{#2}{#3}{#4}{#5}
    \crefname@full{#1@basic}{#2}{#3}{#4}{#5}
    \crefname@full{#1@withName}{#2}{#3}{#4}{#5}
}
\def\qedEIGEN#1{\@ifnextchar[{\qedEIGEN@c{#1}}{\qedEIGEN@bes{#1}}}%]
\def\qedEIGEN@bes#1{%
    \parfillskip=0pt%            % so \par doesnt push \square to left
    \widowpenalty=10000%         % so we dont break the page before \square
    \displaywidowpenalty=10000%  % ditto
    \finalhyphendemerits=0%      % TeXbook exercise 14.32
    \leavevmode%                 % \nobreak means lines not pages
    \unskip%                     % remove previous space or glue
    \nobreak%                    % don’t break lines
    \hfil%                       % ragged right if we spill over
    \penalty50%                  % discouragement to do so
    \hskip.2em%                  % ensure some space
    \null%                       % anchor following \hfill
    \hfill%                      % push \square to right
    #1%                          % the end-of-proof mark
    \par%
}
\def\qedEIGEN@c#1[#2]{%
    \parfillskip=0pt%            % so \par doesnt push \square to left
    \widowpenalty=10000%         % so we dont break the page before \square
    \displaywidowpenalty=10000%  % ditto
    \finalhyphendemerits=0%      % TeXbook exercise 14.32
    \leavevmode%                 % \nobreak means lines not pages
    \unskip%                     % remove previous space or glue
    \nobreak%                    % don’t break lines
    \hfil%                       % ragged right if we spill over
    \penalty50%                  % discouragement to do so
    \hskip.2em%                  % ensure some space
    \null%                       % anchor following \hfill
    \hfill%                      % push \square to right
    {#1~{\small\bfseries\upshape (#2)}}%
    \par%
}
\def\qedVARIANT#1#2{
    \expandafter\def\csname ennde#1Sign\endcsname{#2}
    \expandafter\def\csname ennde#1\endcsname{\@ifnextchar[{\qedEIGEN@c{#2}}{\qedEIGEN@bes{#2}}} %]
}
\def\ra@pretheoremwork{
    \setlength{\theorempreskipamount}{\ownspaceabovethm}
    \setlength{\theorempostskipamount}{\ownspacebelowthm}
}
\def\rathmtransfer#1#2{
    \expandafter\def\csname #2\endcsname{\csname #1\endcsname}
    \expandafter\def\csname end#2\endcsname{\csname end#1\endcsname}
}
\def\ranewthm#1#2#3[#4]{
    %% FOR \BEGIN{THM}
    \theoremstyle{\current@theoremstyle}
    \theoremseparator{\current@theoremseparator}
    \theoremprework{\ra@pretheoremwork}
    \@ifundefined{#1@basic}{\newtheorem{#1@basic}[#4]{#2}}{\renewtheorem{#1@basic}[#4]{#2}}
    %% FOR \BEGIN{THM}[...]
    \theoremstyle{\current@theoremstyle}
    \theoremseparator{\thmForceSepPt}
    \theoremprework{\ra@pretheoremwork}
    \@ifundefined{#1@withName}{\newtheorem{#1@withName}[#4]{#2}}{\renewtheorem{#1@withName}[#4]{#2}}
    %% FOR \BEGIN{THM*}
    \theoremstyle{nonumberplain}
    \theoremseparator{\thmForceSepPt}
    \theoremprework{\ra@pretheoremwork}
    \@ifundefined{#1@star@basic}{\newtheorem{#1@star@basic}[#4]{#2}}{\renewtheorem{#1@star@basic}[#4]{#2}}
    %% FOR \BEGIN{THM*}[...]
    \theoremstyle{nonumberplain}
    \theoremseparator{\thmForceSepPt}
    \theoremprework{\ra@pretheoremwork}
    \@ifundefined{#1@star@withName}{\newtheorem{#1@star@withName}[#4]{#2}}{\renewtheorem{#1@star@withName}[#4]{#2}}
    %% GENERATE ENVIRONMENTS:
    \umbauenenv{#1}{#3}[#4]
    \umbauenenv{#1@star}{#3}[#4]
    %% TRANSFER *-DEFINITION
    \rathmtransfer{#1@star}{#1*}
}
\def\umbauenenv#1#2[#3]{%
    %% \BEGIN{THM}...
    \expandafter\def\csname #1\endcsname{\relax%
        \@ifnextchar[{\csname #1@\endcsname}{\csname #1@\endcsname[*]}%
    }
    %% \BEGIN{THM}[ANFANG]...
    \expandafter\def\csname #1@\endcsname[##1]{\relax%
        \@ifnextchar[{\csname #1@@\endcsname[##1]}{\csname #1@@\endcsname[##1][*]}%
    }
    %% \BEGIN{THM}[ANFANG][SCHLUSS]
    \expandafter\def\csname #1@@\endcsname[##1][##2]{%
        \ifx*##1%
            \def\enndeOfBlock{\csname end#1@basic\endcsname}
            \csname #1@basic\endcsname%
        \else%
            \def\enndeOfBlock{\csname end#1@withName\endcsname}
            \csname #1@withName\endcsname[##1]%
        \fi%
        \def\makelabel####1{%
            \gdef\beweislabel{####1}%
            \label{\beweislabel}%
        }%
        \ifx*##2%
            \def\enndeSymbol{\qedEIGEN{#2}}
        \else%
            \def\enndeSymbol{\qedEIGEN{#2}[##2]}
        \fi
    }
    %% \END{THM}
    \expandafter\gdef\csname end#1\endcsname{\enndeSymbol\enndeOfBlock}
}
    \def\current@theoremstyle{plain}
    \def\current@theoremseparator{\thmnumberingseppt}
    \theoremstyle{\current@theoremstyle}
\def\shortclaim@claim{%
    \iflanguage{british}{Claim}{%
    \iflanguage{english}{Claim}{%
    \iflanguage{ngerman}{Behauptung}{%
    \iflanguage{russian}{Утверждение}{%
    Claim%
    }}}}%
}
\def\shortclaim@pf@kurz{%
    \iflanguage{british}{Pf}{%
    \iflanguage{english}{Pf}{%
    \iflanguage{ngerman}{Bew}{%
    \iflanguage{russian}{Доказательство}{%
    Pf%
    }}}}%
}
\def\shortclaim{\@ifnextchar\bgroup{\shortclaim@c}{\shortclaim@bes}}
    \def\shortclaim@c#1{\item[{\bfseries \shortclaim@claim\forceaddspace #1.}]}
    \def\shortclaim@bes{\item[{\bfseries \shortclaim@claim.}]}
\def\proofofshortclaim{\item[{\bfseries\itshape\shortclaim@pf@kurz.}]}
\newcolumntype{\RECHTS}[1]{>{\raggedleft}p{#1}}
\newcolumntype{\LINKS}[1]{>{\raggedright}p{#1}}
\newcolumntype{m}{>{$}l<{$}}
\newcolumntype{C}{>{$}c<{$}}
\newcolumntype{L}{>{$}l<{$}}
\newcolumntype{R}{>{$}r<{$}}
\newcolumntype{0}{@{\hspace{0pt}}}
\newcolumntype{\LINKSRAND}{@{\hspace{\@totalleftmargin}}}
\newcolumntype{h}{@{\extracolsep{\fill}}}
\newcolumntype{i}{>{\itshape}}
\newcolumntype{t}{@{\hspace{\tabcolsep}}}
\newcolumntype{q}{@{\hspace{1em}}}
\newcolumntype{n}{@{\hspace{-\tabcolsep}}}
\newcolumntype{M}[2]{%
    >{\begin{minipage}{#2}\begin{math}}%
    {#1}%
    <{\end{math}\end{minipage}}%
}
\newcolumntype{T}[2]{%
    >{\begin{minipage}{#2}}%
    {#1}%
    <{\end{minipage}}%
}
\def\punkteumgebung@genbefehl#1#2#3{
    \punkteumgebung@genbefehl@{#1}{#2}{#3}{}{}
    \punkteumgebung@genbefehl@{multi#1}{#2}{#3}{
        \setlength{\columnsep}{10pt}%
        \setlength{\columnseprule}{0pt}%
        \begin{multicols}{\thecolumnanzahl}%
    }{\end{multicols}\nvraum{1}}
}
\def\punkteumgebung@genbefehl@#1#2#3#4#5{
    \expandafter\gdef\csname #1\endcsname{
        \@ifnextchar\bgroup{\csname #1@c\endcsname}{\csname #1@bes\endcsname}
    }%]
        \expandafter\def\csname #1@c\endcsname##1{
            \@ifnextchar[{\csname #1@c@\endcsname{##1}}{\csname #1@c@\endcsname{##1}[\z@]}
        }%]
        \expandafter\def\csname #1@c@\endcsname##1[##2]{
            \@ifnextchar[{\csname #1@c@@\endcsname{##1}[##2]}{\csname #1@c@@\endcsname{##1}[##2][\z@]}
        }%]
        \expandafter\def\csname #1@c@@\endcsname##1[##2][##3]{
            \let\alterlinkerRand\gesamtlinkerRand
            \let\alterrechterRand\gesamtrechterRand
            \addtolength{\gesamtlinkerRand}{##2}
            \addtolength{\gesamtrechterRand}{##3}
            \advance\linewidth -##2%
            \advance\linewidth -##3%
            \advance\@totalleftmargin ##2%
            \parshape\@ne \@totalleftmargin\linewidth%
            #4
            \begin{#2}[\upshape ##1]%
                \setlength{\parskip}{0.5\baselineskip}\relax%
                \setlength{\topsep}{\z@}\relax%
                \setlength{\partopsep}{\z@}\relax%
                \setlength{\parsep}{\parskip}\relax%
                \setlength{\itemsep}{#3}\relax%
                \setlength{\listparindent}{\z@}\relax%
                \setlength{\itemindent}{\z@}\relax%
        }
        \expandafter\def\csname #1@bes\endcsname{
            \@ifnextchar[{\csname #1@bes@\endcsname}{\csname #1@bes@\endcsname[\z@]}
        }%]
        \expandafter\def\csname #1@bes@\endcsname[##1]{
            \@ifnextchar[{\csname #1@bes@@\endcsname[##1]}{\csname #1@bes@@\endcsname[##1][\z@]}
        }%]
        \expandafter\def\csname #1@bes@@\endcsname[##1][##2]{
            \let\alterlinkerRand\gesamtlinkerRand
            \let\alterrechterRand\gesamtrechterRand
            \addtolength{\gesamtlinkerRand}{##1}
            \addtolength{\gesamtrechterRand}{##2}
            \advance\linewidth -##1%
            \advance\linewidth -##2%
            \advance\@totalleftmargin ##1%
            \parshape\@ne \@totalleftmargin\linewidth%
            #4
            \begin{#2}%
                \setlength{\parskip}{0.5\baselineskip}\relax%
                \setlength{\topsep}{\z@}\relax%
                \setlength{\partopsep}{\z@}\relax%
                \setlength{\parsep}{\parskip}\relax%
                \setlength{\itemsep}{#3}\relax%
                \setlength{\listparindent}{\z@}\relax%
                \setlength{\itemindent}{\z@}\relax%
        }
    \expandafter\gdef\csname end#1\endcsname{%
        \end{#2}#5
        \setlength{\gesamtlinkerRand}{\alterlinkerRand}
        \setlength{\gesamtlinkerRand}{\alterrechterRand}
    }
}
\def\ritempunkt{{\Large \textbullet}} % \textbullet, $\sqbullet$, $\blacktriangleright$
\setdefaultitem{\ritempunkt}{\ritempunkt}{\ritempunkt}{\ritempunkt}
\def\enumerate{%
    \@ifnextchar\bgroup{%
        \enumerate@legacyarg%
    }{%
        \@ifnextchar[{\enumerate@args}{\enumerate@noargs}
    }%
}
    \def\enumerate@spacing{
        \setlength{\parskip}{0.5\baselineskip}\relax%
        \setlength{\topsep}{\z@}\relax%
        \setlength{\partopsep}{\z@}\relax%
        \setlength{\parsep}{\parskip}\relax%
        \setlength{\itemsep}{\z@}\relax%
        \setlength{\listparindent}{\z@}\relax%
        \setlength{\itemindent}{\z@}\relax%
    }
    \def\enumerate@noargs{
        \begin{oldenumerate}
        \enumerate@spacing
    }
    \def\enumerate@args[#1]{
        \begin{oldenumerate}[#1]
        \enumerate@spacing
    }
    \def\enumerate@legacyarg#1{
        \begin{oldenumerate}[label=#1]
        \enumerate@spacing
    }
    \def\endenumerate{%
        \end{oldenumerate}
    }
\def\shorteqnarray{%
    \bgroup% <- NOTE: this might be unnecessary
    \setlength{\abovedisplayshortskip}{\aboveequation}%
    \setlength{\belowdisplayshortskip}{\belowequation}%
    \setlength{\abovedisplayskip}{\aboveequation - \baselineskip}%
    \setlength{\belowdisplayskip}{\belowequation}%
    \begin{eqnarray*}%
}
\def\endshorteqnarray{%
    \end{eqnarray*}%
    \egroup% <- NOTE: this might be unnecessary
}
\def\longeqnarray{%
    \bgroup%
    \allowdisplaybreaks%
    \setlength{\abovedisplayshortskip}{\aboveequation}%
    \setlength{\belowdisplayshortskip}{\belowequation}%
    \setlength{\abovedisplayskip}{\aboveequation - \baselineskip}%
    \setlength{\belowdisplayskip}{\belowequation}%
    \begin{eqnarray*}
}
\def\endlongeqnarray{%
    \end{eqnarray*}%
    \egroup%
}
\def\displayarray[#1]#2{
    \bgroup
    \everymath={\displaystyle}
    \begin{array}[#1]{#2}
}
\def\enddisplayarray{
    \end{array}
    \egroup
}
\def\matrix#1{\left(\begin{array}[mc]{#1}}
    \def\endmatrix{\end{array}\right)}
\def\smatrix{\left(\begin{smallmatrix}}
    \def\endsmatrix{\end{smallmatrix}\right)}
\def\multiargrekursiverbefehl#1#2#3#4#5#6#7#8{%
    \expandafter\gdef\csname#1\endcsname #2##1#4{\csname #1@anfang\endcsname##1#3\egroup}
    \expandafter\def\csname #1@anfang\endcsname##1#3{#5##1\@ifnextchar\egroup{\csname #1@ende\endcsname}{#7\csname #1@mitte\endcsname}}
    \expandafter\def\csname #1@mitte\endcsname##1#3{#6##1\@ifnextchar\egroup{\csname #1@ende\endcsname}{#7\csname #1@mitte\endcsname}}
    \expandafter\def\csname #1@ende\endcsname##1{#8}
}
\def\underbracenodisplay#1{%
    \mathop{\vtop{\m@th\ialign{##\crcr
    $\hfil\displaystyle{#1}\hfil$\crcr
    \noalign{\kern3\p@\nointerlineskip}%
    \upbracefill\crcr\noalign{\kern3\p@}}}}\limits%
}
\def\changemargins{\@ifnextchar[{\indents@}{\indents@[\z@]}}%]
\def\indents@[#1]{\@ifnextchar[{\indents@@[#1]}{\indents@@[#1][\z@]}}%]
\def\indents@@[#1][#2]{%
    \begin{list}{}{\relax
        \setlength{\topsep}{\z@}\relax
        \setlength{\partopsep}{\z@}\relax
        \setlength{\parsep}{\parskip}\relax
        \setlength{\listparindent}{\z@}\relax
        \setlength{\itemindent}{\z@}\relax
        \setlength{\leftmargin}{#1}\relax
        \setlength{\rightmargin}{#2}\relax
        \let\alterlinkerRand\gesamtlinkerRand
        \let\alterrechterRand\gesamtrechterRand
        \addtolength{\gesamtlinkerRand}{#1}
        \addtolength{\gesamtrechterRand}{#2}
    }\relax
        \item[]\relax
}
    \def\endchangemargins{%
        \setlength{\gesamtlinkerRand}{\alterlinkerRand}
        \setlength{\gesamtlinkerRand}{\alterrechterRand}
        \end{list}%
    }
\def\indentonce{\begin{changemargins}[\rtab][\rtab]}
    \def\endindentonce{\end{changemargins}}
\def\restoremargins{\begin{changemargins}[-\gesamtlinkerRand][-\gesamtrechterRand]}
    \def\endrestoremargins{\end{changemargins}}
\def\programmiercode{
    \modulolinenumbers[1]
    \begin{changemargins}[\rtab][\rtab]%
    \begin{linenumbers}%
        \fontfamily{cmtt}\fontseries{m}\fontshape{u}\selectfont%
        \setlength{\parskip}{1\baselineskip}%
        \setlength{\parindent}{0pt}%
}
    \def\endprogrammiercode{
        \end{linenumbers}
        \end{changemargins}
    }
\def\schattiertebox@genbefehl#1#2#3{
    \expandafter\gdef\csname #1\endcsname{%
        \@ifnextchar[{\csname #1@args\endcsname}{\csname #1@args\endcsname[#3]}%]%
    }
        \expandafter\def\csname #1@args\endcsname[##1]{%
            \@ifnextchar[{\csname #1@args@l\endcsname[##1]}{\csname #1@args@n\endcsname[##1]}%]%
        }
        \expandafter\def\csname #1@args@l\endcsname[##1][##2]{%
            \@ifnextchar[{\csname #1@args@l@r\endcsname[##1][##2]}{\csname #1@args@l@n\endcsname[##1][##2]}%]%
        }
        \expandafter\def\csname #1@args@n\endcsname[##1]{%
            \let\boolinmdframed\boolwahr
            \begin{mdframed}[#2leftmargin=0,rightmargin=0,outermargin=0,innermargin=0,##1]
        }
        \expandafter\def\csname #1@args@l@n\endcsname[##1][##2]{%
            \let\boolinmdframed\boolwahr
            \begin{mdframed}[#2leftmargin=##2/2,rightmargin=##2/2,outermargin=##2/2,innermargin=##2/2,##1]
        }
        \expandafter\def\csname #1@args@l@r\endcsname[##1][##2][##3]{%
            \let\boolinmdframed\boolwahr
            \begin{mdframed}[#2leftmargin=##2,rightmargin=##3,outermargin=##2,innermargin=##3,##1]
        }
    \expandafter\gdef\csname end#1\endcsname{%
        \end{mdframed}
        \let\boolinmdframed\boolfalsch
    }
}
\def\tikzsetzepfeil#1{%
    \begin{tikzpicture}[remember picture,overlay,>=latex]%
        \draw #1;%
    \end{tikzpicture}%
}
\def\tikzsetzekreise[#1]#2#3{%
    \tikzsetzepfeil{%
    [rounded corners,#1]%
        ([shift={(-\tabcolsep,0.75\baselineskip)}]#2)%
        rectangle%
        ([shift={(\tabcolsep,-0.5\baselineskip)}]#3)
    }%
}
\tikzset{
    >=stealth,
    auto,
    node distance=1cm,
    thick,
    main node/.style={
        circle,draw,font=\sffamily\Large\bfseries,minimum size=0pt
    },
    state/.style={minimum size=0pt}
    loop above right/.style={loop,out=30,in=60,distance=0.5cm},
    loop above left/.style={above left,out=150,in=120,loop},
    loop below right/.style={below right,out=330,in=300,loop},
    loop below left/.style={below left,out=240,in=210,loop},
    itria/.style={
        draw,dashed,shape border uses incircle,
        isosceles triangle,shape border rotate=90,yshift=-1.45cm
    },
    rtria/.style={
        draw,dashed,shape border uses incircle,
        isosceles triangle,isosceles triangle apex angle=90,
        shape border rotate=-45,yshift=0.2cm,xshift=0.5cm
    },
    ritria/.style={
        draw,dashed,shape border uses incircle,
        isosceles triangle,isosceles triangle apex angle=110,
        shape border rotate=-55,yshift=0.1cm
    },
    litria/.style={
        draw,dashed,shape border uses incircle,
        isosceles triangle,isosceles triangle apex angle=110,
        shape border rotate=235,yshift=0.1cm
    }
}
\renewenvironment{cases}[0]{\left\{\begin{array}[c]{0lcl}}{\end{array}\right.}
\renewenvironment{displaycases}[0]{%
    \left\{\relax%
    \bgroup\relax%
    \everymath={\displaystyle}\relax%
    \begin{array}[c]{0lcl}\relax%
}{%
    \end{array}\relax%
    \egroup\relax%
    \right.\relax%
}
\providecommand{\usesinglequotes}{}
\renewcommand{\usesinglequotes}[1]{`#1'}
\providecommand{\zeroone}{}
\renewcommand{\zeroone}[0]{\textup{0\=/1}\relax\ifmmode\else\@\xspace\fi}
\providecommand{\onetoone}{}
\renewcommand{\onetoone}[0]{\ensuremath{1\!\!:\!\!1}\relax\ifmmode\else\@\xspace\fi}
\providecommand{\First}{}
\renewcommand{\First}[0]{\text{I\textsuperscript{st}}\relax\ifmmode\else\@\xspace\fi}
\providecommand{\Second}{}
\renewcommand{\Second}[0]{\text{II\textsuperscript{nd}}\relax\ifmmode\else\@\xspace\fi}
\providecommand{\Third}{}
\renewcommand{\Third}[0]{\text{III\textsuperscript{rd}}\relax\ifmmode\else\@\xspace\fi}
\providecommand{\TextCStarAlg}{}
\renewcommand{\TextCStarAlg}[0]{\text{C\textsuperscript{\ensuremath{\ast}}\=/algebra}\relax\ifmmode\else\@\xspace\fi}
\providecommand{\TextCStarSubAlg}{}
\renewcommand{\TextCStarSubAlg}[0]{\text{C\textsuperscript{\ensuremath{\ast}}\=/subalgebra}\relax\ifmmode\else\@\xspace\fi}
\providecommand{\TextCStarAlgs}{}
\renewcommand{\TextCStarAlgs}[0]{\text{C\textsuperscript{\ensuremath{\ast}}\=/algebras}\relax\ifmmode\else\@\xspace\fi}
\providecommand{\TextCStarSubAlgs}{}
\renewcommand{\TextCStarSubAlgs}[0]{\text{C\textsuperscript{\ensuremath{\ast}}\=/subalgebras}\relax\ifmmode\else\@\xspace\fi}
\providecommand{\envPreMathsLong}{}
\renewcommand{\envPreMathsLong}[0]{%
    \bgroup\relax%
    \let\old@arraystretch\arraystretch\relax%
    \renewcommand\arraystretch{1.2}\relax\relax%
}
\providecommand{\envPostMathsLong}{}
\renewcommand{\envPostMathsLong}[0]{%
    \renewcommand\arraystretch{\old@arraystretch}\relax%
    \egroup\relax%
}
\providecommand{\id}{}
\renewcommand{\id}[0]{\mathrm{\textit{id}}}
\providecommand{\mod}{}
\renewcommand{\mod}[0]{\:\mathbin{\mathrm{mod}}\:}
\providecommand{\complex}{}
\renewcommand{\complex}[0]{\mathbb{C}}
\providecommand{\Torus}{}
\renewcommand{\Torus}[0]{\mathbb{T}}
\providecommand{\reals}{}
\renewcommand{\reals}[0]{\mathbb{R}}
\providecommand{\realsNonNeg}{}
\renewcommand{\realsNonNeg}[0]{\reals_{\geq 0}}
\providecommand{\rationals}{}
\renewcommand{\rationals}[0]{\mathbb{Q}}
\providecommand{\rationalsNonNeg}{}
\renewcommand{\rationalsNonNeg}[0]{\rationals_{\geq 0}}
\providecommand{\integers}{}
\renewcommand{\integers}[0]{\mathbb{Z}}
\providecommand{\naturals}{}
\renewcommand{\naturals}[0]{\mathbb{N}}
\providecommand{\naturalsZero}{}
\renewcommand{\naturalsZero}[0]{\mathbb{N}_{0}}
\providecommand{\HilbertRaum}{}
\renewcommand{\HilbertRaum}[0]{\mathcal{H}}
\providecommand{\BanachRaum}{}
\renewcommand{\BanachRaum}[0]{\mathcal{E}}
\providecommand{\RaumX}{}
\renewcommand{\RaumX}[0]{X}
\providecommand{\RaumY}{}
\renewcommand{\RaumY}[0]{Y}
\providecommand{\Proj}{}
\renewcommand{\Proj}[0]{\mathrm{Proj}}
\providecommand{\oBall}{}
\renewcommand{\oBall}[2]{\cal{B}_{#2}(#1)}
\providecommand{\topSOT}{}
\renewcommand{\topSOT}[0]{\text{\upshape\scshape sot}}
\providecommand{\topSOTstar}{}
\renewcommand{\topSOTstar}[0]{\text{\upshape\scshape sot}^{\star}}
\providecommand{\topWOT}{}
\renewcommand{\topWOT}[0]{\text{\upshape\scshape wot}}
\providecommand{\topPW}{}
\renewcommand{\topPW}[0]{\text{\upshape\scshape pw}}
\providecommand{\FnRm}{}
\renewcommand{\FnRm}[2]{\mathrm{Fct}(#1,#2)}
\providecommand{\KmpRm}{}
\renewcommand{\KmpRm}[1]{\mathcal{K}(#1)}
\providecommand{\card}{}
\renewcommand{\card}[1]{\lvert #1 \rvert}
\newcommand{\Pot}[0]{\mathop{\powerset}}
\providecommand{\einser}{}
\renewcommand{\einser}[0]{1\!\!1}
\providecommand{\ConstOne}{}
\renewcommand{\ConstOne}[0]{\blackboardfont{1}}
\providecommand{\Gph}{}
\renewcommand{\Gph}[1]{\mathcal{G}\mathrm{ph}(#1)}
\providecommand{\floor}{}
\renewcommand{\floor}[1]{{\lfloor #1 \rfloor}}
\providecommand{\iunit}{}
\renewcommand{\iunit}[0]{\imath}
\providecommand{\abs}{}
\renewcommand{\abs}[1]{\lvert #1 \rvert}
\providecommand{\absLong}{}
\renewcommand{\absLong}[1]{\Big| #1 \Big|}
\providecommand{\revProd}{}
\renewcommand{\revProd}[0]{\text{\upshape rev-}\prod}
\providecommand{\linspann}{}
\renewcommand{\linspann}[0]{\textup{lin}}
\providecommand{\onematrix}{}
\renewcommand{\onematrix}[0]{\text{\upshape\bfseries I}}
\providecommand{\zeromatrix}{}
\renewcommand{\zeromatrix}[0]{\mathbf{0}}
\providecommand{\zerovector}{}
\renewcommand{\zerovector}[0]{\mathbf{0}}
\providecommand{\brkt}{}
\renewcommand{\brkt}[2]{\langle{}#1,\:#2{}\rangle}
\providecommand{\brktLong}{}
\renewcommand{\brktLong}[2]{\Big\langle{}#1,\:#2{}\Big\rangle}
\providecommand{\norm}{}
\renewcommand{\norm}[1]{\lVert #1 \rVert}
\providecommand{\normLong}{}
\renewcommand{\normLong}[1]{\Big\| #1 \Big\|}
\providecommand{\normLarge}{}
\renewcommand{\normLarge}[1]{\left\| #1 \right\|}
\providecommand{\opDomain}{}
\renewcommand{\opDomain}[1]{\mathcal{D}(#1)}
\providecommand{\opResolvent}{}
\renewcommand{\opResolvent}[2]{R(#2, #1)}
\providecommand{\BoundedOpsSymbol}{}
\renewcommand{\BoundedOpsSymbol}[0]{\mathfrak{L}}
\providecommand{\BaseVector}{}
\renewcommand{\BaseVector}[1]{\mathbf{e}_{#1}}
\providecommand{\ElementaryMatrix}{}
\renewcommand{\ElementaryMatrix}[2]{\mathbf{E}_{#1,#2}}
\providecommand{\Cnought}{}
\renewcommand{\Cnought}[0]{\mathcal{C}_{0}}
\providecommand{\Repr}{}
\renewcommand{\Repr}[2]{\mathrm{Repr}(#1 \!:\! #2)}
\providecommand{\OpSpaceC}{}
\renewcommand{\OpSpaceC}[1]{\textup{\textbf{C}}(#1)}
\providecommand{\OpSpaceU}{}
\renewcommand{\OpSpaceU}[1]{\textup{\textbf{U}}(#1)}
\providecommand{\ShiftRight}{}
\renewcommand{\ShiftRight}[0]{\mathrm{Sh}_{\to}}
\providecommand{\HalmosOp}{}
\renewcommand{\HalmosOp}[0]{\mathfrak{D}}
\providecommand{\NagyIso}{}
\renewcommand{\NagyIso}[0]{\mathcal{V}}
\providecommand{\NagyUtr}{}
\renewcommand{\NagyUtr}[0]{\mathcal{U}}
\providecommand{\dee}{}
\renewcommand{\dee}[0]{\mathop{\textup{d}}\!}
\providecommand{\freepr}{}
\renewcommand{\freepr}[0]{\oasterisk}
\providecommand{\freeprBig}{}
\renewcommand{\freeprBig}[0]{\bigoasterisk}
\providecommand{\FreeProduct}{}
\renewcommand{\FreeProduct}[2]{\prod_{#1}^{\freepr}#2}
\providecommand{\FreePower}{}
\renewcommand{\FreePower}[2]{#2^{\freepr #1}}
\providecommand{\DilatableMonoids}{}
\renewcommand{\DilatableMonoids}[0]{\mathbb{M}_{1}}
\providecommand{\DilatableGroupMonoids}{}
\renewcommand{\DilatableGroupMonoids}[0]{\mathbb{M}_{2}}
\def\Cts{\@ifnextchar_{\Cts@tief}{\Cts@tief_{}}}
    \def\Cts@tief_#1#2{\@ifnextchar\bgroup{\Cts@two_{#1}{#2}}{\Cts@one_{#1}{#2}}}
    \def\Cts@one_#1#2{C_{#1}\big(#2\big)}
    \def\Cts@two_#1#2#3{C_{#1}\big(#2,~#3\big)}
\def\BoundedOps#1{\@ifnextchar\bgroup{\BoundedOps@two{#1}}{\mathop{\BoundedOpsSymbol}(#1)}}
    \def\BoundedOps@two#1#2{\mathop{\BoundedOpsSymbol}(#1,#2)}
\def\BoundedOpsInv#1{\@ifnextchar\bgroup{\BoundedOps@two{#1}}{\mathop{\BoundedOpsSymbol}(#1)^{\times}}}
    \def\BoundedOpsInv@two#1#2{\mathop{\BoundedOpsSymbol}(#1,#2)^{\times}}
\def\restr#1{\vert_{#1}}
\def\without{\mathbin{\setminus}}
\def\da{\partial}
\def\eps{\varepsilon}
\let\altphi\phi
\let\altvarphi\varphi
    \def\phi{\altvarphi}
    \def\varphi{\altphi}
\def\quer#1{\overline{#1}}
\def\lim{\mathop{\ell\mathrm{im}}}
\def\supp{\mathop{\textup{supp}}}
\def\dim{\mathop{\textup{dim}}}
\def\ran{\mathop{\textup{ran}}}
\def\Re{\mathop{\mathfrak{R}\mathrm{e}}}
\def\tinytopWOT{\text{\scriptsize\upshape \scshape wot}}
\def\toplocWOT{\ensuremath{\mathpzc{k}}_{\text{\tiny\upshape \scshape wot}}}
\def\tinytoplocWOT{\scriptsize{\ensuremath{\mathpzc{k}}}\text{-{\upshape \scshape wot}}}
\def\tinytopSOT{\text{\scriptsize\upshape \scshape sot}}
\def\toplocSOT{\ensuremath{\mathpzc{k}}_{\text{\tiny\upshape \scshape sot}}}
\def\tinytoplocSOT{\scriptsize{\ensuremath{\mathpzc{k}}}\text{-{\upshape \scshape sot}}}
\def\toplocSOTstar{\ensuremath{\mathpzc{k}}_{\text{\tiny\upshape \scshape sot\ensuremath{\star}}}}
\def\tinytoplocSOTstar{\scriptsize{\ensuremath{\mathpzc{k}}}\text{-{\upshape \scshape sot\ensuremath{\star}}}}
\def\tinytopPW{\text{\scriptsize\upshape \scshape pw}}
\def\interval{\mathcal{J}}
\def\filterunit{\mathcal{F}_{[0,\:1]}}
\def\emb{\boldsymbol{\iota}}
\renewcommand{\arraystretch}{1}
\def\firstparagraph{\noindent}
\def\continueparagraph{\noindent}
\def\theunitnamesection{\thesection}
\def\sectionname{}
\let\appendix@orig\appendix
\def\appendix{%
    \appendix@orig%
    \let\boolinappendix\boolwahr
    \addcontentsline{toc}{part}{\appendixname}%
    \addtocontents{toc}{\protect\setcounter{tocdepth}{0}}
    \def\sectionname{Appendix}%
    \def\theunitnamesection{\Alph{section}}%
}
\def\notappendix{%
    \let\boolinappendix\boolfalse
    \addtocontents{toc}{\protect\setcounter{tocdepth}{1 }}
    \def\sectionname{}%
    \def\theunitnamesection{\arabic{section}}%
}
\def\@settitle{%
    \bgroup
    \LARGE
    \scshape
    \@title
    \egroup
}
\def\@seccntformat#1{%
    \protect\textup{%
        \protect\@secnumfont
        \expandafter\protect\csname format#1\endcsname%
        \csname the#1\endcsname
        \expandafter\protect\csname format#1@pt\endcsname%
        \space
    }%
}
\def\formatsection@text{\centering\Large\scshape}
\def\formatsection@pt{\secnumberingseppt}
\def\section{\@startsection{section}{1}{\z@}{.7\linespacing\@plus\linespacing}{.5\linespacing}{\formatsection@text}}
\def\formatsubsection@text{\flushleft\bfseries\scshape}
\def\formatsubsection@pt{\subsecnumberingseppt}
\def\subsection{\@startsection{subsection}{2}{\z@}{\z@}{\z@\hspace{1em}}{\formatsubsection@text}}
\renewcommand{\paragraph}[1]{%
    {\itshape #1}\:%
}
\def\footnotemark[#1]{\text{\textsuperscript{\getrefnumber{#1}}}}
\def\footnote@custom@period{24}
\providecommand{\footnote@ctr@prebump}{}
\renewcommand{\footnote@ctr@prebump}[1]{%
    \ifnum\value{#1}<\footnote@custom@period%
    \else\relax
        \setcounter{#1}{0}%
    \fi%
}
\providecommand{\footnoteref}{}
\renewcommand{\footnoteref}[1]{\protected@xdef\@thefnmark{\ref{#1}}\@footnotemark}
\let\@old@footnotetext\footnotetext
\def\footnotetext[#1]#2{%
    \footnote@ctr@prebump{footnote}%
    \addtocounter{footnote}{1}%
    \@old@footnotetext[\value{footnote}]{\label{#1}#2}%
}
\let\@old@footnote\footnote
\renewcommand{\footnote}[1]{%
    \footnote@ctr@prebump{footnote}%
    \@old@footnote{#1}%
}
\def\kopfzeiledefault{
    \lhead[]{}
    \lhead[]{}
    \chead[]{}
    \rhead[]{}
    \lfoot[]{}
    \cfoot{\footnotesize\thepage}
    \rfoot[]{}
}
\def\aktuellesfont{\csname lmodern\endcsname}
\def\documentfont{%
    %% Fancy:
    %% Plain:
    \gdef\aktuellesfont{\csname lmodern\endcsname}%
    \fontfamily{lmr}\fontseries{m}\selectfont%
    \renewcommand{\sfdefault}{phv}%
    \renewcommand{\ttdefault}{pcr}%
    \renewcommand{\rmdefault}{cmr}% <— funktionieren nicht mit {ptm}
    \renewcommand{\bfdefault}{bx}%
    \renewcommand{\itdefault}{it}%
    \renewcommand{\sldefault}{sl}%
    \renewcommand{\scdefault}{sc}%
    \renewcommand{\updefault}{n}%
}
\def\startdocumentlayoutoptions{
    \selectlanguage{british}
    \setlength{\parskip}{0.25\baselineskip}
    \setlength{\parindent}{2em}
    \kopfzeiledefault
    \documentfont
    \normalsize
}
\providecommand{\highlightTerm}{}
\renewcommand{\highlightTerm}[1]{\emph{#1}}
\def\addresseshere{%
  \bgroup
  \setlength{\parindent}{0pt}
  \enddoc@text
  \egroup
  \let\enddoc@text\relax
}
\begin{document}
\startdocumentlayoutoptions

%% FRONTMATTER:
\thispagestyle{plain}

%% ********************************************************************************
%% FILE: front/.index.tex
%% ********************************************************************************

%% ********************************************************************************
%% FILE: front/abstract.tex
%% ********************************************************************************

\def\abstractname{Abstract}
\begin{abstract}
    Commuting families of contractions
    or contractive $\Cnought$\=/semigroups
    on Hilbert spaces
    often fail to admit power dilations \resp simultaneous unitary dilations
    which are themselves commutative
    (see
        \cite{%
            Parrott1970counterExamplesDilation,%
            Dahya2023dilation,%
            Dahya2024interpolation%
        }%
    ).
    In the \emph{non-commutative} setting,
    Sz.-Nagy \cite{Nagy1960Article}
    and
    Bo\.{z}ejko \cite{Bozejko1989Article}
    provided means to dilate arbitrary families of contractions.
    The present work extends these discrete-time results
    to families
        $\{T_{i}\}_{i \in I}$
    of contractive $\Cnought$\=/semigroups.
    We refer to these dilations as continuous-time \emph{free unitary dilations}
    and present three distinct approaches to obtain them:
        1) An explicit derivation applicable to semigroups that arise as interpolations;
        2) A full proof with an explicit construction,
            via the theory of co-generators
            \akin S\l{}oci\'{n}ski
            \cite{%
                Slocinski1974,%
                Slocinski1982%
            };
        and
        3) A second full proof based on the abstract structure of semigroups,
        which admits a natural reformulation
        to semigroups defined over topological free products of $\realsNonNeg$
        and leads to various residuality results.
    In 2) a \Second free dilation theorem
    for topologised index sets is developed
    via a reformulation of the Trotter--Kato theorem for co-generators.
    As an application of this we demonstrate how evolution families
    can be reduced to continuously monitored processes subject to temporal change,
    \akin the quantum Zeno effect
        \cite{%
            ExnerIchinose2005Article,%
            ExnerIchinose2006Inproceedings,%
            FacchiLigabo2010Article,%
            HermanShaydulinSun2023Article,%
            Kitano1997Article%
        }.
\end{abstract}

%% ********** END OF FILE: front/abstract.tex **********

%% ********************************************************************************
%% FILE: front/title.tex
%% ********************************************************************************

\title[Free dilations]{
    \hraum Free dilations of families of $\mathcal{C}_{0}$\=/semigroups\hraum%
    \large
    \newline%
    \hraum and applications to evolution families\hraum%
}

\author{Raj Dahya}
\address{Fakult\"at f\"ur Mathematik und Informatik\newline
Universit\"at Leipzig, Augustusplatz 10, D-04109 Leipzig, Germany}
\email{raj\,[\!\![dot]\!\!]\,dahya\:[\!\![at]\!\!]\:web\,[\!\![dot]\!\!]\,de}

\def\subjclassname{Mathematics Subject Classification (2020)} % only way to overwrite year in birkhaus.cls
\subjclass{47A20, 47D06}
\keywords{Non-commutative operator families; dilations; co-generators; free topological products; evolution families.}

\maketitle

%% ********** END OF FILE: front/title.tex **********

%% ********** END OF FILE: front/.index.tex **********

%% HAUPTTEXT:
\setcounternach{section}{1}

%% ********************************************************************************
%% FILE: body/.index.tex
%% ********************************************************************************

%% ********************************************************************************
%% FILE: body/sec-1-introduction/.index.tex
%% ********************************************************************************

\section[Background and motivation]{Background and motivation}
\label{sec:introduction:sig:article-free-raj-dahya}

%% ********************************************************************************
%% FILE: body/sec-1-introduction/para-1-history.tex
%% ********************************************************************************

\firstparagraph
The existence of dilations
for commuting families of contractions
(discrete-time setting)
and contractive $\Cnought$\=/semigroups
(continuous-time setting)
on Hilbert spaces
has been largely determined.
Let $d\in\naturals$ and consider $d$-tuples
    $\{S_{i}\}_{i=1}^{d}$
of commuting contractions
\resp
families
    $\{T_{i}\}_{i=1}^{d}$
of commuting contractive $\Cnought$\=/semigroups
on some Hilbert space $\HilbertRaum$.
By Sz.-Nagy, And\^{o}, and S\l{}oci\'{n}ski
(see
    \cite[Theorems~I.4.2 and I.8.1]{Nagy1970},
    \cite{Ando1963pairContractions}
    \cite{Slocinski1974},
    and
    \cite[Theorem~2]{Slocinski1982}%
),
we know for $d \in \{1,2\}$
that power dilations \resp simultaneous unitary dilations
always exist in the discrete- \resp continuous-time setting.
For $d \geq 3$,
Parrott, Varopoulos, and Kaijser
(see
    \cite[\S{}3]{Parrott1970counterExamplesDilation}
    and
    \cite[Theorem~1]{Varopoulos1974counterexamples}%
)
showed that power dilations of $d$-tuples of contractions
do not always exist.
Continuous-time analogues of these results
were recently presented in
\cite[Theorem~1.5 and Corollary~1.7~b)]{Dahya2024interpolation},
in which generic commuting families
of contractive $\Cnought$\=/semigroups
were demonstrated to possess no simultaneous unitary dilations.
%% Conclusion
This failure can be amended, if we discard the requirement
that the dilations themselves be commuting families of unitaries
(\resp unitary representations).
We thus turn our attention
to perhaps the simplest non-commutative setting,
\viz one in which
no assumptions are made in terms of commutation relations
or algebraic simplifications.

In the discrete-time setting,
Sz.-Nagy and later Bo\.{z}ejko demonstrated
that non-commutative dilations are always possible.

\begin{thm}[Sz.-Nagy, 1960/1]
\makelabel{thm:free-dilation-discrete-time:sig:article-free-raj-dahya}
    Let $I$ be a non-empty index set
    and $\{S_{i}\}_{i \in I}$ be a family of (not necessarily commuting)
    contractions on a Hilbert space $\HilbertRaum$.
    Then there exists a family of unitaries
        $\{V_{i}\}_{i \in I}$
    on a Hilbert space $\HilbertRaum^{\prime}$
    and an isometry ${r : \HilbertRaum \to \HilbertRaum^{\prime}}$
    satisfying

    \begin{restoremargins}
    \begin{equation}
    \label{eq:free-dilation:discrete-time:sig:article-free-raj-dahya}
        \prod_{k=1}^{N}
            S_{i_{k}}^{n_{k}}
        = r^{\ast}
            \:\Big(
                \prod_{k=1}^{N}
                    V_{i_{k}}^{n_{k}}
            \Big)
            \:r
    \end{equation}
    \end{restoremargins}

    \continueparagraph
    for all $N \in \naturals$,
    all sequences $(i_{k})_{k=1}^{N} \subseteq I$,
    and all
        $(n_{k})_{k=1}^{N} \subseteq \naturalsZero$.
\end{thm}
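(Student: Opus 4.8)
The plan is to build the dilation in two stages and to realise everything on one common Hilbert space by a fibred direct-sum device; crucially, no commutation among the $S_{i}$ will ever be invoked. The conceptual heart is the following elementary observation. Suppose $\{W_{i}\}_{i\in I}$ are bounded operators on a Hilbert space $\mathcal{K}\supseteq\HilbertRaum$ such that $\HilbertRaum$ is invariant under each $W_{i}^{\ast}$, with $W_{i}^{\ast}|_{\HilbertRaum}=S_{i}^{\ast}$. Then $\HilbertRaum$ is invariant under \emph{every} word in the $W_{i}^{\ast}$, and on $\HilbertRaum$ such a word agrees with the corresponding word in the $S_{i}^{\ast}$; taking adjoints one obtains $P_{\HilbertRaum}\,W_{i_{1}}^{n_{1}}\cdots W_{i_{N}}^{n_{N}}|_{\HilbertRaum}=S_{i_{1}}^{n_{1}}\cdots S_{i_{N}}^{n_{N}}$ for all $N$, all $(i_{k})_{k}\subseteq I$ (repetitions allowed), and all $(n_{k})_{k}\subseteq\naturalsZero$. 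So it suffices to exhibit such $W_{i}$ that are moreover isometries, and then to extend all of them to unitaries without losing this property.

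\emph{Stage 1: contractions to isometries.} For each $i\in I$ write $D_{i}=(1-S_{i}^{\ast}S_{i})^{1/2}$ and let $\mathcal{D}_{i}=\overline{\ran\,D_{i}}$ be the defect space. I would put $\mathcal{K}=\HilbertRaum\oplus\bigoplus_{i\in I}\ell^{2}(\naturals,\mathcal{D}_{i})$, and let $W_{i}$ act as the Sch\"affer minimal isometric dilation of $S_{i}$ on the summand $\HilbertRaum\oplus\ell^{2}(\naturals,\mathcal{D}_{i})$ --- that is, $W_{i}(h,d_{1},d_{2},\dots)=(S_{i}h,D_{i}h,d_{1},d_{2},\dots)$ --- and as the identity on each of the remaining summands $\ell^{2}(\naturals,\mathcal{D}_{j})$, $j\neq i$. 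Then each $W_{i}$ is an isometry on $\mathcal{K}$, the subspace $\HilbertRaum\oplus\ell^{2}(\naturals,\mathcal{D}_{i})$ reduces it, and the standard Sch\"affer computation gives $W_{i}^{\ast}\HilbertRaum\subseteq\HilbertRaum$ and $W_{i}^{\ast}|_{\HilbertRaum}=S_{i}^{\ast}$. By the observation above, the word identity for the $W_{i}$ holds on $\HilbertRaum$.

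\emph{Stage 2: isometries to unitaries.} Every isometry extends to a unitary on a larger space (Wold decomposition: split $W_{i}$ into a unitary part and a unilateral-shift part, and extend the latter to a bilateral shift); let $\mathcal{G}_{i}$ denote the space one adjoins. I would then take $\HilbertRaum^{\prime}=\mathcal{K}\oplus\bigoplus_{i\in I}\mathcal{G}_{i}$ and let $V_{i}$ be this unitary extension of $W_{i}$ on $\mathcal{K}\oplus\mathcal{G}_{i}$ and the identity on the remaining $\mathcal{G}_{j}$, $j\neq i$. Each $V_{i}$ is then a unitary on $\HilbertRaum^{\prime}$ with $V_{i}|_{\mathcal{K}}=W_{i}$; in particular $\mathcal{K}$ is invariant under every $V_{i}$, so any word $V_{i_{1}}^{n_{1}}\cdots V_{i_{N}}^{n_{N}}$ maps $\mathcal{K}$ into $\mathcal{K}$ and coincides there with $W_{i_{1}}^{n_{1}}\cdots W_{i_{N}}^{n_{N}}$.

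\emph{Assembly, and where the difficulty lies.} Taking $r\colon\HilbertRaum\to\HilbertRaum^{\prime}$ to be the inclusion (an isometry, with $r^{\ast}=P_{\HilbertRaum}$ since $\HilbertRaum\subseteq\mathcal{K}\subseteq\HilbertRaum^{\prime}$), for every word one has $V_{i_{1}}^{n_{1}}\cdots V_{i_{N}}^{n_{N}}r(h)\in\mathcal{K}$ and it equals $W_{i_{1}}^{n_{1}}\cdots W_{i_{N}}^{n_{N}}h$ by Stage~2, whence $r^{\ast}V_{i_{1}}^{n_{1}}\cdots V_{i_{N}}^{n_{N}}r=P_{\HilbertRaum}W_{i_{1}}^{n_{1}}\cdots W_{i_{N}}^{n_{N}}|_{\HilbertRaum}=S_{i_{1}}^{n_{1}}\cdots S_{i_{N}}^{n_{N}}$ by Stage~1 (the cases $n_{k}=0$ being trivial), which is \eqref{eq:free-dilation:discrete-time:sig:article-free-raj-dahya}. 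I do not anticipate a serious obstacle, since both liftings are classical. The one point that must be handled carefully is that the liftings have to be performed \emph{simultaneously} for the whole --- possibly infinite, possibly letter-repeating --- family on a single fixed space (rather than on separate minimal dilations); this is what the orthogonal-fibre device takes care of, together with the adjoint/co-invariance chaining in Stage~1, which is precisely the place where it becomes visible that the $S_{i}$ need not commute.
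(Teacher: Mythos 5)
Your proof is correct and follows essentially the same route as the paper's: a Sch\"affer-type isometric dilation of each $S_{i}$ realised simultaneously on one common space so that $\HilbertRaum$ is co-invariant with $W_{i}^{\ast}|_{\HilbertRaum}=S_{i}^{\ast}$, followed by a unitary extension leaving the isometric dilation space invariant, the word identity \eqref{eq:free-dilation:discrete-time:sig:article-free-raj-dahya} then coming from precisely the co-invariance/invariance chaining that the paper encodes in its block-matrix induction. The only differences are implementation details: you use minimal (defect-space) Sch\"affer dilations spread over orthogonal fibres and a Wold/bilateral-shift unitary extension, whereas the paper inflates the shift to multiplicity $\HilbertRaum$ so that all isometries $\NagyIso_{S_{i}}$ act on the same $\HilbertRaum\otimes\ell^{2}(\naturalsZero)$ and are extended by the explicit Halmos-type $2\times2$ block $\NagyUtr_{S_{i}}$ --- a choice the paper exploits later (spectral and continuity arguments), but which is immaterial for the discrete-time statement itself.
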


Since we shall make use of the constructions that occur in this result,
a proof of this will be sketched
in \S{}\ref{sec:result-concrete:discrete-dilation:sig:article-free-raj-dahya}
(see also \cite[\S{}I.5,~(5.5)]{Nagy1970}).
Bo\.{z}ejko strengthened this result as follows.

\begin{defn}
\makelabel{defn:bubble-swap-free:sig:article-free-raj-dahya}
    For $N \in \naturals$ say that a sequence
        $\{a_{k}\}_{k=1}^{N}$
    of arbitrary elements
    is \highlightTerm{bubble-swap free}
    if $a_{k'} \neq a_{k}$
    for each $k,k' \in \{1,2,\ldots,N\}$ with $\abs{k' - k} = 1$.
\end{defn}

\begin{thm}[Bo\.{z}ejko, 1989]
\makelabel{thm:free-regular-dilation-discrete-time:sig:article-free-raj-dahya}
    Let $I$ be a non-empty index set
    and $\{S_{i}\}_{i \in I}$ be a family of (not necessarily commuting)
    contractions on a Hilbert space $\HilbertRaum$.
    Then there exists a family of unitaries
    $\{V_{i}\}_{i \in I}$
    on a Hilbert space $\HilbertRaum^{\prime}$
    and an isometry ${r : \HilbertRaum \to \HilbertRaum^{\prime}}$
    satisfying

    \begin{restoremargins}
    \begin{equation}
    \label{eq:free-regular-dilation:discrete-time:sig:article-free-raj-dahya}
        \prod_{k=1}^{N}
            \begin{cases}
                S_{i_{k}}^{n_{k}}
                    &: &n_{k} > 0\\
                (S_{i_{k}}^{\ast})^{-n_{k}}
                    &: &n_{k} < 0\\
            \end{cases}
        = r^{\ast}
            \:\Big(
            \prod_{k=1}^{N}V_{i_{k}}^{n_{k}}
            \Big)
            \:r
    \end{equation}
    \end{restoremargins}

    \continueparagraph
    for all $N \in \naturals$,
    all bubble-swap free $(i_{k})_{k=1}^{N} \subseteq I$,
    and all
        $(n_{k})_{k=1}^{N} \subseteq \integers \without \{0\}$.
\end{thm}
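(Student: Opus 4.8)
Following Bo\.{z}ejko \cite{Bozejko1989Article}, the plan is to recast the statement as a single operator-valued positivity assertion on the free group and then to prove that by exhibiting the relevant function as a \emph{free product} of the one-variable unitary dilations. Write $\mathbb{F}_{I}$ for the free group on the alphabet $I$, and for a contraction $S$ put $S^{[n]} \coloneqq S^{n}$ when $n > 0$, $S^{[n]} \coloneqq (S^{\ast})^{-n}$ when $n < 0$, and $S^{[0]} \coloneqq \id$, so the left-hand side of \eqcref{eq:free-regular-dilation:discrete-time:sig:article-free-raj-dahya} is $S_{i_{1}}^{[n_{1}]} S_{i_{2}}^{[n_{2}]} \cdots S_{i_{N}}^{[n_{N}]}$. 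Every element of $\mathbb{F}_{I}$ has a unique reduced expression $i_{1}^{n_{1}} i_{2}^{n_{2}} \cdots i_{N}^{n_{N}}$ with $i_{k} \neq i_{k+1}$ and $n_{k} \in \integers \without \{0\}$, and a sequence $(i_{k})_{k}$ being \emph{bubble-swap free} in the sense of \cref{defn:bubble-swap-free:sig:article-free-raj-dahya} is precisely the condition that makes such a word reduced; so we may unambiguously set
\begin{equation*}
    \varphi \colon \mathbb{F}_{I} \to \BoundedOps{\HilbertRaum}, \qquad \varphi\big( i_{1}^{n_{1}} \cdots i_{N}^{n_{N}} \big) \coloneqq S_{i_{1}}^{[n_{1}]} \cdots S_{i_{N}}^{[n_{N}]}, \qquad \varphi(e) \coloneqq \id .
\end{equation*}
Granting that $\varphi$ is positive-definite, the Naimark/Sz.-Nagy dilation theorem for operator-valued positive-definite functions on a group (see, e.g., \cite{Nagy1970}) provides a Hilbert space $\HilbertRaum^{\prime}$, a unitary representation $\pi \colon \mathbb{F}_{I} \to \BoundedOps{\HilbertRaum^{\prime}}$, and an isometry $r \colon \HilbertRaum \to \HilbertRaum^{\prime}$ (an isometry since $\varphi(e) = \id$) with $\varphi(g) = r^{\ast} \pi(g) r$ for every $g$. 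Then $V_{i} \coloneqq \pi(i)$ are unitaries, and applying the homomorphism $\pi$ to a reduced word gives $\prod_{k} V_{i_{k}}^{n_{k}} = \pi\big( i_{1}^{n_{1}} \cdots i_{N}^{n_{N}} \big)$, so $r^{\ast}\big( \prod_{k} V_{i_{k}}^{n_{k}} \big) r = \varphi\big( i_{1}^{n_{1}} \cdots i_{N}^{n_{N}} \big) = \prod_{k} S_{i_{k}}^{[n_{k}]}$, which is exactly \eqcref{eq:free-regular-dilation:discrete-time:sig:article-free-raj-dahya}. So everything reduces to the positivity of $\varphi$.

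To see that $\varphi$ is positive-definite, note first that for each fixed $i \in I$ the one-variable function $\integers \ni n \mapsto S_{i}^{[n]}$ is positive-definite — this is nothing more than the classical unitary dilation of the single contraction $S_{i}$ (see \cite[Ch.~I]{Nagy1970}), equivalently the complete positivity of the unital map $\phi_{i} \colon C(\Torus) \to \BoundedOps{\HilbertRaum}$ with $\phi_{i}(z^{n}) = S_{i}^{[n]}$. Since $\mathbb{F}_{I} = \ast_{i \in I} \integers$ and $C^{\ast}(\mathbb{F}_{I}) = \ast_{i \in I} C(\Torus)$ (full free product), the function $\varphi$ is the restriction to group elements of the free-product map $\Phi = \ast_{i \in I} \phi_{i} \colon C^{\ast}(\mathbb{F}_{I}) \to \BoundedOps{\HilbertRaum}$, and the crux of the argument is that such a $\Phi$ exists and is unital completely positive — the operator-valued form of the fact that free products of normalised positive-definite functions on groups are again positive-definite. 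Given this, complete positivity of $\Phi$ yields the required positivity of $\varphi$; in fact Stinespring's theorem applied to $\Phi$ directly returns the representation $\pi$ and the isometry $r$ above. Concretely, one builds $\Phi$ (and its Stinespring space) by amalgamating, over the common space $\HilbertRaum$, the minimal unitary dilations $(\HilbertRaum_{i}, U_{i})$ of the individual $S_{i}$: form
\begin{equation*}
    \HilbertRaum^{\prime} = \HilbertRaum \oplus \bigoplus_{N \geq 1} \; \bigoplus_{i_{1} \neq i_{2} \neq \cdots \neq i_{N}} \HilbertRaum_{i_{1}}^{\circ} \otimes_{\HilbertRaum} \HilbertRaum_{i_{2}}^{\circ} \otimes_{\HilbertRaum} \cdots \otimes_{\HilbertRaum} \HilbertRaum_{i_{N}}^{\circ}, \qquad \HilbertRaum_{i}^{\circ} \coloneqq \HilbertRaum_{i} \ominus \HilbertRaum ,
\end{equation*}
let $V_{i}$ act as $U_{i}$ on the $i$-th leg with the usual creation/annihilation reshuffling of the tensor legs, and check that for a reduced word the compression $r^{\ast}(\,\cdot\,)r$ telescopes to $S_{i_{1}}^{[n_{1}]} \cdots S_{i_{N}}^{[n_{N}]}$.

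The step I expect to be the main obstacle is exactly this free-product construction, for two reasons. First, the amalgamation is over the (in general infinite-dimensional) Hilbert space $\HilbertRaum$ rather than over $\complex$, so the balanced tensor products above genuinely live in the category of Hilbert $C^{\ast}$-modules / $\BoundedOps{\HilbertRaum}$-correspondences, and the orthogonality bookkeeping that makes the compression telescope must be carried out at that level; this is where the real content of Bo\.{z}ejko's theorem resides. Second, the bubble-swap free hypothesis is indispensable and is used precisely at the telescoping: it guarantees that $i_{1}^{n_{1}} \cdots i_{N}^{n_{N}}$ is reduced, so that applying the $V_{i_{k}}^{n_{k}}$ in succession produces no cancellation at a junction and never re-enters a leg just vacated — drop it, and a factor $V_{i}^{m} V_{i}^{-m}$ would have to equal $\id$ on $\HilbertRaum^{\prime}$ while $S_{i}^{m}(S_{i}^{\ast})^{m} \neq \id$ on $\HilbertRaum$ in general, so no such dilation could exist. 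A more hands-on variant, better suited to simultaneously reproving \cref{thm:free-dilation-discrete-time:sig:article-free-raj-dahya}, is to first build a free \emph{isometric} dilation $\{W_{i}\}$ on a common $\HilbertRaum_{+} \supseteq \HilbertRaum$ with $\HilbertRaum$ co-invariant for each $W_{i}$ — which already settles \eqcref{eq:free-regular-dilation:discrete-time:sig:article-free-raj-dahya} for all $n_{k} > 0$ with no restriction on the $i_{k}$ — and then extend the $W_{i}$ to unitaries in a mutually free fashion; the verification that this still compresses correctly once adjoints are admitted is again the regular/free-product positivity and again requires the bubble-swap free condition.
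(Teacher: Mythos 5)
The paper itself gives no proof of this theorem: it is quoted from the literature with the pointer to \cite[Theorem~8.1]{Bozejko1989Article}, and your proposal reconstructs essentially that original argument --- identifying bubble-swap free sequences with $n_{k}\neq 0$ as exactly the reduced words of $\mathbb{F}_{I}$, using Sz.-Nagy's one-variable result to get positive-definiteness of $n\mapsto S_{i}^{[n]}$ on $\integers$, invoking the free-product positivity to get positive-definiteness of $\varphi$ on $\mathbb{F}_{I}$, and finishing with the Naimark/Sz.-Nagy dilation of an operator-valued positive-definite function. The only caveat is that the step you yourself flag as the main obstacle --- that the free product of unital operator-valued positive-definite functions (equivalently the u.c.p.\ free product of the maps $\phi_{i}$ on $\freeprBig_{i}C(\Torus)$) is again positive-definite --- is precisely the substance of the cited Theorem~8.1, so your write-up defers rather than proves the core of the result; the surrounding reductions, and your observation that the bubble-swap free hypothesis is indispensable (compressing $V_{i}^{m}V_{i}^{-m}$ would otherwise force $S_{i}^{m}(S_{i}^{\ast})^{m}=\onematrix$), are correct and consistent with the paper's own remarks.
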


For a proof, see \cite[Theorem~8.1]{Bozejko1989Article}.

%% ********** END OF FILE: body/sec-1-introduction/para-1-history.tex **********

%% ********************************************************************************
%% FILE: body/sec-1-introduction/para-2-results-I.tex
%% ********************************************************************************

We shall refer to the dilation of Sz.-Nagy
as a (discrete-time) \highlightTerm{free unitary dilation},
and to that of Bo\.{z}ejko
as a (discrete-time) \highlightTerm{free regular unitary dilation}.
Such dilations can be instrumentalised to obtain polynomial bounds
(see \exempli %
    \cite[Lemma~2.8]{SampatShalit2025ArticleNcOpBalls}%
).
We shall investigate the existence of continuous-time counterparts.
Our first main result is as follows:

\begin{highlightboxWithBreaks}
\begin{thm}[\First Free dilation theorem]
\makelabel{thm:free-dilation:1st:sig:article-free-raj-dahya}
    Let $I$ be a non-empty index set
    and $\{T_{i}\}_{i \in I}$ be a family of (not necessarily commuting)
    contractive $\Cnought$\=/semigroups on a Hilbert space $\HilbertRaum$.
    There exists a Hilbert space $\HilbertRaum^{\prime}$,
    a family of $\topSOT$\=/continuous unitary representations
        $\{U_{i}\}_{i \in I} \subseteq \Repr{\reals}{\HilbertRaum^{\prime}}$,
    and an isometry ${r : \HilbertRaum \to \HilbertRaum^{\prime}}$
    satisfying

    \begin{restoremargins}
    \begin{equation}
    \label{eq:free-dilation:cts-time:sig:article-free-raj-dahya}
        \prod_{k=1}^{N}
            T_{i_{k}}(t_{k})
        = r^{\ast}
            \:\Big(
                \prod_{k=1}^{N}
                    U_{i_{k}}(t_{k})
            \Big)
            \:r
    \end{equation}
    \end{restoremargins}

    \continueparagraph
    for all $N \in \naturals$,
    all sequences $(i_{k})_{k=1}^{N} \subseteq I$,
    and all
        $(t_{k})_{k=1}^{N} \subseteq \realsNonNeg$.
\end{thm}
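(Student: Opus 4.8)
The plan is to reduce the assertion to the discrete-time
\cref{thm:free-dilation-discrete-time:sig:article-free-raj-dahya} by passing to
co-generators --- this is the co-generator route \`{a}~la S\l{}oci\'{n}ski
anticipated in the introduction --- and then to lift the resulting identity for
products of powers of contractions to one for products of semigroup values via a
functional-calculus limit. First, for each $i \in I$ let $V_{i}$ be the
co-generator of $T_{i}$, \idest the Cayley transform of its generator; since
$T_{i}$ is a contractive $\Cnought$\=/semigroup, $V_{i}$ is a contraction on
$\HilbertRaum$ and $1$ is not an eigenvalue of $V_{i}$. Put
$e_{t}(z) \colonequals \exp\!\big(t\tfrac{z+1}{z-1}\big)$. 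For $t \geq 0$ one then
has $e_{t} \in H^{\infty}(\mathbb{D})$ with $\|e_{t}\|_{\infty} \leq 1$
(because $\Re\tfrac{z+1}{z-1} = \tfrac{|z|^{2}-1}{|z-1|^{2}} < 0$ on the open disc),
$e_{t}e_{s} = e_{t+s}$, and $|e_{t}(z)| = 1$ for all $z \in \Torus \setminus \{1\}$
and all $t \in \reals$; moreover the co-generator theory of Sz.-Nagy--Foia\c{s}
gives $T_{i}(t) = e_{t}(V_{i})$ for $t \geq 0$, the right-hand side being formed
through the $H^{\infty}$\=/functional calculus of $V_{i}$.

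Next, I would apply \cref{thm:free-dilation-discrete-time:sig:article-free-raj-dahya}
to the family $\{V_{i}\}_{i \in I}$ of contractions, obtaining a Hilbert space
$\HilbertRaum^{\prime}$, unitaries $\{W_{i}\}_{i \in I}$ on $\HilbertRaum^{\prime}$,
and an isometry $r \colon \HilbertRaum \to \HilbertRaum^{\prime}$ with
$\prod_{k=1}^{N} V_{i_{k}}^{n_{k}} = r^{\ast}\big(\prod_{k=1}^{N} W_{i_{k}}^{n_{k}}\big)\,r$
for all $N \in \naturals$, $(i_{k})_{k=1}^{N} \subseteq I$ and
$(n_{k})_{k=1}^{N} \subseteq \naturalsZero$; from the construction underlying that
theorem (whose ``new'' unitary summands are bilateral shifts) one reads off that
$1$ is not an eigenvalue of any $W_{i}$. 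Define $U_{i}(t) \colonequals e_{t}(W_{i})$
through the Borel functional calculus of the unitary $W_{i}$: since $|e_{t}| = 1$ on
$\Torus \setminus \{1\}$ this is a unitary for every $t \in \reals$, multiplicativity
of the calculus together with $e_{t}e_{s} = e_{t+s}$ yields the group law, and
dominated convergence in the spectral integral (legitimate because $1$ is not an
eigenvalue of $W_{i}$) gives $\topSOT$\=/continuity, so
$\{U_{i}\}_{i \in I} \subseteq \Repr{\reals}{\HilbertRaum^{\prime}}$ and $r$ is the
desired isometry.

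It remains to upgrade the discrete identity to
\eqref{eq:free-dilation:cts-time:sig:article-free-raj-dahya}. That identity is
multilinear in its factors, hence for arbitrary polynomials $p_{1}, \ldots, p_{N}$
\[ \prod_{k=1}^{N} p_{k}(V_{i_{k}}) = r^{\ast}\Big(\prod_{k=1}^{N} p_{k}(W_{i_{k}})\Big)\,r . \]
For $\rho \in (0,1)$ set $e_{t}^{(\rho)}(z) \colonequals e_{t}(\rho z)$, which is
holomorphic on a neighbourhood of $\overline{\mathbb{D}}$ with supremum $\leq 1$
there, hence a uniform limit on $\overline{\mathbb{D}}$ of polynomials of supremum
$\leq 1$; by von Neumann's inequality the operators $e_{t}^{(\rho)}(V_{i})$ and
$e_{t}^{(\rho)}(W_{i})$ are contractions and are operator-norm limits of those
polynomials evaluated at $V_{i}$ \resp $W_{i}$, and since a product of uniformly
bounded norm-convergent nets converges in norm the displayed identity persists with
$p_{k} = e_{t_{k}}^{(\rho)}$. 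Letting $\rho \uparrow 1$ one has
$e_{t}^{(\rho)}(V_{i}) \to e_{t}(V_{i}) = T_{i}(t)$ in $\topSOT$ (this is precisely
how the Sz.-Nagy--Foia\c{s} $H^{\infty}$ calculus is constructed, valid because $1$
is not an eigenvalue of the unitary part of $V_{i}$) and
$e_{t}^{(\rho)}(W_{i}) \to e_{t}(W_{i}) = U_{i}(t)$ in $\topSOT$ by dominated
convergence; since products of uniformly bounded $\topSOT$\=/convergent nets
converge in $\topSOT$, passing to the limit gives
\eqref{eq:free-dilation:cts-time:sig:article-free-raj-dahya}.

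The conceptual content sits in the first paragraph --- co-generators convert the
continuous-time problem into the already-solved discrete-time one --- while the
actual labour, and the main obstacle, is the double limit of the third paragraph,
where the $H^{\infty}$ calculus on the contraction side must be matched with the
Borel calculus on the unitary side compatibly in the strong operator topology. The
one genuinely new item needing a direct check is that the dilating unitaries
$W_{i}$ delivered by \cref{thm:free-dilation-discrete-time:sig:article-free-raj-dahya}
have no eigenvalue $1$: this is exactly what makes $U_{i}(t) = e_{t}(W_{i})$ both a
well-defined unitary representation of $\reals$ and the $\topSOT$\=/limit of
$e_{t}^{(\rho)}(W_{i})$. (Alternatively, one can run the S\l{}oci\'{n}ski
co-generator construction directly in continuous time, which is the second of the
three approaches announced in the introduction.)
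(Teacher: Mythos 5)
Your route is structurally the paper's own first full proof: pass to the co-generators $V_{i}$, apply the discrete-time result (\Cref{thm:free-dilation-discrete-time:sig:article-free-raj-dahya}) to $\{V_{i}\}_{i\in I}$, verify that the dilating unitaries have no eigenvalue $1$ so that they are themselves co-generators of $\topSOT$\=/continuous unitary representations, and then transfer the dilation identity by a factor-by-factor limit. The only real methodological difference is how you recover semigroups from co-generators: you invoke the Sz.-Nagy--Foia\c{s} functions $e_{t}$ together with radial dilates $e_{t}(\rho\,\cdot)$ and von Neumann's inequality (the S\l{}oci\'{n}ski route, which the paper explicitly mentions and deliberately avoids), whereas the paper uses Yosida approximants rewritten in terms of the co-generator and explicit polynomial nets (\Cref{lemm:poly-approx-cogenerator:sig:article-free-raj-dahya}); either implementation of that step is sound, and your limit-passing argument (uniformly bounded factors, $\topSOT$\=/convergence of products, $\topSOT$\=/continuity of $r^{\ast}(\cdot)r$ on bounded sets) is correct.

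The genuine gap is the assertion that the unitaries $W_{i}$ produced by \Cref{thm:free-dilation-discrete-time:sig:article-free-raj-dahya} satisfy $1\notin\sigma_{p}(W_{i})$. First, this is not a consequence of that theorem as stated: one can always enlarge any free unitary dilation by a direct summand $\onematrix$ on an extra space without disturbing the dilation identity, so the property must be verified for a concrete construction. Second, your parenthetical justification --- that the \usesinglequotes{new} unitary summands are bilateral shifts --- is not accurate for the Sz.-Nagy/Sch\"affer-type construction: decomposing along the Wold decomposition of the isometric dilation $\NagyIso_{S}$, the unitary $\NagyUtr_{S}$ is unitarily equivalent to $V_{u}\oplus V_{u}^{\ast}\oplus(\text{bilateral shift})$, where $V_{u}$ is the unitary part of $\NagyIso_{S}$; in particular, for $S$ unitary one gets $S\oplus S^{\ast}\oplus(\text{bilateral shift})$, so the complement of the minimal part contains an adjoint copy and not only shifts, and in general $\sigma_{p}(\NagyUtr_{S})$ can contain $\sigma_{p}(S)^{\ast}\setminus\sigma_{p}(S)$. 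For the eigenvalue $1$ the adjoint copy happens to be harmless, but establishing $\sigma_{p}(\NagyUtr_{S})\cap\Torus\subseteq(\sigma_{p}(S)\cup\sigma_{p}(S)^{\ast})\cap\Torus$ still requires the eigenvector analysis of the specific construction (the $\ell^{2}$\=/summability argument showing that unimodular eigenvectors of $\NagyIso_{S}$ are supported in the zeroth slot and come from eigenvectors of $S$, using \Cref{prop:eigenvalues-on-border:sig:article-free-raj-dahya}); this is exactly what the paper supplies in \Cref{prop:eigenvalues-discrete-dilation:sig:article-free-raj-dahya}. With that proposition in place of your \usesinglequotes{read off}, your argument goes through.
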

\end{highlightboxWithBreaks}

We shall refer to
$(\HilbertRaum^{\prime},r,\{U_{i}\}_{i \in I})$
in \Cref{thm:free-dilation:1st:sig:article-free-raj-dahya}
as a (continuous-time) \highlightTerm{free unitary dilation}.

\begin{rem}[Na\"{i}ve approach]
\makelabel{rem:naive-approach:sig:article-free-raj-dahya}
    Since the dilation is not required to preserve any algebraic relations,
    it is tempting to attempt to prove \Cref{thm:free-dilation:1st:sig:article-free-raj-dahya}
    by simply taking $1$\=/parameter unitary dilations
    and stitching these together.
    This approach would be as follows:
    For each $i \in I$ we let $(H_{i},r_{i},U_{i})$
    be a minimal unitary dilation of $(\HilbertRaum,T_{i})$
    (see \cite[Theorem~I.8.1]{Nagy1970}).
    Using direct sums, and unitary adjustments,
    we can assume that $H_{i} = H$ and $r_{i} = r$
    for all $i \in I$
    and some Hilbert space $H$
    and isometry $r \in \BoundedOps{\HilbertRaum}{H}$.
    Let $N \in \naturals$,
    $(i_{k})_{k=1}^{N} \subseteq I$,
    and $(t_{k})_{k=1}^{N} \subseteq \realsNonNeg$.
    Then

    \begin{shorteqnarray}
        \prod_{k=1}^{N}
            T_{i_{k}}(t_{k})
        = \prod_{k=1}^{N}
            (r^{\ast}\:U_{i_{k}}(t_{k})\:r)
        = r^{\ast}
            \:
            \Big(
                \prod_{k=1}^{N}
                    p\:U_{i_{k}}(t_{k})
            \Big)
            \:r
    \end{shorteqnarray}

    \continueparagraph
    where $p \coloneqq rr^{\ast}$,
    which is the projection in $\HilbertRaum^{\prime}$
    to the subspace $\ran(r) = r\HilbertRaum$.
    The issue is that it is unclear whether
    the $p$'s in the above can be eliminated
    to obtain \eqcref{eq:free-dilation:cts-time:sig:article-free-raj-dahya}.
    This requires further special knowledge of the structure of the $1$\=/parameter dilations.
\end{rem}

\begin{rem}[Non-topological free dilations]
    In \cite[Theorem~4.1]{Popescu1999Article},
    Popescu worked with the notion of
    \highlightTerm{positive-definite} kernels
    to obtain similar dilations
    for operator semigroups
    satisfying certain restrictive conditions,
    and defined on \highlightTerm{commensurable} submonoids
    $P \subseteq \realsNonNeg$.%
    \footnote{%
        A set $A \subseteq \realsNonNeg$
        is called commensurable if for each finite set $F \subseteq A \without \{0\}$,
        there exists $t \in A$,
        such that $F \subseteq \naturals \cdot t$.
        \Exempli
        $\rationalsNonNeg$,
        $\{\tfrac{m}{2^{n}} \mid m,n\in\naturalsZero\}$
        are commensurable,
        but $\realsNonNeg$ is not.
    }
    However, these results make no claims about the \emph{continuity}
    of either the semigroups or their dilations,
    as the underlying space of time points is endowed with the discrete topology.
\end{rem}

It is also a straightforward exercise to
extend Bo\.{z}ejko's techniques in \cite[\S{}8]{Bozejko1989Article}
to families of $\Cnought$\=/semigroups.
This approach however fails to produce dilations that are continuous.

\begin{rem}[No free regular dilations in continuous-time]
    Given a non-empty index set $I$
    and family $\{T_{i}\}_{i \in I}$ (not necessarily commuting)
    of contractive $\Cnought$\=/semigroups on a Hilbert space $\HilbertRaum$,
    a natural definition for a
    (continuous-time) \highlightTerm{free regular unitary dilation}
    would consist of
    a family of $\topSOT$\=/continuous unitary representations
        $\{U_{i}\}_{i \in I} \subseteq \Repr{\reals}{\HilbertRaum^{\prime}}$,
    and an isometry
        ${r : \HilbertRaum \to \HilbertRaum^{\prime}}$
    satisfying

        \begin{restoremargins}
        \begin{equation}
        \label{eq:free-regular-dilation:cts-time:sig:article-free-raj-dahya}
            \prod_{k=1}^{N}
                \begin{cases}
                    T_{i_{k}}(t_{k})
                        &: &t_{k} > 0\\
                    T_{i_{k}}(-t_{k})^{\ast}
                        &: &t_{k} < 0\\
                \end{cases}
            = r^{\ast}
                \:\Big(
                    \prod_{k=1}^{N}
                        U_{i_{k}}(t_{k})
                \Big)
                \:r
        \end{equation}
        \end{restoremargins}

    \continueparagraph
    for all $N \in \naturals$,
    all bubble-swap free $(i_{k})_{k=1}^{N} \subseteq I$,
    and all
        $(t_{k})_{k=1}^{N} \subseteq \reals \without \{0\}$.
    Suppose that such a dilation existed and that $\card{I} \geq 2$.
    Consider an arbitrary index $i \in I$ and $t > 0$.
    Taking any $j \in I \setminus \{i\}$
    the free regular dilation would yield

        \begin{shorteqnarray}
            T_{i}(t)^{\ast}T_{j}(\varepsilon)T_{i}(t)
            &\eqcrefoverset{eq:free-regular-dilation:cts-time:sig:article-free-raj-dahya}{=}
                &r^{\ast}
                U_{i}(-t)U_{j}(\varepsilon)U_{i}(t)
                \:r,
                ~\text{and}\\
            T_{i}(t)T_{j}(\varepsilon)T_{i}(t)^{\ast}
            &\eqcrefoverset{eq:free-regular-dilation:cts-time:sig:article-free-raj-dahya}{=}
                &r^{\ast}
                U_{i}(t)U_{j}(\varepsilon)U_{i}(-t)
                \:r
        \end{shorteqnarray}

    \continueparagraph
    for all $\varepsilon > 0$.
    Since by continuity
    ${U_{j}(\varepsilon) \overset{\tinytopSOT}{\longrightarrow} I}$
    and
    ${T_{j}(\varepsilon) \overset{\tinytopSOT}{\longrightarrow} I}$
    for ${(0,\:\infty)\ni\varepsilon \longrightarrow 0}$,
    taking limits of either side of the above expressions yields
    $
        T_{i}(t)^{\ast}T_{i}(t)
        = r^{\ast}\:U_{i}(-t)U_{i}(t)\:r
        = I
    $
    and similarly
    $
        T_{i}(t)T_{i}(t)^{\ast}
        = r^{\ast}\:U_{i}(t)U_{i}(-t)\:r
        = I
    $.
    Since this must hold for all $i \in I$ and all $t > 0$,
    we conclude that the existence of a free regular unitary dilation
    is only possible if either $\card{I} = 1$
    or the semigroups $\{T_{i}\}_{i \in I}$
    are all already unitary.
    That is, there are no non-trivial continuous-time free regular unitary dilations.
\end{rem}

%% ********** END OF FILE: body/sec-1-introduction/para-2-results-I.tex **********

%% ********************************************************************************
%% FILE: body/sec-1-introduction/para-3-results-II.tex
%% ********************************************************************************

We shall also derive a version of free dilations for topologised index sets.
To achieve this, we recall and make use of the following notions.
Let
    $\FnRm{\RaumX}{\RaumY}$
    denote the set of all functions
    from a set $\RaumX$ to a set $\RaumY$
and let
    $\KmpRm{\RaumX}$
    denote the compact subsets of a topological space $\RaumX$.

\begin{defn}
\makelabel{defn:k-sot-top:sig:article-free-raj-dahya}
    Let $\RaumX$ be a topological space and $\BanachRaum$ a Banach space.
    The topology ($\toplocSOT$)
    of \highlightTerm{uniform strong convergence on compact subsets of $\RaumX$}
    on $\FnRm{\RaumX}{\BoundedOps{\BanachRaum}}$
    is defined by the convergence condition
    $
        f^{(\alpha)}
        \underset{\alpha}{\overset{\tinytoplocSOT}{\longrightarrow}}
        f
    $
    if and only if

    \begin{shorteqnarray}
        \forall{K \in \KmpRm{\RaumX}:~}
        \forall{\xi \in \BanachRaum:~}
            \sup_{x \in K}
                \norm{(f^{(\alpha)}(x) - f(x))\xi}
            \underset{\alpha}{\longrightarrow}
            0
    \end{shorteqnarray}

    \continueparagraph
    for nets
    $
        (f^{(\alpha)})_{\alpha \in \Lambda}
        \subseteq
        \FnRm{\RaumX}{\BoundedOps{\BanachRaum}}
    $
    and $f\in\FnRm{\RaumX}{\BoundedOps{\BanachRaum}}$.
\end{defn}

\begin{defn}
\makelabel{defn:k-sot-star-top:sig:article-free-raj-dahya}
    Let $\RaumX$ be a topological space and $\HilbertRaum$ a Hilbert space.
    We let $\toplocSOTstar$ denote the topology
    defined by the convergence condition
    ${
        f^{(\alpha)}
        \underset{\alpha}{\overset{\tinytoplocSOTstar}{\longrightarrow}}
        f
    }$
    if and only if
    ${
        f^{(\alpha)}
        \underset{\alpha}{\overset{\tinytoplocSOT}{\longrightarrow}}
        f
    }$
    and
    ${
        f^{(\alpha)}(\cdot)^{\ast}
        \underset{\alpha}{\overset{\tinytoplocSOT}{\longrightarrow}}
        f(\cdot)^{\ast}
    }$
    for nets
    $
        (f^{(\alpha)})_{\alpha \in \Lambda}
        \subseteq
        \FnRm{\RaumX}{\BoundedOps{\HilbertRaum}}
    $
    and $f\in\FnRm{\RaumX}{\BoundedOps{\HilbertRaum}}$.
\end{defn}

\begin{defn}
\makelabel{defn:k-wot-top:sig:article-free-raj-dahya}
    Let $\RaumX$ be a topological space
    and $\HilbertRaum$ a Hilbert space.
    The topology ($\toplocWOT$)
    of \highlightTerm{uniform weak convergence on compact subsets of $\RaumX$}
    on $\FnRm{\RaumX}{\BoundedOps{\HilbertRaum}}$
    is defined by the convergence condition
    $
        f^{(\alpha)}
        \underset{\alpha}{\overset{\tinytoplocWOT}{\longrightarrow}}
        f
    $
    if and only if

    \begin{shorteqnarray}
        \forall{K \in \KmpRm{\RaumX}:~}
        \forall{\xi, \eta \in \HilbertRaum:~}
            \sup_{x \in K}
                \abs{\brkt{(f^{(\alpha)}(x) - f(x))\xi}{\eta}}
            \underset{\alpha}{\longrightarrow} 0
    \end{shorteqnarray}

    \continueparagraph
    for nets
    $
        (f^{(\alpha)})_{\alpha \in \Lambda}
        \subseteq
        \FnRm{\RaumX}{\BoundedOps{\HilbertRaum}}
    $
    and $f\in\FnRm{\RaumX}{\BoundedOps{\HilbertRaum}}$.
    This topology is similarly defined with appropriate adjustments for Banach spaces.
\end{defn}

Consider a family
    $\{T_{\omega}\}_{\omega \in \Omega}$
of either $\Cnought$\=/semigroups
(defined over $\RaumX = \realsNonNeg$)
on a Hilbert or Banach space $E$,
or else continuous representations of $\RaumX = \reals$
on a Hilbert space.
In latter parts of this paper,
we shall focus on such families
which are
\highlightTerm{%
    $\toplocWOT$-/ $\toplocSOT$-/$\toplocSOTstar$\=/continuous
    in the index set
}
(or simply: \highlightTerm{in $\Omega$}),
\idest
for which the map
    ${
        \Omega
        \ni \omega
        \mapsto
        T_{\omega}
        \in
        \FnRm{\RaumX}{\BoundedOps{E}}
    }$
is
    $\toplocSOT$- \resp $\toplocSOTstar$- \resp $\toplocWOT$\=/continuous.

\begin{rem}
\makelabel{rem:equivalence-of-topologies-in-unitary-case:sig:article-free-raj-dahya}
    Consider the respective topologies in
    \Cref{%
        defn:k-sot-top:sig:article-free-raj-dahya,%
        defn:k-sot-star-top:sig:article-free-raj-dahya,%
        defn:k-wot-top:sig:article-free-raj-dahya%
    }
    but restricted to spaces of continuous contraction-valued functions,
    where the space of contractions is endowed
    with the strong \resp strong\textsuperscript{$\star$} \resp weak
    operator topologies.
    It is straightforward to observe that
    the $\toplocSOT$-/$\toplocSOTstar$-/$\toplocWOT$\=/topologies
    are completely determined
    by the topologies on the underlying operator spaces,
    \idest
        $(\BoundedOps{E},\tau)$,
    where $E$ is a Hilbert or Banach space
    and $\tau$ is the $\topSOT$-/$\topSOTstar$-/$\topSOT$\=/topology.%
    \footnote{%
        In fact, they coincide with the respective \highlightTerm{compact-open} topologies,
        \cf \cite[Theorem~7.11]{Kelley1955Book},
        \cite[Remark~1.12]{Dahya2022weakproblem}.
    }
    And since the strong, strong\textsuperscript{$\star$}, and weak
    topologies coincide for the subspace of unitary operators,
    the three topologies mentioned above
    \Cref{rem:equivalence-of-topologies-in-unitary-case:sig:article-free-raj-dahya}
    coincide when considering families of unitary $\Cnought$\=/semigroups
    (\cf \cite[Remark~1.17]{Dahya2022weakproblem}).
    From this, it is a simple exercise to arrive at the fact that
    for a family
        $
            \{U_{\omega}\}_{\omega \in \Omega}
            \subseteq
            \Repr{\reals}{\HilbertRaum}
        $
    of continuous unitary representations,
    the map
        ${
            \Omega
            \ni
            \omega
            \mapsto
            U_{\omega}
            \in
            \FnRm{\reals}{\BoundedOps{\HilbertRaum}}
        }$
        is $\toplocSOT$\=/continuous
    if and only if
        ${
            \Omega
            \ni
            \omega
            \mapsto
            U_{\omega}\restr{\realsNonNeg}
            \in
            \FnRm{\realsNonNeg}{\BoundedOps{\HilbertRaum}}
        }$
        is $\toplocSOT$\=/continuous.
    Hence, in the unitary case,
    one can switch between representations and their corresponding semigroups
    without affecting the topology.
\end{rem}

With these definitions we formulate the second main result:

\begin{highlightboxWithBreaks}
\begin{thm}[\Second Free dilation theorem]
\makelabel{thm:free-dilation:2nd:sig:article-free-raj-dahya}
    Let $\Omega$ be a compact topological space
    and
        $\{T_{\omega}\}_{\omega \in \Omega}$
    a family of contractive $\Cnought$\=/semigroups on a Hilbert space $\HilbertRaum$.
    There exists a free unitary dilation
        $(\HilbertRaum^{\prime},r,\{U_{\omega}\}_{\omega \in \Omega})$,
    such that
        $\{U_{\omega}\}_{\omega \in \Omega}$
    is $\toplocSOT$\=/continuous in $\Omega$
    if and only if
        $\{T_{\omega}\}_{\omega \in \Omega}$
    is $\toplocSOTstar$\=/continuous in $\Omega$.
\end{thm}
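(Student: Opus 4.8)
The plan is to route the problem through the co-generators of the semigroups: replace each $T_{\omega}$ by its co-generator $V_{\omega}$, apply the discrete free dilation theorem (\Cref{thm:free-dilation-discrete-time:sig:article-free-raj-dahya}) to the family $\{V_{\omega}\}_{\omega\in\Omega}$, and then transfer both the intertwining relation and the continuity back to continuous time by holomorphic functional calculus together with a Trotter--Kato argument for co-generators. The analytic device throughout is a \emph{universal} Euler approximation: for $0<c<1$ the M\"obius function $g_{c}(z)\coloneqq\tfrac{z-1}{(1-c)z-(1+c)}$ is holomorphic on $\{z\in\complex:\abs{z}<\tfrac{1+c}{1-c}\}$, which contains the closed unit disc, and satisfies $\abs{g_{c}(z)}\leq 1$ there; moreover, if $V$ is the co-generator of a contractive $\Cnought$\=/semigroup $T$ on $\HilbertRaum$ (equivalently, $V$ is a contraction with $\ker(I-V)=\{0\}$), then $g_{t/n}(V)^{n}\to T(t)$ in $\topSOT$ as $n\to\infty$ for each $t\geq 0$, and the very same formula recovers the $\topSOT$\=/continuous unitary $\Cnought$\=/group with a prescribed unitary co-generator $W$ (where $\ker(I-W)=\{0\}$). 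Since $g_{c}$ and every $g_{c}^{n}$ are holomorphic on a neighbourhood of the closed disc, $g_{c}(V)$ and $g_{c}(V)^{n}$ are $\topNorm$\=/limits of polynomials in $V$, uniformly over all contractions $V$.

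\emph{Necessity} is soft. If a free unitary dilation $(\HilbertRaum^{\prime},r,\{U_{\omega}\}_{\omega\in\Omega})$ exists with $\omega\mapsto U_{\omega}$ being $\toplocSOT$\=/continuous in $\Omega$, then putting $N=1$ in the dilation relation gives $T_{\omega}(t)=r^{\ast}U_{\omega}(t)\,r$ and $T_{\omega}(t)^{\ast}=r^{\ast}U_{\omega}(-t)\,r$, so for compact $K\subseteq\realsNonNeg$ and $\xi\in\HilbertRaum$ one has $\sup_{t\in K}\norm{(T_{\omega}(t)-T_{\omega_{0}}(t))\xi}\leq\sup_{t\in K}\norm{(U_{\omega}(t)-U_{\omega_{0}}(t))(r\xi)}\to 0$ as $\omega\to\omega_{0}$, and running the same estimate over the compact set $-K\subseteq\reals$ handles $T_{\omega}(\cdot)^{\ast}$ — using here that for unitary representations the $\toplocSOT$\=/topology over $\reals$ already controls the restriction to $\realsNonNeg$ and agrees with $\toplocSOTstar$ (\Cref{rem:equivalence-of-topologies-in-unitary-case:sig:article-free-raj-dahya}). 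Hence $\{T_{\omega}\}_{\omega\in\Omega}$ is $\toplocSOTstar$\=/continuous in $\Omega$. For \emph{sufficiency}, assume $\{T_{\omega}\}_{\omega\in\Omega}$ is $\toplocSOTstar$\=/continuous in $\Omega$. Writing $V_{\omega}=I-2\int_{0}^{\infty}e^{-t}T_{\omega}(t)\,\dee t$ (a $\topSOT$\=/Bochner integral; this is the resolvent identity $V_{\omega}=I-2(I-A_{\omega})^{-1}$), one checks — by splitting the integral at a large radius, where the tail is uniformly small and the head converges uniformly on a compact interval — that $\omega\mapsto V_{\omega}$ and $\omega\mapsto V_{\omega}^{\ast}$ are $\topSOT$\=/continuous; since $T_{\omega}$ is contractive, $\ker(I-V_{\omega})=\{0\}$. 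Because $X\mapsto X^{1/2}$ is $\topNorm$\=/continuous, hence $\topSOT$\=/continuous, on $\{0\leq X\leq I\}$ (uniform polynomial approximation of $\sqrt{\cdot\,}$ on $[0,1]$), the defect operators $\omega\mapsto D_{V_{\omega}}=(I-V_{\omega}^{\ast}V_{\omega})^{1/2}$ and $\omega\mapsto D_{V_{\omega}^{\ast}}$ are $\topSOT$\=/continuous as well.

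Next I would run the construction underlying \Cref{thm:free-dilation-discrete-time:sig:article-free-raj-dahya} on $\{V_{\omega}\}_{\omega\in\Omega}$, but with the \emph{ambient} space $\HilbertRaum$ in place of the index\=/dependent defect subspaces $\overline{\ran}\,D_{V_{\omega}}$, $\overline{\ran}\,D_{V_{\omega}^{\ast}}$ (minimality was never needed, only the dilation identity). This produces a Hilbert space $\HilbertRaum^{\prime}$ and an isometry $r:\HilbertRaum\to\HilbertRaum^{\prime}$ that do \emph{not} depend on $\omega$ (an $\ell^{2}$\=/sum of copies of $\HilbertRaum$ indexed by the reduced words over $\Omega$), unitaries $W_{\omega}$ given by the same word formulas built from $V_{\omega},V_{\omega}^{\ast},D_{V_{\omega}},D_{V_{\omega}^{\ast}}$, and the discrete intertwining $\prod_{k}V_{\omega_{k}}^{m_{k}}=r^{\ast}\big(\prod_{k}W_{\omega_{k}}^{m_{k}}\big)\,r$ for all finite index sequences and all exponents in $\naturalsZero$. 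From this explicit construction I would extract two facts: (i) $\omega\mapsto W_{\omega}$ is $\topSOTstar$\=/continuous, as the space is fixed and the ingredients are $\topSOTstar$\=/continuous — equivalently $\topSOT$\=/continuous, since the $W_{\omega}$ are unitary (\Cref{rem:equivalence-of-topologies-in-unitary-case:sig:article-free-raj-dahya}); and (ii) $\ker(I-W_{\omega})=\{0\}$, since a fixed vector of $W_{\omega}$ would have to be constant along word length and kill all defect data, hence vanish (using $\ker(I-V_{\omega})=\{0\}$). Let $U_{\omega}$ be the $\topSOT$\=/continuous unitary representation of $\reals$ with co-generator $W_{\omega}$ (Stone's theorem, via (ii)). To obtain the required intertwining relation, linearity of $X\mapsto r^{\ast}Xr$ lifts the discrete identity to $\prod_{k}p_{k}(V_{\omega_{k}})=r^{\ast}\big(\prod_{k}p_{k}(W_{\omega_{k}})\big)\,r$ for polynomials $p_{k}$ with non-negative powers, $\topNorm$\=/approximation lifts it further to functions $f_{k}$ holomorphic on a neighbourhood of the closed disc, and one then takes $f_{k}=g_{t_{k}/n}^{\,n}$ and lets $n\to\infty$, using the universal Euler limit for both $V_{\omega_{k}}$ and $W_{\omega_{k}}$ and the fact that $\topSOT$\=/limits pass through products of uniformly bounded operators and through $X\mapsto r^{\ast}Xr$. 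This yields $\prod_{k}T_{\omega_{k}}(t_{k})=r^{\ast}\big(\prod_{k}U_{\omega_{k}}(t_{k})\big)\,r$, so $(\HilbertRaum^{\prime},r,\{U_{\omega}\}_{\omega\in\Omega})$ is a free unitary dilation.

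Finally I would promote the $\topSOTstar$\=/continuity of $\omega\mapsto W_{\omega}$ to $\toplocSOT$\=/continuity of $\omega\mapsto U_{\omega}$ in $\Omega$: fixing $\omega_{0}$ and a net $\omega_{\alpha}\to\omega_{0}$, one has $W_{\omega_{\alpha}}\to W_{\omega_{0}}$ in $\topSOTstar$ with $\ker(I-W_{\omega_{0}})=\{0\}$, and the Trotter--Kato theorem for co-generators (the reformulation carried out in this paper) then gives $U_{\omega_{\alpha}}(\cdot)\to U_{\omega_{0}}(\cdot)$ uniformly on compact subsets of $\reals$ in $\topSOT$ — which is exactly $\toplocSOT$\=/continuity at $\omega_{0}$. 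I expect this last step to be the crux. Everything else is bookkeeping around the universal Euler approximation and the explicit free dilation, and each of those pieces is routine functional calculus; but converting convergence of co-generators into \emph{locally uniform} convergence of the corresponding groups is precisely the Trotter--Kato circle of ideas and carries the genuine analytic content, and this is also where the compactness of $\Omega$ is meant to be invoked. A secondary technical point I would want to handle carefully is the fixed-space variant of Sz.-Nagy's free dilation in the previous paragraph — verifying that replacing the varying defect spaces by the ambient $\HilbertRaum$ leaves the dilation identity intact and keeps $\ker(I-W_{\omega})=\{0\}$ — but this is a direct inspection of the construction.
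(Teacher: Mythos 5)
Your proposal is correct and follows essentially the same route as the paper's own proof: pass to co-generators, build the explicit discrete-time free unitary dilation on a fixed space with $\topSOTstar$-continuously varying ingredients (including the defect operators via functional calculus), verify that $1$ is not an eigenvalue of the dilated unitaries, transfer the dilation identity to continuous time through a universal approximation scheme compressed by $r^{\ast}(\cdot)r$, and conclude with the compact-index Trotter--Kato reformulation (together with the soft compression argument for necessity). The only deviations are cosmetic: you use backward-Euler/Cayley approximants $g_{t/n}(V)^{n}$ where the paper uses Yosida approximants plus polynomial nets, and your word-indexed description of the discrete dilation space can simply be replaced by the paper's fixed Sch\"affer-type space $\HilbertRaum\otimes\ell^{2}(\naturalsZero)\otimes\complex^{2}$, which already supplies the $\omega$-independent isometry and the point-spectrum property you defer to a final inspection.
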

\end{highlightboxWithBreaks}

%% ********** END OF FILE: body/sec-1-introduction/para-3-results-II.tex **********

%% ********************************************************************************
%% FILE: body/sec-1-introduction/sec-1-motivation.tex
%% ********************************************************************************

\subsection[Further motivation]{Further motivation}
\label{sec:introduction:motivation:sig:article-free-raj-dahya}

\firstparagraph
The primary motivation of this paper is to obtain non-commutative dilation results.
The underlying systems of evolution are further worthy of investigation.

Product expressions of non-commuting families of semigroups
as in \eqcref{eq:free-dilation:cts-time:sig:article-free-raj-dahya}
arise when considering systems
whose states are given by a vector $\xi$ in a Banach space $\BanachRaum$,
and whose evolutions can change in the course of time.
Note that such constructions occur frequently in semigroup theory,
\exempli
    in the Lie--Trotter product formula
    (which is applied in the celebrated the Feynman path integral)
    or more generally Chernoff approximations,
    in the study of random evolutions
    and evolution families,
    \etcetera,
    see
        \cite{%
            Chernoff1968article,%
            SmolyanovWeizsaeckerWittich2000chernoff,%
            SmolyanovVWeizsackerWittich2003Incollection,%
            SmolyanovWeizsaeckerWittich2007chernoff,%
            Butko2020chernoff,%
            GriegoHersh1971Article,%
            Goldstein1985semigroups,%
            Pazy1983Book,%
            Faris1967Article%
        }.
More concretely, given a family $\{T_{i}\}_{i \in I}$
of (not necessarily commuting)
$\Cnought$\=/semigroups indexed
by a set $I$
and a time interval $[0,\:t)$
together with a finite partition
    ${0 = t_{0} < t_{1} < t_{2} < \ldots < t_{N} = t}$,
we may envisage that the system evolves
    according to
    $T_{i_{1}}$ on $[t_{0},\:t_{1})$,
    according to
    $T_{i_{2}}$ on $[t_{1},\:t_{2})$,
    \textellipsis,
    and according to
    $T_{i_{N}}$ on $[t_{N-1},\:t_{N})$,
where
    $(i_{k})_{k=2}^{N} \subseteq I$.
The resulting state is then
    $\Big(\prod_{k=1}^{N}T_{i_{k}}(\tau_{k})\Big)\:\xi$,
where
    $\tau_{k} = t_{k} - t_{k-1}$
for each $k \in \{1,2,\ldots,N\}$.

Under suitable conditions, such product constructions
give rise to \emph{evolution families},
discussed at greater length
in \S{}\ref{sec:applications:sig:article-free-raj-dahya}.
Instrumental in the proof of the \Second free dilation theorem
is a reformulation of the Trotter--Kato theorem,
from which a \emph{diagonalisation construction}
precipitates as a by-product
(see \S{}\ref{sec:result-concrete:trotter-kato:sig:article-free-raj-dahya}),
which which shall apply to evolution families.

%% ********** END OF FILE: body/sec-1-introduction/sec-1-motivation.tex **********

%% ********************************************************************************
%% FILE: body/sec-1-introduction/sec-2-notation.tex
%% ********************************************************************************

\subsection[Notation]{Notation}
\label{sec:introduction:notation:sig:article-free-raj-dahya}

\firstparagraph
Throughout this paper we fix the following conventions:

\begin{itemize}
\item
        $\naturals = \{1,2,\ldots\}$,
        $\naturalsZero = \{0,1,2,\ldots\}$,
        $\realsNonNeg = \{r\in\reals \mid r\geq 0\}$,
        and
        $\Torus = \{z\in\complex \mid \abs{z} = 1\}$.
    To distinguish from indices $i$ we use $\iunit$ for the imaginary unit $\sqrt{-1}$.

\item
    We write elements of product spaces in bold
    and denote their components in light face fonts with appropriate indices,
    \exempli the $i$\textsuperscript{th} components of
        $\mathbf{t} \in \reals^{d}$
        and
        $\mathbf{n} \in \naturalsZero^{d}$
        are denoted
            $t_{i}$ and $n_{i}$
        respectively.

\item
    In the case of concrete Hilbert spaces like
        $\HilbertRaum \in \{
            \ell^{2}(\integers),
            \ell^{2}(\naturalsZero),
            \complex^{N}
            \mid
            N \in \naturals
        \}$,
    the vector
        $\BaseVector{k} \in \HilbertRaum$
    denotes the $k$\textsuperscript{th}
    canonical unit vector of the standard orthonormal basis
    and
        $\ElementaryMatrix{i}{j} \in \BoundedOps{\HilbertRaum}$
    denotes the operator
    which satisfies
        $
            \ElementaryMatrix{i}{j}\xi
            =\brkt{\xi}{\BaseVector{j}}\:\BaseVector{i}
        $.
    This allows us, \exempli on $\ell^{2}(\naturalsZero)$
    to denote the discrete-time forwards-shift operator
    as
        $
            \ShiftRight
            = \sum_{n=0}^{\infty}
                \ElementaryMatrix{n+1}{n}
        $
    (with convergence in the $\topSOT$\=/sense).

\item
    For any Banach space $\BanachRaum$,
        $\onematrix$ denotes the identity operator
    and
        $
            \BoundedOps{\BanachRaum}
            \supseteq
            \OpSpaceC{\BanachRaum}
            \supseteq
            \OpSpaceU{\BanachRaum}
        $
    denote the sets of
        bounded linear operators%
        /contractions%
        /surjective isometries
    on $\BanachRaum$.
    In the Hilbert space setting,
    the latter coincides with the set of unitaries.

\item
    For any operator $S$ on a Hilbert space,
    we adopt the convention $S^{0} \coloneqq \onematrix$.

\item
    Empty products (\resp sums)
    shall always be assumed to equal the multiplicative
    (\resp additive) identity.

\item
    For arbitrary groups $G$, we let $e$ denote the neutral element.

\item
    For a Hilbert space $\HilbertRaum$
    and a group $G$,
    the set $\Repr{G}{\HilbertRaum}$
    shall denote the set of unitary representations
        ${U : G \to \OpSpaceU{\HilbertRaum}}$
    of $G$ on $\HilbertRaum$.
    And for a Banach space $\BanachRaum$,
    $\Repr{G}{\BanachRaum}$
    shall similarly denote the set of
    group homomorphisms between
    $G$ and $\OpSpaceU{\BanachRaum}$.
    In the case of topological groups,
    representations in this paper
    shall \emph{not} be taken to be (strongly) continuous,
    unless explicitly stated.
\end{itemize}

Letting $I$ be a non-empty index set,
we shall use the notation
    $\iota_{i}$ for $i \in I$
to denote certain canonical inclusions
dependent on the context:

\begin{itemize}
\item
    Given Hilbert spaces $H_{i}$ for $i \in I$
    and the direct sum $H \coloneqq \bigoplus_{i=1}^{n}H_{i}$,
    we let
        ${\iota_{i} : H_{i} \to H}$
    denote the canonical isometric embedding
    into the $i$\textsuperscript{th} component
    for each $i \in I$.

\item
    We habitually include in the presentation
    of the unique upto (topological) isomorphism
    free (topological) product $G = \FreeProduct{i \in I}G_{i}$
    of a family of (topological) groups $\{G_{i}\}_{i \in I}$,
    a family of (continuous) embeddings
        ${\iota_{i} : G_{i} \to G}$
    (\cf \S{}\ref{sec:algebra:sig:article-free-raj-dahya}).
\end{itemize}

There are two slightly different presentations of dilation we consider in this paper:

\begin{itemize}
\item
    Let $\HilbertRaum$ be a Hilbert space
    and let $(M,\cdot,e)$ be a topological monoid.
    A(n $\topSOT$\=/continuous) semigroup
    of bounded operators/contractions/isometries/unitaries
    over $M$ on $\HilbertRaum$
    is an ($\topSOT$\=/continuous) operator-valued map
        ${T : M \to \BoundedOps{\HilbertRaum}}$
    satisfying
        $T(x)$ is a bounded operator/contraction/isometry/unitary
        for each $x \in M$,
        $T(e) = \onematrix$,
        and
        $T(xy)  = T(x)T(y)$
        for all $x,y \in M$.
    A(n $\topSOT$\=/continuous) isometric/unitary dilation of $T$
    is a tuple $(\HilbertRaum^{\prime},r,U)$
    where
        $\HilbertRaum^{\prime}$ is a Hilbert space,
        $r \in \BoundedOps{\HilbertRaum}{\HilbertRaum^{\prime}}$ an isometry, and
        $U$ is a(n $\topSOT$\=/continuous) semigroup of bounded isometries/unitaries
        over $M$ on $\HilbertRaum^{\prime}$
        satisfying
            $T(\cdot) = r^{\ast}\:U(\cdot)\:r$.%
        \footnote{%
            One can readily check that this definition
            coincides with the usual definition for
            $(M,\cdot,e) = (\realsNonNeg^{d},+,\zerovector)$,
            $d\in\naturals$.
        }

    We let
        $\DilatableMonoids$
    be the class of
    all topological monoids $M$
    such that
    all $\topSOT$\=/continuous semigroups over $M$
    admit a dilation to an $\topSOT$\=/continuous unitary semigroup over $M$.

\item
    Let $\HilbertRaum$ be a Hilbert space
    and consider a pair $(G,M)$
    where $(G,\cdot,e)$ is a topological group
    and $M \subseteq G$ a submonoid endowed with the relative topology.
    In this setting a(n $\topSOT$\=/continuous) unitary dilation of $T$
    is a tuple $(\HilbertRaum^{\prime},r,U)$
    where
        $\HilbertRaum^{\prime}$ is a Hilbert space,
        $r \in \BoundedOps{\HilbertRaum}{\HilbertRaum^{\prime}}$ an isometry,
        and $U \in \Repr{G}{\HilbertRaum^{\prime}}$
        a(n $\topSOT$\=/continuous) representation of $G$ on $\HilbertRaum^{\prime}$
        such that
        the ($\topSOT$\=/continuous) unitary semigroup $U(\cdot)\restr{M}$
        is a dilation of $(\HilbertRaum,T)$.

    For $(M,\cdot,e) = (\realsNonNeg^{d},+,\zerovector)$, $d\in\naturals$,
    ($\topSOT$\=/continuous) unitary dilations shall mostly refer to this kind
    with $G \coloneqq \reals^{d}$.
    However note that in this special case,
    the above previous and current definitions are equivalent,
    since any ($\topSOT$\=/continuous) unitary semigroup
    over $\realsNonNeg^{d}$
    can canonically be extended to a(n $\topSOT$\=/continuous) representation of $\reals^{d}$.

    We let
        $\DilatableGroupMonoids$
    be the class of
    all pairs $(G,M)$ of topological groups and submonoids
    such that
    all $\topSOT$\=/continuous semigroups over $M$
    admit a dilation to an $\topSOT$\=/continuous unitary representation of $G$.
    Clearly
        $
            \{M \mid \exists{G:~}(G,M) \in \DilatableGroupMonoids\}
            \subseteq
            \DilatableMonoids
        $.
\end{itemize}

%% ********** END OF FILE: body/sec-1-introduction/sec-2-notation.tex **********

%% ********************************************************************************
%% FILE: body/sec-1-introduction/sec-3-scope.tex
%% ********************************************************************************

\subsection[Structure of the paper]{Structure of the paper}
\label{sec:introduction:structure:sig:article-free-raj-dahya}

\firstparagraph
%% Ist Free Dilations: Interpolations
In \S{}\ref{sec:result-bsk:sig:article-free-raj-dahya} a simple proof of
the \First main result is presented,
restricted to families of semigroups which arise from interpolations.
%% Ist+IInd Free Dilations: Explicit/Co-generators
Then in \S{}\ref{sec:result-concrete:sig:article-free-raj-dahya}
an \emph{explicit construction} of free unitary dilations in full generality
is provided, relying on Sz.-Nagy's discrete-time constructions
and the theory of co-generators.
We also derive the \Second free dilation theorem
via a crucial reformulation of the Trotter--Kato theorem.
%% Ist Free Dilations: abstract / algebraic
In \S{}\ref{sec:result-abstract:sig:article-free-raj-dahya}
we present a second full but \emph{abstract proof}
of the \First main result,
which relies on structure theorems
and mends the na\"{i}ve approach
(\cf \Cref{rem:naive-approach:sig:article-free-raj-dahya}).
And in \S{}\ref{sec:algebra:sig:article-free-raj-dahya}
this result is reframed algebraically
in terms of semigroups on free topological products.

The final two sections consider applications:
In \S{}\ref{sec:residuality:sig:article-free-raj-dahya}
various residuality results are derived.
And in \S{}\ref{sec:applications:sig:article-free-raj-dahya}
we discuss at length various problems
relevant to evolution families,
and bring to bear our reformulation of the Trotter--Kato theorem.

%% ********** END OF FILE: body/sec-1-introduction/sec-3-scope.tex **********

%% ********** END OF FILE: body/sec-1-introduction/.index.tex **********

%% ********************************************************************************
%% FILE: body/sec-2-result-bsk/.index.tex
%% ********************************************************************************

\section[Free dilations of interpolations]{Free dilations of interpolations}
\label{sec:result-bsk:sig:article-free-raj-dahya}

\firstparagraph
In this section we motivate the feasibility of
the \First free dilation theorem
by restricting our attention to certain kinds of semigroups.
Doing so allows for a simpler approach.

In \cite{%
    BhatSkeide2015PureCounterexamples,%
    Dahya2024interpolation%
}
techniques were introduced to construct
interpolations of families $\{S_{i}\}_{i \in I}$
of (not necessarily commuting) contractions
on a Hilbert space $\HilbertRaum$
to families of $\Cnought$\=/semigroups,
with the caveat that the interpolations exist in general on larger spaces.
The construction is as follows
(see \cite[\S{}2.1--2]{Dahya2024interpolation}):
Let $\Torus$ be endowed with the standard probability structure
and let $\Torus^{I}$ denote the product probability space.
Strongly right-continuous $1$-periodic families
    $\{P_{i}(t)\}_{t \in \reals} \in \BoundedOps{L^{2}(\Torus^{I})}$
of projections
as well as strongly continuous representations
    $W_{i} \in \Repr{\reals}{L^{2}(\Torus^{I})}$,
    $i \in I$
are defined,
such that
    $P_{i}(0) = \onematrix$
for $i \in I$
and
    $\{P_{i}(s),P_{j}(t)\mid s,t\in\reals\}$,
    $\{W_{i}(s),W_{j}(t)\mid s,t\in\reals\}$,
    and
    $\{P_{i}(s),W_{j}(t)\mid s,t\in\reals\}$
are commuting families
for $i,j \in I$ with $i \neq j$.
The \highlightTerm{Bhat--Skeide} interpolation
    $\{T_{i}\}_{i \in I}$
    of $\{S_{i}\}_{i \in I}$
is a family of contractive $\Cnought$\=/semigroups
on $L^{2}(\Torus^{I})\otimes\HilbertRaum$
can then be defined by

    \begin{displaymath}
        T_{i}(t)
        = W_{i}(t)P_{i}(t) \otimes S_{i}^{\floor{t}}
            +
            W_{i}(t)(\onematrix - P_{i}(t)) \otimes S_{i}^{\floor{t+1}}
    \end{displaymath}

\continueparagraph
for each $i \in I$ and $t \in \realsNonNeg$.
Some basic properties of these interpolations are:

\begin{kompaktenum}{\bfseries 1.}
\item
    $T_{i}(n) = \onematrix \otimes S_{i}^{n}$ for $n\in\naturalsZero$ and $i\in I$;
\item
    $\{T_{i}\}_{i \in I}$ is a commuting family
    if and only if
    $\{S_{i}\}_{i \in I}$ is a commuting family;
\item
    $\{T_{i}\}_{i \in I}$ is a family of contractive/isometric/unitary semigroups
    if and only if
    $\{S_{i}\}_{i \in I}$ is a family of contractions/isometries/unitary operators.
\end{kompaktenum}

It is natural to consider semigroups unitarily equivalent
to time-scaled interpolations.
To this extent we may define the following
(\cf \cite[\S{}2.4.2]{Dahya2024interpolation}):

\begin{defn}
\makelabel{defn:scaled-bsk:sig:article-free-raj-dahya}
    Let $I$ be a non-empty index set
    and $\{T_{i}\}_{i \in I}$ a family of contractive $\Cnought$\=/semigroups
    on a Hilbert space $\HilbertRaum$.
    We say that $\{T_{i}\}_{i \in I}$
    is \highlightTerm{similar to a time-scaled Bhat--Skeide dilation}
    in case there exists
    a family $\{S_{i}\}_{i \in I}$ of contractions on a Hilbert space $H$,
    a unitary operator
        $w \in \BoundedOps{L^{2}(\Torus^{I}) \otimes H}{\HilbertRaum}$,
    and
    scales $(\eps_{i})_{i \in I} \subseteq (0,\:\infty)$,
    such that

        \begin{restoremargins}
        \begin{equation}
        \label{eq:scaled-bsk:sig:article-free-raj-dahya}
            T_{i}(t)
            = w\:\Big(
                W_{i}(\tfrac{t}{\eps_{i}})P_{i}(\tfrac{t}{\eps_{i}})
                \otimes
                S_{i}^{\floor{\tfrac{t}{\eps_{i}}}}
                +
                W_{i}(\tfrac{t}{\eps_{i}})(\onematrix - P_{i}(\tfrac{t}{\eps_{i}}))
                \otimes
                S_{i}^{\floor{\tfrac{t}{\eps_{i}}+1}}
            \Big)\:w^{\ast}
        \end{equation}
        \end{restoremargins}

    \continueparagraph
    for $i \in I$, $t\in\realsNonNeg$,
    where $W_{i},P_{i}$ are as above.
\end{defn}

We shall also make use of the following notation:
For any projection $p$ on a Hilbert space,
$p^{\perp^{0}} \coloneqq p$
and
$p^{\perp^{1}} \coloneqq p^{\perp} \coloneqq \onematrix - p$.
This allows us to rewrite \eqcref{eq:scaled-bsk:sig:article-free-raj-dahya} as

    \begin{restoremargins}
    \begin{equation}
    \label{eq:scaled-bsk:products:sig:article-free-raj-dahya}
    \everymath={\displaystyle}
    \begin{array}[m]{rcl}
        \prod_{k=1}^{N}
            T_{i_{k}}(t_{k})
        &= &w\:\Big(
                \prod_{k=1}^{N}
                \sum_{e\in\{0,1\}}
                    W_{i_{k}}(\tfrac{t}{\eps_{i_{k}}})
                    P^{\perp^{e}}_{i_{k}}(\tfrac{t}{\eps_{i_{k}}})
                    \otimes
                    S_{i_{k}}^{\floor{\tfrac{t}{\eps_{i_{k}}} + e}}
            \Big)\:w^{\ast}\\
        &= &w\:
            \Big(
            \sum_{e\in\{0,1\}^{N}}
                \prod_{k=1}^{N}
                    W_{i_{k}}(\tfrac{t}{\eps_{i_{k}}})
                    P^{\perp^{e_{k}}}_{i_{k}}(\eps_{i_{k}}^{-1}t)
                    \otimes
                    S_{i_{k}}^{\floor{\tfrac{t}{\eps_{i_{k}}} + e_{k}}}
            \Big)\:w^{\ast}
    \end{array}
    \end{equation}
    \end{restoremargins}

\continueparagraph
for $N\in\naturals$,
$(i_{k})_{k=1}^{N} \subseteq I$,
and $(t_{k})_{k=1}^{N} \subseteq \realsNonNeg$
(\cf \cite[\S{}2.4.2]{Dahya2024interpolation}).

%% ********************************************************************************
%% FILE: body/sec-2-result-bsk/sec-1-free-dilation.tex
%% ********************************************************************************

\subsection[Restricted proof of the Ist main result]{Restricted proof of the \First main result}
\label{sec:result-bsk:proof-1st:sig:article-free-raj-dahya}

\firstparagraph
By confining our attention to families of semigroups that arise via interpolations,
one readily obtains free dilations without any high-powered techniques.

\def\beweislabel{thm:free-dilation:1st:sig:article-free-raj-dahya}
\begin{proof}[of \Cref{thm:free-dilation:1st:sig:article-free-raj-dahya}, restricted context]
    Suppose that
        $\{T_{i}\}_{i \in I}$
    is similar to a time-scaled Bhat-Skeide dilation.
    That is, there exists a family
        $\{S_{i}\}_{i \in I}$
    of contractions on a Hilbert space $H$,
    a unitary operator
        $w \in \BoundedOps{L^{2}(\Torus^{I}) \otimes H}{\HilbertRaum}$,
    and $(\eps_{i})_{i \in I} \subseteq (0,\:\infty)$,
    such that \eqcref{eq:scaled-bsk:sig:article-free-raj-dahya} holds.

    By the free dilation theorem of Sz.-Nagy (\Cref{thm:free-dilation-discrete-time:sig:article-free-raj-dahya})
    there exists
        a family $\{V_{i}\}_{i \in I}$
        of unitaries on a Hilbert space $H^{\prime}$
        and
        an isometry $r \in \BoundedOps{H}{H^{\prime}}$,
    such that \eqcref{eq:free-dilation:discrete-time:sig:article-free-raj-dahya} holds.

    Using the Bhat--Skeide interpolation
    as well as the geometric transformations
    we obtain a family
        $\{U_{i}\}_{i \in I}$
    of $\topSOT$\=/continuous unitary representations
    of $\reals$
    on $\HilbertRaum^{\prime} \coloneqq L^{2}(\Torus^{I}) \otimes H^{\prime}$
    defined by

    \begin{displaymath}
        U_{i}(t)
            = \sum_{e \in \{0,1\}}
                W_{i}(\tfrac{t}{\eps_{i}})
                P_{i}(\tfrac{t}{\eps_{i}})^{\perp^{e}}
                \otimes
                V_{i}^{\floor{\tfrac{t}{\eps_{i}} + e}}\\
    \end{displaymath}

    \continueparagraph
    for $t \in \realsNonNeg$.
    \Withoutlog we may extend each $U_{i}$ to a strongly continuous unitary representation
    of $\reals$ on $\HilbertRaum^{\prime}$.
    Finally, letting
        $\tilde{r} \coloneqq (\onematrix \otimes r)\:w^{\ast} \in \BoundedOps{\HilbertRaum}{\HilbertRaum^{\prime}}$,
    which is an isometry,
    one computes for
        $N\in\naturals$,
        $(i_{k})_{k=1}^{N} \subseteq I$,
        and
        $(t_{k})_{k=1}^{N} \subseteq \realsNonNeg$

    \begin{longeqnarray}
        \tilde{r}^{\ast}
        \:\Big(
            \prod_{k=1}^{N}
                U_{i_{k}}(t_{k})
        \Big)
        \:\tilde{r}
            &\eqcrefoverset{eq:scaled-bsk:products:sig:article-free-raj-dahya}{=}
                &w
                \:(\onematrix \otimes r^{\ast})
                \:\Big(
                    \sum_{e \in \{0,1\}^{N}}
                    \prod_{k=1}^{N}
                        W_{i_{k}}(\tfrac{t_{k}}{\eps_{i_{k}}})
                        P^{\perp^{e_{k}}}_{i_{k}}(\tfrac{t_{k}}{\eps_{i_{k}}})
                        \otimes
                        V_{i_{k}}^{\floor{\tfrac{t_{k}}{\eps_{i_{k}}} + e_{k}}}
                \Big)
                \:(\onematrix \otimes r)
                \:w^{\ast}\\
            &=
                &w
                \:\Big(
                    \sum_{e \in \{0,1\}^{N}}
                        \Big(
                            \prod_{k=1}^{N}
                                W_{i_{k}}(\tfrac{t_{k}}{\eps_{i_{k}}})
                                P^{\perp^{e_{k}}}_{i_{k}}(\tfrac{t_{k}}{\eps_{i_{k}}})
                        \Big)
                        \otimes
                        r^{\ast}
                        \Big(
                        \prod_{k=1}^{N}
                            V_{i_{k}}^{\floor{\tfrac{t_{k}}{\eps_{i_{k}}} + e_{k}}}
                        \Big)
                        r
                \Big)
                \:w^{\ast}\\
            &\eqcrefoverset{eq:free-dilation:discrete-time:sig:article-free-raj-dahya}{=}
                &w
                \:\Big(
                    \sum_{e \in \{0,1\}^{N}}
                        \Big(
                        \prod_{k=1}^{N}
                            W_{i_{k}}(\tfrac{t_{k}}{\eps_{i_{k}}})
                            P^{\perp^{e_{k}}}_{i_{k}}(\tfrac{t_{k}}{\eps_{i_{k}}})
                        \Big)
                        \otimes
                        \Big(
                        \prod_{k=1}^{N}
                            S_{i_{k}}^{\floor{\tfrac{t_{k}}{\eps_{i_{k}}} + e_{k}}}
                        \Big)
                \Big)
                \:w^{\ast}\\
            &=
                &w
                \:\Big(
                    \sum_{e \in \{0,1\}^{N}}
                        \prod_{k=1}^{N}
                            W_{i_{k}}(\tfrac{t_{k}}{\eps_{i_{k}}})
                            P^{\perp^{e_{k}}}_{i_{k}}(\tfrac{t_{k}}{\eps_{i_{k}}})
                        \otimes
                            S_{i_{k}}^{\floor{\tfrac{t_{k}}{\eps_{i_{k}}} + e_{k}}}
                \Big)
                \:w^{\ast}\\
            &\eqcrefoverset{eq:scaled-bsk:products:sig:article-free-raj-dahya}{=}
                &\prod_{k=1}^{N}
                    T_{i_{k}}(t_{k}).
    \end{longeqnarray}

    \continueparagraph
    Thus $(\HilbertRaum^{\prime},\tilde{r},\{U_{i}\}_{i \in I})$
    constitutes a free unitary dilation of $(\HilbertRaum,\{T_{i}\}_{i \in I})$.
\end{proof}

Time-scaled Bhat--Skeide interpolations
are not entirely pathological constructions.
Indeed, at least in the commutative setting,
by \cite[Proposition~2.16]{Dahya2024interpolation}
such families can arbitrarily approximate
any family of $\Cnought$\=/semigroups
in a certain weak sense.
Whilst the above restricted result
cannot be used to obtain the general result,
it at least provides us an indication
that families of contractive $\Cnought$\=/semigroups
may in general admit free unitary dilations,
which shall indeed be proved in the subsequent sections.

%% ********** END OF FILE: body/sec-2-result-bsk/sec-1-free-dilation.tex **********

%% ********** END OF FILE: body/sec-2-result-bsk/.index.tex **********

%% ********************************************************************************
%% FILE: body/sec-3-result-concrete/.index.tex
%% ********************************************************************************

\section[Free dilations explicitly constructed]{Free dilations explicitly constructed}
\label{sec:result-concrete:sig:article-free-raj-dahya}

\firstparagraph
Our first \emph{full proof} of
the \First free dilation theorem
is an explicit construction which rests on two techniques.
The steps in Sz.-Nagy's classical dilation results
\cite{%
    Nagy1960Article,%
    Nagy1970%
}
allow us to build operators with appropriate properties in discrete-time.
The theory of co-generators then allows us to switch between
the continuous- and discrete-time settings \akin S\l{}oci\'{n}ski
\cite{%
    Slocinski1974,%
    Slocinski1982%
}.

%% ********************************************************************************
%% FILE: body/sec-3-result-concrete/sec-1-nagy-dilation.tex
%% ********************************************************************************

\subsection[Discrete Dilation]{Discrete Dilation}
\label{sec:result-concrete:discrete-dilation:sig:article-free-raj-dahya}

\firstparagraph
We begin by sketching a proof of
\Cref{thm:free-dilation-discrete-time:sig:article-free-raj-dahya}.
The constructions underlying this result
are due to Sch\"{a}ffer in the case of dilations of a single contraction,
but were later modified by Sz.-Nagy
(see
    \cite[\S{}I.5]{Nagy1970}
    as well as
    \cite{%
        Schaffer1955Article,%
        Nagy1960Article%
    }%
).
The following presentation modifies Sz.-Nagy's construction slightly,
but captures the essence of his result.

\begin{proof}[of \Cref{thm:free-dilation-discrete-time:sig:article-free-raj-dahya}, sketch]
    Let $\HilbertRaum$ be a Hilbert space,
    $I$ be a non-empty index set and $\{S_{i}\}_{i \in I} \subseteq \BoundedOps{\HilbertRaum}$
    a (not necessarily commuting) family of contractions.

    Set
    $
        H_{0}
        \coloneqq
            \HilbertRaum \otimes \ell^{2}(\naturalsZero)
    $
    and
    $
        H_{1}
        \coloneqq
            H_{0} \otimes \complex^{2}
        = \HilbertRaum \otimes \ell^{2}(\naturalsZero) \otimes \complex^{2}
    $.
    We further define the isometries
    $r_{0} \in \BoundedOps{\HilbertRaum}{H_{0}}$
    and $r_{1} \in \BoundedOps{\HilbertRaum}{H_{1}}$
    via
        $r_{0} \coloneqq \onematrix \otimes \BaseVector{0}$
    and
        $r_{1} \coloneqq \onematrix \otimes \BaseVector{0} \otimes \BaseVector{0}$,
    \idest
        $r_{0}\xi = \xi \otimes \BaseVector{0}$
    and
        $r_{1}\xi = \xi \otimes \BaseVector{0} \otimes \BaseVector{0}$
    for $\xi \in \HilbertRaum$.

    For any contraction $S \in \BoundedOps{\HilbertRaum}$
    define

    \begin{restoremargins}
    \begin{equation}
    \label{eq:schaeffer-nagy-dilation:isometry:sig:article-free-raj-dahya}
    \everymath={\displaystyle}
    \begin{array}[m]{rcl}
        \NagyIso_{S}
            &\coloneqq
                &S \otimes \ElementaryMatrix{0}{0}
                + \HalmosOp_{S} \otimes \ElementaryMatrix{1}{0}
                + I \otimes \sum_{n=1}^{\infty} \ElementaryMatrix{n+1}{n}\\
            &= &\begin{matrix}{ccccc}
                S &\zeromatrix &\zeromatrix &\zeromatrix &\cdots\\
                \HalmosOp_{S} &\zeromatrix &\zeromatrix &\zeromatrix &\cdots\\
                \zeromatrix &\onematrix &\zeromatrix &\zeromatrix &\cdots\\
                \zeromatrix &\zeromatrix &\onematrix &\zeromatrix &\cdots\\
                \vdots &\vdots &\vdots &\vdots &\ddots\\
            \end{matrix}
            \in \BoundedOps{H_{0}}
    \end{array}
    \end{equation}
    \end{restoremargins}

    \continueparagraph
    and

    \begin{restoremargins}
    \begin{equation}
    \label{eq:schaeffer-nagy-dilation:unitary:sig:article-free-raj-dahya}
    \everymath={\displaystyle}
    \begin{array}[m]{rcl}
        \NagyUtr_{S}
            &\coloneqq
                &\NagyIso_{S} \otimes \ElementaryMatrix{0}{0}
                + (\onematrix - \NagyIso_{S}\NagyIso_{S}^{\ast}) \otimes \ElementaryMatrix{0}{1}
                + \NagyIso_{S}^{\ast} \otimes \ElementaryMatrix{1}{1}\\
            &= &\begin{matrix}{cc}
                \NagyIso_{S} &\onematrix - \NagyIso_{S}\NagyIso_{S}^{\ast}\\
                \zeromatrix &\NagyIso_{S}^{\ast}\\
            \end{matrix}
            \in \BoundedOps{H_{1}},
    \end{array}
    \end{equation}
    \end{restoremargins}

    \continueparagraph
    where $\HalmosOp_{S} \coloneqq \sqrt{\onematrix - S^{\ast}S} \in \BoundedOps{\HilbertRaum}$.
    It is a straightforward exercise to verify that
        $\NagyIso_{S}$ is an isometry
        and
        $\NagyUtr_{S}$ a unitary operator.

    Thus $\{\NagyUtr_{S_{i}}\}_{i \in I}$ is a family of unitaries.
    Let $N \in \naturals$,
        $(i_{k})_{k=1}^{N} \subseteq I$,
        and
        $(n_{k})_{k=1}^{N} \subseteq \naturalsZero$.
    A simple induction argument reveals that
        $
            r_{1}^{\ast}
            \:\Big(
                \prod_{k=1}^{N}
                \NagyUtr_{S_{i_{k}}}^{n_{k}}
            \Big)
            \:r_{1}
            = r_{0}^{\ast}
            \:\Big(
                \prod_{k=1}^{N}
                \NagyIso_{S_{i_{k}}}^{n_{k}}
            \Big)
            \:r_{0}
        $
    and that there exists
        $\{R_{k}\}_{k=1}^{N} \subseteq \BoundedOps{\HilbertRaum}$
    such that
        $\prod_{k=1}^{N}\NagyIso_{S_{i_{k}}}^{n_{k}}
        = \prod_{k=1}^{N}
            S_{i_{k}}^{n_{k}}
            \otimes
            \ElementaryMatrix{0}{0}
            +
            \sum_{k=1}^{\infty}
                R_{k}
                \otimes
                \ElementaryMatrix{k+1}{k}
        $.
    Putting this together yields

    \begin{displaymath}
        r_{1}^{\ast}
        \:\Big(
            \prod_{k=1}^{N}
            \NagyUtr_{S_{i_{k}}}^{n_{k}}
        \Big)
        \:r_{1}
            =
                r_{0}^{\ast}
                \:\Big(
                    \prod_{k=1}^{N}
                    S_{i_{k}}^{n_{k}}
                    \otimes
                    \ElementaryMatrix{0}{0}
                    +
                    \sum_{k=1}^{\infty}
                        R_{k}
                        \otimes
                        \ElementaryMatrix{k+1}{k}
                \Big)
                \:r_{0}
            =
                \prod_{k=1}^{N}
                    S_{i_{k}}^{n_{k}},
    \end{displaymath}

    \continueparagraph
    which demonstrates that
    $(H_{1},r_{1},\{\NagyUtr_{S_{i}}\}_{i \in I})$
    is a free unitary dilation of
    $(\HilbertRaum,\{S_{i}\}_{i \in I})$.
\end{proof}

We now make note of a crucial spectral property of the dilation
in \eqcref{eq:schaeffer-nagy-dilation:unitary:sig:article-free-raj-dahya}.
First recall the following well-known result
(see \exempli \cite[Proposition~I.3.1]{Nagy1970}).

\begin{prop}
\makelabel{prop:eigenvalues-on-border:sig:article-free-raj-dahya}
    Let $S \in \BoundedOps{\HilbertRaum}$
    and $\lambda \in \complex$.
    If $\abs{\lambda} \geq \norm{S}$
    then
        $
            \ker(\lambda\cdot\onematrix - S)
            = \ker(\lambda^{\ast}\cdot\onematrix - S^{\ast})
        $.
\end{prop}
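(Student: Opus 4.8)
The plan is to establish the two inclusions separately. Since $\norm{S^{\ast}} = \norm{S} \leq \abs{\lambda} = \abs{\lambda^{\ast}}$ and $S^{\ast\ast} = S$, the inclusion $\ker(\lambda^{\ast}\cdot\onematrix - S^{\ast}) \subseteq \ker(\lambda\cdot\onematrix - S)$ follows from $\ker(\lambda\cdot\onematrix - S) \subseteq \ker(\lambda^{\ast}\cdot\onematrix - S^{\ast})$ upon replacing the pair $(S,\lambda)$ by $(S^{\ast},\lambda^{\ast})$. So it suffices to prove the first inclusion.

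Let $\xi \in \HilbertRaum$ with $S\xi = \lambda\xi$. The key step is to expand $\norm{S^{\ast}\xi - \lambda^{\ast}\xi}^{2}$ and to simplify the two cross terms using the defining relation of the adjoint, $\brkt{S^{\ast}\xi}{\xi} = \brkt{\xi}{S\xi} = \brkt{\xi}{\lambda\xi}$. The cross terms then combine with $\abs{\lambda}^{2}\norm{\xi}^{2}$ and cancel, leaving exactly
\[
\norm{S^{\ast}\xi - \lambda^{\ast}\xi}^{2} = \norm{S^{\ast}\xi}^{2} - \abs{\lambda}^{2}\norm{\xi}^{2}.
\]
(One should note here that $S\xi = \lambda\xi$ already forces $\abs{\lambda}\norm{\xi} = \norm{S\xi} \leq \norm{S}\norm{\xi}$, so that on $\ker(\lambda\cdot\onematrix - S)$ one in fact has $\abs{\lambda} = \norm{S}$ whenever $\xi \neq 0$; the hypothesis $\abs{\lambda} \geq \norm{S}$ is thus used with equality on the relevant subspace.)

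It remains to bound the right-hand side. Since $\norm{S^{\ast}\xi} \leq \norm{S^{\ast}}\norm{\xi} = \norm{S}\norm{\xi} \leq \abs{\lambda}\norm{\xi}$, we obtain $\norm{S^{\ast}\xi - \lambda^{\ast}\xi}^{2} \leq 0$, whence $S^{\ast}\xi = \lambda^{\ast}\xi$, i.e.\ $\xi \in \ker(\lambda^{\ast}\cdot\onematrix - S^{\ast})$. This proves the first inclusion and, by the symmetry argument above, the proposition.

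I do not anticipate any genuine obstacle: this is a one-line Hilbert-space computation, and the only thing requiring a little care is the bookkeeping of the sesquilinear cross terms and the (harmless) observation that the eigenspace $\ker(\lambda\cdot\onematrix - S)$ is automatically trivial unless $\abs{\lambda} = \norm{S}$.
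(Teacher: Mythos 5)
Your proof is correct and follows essentially the same route as the paper: both arguments reduce to the single inclusion $\ker(\lambda\cdot\onematrix - S) \subseteq \ker(\lambda^{\ast}\cdot\onematrix - S^{\ast})$ by passing to $(S^{\ast},\lambda^{\ast})$, and both amount to identifying the component of $S^{\ast}\xi$ along $\xi$ via the adjoint relation and then killing the rest with the bound $\norm{S^{\ast}\xi} \leq \norm{S}\norm{\xi} \leq \abs{\lambda}\norm{\xi}$ (the paper phrases this as an orthogonal decomposition $S^{\ast}\xi = \lambda^{\ast}\xi + \eta$, you as the expansion $\norm{S^{\ast}\xi - \lambda^{\ast}\xi}^{2} = \norm{S^{\ast}\xi}^{2} - \abs{\lambda}^{2}\norm{\xi}^{2} \leq 0$, which is the same computation).
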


    \begin{proof}
        It clearly suffices to prove the $\subseteq$-inclusion.
        Let $\xi \in \ker(\lambda\cdot\onematrix - S) \without \{\zerovector\}$.
        Let $\alpha\in\complex$ and $\eta\in\{\xi\}^{\perp}$
        be such that
            $S^{\ast}\xi = \alpha\xi + \eta$.
        Then
            $
                \alpha^{\ast}\norm{\xi}^{2}
                = \brkt{\xi}{S^{\ast}\xi}
                = \brkt{S\xi}{\xi}
                = \lambda\norm{\xi}^{2}
            $.
        So $\alpha = \lambda^{\ast}$.
        Since $
            \norm{\eta}^{2} + \abs{\alpha}^{2}\norm{\xi}^{2}
            = \norm{S^{\ast}\xi}^{2}
            \leq \norm{S}^{2}\norm{\xi}^{2}
            \leq \abs{\lambda}^{2}\norm{\xi}^{2}
        $,
        it follows that $\eta=\zerovector$.
        Thus $S^{\ast}\xi = \lambda^{\ast}\xi$,
        whence $\xi \in \ker(\lambda^{\ast}\cdot\onematrix - S^{\ast}) \without \{\zerovector\}$.
    \end{proof}

\begin{prop}[Point spectrum of Sz.-Nagy dilations]
\makelabel{prop:eigenvalues-discrete-dilation:sig:article-free-raj-dahya}
    Let $S \in \BoundedOps{\HilbertRaum}$ be a contraction.
    Then

    \begin{displaymath}
        \sigma_{p}(S) \cap \Torus
        \subseteq
            \sigma_{p}(\NagyUtr_{S})
        \subseteq
            (\sigma_{p}(S) \cup \sigma_{p}(S)^{\ast}) \cap \Torus
    \end{displaymath}

    \continueparagraph
    where $\sigma_{p}$ denotes the point spectrum.
    In particular
    $1 \notin \sigma_{p}(\NagyUtr_{S})$
    if and only if
    $1 \notin \sigma_{p}(S)$.
\end{prop}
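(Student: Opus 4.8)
The plan is to analyse the eigenvectors of the unitary $\NagyUtr_{S}$ by exploiting its $2\times 2$ block form over the decomposition $H_{1}=H_{0}\oplus H_{0}$ coming from the tensor factor $\complex^{2}$ in \eqcref{eq:schaeffer-nagy-dilation:unitary:sig:article-free-raj-dahya}. Writing a vector of $H_{1}$ as a pair $(x,y)\in H_{0}\oplus H_{0}$, one has $\NagyUtr_{S}(x,y)=\big(\NagyIso_{S}x+(\onematrix-\NagyIso_{S}\NagyIso_{S}^{\ast})y,\ \NagyIso_{S}^{\ast}y\big)$. Since $\NagyUtr_{S}$ is unitary, $\sigma_{p}(\NagyUtr_{S})\subseteq\Torus$ automatically, so throughout we fix $\lambda\in\Torus$.

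The one computation that carries the proof is a description of the eigenspaces of $\NagyIso_{S}$ and $\NagyIso_{S}^{\ast}$ on the circle: for every $\mu\in\Torus$,
\[
    \ker(\mu\cdot\onematrix-\NagyIso_{S})=r_{0}\,\ker(\mu\cdot\onematrix-S),
    \qquad
    \ker(\mu\cdot\onematrix-\NagyIso_{S}^{\ast})=r_{0}\,\ker(\mu\cdot\onematrix-S^{\ast}),
\]
where $r_{0}\colon\HilbertRaum\to H_{0}$, $r_{0}\xi=\xi\otimes\BaseVector{0}$. To see this, expand a candidate eigenvector as $\sum_{n\in\naturalsZero}\xi_{n}\otimes\BaseVector{n}$, read off from \eqcref{eq:schaeffer-nagy-dilation:isometry:sig:article-free-raj-dahya} the coordinate recursions, and use $\abs{\mu}=1$ together with square-summability to force all tail coordinates $\xi_{n}$ ($n\geq1$) to vanish; the surviving equation is $S\xi_{0}=\mu\xi_{0}$ (resp.\ $S^{\ast}\xi_{0}=\mu\xi_{0}$) plus the side condition $\HalmosOp_{S}\xi_{0}=\zerovector$, and the latter is automatic because $S\xi_{0}=\mu\xi_{0}$ with $\abs{\mu}=1\geq\norm{S}$ gives $S^{\ast}\xi_{0}=\mu^{\ast}\xi_{0}$ by \Cref{prop:eigenvalues-on-border:sig:article-free-raj-dahya}, hence $S^{\ast}S\xi_{0}=\xi_{0}$ and $\HalmosOp_{S}\xi_{0}=\zerovector$. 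In particular $\sigma_{p}(\NagyIso_{S})\cap\Torus=\sigma_{p}(S)\cap\Torus$.

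Given this, the first inclusion is immediate: for $\lambda\in\sigma_{p}(S)\cap\Torus$ pick $\xi\neq\zerovector$ with $S\xi=\lambda\xi$; then $\NagyIso_{S}r_{0}\xi=\lambda r_{0}\xi$ (since $\HalmosOp_{S}\xi=\zerovector$ as above), so $(r_{0}\xi,\zerovector)$ is an eigenvector of $\NagyUtr_{S}$ for $\lambda$. For the second inclusion, let $(x,y)\neq(\zerovector,\zerovector)$ satisfy $\NagyUtr_{S}(x,y)=\lambda(x,y)$ with $\lambda\in\Torus$. The lower block gives $\NagyIso_{S}^{\ast}y=\lambda y$, so $y=r_{0}\eta$ with $S^{\ast}\eta=\lambda\eta$. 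If $y\neq\zerovector$, then $\lambda\in\sigma_{p}(S^{\ast})\cap\Torus$, and $\sigma_{p}(S^{\ast})\cap\Torus=\sigma_{p}(S)^{\ast}\cap\Torus$ by \Cref{prop:eigenvalues-on-border:sig:article-free-raj-dahya}; if $y=\zerovector$, then $x\neq\zerovector$ and the upper block reads $\NagyIso_{S}x=\lambda x$, whence $\lambda\in\sigma_{p}(\NagyIso_{S})\cap\Torus=\sigma_{p}(S)\cap\Torus$. Either way $\lambda\in(\sigma_{p}(S)\cup\sigma_{p}(S)^{\ast})\cap\Torus$. Finally, the displayed equivalence for $\lambda=1$ drops out: $1$ is real, so $1\in\sigma_{p}(S)\iff 1\in\sigma_{p}(S)^{\ast}$, and the two inclusions then squeeze $\sigma_{p}(\NagyUtr_{S})$ so that $1\in\sigma_{p}(\NagyUtr_{S})\iff 1\in\sigma_{p}(S)$.

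I expect the only step with any content to be the eigenspace computation, and its one genuinely nontrivial point is that the defect constraint $\HalmosOp_{S}\xi_{0}=\zerovector$ is not an additional restriction — this is precisely the role of \Cref{prop:eigenvalues-on-border:sig:article-free-raj-dahya}, stated just beforehand. Everything else is bookkeeping with the block matrix and the observation that unitarity confines the point spectrum to $\Torus$.
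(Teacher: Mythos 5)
Your proof is correct and follows essentially the same route as the paper: the same block decomposition of $\NagyUtr_{S}$, the same coordinate expansion in $\ell^{2}(\naturalsZero)$ with $\abs{\lambda}=1$ and square-summability killing the tail coordinates, and the same appeal to \Cref{prop:eigenvalues-on-border:sig:article-free-raj-dahya} to dispose of the defect condition $\HalmosOp_{S}\xi_{0}=\zerovector$. The only, harmless, difference is organisational: you compute the eigenspace of $\NagyIso_{S}^{\ast}$ directly from its own coordinate recursion, whereas the paper converts $\NagyIso_{S}^{\ast}y=\lambda y$ into $\NagyIso_{S}y=\lambda^{\ast}y$ via that same proposition and then reuses the first case.
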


    \begin{proof}
        Towards first inclusion,
        let $\lambda \in \sigma_{p}(S) \cap \Torus$.
        Since $\abs{\lambda} = 1 \geq \norm{S}$,
        by \Cref{prop:eigenvalues-on-border:sig:article-free-raj-dahya}
        there exists
            $\xi \in \HilbertRaum \without \{\zerovector\}$,
        such that
            $S \xi = \lambda \xi$
            and
            $S^{\ast} \xi = \lambda^{\ast} \xi$.
        So
            $
                (\onematrix - S^{\ast}S)\xi
                = \xi - \lambda^{\ast}\lambda\xi
                = \zerovector
            $,
        which implies
            $\HalmosOp_{S}\xi = \zerovector$,
        since
            $
                \norm{\HalmosOp_{S}\xi}^{2}
                = \brkt{\HalmosOp_{S}^{2}\xi}{\xi}
                = \brkt{(\onematrix - S^{\ast}S)\xi}{\xi}
                = 0
            $.
        We thus have

            \begin{shorteqnarray}
                \NagyUtr_{S}\:r_{1}\xi
                &= &\NagyIso_{S}\:r_{0}\xi \otimes \BaseVector{0}\\
                &= &(S\xi \otimes \BaseVector{0} + \cancel{\HalmosOp_{S}\xi} \otimes \BaseVector{1}) \otimes \BaseVector{0}\\
                &= &S\xi \otimes \BaseVector{0} \otimes \BaseVector{0}\\
                &= &r_{1}\:(S\xi)\\
                &= &\lambda\:r_{1}\xi,
            \end{shorteqnarray}

        \continueparagraph
        whence $r_{1}\xi \in \ker(\lambda\cdot\onematrix - \NagyUtr_{S}) \without \{\zerovector\}$.
        So $\lambda \in \sigma_{p}(\NagyUtr_{S})$.

        Towards the second inclusion,
        let $\lambda \in \sigma_{p}(\NagyUtr_{S})$.
        Then $\lambda \in \Torus$, since $\NagyUtr_{S}$ is unitary.
        Let $x,y\in H_{0}$
        be such that $x \otimes \BaseVector{0} + y \otimes \BaseVector{1} \neq \zerovector$,
        \idest either $x \neq \zerovector$ or $y \neq \zerovector$,
        and such that

            \begin{displaymath}
                \begin{smatrix}
                    \lambda x\\
                    \lambda y\\
                \end{smatrix}
                = \NagyUtr_{S}
                    \begin{smatrix}
                        x\\
                        y\\
                    \end{smatrix}
                = \begin{smatrix}
                        \NagyIso_{S}x + (\onematrix - \NagyIso_{S}^{\ast}\NagyIso_{S})y\\
                        \NagyIso_{S}^{\ast}y\\
                    \end{smatrix}.
            \end{displaymath}

        \continueparagraph
        There are two cases to consider:

        \begin{enumerate}{\itshape {Case} 1.}
        %% CASE 1
        \item
            If $y = \zerovector$, then $x \neq \zerovector$
            and
            $
                \lambda x
                = \NagyIso_{S}x + (\onematrix - \NagyIso_{S}^{\ast}\NagyIso_{S})\zerovector
                = \NagyIso_{S}x
            $.
            Writing
                $
                    x
                    = \sum_{n \in \naturalsZero}
                        x_{n} \otimes \BaseVector{n}
                $,
            one has

                \begin{displaymath}
                    \sum_{n=0}^{\infty}
                        \lambda x_{n} \otimes \BaseVector{n}
                    = \lambda x
                    = \NagyIso_{S}x
                    = Sx_{0} \otimes \BaseVector{0}
                    + \HalmosOp_{S}x_{0} \otimes \BaseVector{1}
                    + \sum_{n=1}^{\infty} x_{n} \otimes \BaseVector{n+1},
                \end{displaymath}

            \continueparagraph
            which implies
                $\lambda x_{0} = Sx_{0}$
            and
                $\lambda x_{n} = x_{n-1}$
            for all $n \geq 2$.
            A simple induction argument yields that
                $
                    x = x_{0} \otimes \BaseVector{0}
                    + \sum_{n=1}^{\infty}
                        \lambda^{-(n-1)}
                            x_{1} \otimes \BaseVector{n}
                $.
            Since $\abs{\lambda} = 1$,
            this is only possibly if $x_{1} = \zerovector$.
            Thus $x = x_{0} \otimes \BaseVector{0}$
            and since $x \neq \zerovector$,
            this implies $x_{0} \neq \zerovector$.
            Since $Sx_{0} = \lambda x_{0}$,
            this proves that $\lambda \in \sigma_{p}(S)$.

        \item
            Otherwise $y \neq \zerovector$.
            Since
                $\NagyIso_{S}^{\ast}y = \lambda y$
            and
                $\abs{\lambda} = 1 = \norm{\NagyIso_{S}}$,
            applying \Cref{prop:eigenvalues-on-border:sig:article-free-raj-dahya}
            yields that
                $\NagyIso_{S}y = \lambda^{\ast} y$.
            Arguing as in Case 1 yields that
                $\lambda^{\ast} \in \sigma_{p}(S)$.
        \end{enumerate}

        \continueparagraph
        Hence $\sigma_{p}(\NagyUtr_{S}) \subseteq (\sigma_{p}(S) \cup \sigma_{p}(S)^{\ast}) \cap \Torus$.
    \end{proof}

%% ********** END OF FILE: body/sec-3-result-concrete/sec-1-nagy-dilation.tex **********

%% ********************************************************************************
%% FILE: body/sec-3-result-concrete/sec-2-cogenerators.tex
%% ********************************************************************************

\subsection[Co-generators]{Co-generators}
\label{sec:result-concrete:cogenerators:sig:article-free-raj-dahya}

\firstparagraph
We first recall some basic facts which can be found in
    \cite[\S{}III.8, Theorem~III.8.1, and Proposition~III.8.2]{Nagy1970},
    see also
    \cite[\S{}I.3.1 and Theorem~I.3.4]{Eisner2010buchStableOpAndSemigroups}.
Given a contractive $\Cnought$\=/semigroup $T$ on a Hilbert space $\HilbertRaum$
with generator $A$, the \highlightTerm{co-generator} of $T$
is defined by
    $
        V = (A + \onematrix)(A - \onematrix)^{-1}
        = \onematrix - 2\opResolvent{A}{1}
    $
which is a contraction on $\HilbertRaum$
for which $1$ is not an eigenvalue.%
\footnote{%
    Recall that
    $
        \opResolvent{A}{\lambda}
        = (\lambda\cdot\onematrix - A)^{-1}
    $
    denotes the resolvent operator
    for $\lambda \in \complex$
    outside the spectrum of $A$.
}
Conversely, given a contraction $V \in \BoundedOps{\HilbertRaum}$
with $1 \notin \sigma_{p}(V)$,
there exists a unique
contractive $\Cnought$\=/semigroup $T$ on $\HilbertRaum$
whose co-generator is $V$.
Furthermore $T$ is a unitary (\resp isometric) semigroup
if and only if its co-generator $V$ is a unitary (\resp isometric) operator.

Our goal here is to provide uniform means
of recovering a $\Cnought$\=/semigroup
from its co-generator using only algebraic constructions.
To this end we develop the following
technical results.

\begin{prop}[Yosida-Approximants in terms of co-generators]
\makelabel{prop:hille-yosida-cogenerator:sig:article-free-raj-dahya}
    Let $T$ be a contractive $\Cnought$\=/semigroup
    on a Hilbert space $\HilbertRaum$
    with co-generator $V$.
    For each $\lambda \in (1,\:\infty)$
    set
    $\gamma_{\lambda} \coloneqq \frac{\lambda + 1}{\lambda - 1}$,
    $\alpha_{\lambda} \coloneqq \frac{\lambda}{\lambda - 1}$,
    and
    $\beta_{\lambda} \coloneqq 2\alpha_{\lambda}^{2}$.
    Then

    \begin{displaymath}
        T^{(\lambda)}(t)
        \coloneqq
        e^{
            t(
                \alpha_{\lambda}\cdot\onematrix
                -
                \beta_{\lambda}(\gamma_{\lambda}\cdot\onematrix - V)^{-1}
            )
        }
        \overset{\tinytopSOT}{\longrightarrow}
        T(t)
    \end{displaymath}

    \continueparagraph
    uniformly for $t$ in compact subsets of $\realsNonNeg$
    for ${(1,\:\infty)\ni\lambda \longrightarrow \infty}$.
\end{prop}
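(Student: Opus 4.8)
The plan is to recognise $T^{(\lambda)}(t)$ as precisely $e^{tA_{\lambda}}$, where $A_{\lambda} \coloneqq \lambda A\opResolvent{A}{\lambda}$ is the classical Yosida approximant of the generator $A$ of $T$, merely re-written in terms of the co-generator $V$; once this identification is in place, the asserted convergence is exactly the statement of the Yosida approximation theorem for contraction $\Cnought$\=/semigroups.

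First I would check that everything in the statement is well defined: for $\lambda > 1$ one has $\gamma_{\lambda} = \tfrac{\lambda+1}{\lambda-1} > 1 \geq \norm{V}$, so $\gamma_{\lambda}\cdot\onematrix - V$ is boundedly invertible via the Neumann series, with $\norm{(\gamma_{\lambda}\cdot\onematrix - V)^{-1}} \leq (\gamma_{\lambda}-1)^{-1}$, and $\alpha_{\lambda},\beta_{\lambda} > 0$; hence $A_{\lambda} \coloneqq \alpha_{\lambda}\cdot\onematrix - \beta_{\lambda}(\gamma_{\lambda}\cdot\onematrix - V)^{-1}$ is a bounded operator and $T^{(\lambda)}(t) = e^{tA_{\lambda}}$ is given by the norm-convergent exponential series. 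The core step is the operator identity $A_{\lambda} = \lambda A\opResolvent{A}{\lambda}$. Since $V = \onematrix - 2\opResolvent{A}{1}$ one has $\opResolvent{A}{1} = \tfrac12(\onematrix - V)$, which is bounded; because $(0,\infty) \subseteq \opResolventSet{A}$, the first resolvent identity based at $1$ gives $\opResolvent{A}{\lambda} = \opResolvent{A}{1}\bigl(\onematrix + (\lambda-1)\opResolvent{A}{1}\bigr)^{-1}$, and substituting $\opResolvent{A}{1} = \tfrac12(\onematrix - V)$ together with $2\cdot\onematrix + (\lambda-1)(\onematrix - V) = (\lambda-1)(\gamma_{\lambda}\cdot\onematrix - V)$ yields $\opResolvent{A}{\lambda} = \tfrac{1}{\lambda-1}(\onematrix - V)(\gamma_{\lambda}\cdot\onematrix - V)^{-1}$. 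Writing $\onematrix - V = (\gamma_{\lambda}\cdot\onematrix - V) - \tfrac{2}{\lambda-1}\cdot\onematrix$ (using $\gamma_{\lambda} - 1 = \tfrac{2}{\lambda-1}$) and then forming $\lambda A\opResolvent{A}{\lambda} = \lambda^{2}\opResolvent{A}{\lambda} - \lambda\cdot\onematrix$ is a routine scalar manipulation in which the coefficients collapse to $\tfrac{\lambda^{2}}{\lambda-1} - \lambda = \alpha_{\lambda}$ and $\tfrac{2\lambda^{2}}{(\lambda-1)^{2}} = 2\alpha_{\lambda}^{2} = \beta_{\lambda}$, giving exactly $A_{\lambda}$.

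With $T^{(\lambda)}(t) = e^{tA_{\lambda}} = e^{t\lambda A\opResolvent{A}{\lambda}}$ established, the proof finishes by invoking the Yosida approximation theorem (\cf \cite[\S{}III.8]{Nagy1970}, \cite[Ch.~1]{Pazy1983Book}): the bounded operators $\lambda A\opResolvent{A}{\lambda}$ generate contraction $\Cnought$\=/semigroups, and $e^{t\lambda A\opResolvent{A}{\lambda}}\xi \to T(t)\xi$ as $\lambda \to \infty$, uniformly for $t$ in compact subsets of $\realsNonNeg$ and for every $\xi \in \HilbertRaum$. I do not expect a genuine obstacle here: all the substance has been offloaded onto the classical theorem and a resolvent identity, and the only points deserving a little care are bookkeeping — namely that the limit parameter $\lambda \to \infty$ used here coincides with the one in the standard Yosida scheme (it does, since $A_{\lambda}$ is literally $\lambda A\opResolvent{A}{\lambda}$), and that the inverses behave uniformly in $\lambda$, which the bound $\norm{(\gamma_{\lambda}\cdot\onematrix - V)^{-1}} \leq (\gamma_{\lambda}-1)^{-1}$ already controls.
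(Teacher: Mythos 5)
Your proposal is correct and follows essentially the same route as the paper: identify $T^{(\lambda)}(t)$ with the Yosida approximation semigroup $e^{tA^{(\lambda)}}$, $A^{(\lambda)} = \lambda A\opResolvent{A}{\lambda} = \lambda^{2}\opResolvent{A}{\lambda} - \lambda\cdot\onematrix$, via the identity $\opResolvent{A}{\lambda} = \tfrac{1}{\lambda-1}(\onematrix - V)(\gamma_{\lambda}\cdot\onematrix - V)^{-1}$, and then invoke the classical Yosida convergence (strong, uniform on compacts). The only cosmetic difference is that you derive this resolvent identity explicitly from the first resolvent equation, whereas the paper simply cites it as a simple algebraic manipulation (referring to \cite[Lemma~2.1~2)]{EisnerZwart2008cogenerator}); your coefficient bookkeeping matches the paper's computation exactly.
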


    \begin{proof}
        We first recall some basic facts about approximants
        (\cf
            \cite[Theorem~II.3.5]{EngelNagel2000semigroupTextBook},
            \cite[(12.3.4), p.~361]{Hillephillips1957faAndSg}%
        ).
        Let $A$ be the generator of $T$.
        The \highlightTerm{$\lambda$\textsuperscript{th} Yosida approximant} of $T$,
        is given by
        $
            \{ e^{tA^{(\lambda)}} \}_{t\in\realsNonNeg}
        $
        for each $\lambda \in (0,\:\infty)$,
        where

            \begin{restoremargins}
            \begin{equation}
            \label{eq:yosida-approx-generator:sig:article-free-raj-dahya}
                A^{(\lambda)}
                \colonequals
                    \lambda A \opResolvent{A}{\lambda}
                = \lambda^{2} \opResolvent{A}{\lambda}
                    - \lambda\cdot\onematrix
            \end{equation}
            \end{restoremargins}

        \continueparagraph
        is a bounded operator.
        The Yosida approximants satisfy
            $
                \sup_{t \in L}\norm{
                    (e^{tA^{(\lambda)}} - T(t))
                    \xi
                } \longrightarrow 0
            $
        for ${(0,\:\infty) \ni \lambda \longrightarrow \infty}$
        for each
            $\xi \in \HilbertRaum$
            and
            compact $L \subseteq \realsNonNeg$.

        So to prove the claim, it suffices to express
            $A^{(\lambda)}$ in terms of $V$
        for each $\lambda \in (1,\:\infty)$.
        Simple algebraic manipulation
        (see also \cite[Lemma~2.1~2)]{EisnerZwart2008cogenerator})
        yields
            $
                \opResolvent{A}{\lambda}
                = \tfrac{1}{\lambda - 1}(\onematrix - V)\opResolvent{V}{\gamma_{\lambda}}
            $,
        from which one obtains

            \begin{restoremargins}
            \begin{equation}
            \label{eq:yosida-in-terms-of-cogen:sig:article-free-raj-dahya}
            \everymath={\displaystyle}
            \begin{array}[m]{rcl}
                A^{(\lambda)}
                &\eqcrefoverset{eq:yosida-approx-generator:sig:article-free-raj-dahya}{=}
                    &\tfrac{\lambda^{2}}{\lambda-1}
                    (\onematrix - V)
                    \opResolvent{V}{\gamma_{\lambda}}
                    - \lambda\cdot\onematrix\\
                &= &\tfrac{\lambda^{2}}{\lambda-1}
                    \Big(
                        (\gamma_{\lambda}\cdot\onematrix - V)
                        +
                        (1 - \gamma_{\lambda})\onematrix
                    \Big)
                    \opResolvent{V}{\gamma_{\lambda}}
                    - \lambda\cdot\onematrix\\
                &= &\tfrac{\lambda^{2}}{\lambda-1}\cdot\onematrix
                    - \lambda\cdot\onematrix
                    +\tfrac{\lambda^{2}}{\lambda-1}
                    (1 - \gamma_{\lambda})
                    \opResolvent{V}{\gamma_{\lambda}}\\
                &= &\tfrac{\lambda}{\lambda-1}\cdot\onematrix
                    -\tfrac{2\lambda^{2}}{(\lambda - 1)^{2}}
                    \opResolvent{V}{\gamma_{\lambda}}\\
                &= &\alpha_{\lambda}\cdot\onematrix
                    - \beta_{\lambda}(\gamma_{\lambda}\cdot\onematrix - V)^{-1}
            \end{array}
            \end{equation}
            \end{restoremargins}

        \continueparagraph
        whence the claimed convergence holds.
    \end{proof}

\begin{lemm}
\makelabel{lemm:poly-approx-cogenerator:sig:article-free-raj-dahya}
    Let $t \in \realsNonNeg$.
    There exists a net $(p^{(\alpha)}_{t})_{\alpha \in \Lambda}$
    of real-valued polynomials
    with the following property:
    For any contractive $\Cnought$\=/semigroup $T$
    on a Hilbert space $\HilbertRaum$
    with co-generator $V$,
    it holds that
    ${p^{(\alpha)}_{t}(V) \underset{\alpha}{\longrightarrow} T(t)}$
    strongly.
\end{lemm}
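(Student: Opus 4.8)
The plan is to reduce everything to \Cref{prop:hille-yosida-cogenerator:sig:article-free-raj-dahya}: there the co-generator-based Yosida approximants $\opYosidaApprox{T}{\lambda}(t) = e^{t A^{(\lambda)}}$, with $A^{(\lambda)} = \alpha_{\lambda}\onematrix - \beta_{\lambda}(\gamma_{\lambda}\onematrix - V)^{-1}$ and $\gamma_{\lambda} > 1$, were shown to converge to $T(t)$ in the strong operator topology as $(1,\infty)\ni\lambda \to \infty$; crucially the scalars $\alpha_{\lambda},\beta_{\lambda},\gamma_{\lambda}$ depend only on $\lambda$, not on $T$ or $\HilbertRaum$. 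So it suffices, for each fixed $\lambda$, to approximate $\opYosidaApprox{T}{\lambda}(t)$ in operator norm by a real-valued polynomial in $V$, to within an error uniform over all contractions $V$ on all Hilbert spaces, and then to let $\lambda \to \infty$.

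First I would fix $t \in \realsNonNeg$ and, for $\lambda \in (1,\infty)$, expand the resolvent as a Neumann series: since $\norm{V} \le 1 < \gamma_{\lambda}$ we have $(\gamma_{\lambda}\onematrix - V)^{-1} = \sum_{n \ge 0} \gamma_{\lambda}^{-(n+1)} V^{n}$, convergent in operator norm, with real coefficients, and with tail norm $\norm{\sum_{n > M} \gamma_{\lambda}^{-(n+1)} V^{n}} \le \sum_{n > M}\gamma_{\lambda}^{-(n+1)}$, a bound independent of $V$. Consequently $A^{(\lambda)}$ is itself a norm-convergent power series in $V$ with real coefficients, and $\norm{A^{(\lambda)}} \le \rho_{\lambda} \coloneqq \alpha_{\lambda} + \beta_{\lambda}(\gamma_{\lambda} - 1)^{-1} < \infty$, once again depending only on $\lambda$.

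Next I would insert this series into $\opYosidaApprox{T}{\lambda}(t) = \sum_{k \ge 0}\tfrac{t^{k}}{k!}(A^{(\lambda)})^{k}$ and collect powers of $V$. Since $\norm{V}\le 1$, the sum of the absolute values of all the scalar coefficients that arise is dominated by $\sum_{k \ge 0}\tfrac{t^{k}}{k!}\rho_{\lambda}^{k} = e^{t\rho_{\lambda}}$, so the rearrangement is legitimate in $\BoundedOps{\HilbertRaum}$ and yields $\opYosidaApprox{T}{\lambda}(t) = \sum_{n \ge 0} c_{n}^{(\lambda)} V^{n}$ with real $c_{n}^{(\lambda)}$ satisfying $\sum_{n}\abs{c_{n}^{(\lambda)}} \le e^{t\rho_{\lambda}}$; in particular the partial sums converge to $\opYosidaApprox{T}{\lambda}(t)$ in operator norm, uniformly over all contractions on all Hilbert spaces. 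For each $\lambda$ I then pick $M(\lambda) \in \naturals$ with $\sum_{n > M(\lambda)}\abs{c_{n}^{(\lambda)}} < \lambda^{-1}$ and set $p^{(\lambda)}_{t}(z) \coloneqq \sum_{n=0}^{M(\lambda)} c_{n}^{(\lambda)} z^{n}$, a real-valued polynomial with $\norm{p^{(\lambda)}_{t}(V) - \opYosidaApprox{T}{\lambda}(t)} < \lambda^{-1}$ for every contraction $V$.

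It remains only to take the net $(p^{(\lambda)}_{t})_{\lambda \in (1,\infty)}$, directed towards $\infty$ (one may equally extract a sequence $\lambda_{j}\to\infty$): for any contractive $\Cnought$\=/semigroup $T$ on $\HilbertRaum$ with co-generator $V$ and any $\xi \in \HilbertRaum$, the triangle inequality together with \Cref{prop:hille-yosida-cogenerator:sig:article-free-raj-dahya} gives $\norm{p^{(\lambda)}_{t}(V)\xi - T(t)\xi} \le \lambda^{-1}\norm{\xi} + \norm{\opYosidaApprox{T}{\lambda}(t)\xi - T(t)\xi}$, which tends to $0$ as $\lambda\to\infty$. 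The only real obstacle is bookkeeping discipline: every estimate --- the Neumann tail, the bound on $\norm{A^{(\lambda)}}$, the exponential tail --- must be made uniform in $V$ and in the ambient Hilbert space, since one and the same net of polynomials has to serve every semigroup at once; and since the convergence in $\lambda$ is merely strong rather than in norm, the net genuinely cannot be replaced by a single polynomial.
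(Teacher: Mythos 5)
Your proposal is correct and follows essentially the same route as the paper: both reduce to the co-generator Yosida approximants of \Cref{prop:hille-yosida-cogenerator:sig:article-free-raj-dahya} and approximate $e^{tA^{(\lambda)}}$ by truncated power series in $V$ with real coefficients whose tails are bounded uniformly over all contractions on all Hilbert spaces, then conclude by a triangle inequality. The only cosmetic differences are that the paper obtains the coefficients as the Taylor coefficients of the holomorphic function $f^{(\lambda)}_{t}(z)=e^{t(\alpha_{\lambda}-\beta_{\lambda}(\gamma_{\lambda}-z)^{-1})}$ (radius of convergence $\gamma_{\lambda}>1$, identified with $T^{(\lambda)}(t)$ via the analytic functional calculus) and works with a directed set of triples $(\lambda,n,\eps)$, whereas you produce the same expansion by Neumann-series rearrangement and absorb the truncation level into the single index $\lambda$; both variants are valid.
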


    \begin{proof}
        Let $\lambda \in (1,\:\infty)$ be arbitrary.
        Let $\alpha_{\lambda},\beta_{\lambda},\gamma_{\lambda} \in \reals$
        be defined as in \Cref{prop:hille-yosida-cogenerator:sig:article-free-raj-dahya}.
        Since
            ${
                z
                \mapsto
                t(
                    \alpha_{\lambda}
                    -
                    \beta_{\lambda}
                    (\gamma_{\lambda} - z)^{-1}
                )
            }$
        is holomorphic on $\complex \without \{\gamma_{\lambda}\}$
        and its power series centred on $0$
        has convergence radius $\gamma_{\lambda}$,
        it follows that composition with the entire function ${z \mapsto \exp(z)}$
        has the same properties.
        That is,
            ${
                f^{(\lambda)}_{t} : \complex \without \{\gamma_{\lambda}\} \to \complex
            }$
        defined by
            $
                f^{(\lambda)}_{t}(z)
                = e^{
                    t(
                        \alpha_{\lambda}
                        -
                        \beta_{\lambda}
                        (\gamma_{\lambda} - z)^{-1}
                    )
                }
            $
        is holomorphic and its power series centred on $0$
        has convergence radius $\gamma_{\lambda}$.
        Thus there exists
            $
                \{c^{(\lambda, n)}_{t}\}_{n \in \naturalsZero}
                \subseteq
                \complex
            $
        such that
            $
                \sum_{n=0}^{\infty} c^{(\lambda,n)}_{t}z^{n}
                = f^{(\lambda)}_{t}(z)
            $,
        whereby the convergence of the series on the left
        is absolute for each $z \in \complex$ with $\abs{z} < \gamma_{\lambda}$.
        Since $\gamma_{\lambda} = \tfrac{\lambda + 1}{\lambda - 1} >  1$,
        it follows that

        \begin{restoremargins}
        \begin{equation}
        \label{eq:0:\beweislabel}
            C_{t}^{(\lambda,n)}
            \coloneqq
            \sum_{k=n+1}^{\infty}\abs{c^{(\lambda,k)}_{t}}
            \underset{n}{\longrightarrow}
            0
        \end{equation}
        \end{restoremargins}

        \continueparagraph
        for ${\naturals \ni n \longrightarrow \infty}$.
        Note that since
            $f^{(\lambda)}_{t}(\reals) \subseteq \reals$
        (since $\alpha_{\lambda},\beta_{\lambda},\gamma_{\lambda} \in \reals$)
        one has
        $
            c^{(\lambda,n)}_{t}
            =
                \tfrac{1}{n!}
                (
                    (\tfrac{\dee}{\dee x})^{n}
                    f^{(\lambda)}_{t}
                )\restr{x=0}
            \in \reals
        $
        for each $n\in\naturalsZero$.

        Define the index set
        $\Lambda \coloneqq \{
            (\lambda,n,\eps) \in (1,\:\infty) \times \naturalsZero \times (0,\:\infty)
            \mid
            C_{t}^{(\lambda,n)} < \eps
        \}$
        with ordering
        $(\lambda',n',\eps') \succeq (\lambda,n,\eps)$
        if and only if
        $\lambda' \geq \lambda$ and $\eps' \leq \eps$.
        Due to \eqcref{eq:0:\beweislabel}
        it is straightforward to see that $\Lambda$ is directed.
        Finally, we construct

        \begin{displaymath}
            p^{(\lambda,n,\eps)}_{t}
            \coloneqq
            \sum_{k=0}^{n}c^{(\lambda,k)}_{t}z^{k},
        \end{displaymath}

        \continueparagraph
        which are polynomials with real-valued coefficients.

        We now apply these constructions to concrete operators.
        Let $\HilbertRaum$, $T$, $V$ be as in the claim.
        Let further $\xi\in\HilbertRaum$ and $\delta > 0$ be arbitrary.
        By the convergence in \Cref{prop:hille-yosida-cogenerator:sig:article-free-raj-dahya}
        there exists $\lambda_{0} \in (1,\:\infty)$
        with
            $
                \norm{
                    (
                        T^{(\lambda)}(t)
                        -
                        T(t)
                    )
                    \xi
                }
                < \tfrac{\delta}{2}
            $
        for all $\lambda \in (1,\:\infty)$
        with $\lambda \geq \lambda_{0}$.
        Moreover, by the convergence in \eqcref{eq:0:\beweislabel}
        there exists $n_{0} \in \naturalsZero$
        such that
            $
                C_{t}^{(\lambda_{0},n_{0})}
                < \tfrac{\delta}{2\norm{\xi} + 1}
                \eqqcolon \eps_{0}
            $.
        By construction $(\lambda_{0},n_{0},\eps_{0}) \in \Lambda$.

        Consider now $(\lambda,n,\eps) \in \Lambda$
        with $(\lambda,n,\eps) \succeq (\lambda_{0},n_{0},\eps_{0})$.
        So
            $\lambda \geq \lambda_{0}$
            and $\eps \leq \eps_{0}$.
        Since $f^{(\lambda)}_{t}$ is holomorphic on an open neighbourhood of
            $\{z \in \complex \mid \abs{z} \leq 1\}$
        which contains $\sigma(V)$,
        one has

            \begin{shorteqnarray}
                \norm{
                    p^{(\lambda,n,\eps)}_{t}(V)
                    -
                    T^{(\lambda)}(t)
                }
                &= &\normLarge{
                        p^{(\lambda,n,\eps)}_{t}(V)
                        -
                        e^{t(\alpha_{\lambda}\cdot\onematrix - \beta_{\lambda}(\gamma_{\lambda}\cdot\onematrix - V)^{-1})}
                    }\\
                &= &\norm{
                        p^{(\lambda,n,\eps)}_{t}(V)
                        -
                        f^{(\lambda)}_{t}(V)
                    }\\
                &= &\normLarge{
                        \sum_{k=n+1}^{\infty}
                            c^{(\lambda,k)}_{t}
                            V^{k}
                    }\\
                &\leq &\sum_{k=n+1}^{\infty}
                        \abs{c^{(\lambda,k)}_{t}}
                        \underbrace{
                            \norm{V}^{k}
                        }_{\leq 1}
                \leq C_{t}^{(\lambda,n)}
                < \eps,
            \end{shorteqnarray}

        \continueparagraph
        whereby the last inequality holds by virtue
        of $(\lambda,n,\eps) \in \Lambda$.
        It follows that

            \begin{displaymath}
                \normLarge{
                    \Big(
                        p^{(\lambda,n,\eps)}_{t}(V)
                        -
                        T(t)
                    \Big)
                    \xi
                }
                \leq \underbrace{
                        \norm{
                            p^{(\lambda,n,\eps)}_{t}(V)
                            -
                            T^{(\lambda)}(t)
                        }
                    }_{
                        <\eps
                        \leq\eps_{0}
                        =\tfrac{\delta}{2\norm{\xi} + 1}
                    }
                    \norm{\xi}
                    +
                    \underbrace{
                        \norm{
                            (
                                T^{(\lambda)}(t)
                                -
                                T(t)
                            )
                            \xi
                        }
                    }_{< \tfrac{\delta}{2},~\text{since $\lambda \geq \lambda_{0}$}}
                < \delta.
            \end{displaymath}

        Thus for all $\xi\in\HilbertRaum$ and $\delta > 0$,
        there exists $\alpha_{0} \in \Lambda$
        such that $
            \norm{
                (
                    p^{(\alpha)}_{t}(V)
                    -
                    T(t)
                )
                \xi
            }
            < \delta
        $
        for all $\alpha \in \Lambda$
        with $\alpha \succeq \alpha_{0}$.
        The claimed convergence thus holds.
    \end{proof}

%% ********** END OF FILE: body/sec-3-result-concrete/sec-2-cogenerators.tex **********

%% ********************************************************************************
%% FILE: body/sec-3-result-concrete/sec-3-free-dilation.tex
%% ********************************************************************************

\subsection[Proof of the Ist main result]{Proof of the \First main result}
\label{sec:result-concrete:proof-1st:sig:article-free-raj-dahya}

\firstparagraph
We are now equipped to provide the first full proof of
the \First free dilation theorem.

\def\beweislabel{thm:free-dilation:1st:sig:article-free-raj-dahya}
\begin{proof}[of \Cref{thm:free-dilation:1st:sig:article-free-raj-dahya}]
    Let $I$ be a non-empty index set
    and $\{T_{i}\}_{i \in I}$ be a family of contractive $\Cnought$\=/semigroups on a Hilbert space $\HilbertRaum$.
    %% Construction
    Let $V_{i}$ be the co-generator of $T_{i}$ for each $i \in I$.
    These are contractions with $1 \notin \sigma_{p}(V_{i})$ for each $i \in I$.
    Let $(H_{1},r_{1},\{\NagyUtr_{V_{i}}\}_{i \in I})$
    be the discrete-time free unitary dilation
    of $(\HilbertRaum,\{V_{i}\}_{i \in I})$
    as per the proof of \Cref{thm:free-dilation-discrete-time:sig:article-free-raj-dahya}
    in \S{}\ref{sec:result-concrete:discrete-dilation:sig:article-free-raj-dahya}.
    Let $i \in I$.
    By \Cref{prop:eigenvalues-discrete-dilation:sig:article-free-raj-dahya}
    we have $1 \notin \sigma_{p}(\NagyUtr_{V_{i}})$
    whence by the theory of co-generators,
        $\NagyUtr_{V_{i}}$
    is the co-generator
    of a (unique) unitary $\Cnought$\=/semigroup $U_{i}$,
    which \withoutlog we may extend to a
    strongly continuous unitary representation
    of $\reals$ on $H_{1}$.

    Let $N \in \naturals$,
        $(i_{k})_{k=1}^{N} \subseteq I$
    and $(t_{k})_{k=1}^{N} \subseteq \realsNonNeg$
    be arbitrary.
    Consider the nets of polynomials
    constructed in \Cref{lemm:poly-approx-cogenerator:sig:article-free-raj-dahya}.
    Since $(H_{1},r_{1},\{\NagyUtr_{V_{i}}\}_{i \in I})$
    is a free dilation of $(\HilbertRaum,\{V_{i}\}_{i \in I})$,
    we derive from \Cref{thm:free-dilation-discrete-time:sig:article-free-raj-dahya}
    that

        \begin{restoremargins}
        \begin{equation}
        \label{eq:poly:\beweislabel}
            \prod_{k=1}^{N}
                p_{t_{k}}^{(\alpha_{k})}(V_{i_{k}})
            = r_{1}^{\ast}
                \:\Big(
                \prod_{k=1}^{N}
                    p_{t_{k}}^{(\alpha_{k})}(\NagyUtr_{V_{i_{k}}})
                \Big)
                \:r_{1}
        \end{equation}
        \end{restoremargins}

    \continueparagraph
    for all $(\alpha_{k})_{k=1}^{N} \subseteq \Lambda$.
    Since by \Cref{lemm:poly-approx-cogenerator:sig:article-free-raj-dahya}

        \begin{shorteqnarray}
            p_{t_{k}}^{(\alpha_{k})}(V_{i_{k}})
                &\underset{\alpha_{k}}{\overset{\tinytopSOT}{\longrightarrow}}
                    &T_{i_{k}}(t_{k}),
            ~\text{and}\\
            p_{t_{k}}^{(\alpha_{k})}(\NagyUtr_{V_{i_{k}}})
                &\underset{\alpha_{k}}{\overset{\tinytopSOT}{\longrightarrow}}
                    &U_{i_{k}}(t_{k})
        \end{shorteqnarray}

    \continueparagraph
    for each $k \in \{1,2,\ldots,N\}$,
    taking limits of each factor in the products in \eqcref{eq:poly:\beweislabel}
    one-by-one yields
        \eqcref{eq:free-dilation:cts-time:sig:article-free-raj-dahya}.
    Hence $(H_{1},r_{1},\{U_{i}\}_{i \in I})$
    is a continuous-time
    free unitary dilation
    of $(\HilbertRaum,\{T_{i}\}_{i \in I})$.
\end{proof}

\begin{rem}[Computability of constructions]
\makelabel{rem:computability-of-dilation:sig:article-free-raj-dahya}
    Using the above setup,
        let $A_{i}$ and $V_{i}$
        denote again the generator \resp co-generator
        of $T_{i}$
    for each $i \in I$.
    Suppose that each $A_{i}$ is bounded
    (\exempli if $\dim(\HilbertRaum) < \aleph_{0}$).
    Then
        $
            W_{i}
            \coloneqq (\onematrix - V_{i})^{-1}
            = \frac{1}{2}(\onematrix - A_{i})
        $
    is bounded and can be directly computed.
    Considering the construction in
    \eqcref{eq:schaeffer-nagy-dilation:isometry:sig:article-free-raj-dahya},
    set

        \begin{shorteqnarray}
            D_{i}
                &\coloneqq
                    &(\onematrix - V_{i}) \otimes \ElementaryMatrix{0}{0}
                    + \sum_{n=1}^{\infty}\ElementaryMatrix{n}{n},\\
            \tilde{D}_{i}
                &\coloneqq
                    &D_{i}^{-1}
                = W_{i} \otimes \ElementaryMatrix{0}{0}
                    + \sum_{n=1}^{\infty}\ElementaryMatrix{n}{n},\\
            L_{i}
                &\coloneqq
                    &\HalmosOp_{V_{i}} \otimes \ElementaryMatrix{1}{0}
                    + \onematrix \otimes \sum_{n=1}^{\infty}\ElementaryMatrix{n+1}{n},
            ~\text{and}\\
            \tilde{L}_{i}
                &\coloneqq
                    &\tilde{D}_{i}L_{i}\tilde{D}_{i}
                = \HalmosOp_{V_{i}}\:W_{i} \otimes \ElementaryMatrix{1}{0}
                    + \onematrix \otimes \sum_{n=1}^{\infty}\ElementaryMatrix{n+1}{n}
        \end{shorteqnarray}

    \continueparagraph
    which are bounded operators on $H_{0} = \HilbertRaum \otimes \ell^{2}(\naturalsZero)$,
    and observe that
        $
            \onematrix - \NagyIso_{V_{i}}
            = D_{i} - L_{i}
        $
    and
        $
            (\onematrix - \NagyIso_{V_{i}})^{-1}
            = \tilde{D}_{i} + \tilde{L}_{i}
        $,
    \idest $\onematrix - \NagyIso_{V_{i}}$ has bounded inverse.
    Setting $P_{i} \coloneqq \NagyIso_{V_{i}}\NagyIso_{V_{i}}^{\ast}$,
    one obtains

        \begin{shorteqnarray}
            \onematrix - \NagyUtr_{V_{i}}
                &\eqcrefoverset{eq:schaeffer-nagy-dilation:unitary:sig:article-free-raj-dahya}{=}
                    &\begin{matrix}{cc}
                        \onematrix - \NagyIso_{V_{i}} &P_{i}\\
                        \zeromatrix &(\onematrix - \NagyIso_{V_{i}})^{\ast}\\
                    \end{matrix}
            ~\text{and}\\
            (\onematrix - \NagyUtr_{V_{i}})^{-1}
                &= &\begin{matrix}{cc}
                        (\onematrix - \NagyIso_{V_{i}})^{-1}
                            &-(\onematrix - \NagyIso_{V_{i}})^{-1}
                            P_{i}
                            ((\onematrix - \NagyIso_{V_{i}})^{-1})^{\ast}\\
                        \zeromatrix
                            &((\onematrix - \NagyIso_{V_{i}})^{-1})^{\ast}\\
                    \end{matrix}\\
                &= &\begin{matrix}{cc}
                        \tilde{D}_{i} + \tilde{L}_{i}
                            &-(\tilde{D}_{i} + \tilde{L}_{i})
                            P_{i}
                            (\tilde{D}_{i} + \tilde{L}_{i})^{\ast}\\
                        \zeromatrix
                            &(\tilde{D}_{i} + \tilde{L}_{i})^{\ast}\\
                    \end{matrix},
        \end{shorteqnarray}

    \continueparagraph
    \idest $\onematrix - \NagyUtr_{V_{i}}$ has bounded inverse on
       $
            H_{1}
            = H_{0} \otimes \complex^{2}
            = \HilbertRaum \otimes \ell^{2}(\naturalsZero) \otimes \complex^{2}
        $.
    It follows that the free unitary dilation
        $(H_{1},r_{1},\{U_{i}\}_{i \in I})$
    of $(\HilbertRaum,\{T_{i}\}_{i \in I})$
    determined in the above proof
    consists of strongly continuous unitary representations
    whose generators, $\{B_{i}\}_{i \in I}$,
    are bounded operators
    which can be explicitly computed via

        \begin{displaymath}
            B_{i}
                = \onematrix - 2\opResolvent{\NagyUtr_{V_{i}}}{1}
                = \begin{matrix}{cc}
                    \onematrix - 2(\tilde{D}_{i} + \tilde{L}_{i})
                        &2(\tilde{D}_{i} + \tilde{L}_{i})
                        P_{i}
                        (\tilde{D}_{i} + \tilde{L}_{i})^{\ast}\\
                    \zeromatrix
                        &\onematrix - 2(\tilde{D}_{i} + \tilde{L}_{i})^{\ast}\\
                \end{matrix}
        \end{displaymath}

    \continueparagraph
    for each $i \in I$.
\end{rem}

\begin{rem}
    Switching between the continuous- and discrete-time
    setting via the means of co-generators
    is an approach due to S\l{}oci\'{n}ski.
    In his proof of the dilation
    of commuting $\Cnought$\=/semigroups,
    use is made of parameterised analytic functions
        ${e_{t} : \complex \setminus \{1\} \to \complex}$
    defined by $e_{t}(z) = e^{t\tfrac{z+1}{z-1}}$.
    Semigroups can be recovered from their co-generators
    using the $e_{t}$ functions,
    however this involves high-powered machinery
    and functional calculi,
    which we have avoided here entirely.
    We refer the reader to \cite{%
        Slocinski1974,%
        Slocinski1982%
    }
    as well as \cite[III.8.1--2]{Nagy1970}
    for the details.
\end{rem}

\begin{rem}
    In discrete-time we considered primarily
    the dilation results of Sz.-Nagy and Bo\.{z}ejko
    and studied the possibility of continuous-time counterparts.
    In addition to these two,
    there are other unitary dilation results
    for special non-commutating families,
    \exempli
    in \cite{AtkinsonRamsey2017Article}
    dilations of freely independent contractions
    to freely independent unitaries
    are considered
    within the context of free probability theory.
    It would be of interest to know if such notions
    can be lifted to the continuous-time context.
\end{rem}

%% ********** END OF FILE: body/sec-3-result-concrete/sec-3-free-dilation.tex **********

%% ********************************************************************************
%% FILE: body/sec-3-result-concrete/sec-4-trotter-kato.tex
%% ********************************************************************************

\subsection[Trotter--Kato theorem for co-generators]{Trotter--Kato theorem for co-generators}
\label{sec:result-concrete:trotter-kato:sig:article-free-raj-dahya}

\firstparagraph
The above proof of free dilations
suggests that the \Second main result can be achieved,
provided there is a continuous dependency
of semigroups on their co-generators.%
\footnote{%
    We refer the reader to the discussion at the start of
        \S{}\ref{sec:result-concrete:cogenerators:sig:article-free-raj-dahya}
    for basic facts about co-generators.
}
To this end the Trotter--Kato theorem comes to our aid.
This fundamental result in semigroup theory
consists of two parts:
a list of equivalent properties
and strictly sufficient conditions guaranteeing one
(and thus all) of the equivalent properties.
For our purposes, we shall only need the latter.
Note however, that the presentation
(see \exempli
    \cite[Theorem~I.7.3]{Goldstein1985semigroups},
    \cite[Theorem~III.4.8]{EngelNagel2000semigroupTextBook}%
)
of the Trotter--Kato theorem
is typically formulated in terms of sequences.
As such and for the sake of completeness,
we present and prove the equivalences here,
reformulated for our purposes.

\begin{thm}[cf. Trotter--Kato, 1958/9]
\makelabel{thm:trotter-kato-cogen:sig:article-free-raj-dahya}
    Let $\Omega$ be a compact topological space.
    Let $\{T_{\omega}\}_{\omega \in \Omega}$
    be a family of contractive $\Cnought$\=/semigroups
    on a Banach space $\BanachRaum$
    and let $A_{\omega}$, $V_{\omega}$
    denote the generator \resp co-generator
    of $T_{\omega}$ for each $\omega \in \Omega$.
    Then the following are equivalent:

    \begin{kompaktenum}{\bfseries (a)}[\rtab]
    \item\punktlabel{T}
        $\{T_{\omega}\}_{\omega \in \Omega}$
            is $\toplocSOT$\=/continuous in the index set,%
        \footnoteref{ft:continuity-defn:sec:result-concrete:trotter-kato:sig:article-free-raj-dahya}

    \item\punktlabel{R}
        ${\Omega \ni \omega \mapsto \opResolvent{A_{\omega}}{\lambda} \in \BoundedOps{\BanachRaum}}$
        is uniformly $\topSOT$\=/continuous
        for $\lambda$ on compact subsets
        of the open right half-plane $\{z \in \complex \mid \Re(z) > 0\}$;

    \item\punktlabel{R1}
        ${\Omega \ni \omega \mapsto \opResolvent{A_{\omega}}{1} \in \BoundedOps{\BanachRaum}}$
        is $\topSOT$\=/continuous;

    \item\punktlabel{V}
        ${\Omega \ni \omega \mapsto V_{\omega} \in \OpSpaceC{\BanachRaum}}$
        is $\topSOT$\=/continuous;
    \end{kompaktenum}

    \nvraum{1.5}

\end{thm}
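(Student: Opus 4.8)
Two of the four equivalences are purely formal, and I would dispose of them first.  The implication (b) $\Rightarrow$ (c) is immediate, since $\{1\}$ is a compact subset of the open right half-plane; and (c) $\Leftrightarrow$ (d) is immediate too, since $V_{\omega} = \onematrix - 2\opResolvent{A_{\omega}}{1}$, so that $\topSOT$\=/continuity of $\omega \mapsto V_{\omega}$ and of $\omega \mapsto \opResolvent{A_{\omega}}{1}$ are the very same statement.  It therefore remains to prove (a) $\Rightarrow$ (c), (c) $\Rightarrow$ (b), and the substantial implication (c) $\Rightarrow$ (a).  Throughout I would use, without further comment, the contraction bound $\norm{\opResolvent{A_{\omega}}{\lambda}} \leq 1/\Re(\lambda)$ for $\Re(\lambda) > 0$, which holds uniformly in $\omega$.

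\textbf{The two routine directions.}  For (a) $\Rightarrow$ (c) I would invoke the Laplace representation $\opResolvent{A_{\omega}}{1}\xi = \int_{0}^{\infty}e^{-t}\,T_{\omega}(t)\xi\,\dee t$: given $\xi$, $\omega_{0}$ and $\eps > 0$, choose $M$ with $2e^{-M}\norm{\xi} < \eps/2$ and estimate $\norm{(\opResolvent{A_{\omega}}{1} - \opResolvent{A_{\omega_{0}}}{1})\xi} \leq \sup_{t\in[0,M]}\norm{(T_{\omega}(t) - T_{\omega_{0}}(t))\xi} + 2e^{-M}\norm{\xi}$; the first summand tends to $0$ as $\omega \to \omega_{0}$ by (a) applied to the compact set $[0,M]$.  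For (c) $\Rightarrow$ (b) I would propagate strong continuity of $\omega \mapsto \opResolvent{A_{\omega}}{1}$ across the half-plane by the Neumann expansion $\opResolvent{A_{\omega}}{\lambda} = \sum_{k\geq 0}(\mu - \lambda)^{k}\opResolvent{A_{\omega}}{\mu}^{k+1}$, which converges in operator norm, with tails uniform in $\omega$, on the disc $\abs{\lambda - \mu} < \Re(\mu)$ (by the contraction bound).  Since every $\omega \mapsto \opResolvent{A_{\omega}}{\mu}^{k}\xi$ is continuous --- being a finite product of members of the strongly continuous, uniformly bounded family $\opResolvent{A_{\omega}}{\mu}$ --- continuity at a point $\mu$ entails continuity on that whole disc; the set of such $\mu$ is thus open, closed, and nonempty in the connected right half-plane, hence is everything, and covering a compact $K$ by finitely many such discs, on each of which the Neumann tail is uniform in $(\lambda,\omega)$, upgrades this to the uniform continuity for $\lambda \in K$ that (b) demands.

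\textbf{The crux: (c) $\Rightarrow$ (a).}  Fix $\omega_{0}$, $\xi \in \BanachRaum$, $T > 0$, $\eps > 0$; the goal is a neighbourhood of $\omega_{0}$ on which $\sup_{t\in[0,T]}\norm{(T_{\omega}(t) - T_{\omega_{0}}(t))\xi} < \eps$.  Using $\norm{T_{\omega}(t)} \leq 1$ I would first replace $\xi$ by some $\xi' \in \dom(A_{\omega_{0}}^{2})$ with $\norm{\xi - \xi'}$ small (density of $\dom(A_{\omega_{0}}^{2})$).  The key device for turning the single-semigroup argument into one uniform over $\Omega$ is then: put $\zeta \coloneqq (\onematrix - A_{\omega_{0}})^{2}\xi' \in \BanachRaum$, and for \emph{every} $\omega$ set $\xi_{\omega} \coloneqq \opResolvent{A_{\omega}}{1}^{2}\zeta$.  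One checks that $\xi_{\omega} \in \dom(A_{\omega}^{2})$ for every $\omega$, that $\xi_{\omega_{0}} = \xi'$ and $\xi_{\omega} \to \xi'$ as $\omega \to \omega_{0}$ (by the already-established continuity of $\omega \mapsto \opResolvent{A_{\omega}}{1}^{2}$), and --- crucially --- that $A_{\omega}^{2}\xi_{\omega} = (\opResolvent{A_{\omega}}{1} - \onematrix)^{2}\zeta$, whence $\norm{A_{\omega}^{2}\xi_{\omega}} \leq 4\norm{\zeta}$ \emph{uniformly in} $\omega$ (and likewise $\norm{A_{\omega_{0}}^{2}\xi'} \leq 4\norm{\zeta}$).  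With the Yosida approximants $\opYosidaApprox{T_{\omega}}{\lambda}(t) \coloneqq e^{t\opYosidaApprox{A_{\omega}}{\lambda}}$, $\opYosidaApprox{A_{\omega}}{\lambda} \coloneqq \lambda^{2}\opResolvent{A_{\omega}}{\lambda} - \lambda\onematrix$, the standard contraction estimate reads $\norm{(\opYosidaApprox{T_{\omega}}{\lambda}(t) - T_{\omega}(t))\eta} \leq t\,\norm{(\opYosidaApprox{A_{\omega}}{\lambda} - A_{\omega})\eta} = t\,\norm{\opResolvent{A_{\omega}}{\lambda}A_{\omega}^{2}\eta} \leq \tfrac{t}{\lambda}\norm{A_{\omega}^{2}\eta}$ for $\eta \in \dom(A_{\omega}^{2})$ (see the facts recalled in the proof of \Cref{prop:hille-yosida-cogenerator}).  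I would then write
\[
  \norm{\bigl(T_{\omega}(t) - T_{\omega_{0}}(t)\bigr)\xi}
  \leq 2\norm{\xi - \xi'} + \norm{\xi' - \xi_{\omega}} + \norm{T_{\omega}(t)\xi_{\omega} - T_{\omega_{0}}(t)\xi'},
\]
and, inserting $\opYosidaApprox{T_{\omega}}{\lambda}(t)\xi_{\omega}$ and $\opYosidaApprox{T_{\omega_{0}}}{\lambda}(t)\xi'$, bound the last norm by the sum of: $\norm{(\opYosidaApprox{T_{\omega}}{\lambda}(t) - T_{\omega}(t))\xi_{\omega}} \leq 4T\norm{\zeta}/\lambda$; $\norm{\opYosidaApprox{T_{\omega}}{\lambda}(t)(\xi_{\omega} - \xi')} \leq \norm{\xi_{\omega} - \xi'}$; $\norm{(\opYosidaApprox{T_{\omega}}{\lambda}(t) - \opYosidaApprox{T_{\omega_{0}}}{\lambda}(t))\xi'}$; and $\norm{(\opYosidaApprox{T_{\omega_{0}}}{\lambda}(t) - T_{\omega_{0}}(t))\xi'} \leq 4T\norm{\zeta}/\lambda$.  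The first and last are uniform in $(\omega,t)$ and $\to 0$ as $\lambda \to \infty$; for a \emph{fixed} such $\lambda$ the third tends to $0$ uniformly for $t\in[0,T]$ as $\omega\to\omega_{0}$, by expanding $\opYosidaApprox{T_{\omega}}{\lambda}(t) = e^{-\lambda t}\sum_{k\geq 0}\tfrac{(\lambda^{2}t)^{k}}{k!}\opResolvent{A_{\omega}}{\lambda}^{k}$, whose tail is uniform in $(\omega,t)$ by the contraction bound and whose finitely many leading terms are continuous in $\omega$.  So: choose $\xi'$, then $\lambda$, then shrink the neighbourhood --- the total is $<\eps$.

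\textbf{The main obstacle.}  The only genuine difficulty is this last step, and specifically the failure of the na\"{i}ve route (approximate $\xi$ by a vector of $\dom(A_{\omega_{0}}^{2})$ and run the classical single-semigroup Yosida argument): such a vector need not lie in $\dom(A_{\omega}^{2})$ for the competing indices $\omega$.  The fix is exactly the regularisation that replaces $\xi'$ by $\opResolvent{A_{\omega}}{1}^{2}\zeta$ with $\zeta$ a \emph{fixed} vector --- it lands in every $\dom(A_{\omega}^{2})$ at once and keeps $\norm{A_{\omega}^{2}\xi_{\omega}}$ bounded by $4\norm{\zeta}$ uniformly in $\omega$, which is precisely what converts the usual sequential Trotter--Kato estimate into one valid over all of $\Omega$.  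I note in passing that none of the implications above actually uses compactness of $\Omega$ --- each argument is either local in $\omega$ or uses only compactness of subsets of $\realsNonNeg$ or $\complex$ --- so that hypothesis is retained for the applications rather than for the theorem itself.
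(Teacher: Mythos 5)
Your proposal is correct, but for the one substantial implication it takes a genuinely different route from the paper. The formal steps coincide: (b)$\Rightarrow$(c) by specialising $\lambda=1$, and (c)$\Leftrightarrow$(d) via $V_{\omega} = \onematrix - 2\opResolvent{A_{\omega}}{1}$. For the easy direction the paper proves (a)$\Rightarrow$(b) directly from the Laplace representation with uniformity in $\lambda$, whereas you go (a)$\Rightarrow$(c)$\Rightarrow$(b) with a Neumann-series plus connectedness propagation across the half-plane; both are sound. The real divergence is in (c)/(d)$\Rightarrow$(a): the paper builds the diagonal contraction $V_{\Omega}$ on $\Cts{\Omega}{\BanachRaum}$, verifies the Eisner--Zwart conditions for being a co-generator (the dense-range condition via a lower-semicontinuity argument and Michael's selection theorem), recovers the fibres $T_{\omega}$ from the resulting semigroup $T_{\Omega}$ through the polynomial-approximation lemma (\Cref{lemm:poly-approx-cogenerator:sig:article-free-raj-dahya}), and then extracts uniformity on compact time sets from norm-compactness of orbits of $T_{\Omega}$. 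You instead run a quantitative, index-uniform Yosida argument, whose key device is the regularisation $\xi_{\omega} \coloneqq \opResolvent{A_{\omega}}{1}^{2}\zeta$ with $\zeta \coloneqq (\onematrix - A_{\omega_{0}})^{2}\xi'$: this places one fixed datum simultaneously in every $\dom(A_{\omega}^{2})$ with $\norm{A_{\omega}^{2}\xi_{\omega}} = \norm{(\opResolvent{A_{\omega}}{1}-\onematrix)^{2}\zeta} \leq 4\norm{\zeta}$ uniformly in $\omega$, which is exactly what converts the classical sequential Trotter--Kato estimate into one valid over the whole index set; the bookkeeping (the $t/\lambda$ estimate, the expansion of $e^{tA^{(\lambda)}}$ with $\omega$-uniform tails, the final three-stage choice of $\xi'$, $\lambda$, and the neighbourhood) is in order. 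As to what each approach buys: yours is more elementary — no selection theorem, no Eisner--Zwart characterisation, no polynomial lemma — and, as you correctly observe, it never uses compactness of $\Omega$, so it in fact proves a slightly more general statement; the paper's route pays for compactness (needed for Michael's selection theorem) precisely because the diagonal semigroup $T_{\Omega}$ it constructs is the content of \Cref{prop:first-diagonalisation:sig:article-free-raj-dahya} and feeds the second diagonalisation and the reduction results for evolution families in \S{}\ref{sec:applications:sig:article-free-raj-dahya} — a by-product your argument does not furnish.
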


\footnotetext[ft:continuity-defn:sec:result-concrete:trotter-kato:sig:article-free-raj-dahya]{%
    \idest
    ${\Omega \ni \omega \mapsto T_{\omega}}$
    is $\toplocSOT$\=/continuous.
}

In order to prove this, we need the following technical means,
which are not required in the standard presentation of the result
in which the index set is discrete.

\begin{prop}
\makelabel{prop:trotter-kato:selection:lsc:sig:article-free-raj-dahya}
    Let $\Omega$ be topological space,
        $\BanachRaum$ a Banach space,
    and
        $\{W_{\omega}\}_{\omega \in \Omega} \subseteq \BoundedOps{\BanachRaum}$
        a strongly continuous family of operators each with dense image.
    Then for all
        $g \in \Cts{\Omega}{\BanachRaum}$
        and
        $r > 0$,

        \begin{shorteqnarray}
            P_{r}
            \coloneqq
            \{
                (\omega,\xi)
                \in \Omega \times \xi
                \mid
                \norm{W_{\omega}\xi - g(\omega)}
                \leq r
            \}
        \end{shorteqnarray}

    \continueparagraph
    is lower semi-continuous,%
    \footnoteref{ft:1:\beweislabel}
    and $\Proj_{\Omega}P_{r} = \Omega$.
\end{prop}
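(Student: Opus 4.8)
The plan is to treat the two assertions separately. The equality $\Proj_{\Omega}P_{r} = \Omega$ is essentially a restatement of the density hypothesis: fix $\omega \in \Omega$; since $\ran(W_{\omega})$ is dense in $\BanachRaum$ and $r > 0$, there is $\xi \in \BanachRaum$ with $\norm{W_{\omega}\xi - g(\omega)} < r$, so $(\omega,\xi) \in P_{r}$ and the fibre is non-empty. It is convenient to record at this point the \emph{strict fibre} $U(\omega) \coloneqq \{\xi \in \BanachRaum \mid \norm{W_{\omega}\xi - g(\omega)} < r\}$, which the above shows to be non-empty for every $\omega$ and which will carry the rest of the argument.

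For lower semi-continuity I would use the standard criterion (as in the footnoted definition): $P_{r}$, viewed as the set-valued map $\omega \mapsto P_{r}(\omega) \coloneqq \{\xi \mid (\omega,\xi) \in P_{r}\}$, is lower semi-continuous precisely when for every $\omega_{0} \in \Omega$ and every open $O \subseteq \BanachRaum$ meeting $P_{r}(\omega_{0})$ there is a neighbourhood $N$ of $\omega_{0}$ with $P_{r}(\omega) \cap O \neq \emptyset$ for all $\omega \in N$. The delicate point is that a witness $\xi_{0} \in P_{r}(\omega_{0}) \cap O$ might satisfy $\norm{W_{\omega_{0}}\xi_{0} - g(\omega_{0})} = r$ on the nose, in which case strong continuity of $\omega \mapsto W_{\omega}\xi_{0}$ together with continuity of $g$ only yields $\norm{W_{\omega}\xi_{0} - g(\omega)} \to r$, not $\leq r$, as $\omega \to \omega_{0}$; so $\xi_0$ need not lie in $P_r(\omega)$ for nearby $\omega$.

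The remedy is the observation that the closed fibre is the closure of the strict one: $P_{r}(\omega_{0}) \subseteq \quer{U(\omega_{0})}$ for each $\omega_{0}$. To prove this, take $\xi \in P_{r}(\omega_{0})$ and put $\eta \coloneqq W_{\omega_{0}}\xi - g(\omega_{0})$, so $\norm{\eta} \leq r$; using $r > 0$ and density of $\ran(W_{\omega_{0}})$, choose $\zeta \in \BanachRaum$ with $\norm{\eta + W_{\omega_{0}}\zeta} < r$. Then for every $t \in (0,1]$ one computes $\norm{W_{\omega_{0}}(\xi + t\zeta) - g(\omega_{0})} = \norm{(1-t)\eta + t(\eta + W_{\omega_{0}}\zeta)} \leq (1-t)r + t\norm{\eta + W_{\omega_{0}}\zeta} < r$, so $\xi + t\zeta \in U(\omega_{0})$; since $\xi + t\zeta \to \xi$ in norm as $t \downarrow 0$, we get $\xi \in \quer{U(\omega_{0})}$.

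Finally I would combine these. Given $\omega_{0}$ and an open $O$ with $P_{r}(\omega_{0}) \cap O \neq \emptyset$, the inclusion $P_{r}(\omega_{0}) \subseteq \quer{U(\omega_{0})}$ and openness of $O$ force $U(\omega_{0}) \cap O \neq \emptyset$; pick $\xi_{*}$ in this intersection, so $\norm{W_{\omega_{0}}\xi_{*} - g(\omega_{0})} < r$. As $\omega \mapsto W_{\omega}\xi_{*}$ is norm-continuous (strong continuity of the family at the fixed vector $\xi_{*}$) and $g$ is continuous, $\omega \mapsto \norm{W_{\omega}\xi_{*} - g(\omega)}$ is continuous at $\omega_{0}$, hence there is a neighbourhood $N$ of $\omega_{0}$ on which it stays $< r$; then $\xi_{*} \in P_{r}(\omega) \cap O$ for all $\omega \in N$, which is exactly the criterion. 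The only real obstacle is the boundary case isolated in the second paragraph — it is precisely the phenomenon absent from the classical discrete-index Trotter--Kato setup — and the density-plus-convexity step in the third paragraph is the tool that removes it; this lower semi-continuity is what will subsequently allow a continuous selection (Michael's theorem) to be invoked.
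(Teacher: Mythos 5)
Your proposal is correct and follows essentially the same route as the paper's proof: the projection claim from density, then replacing a possibly-boundary witness by a strict-inequality witness inside the open set via a convex perturbation using density of $\ran(W_{\omega_{0}})$ (your $\xi+t\zeta=(1-t)\xi+t(\xi+\zeta)$ is exactly the paper's convex combination $(1-t)\xi_{0}+t\xi_{1}$), and finally norm-continuity of $\omega\mapsto W_{\omega}\xi_{*}-g(\omega)$ to get the neighbourhood. Your packaging of the middle step as the statement that each fibre lies in the closure of the strict fibre is only a cosmetic reorganisation of the same argument.
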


    \footnotetext[ft:1:\beweislabel]{%
        For topological spaces, $X, Y$,
        a subset $S \subseteq X \times Y$,
        or equivalent a multi-valued
        map ${S : X \to \Pot(Y)}$,
        is called \highlightTerm{lower semi-continuous}
        if $\{x \in X \mid S(x) \cap V \neq \emptyset\}$
        is open for all open $V \subseteq Y$.
    }

    \begin{proof}
        That $\Proj_{\Omega}P_{r} = \Omega$ follows from the assumption
        that the $W_{\omega}$ have dense images.
        Let $V \subseteq \BanachRaum$ be an arbitrary non-empty open set
        and fix
            $(\omega_{0}, \xi_{0}) \in P_{r} \cap \Omega \times V$.
        We show that there is an open neighbourhood $U \subseteq \Omega$
        of $\omega_{0}$,
        such that for each $\omega \in U$
        a vector $\xi \in V$ exists with $(\omega,\xi) \in P_{r}$.
        Since $W_{\omega_{0}}$ has dense image,
        one can find $\xi_{1} \in \BanachRaum$
        such that
            $\norm{W_{\omega_{0}}\xi_{1} - g(\omega_{0})} < r$
        strictly.
        Now since $V$ is locally convex,
        one can find $t \in (0,\:1)$,
        such that $\xi_{t} \coloneqq (1-t)\xi_{0} + t\xi_{1} \in V$.
        Moreover,
        since $(\omega_{0},\xi_{0}) \in P_{r}$
        one has
            $
                \norm{
                    W_{\omega_{0}}\xi_{t} - g(\omega_{0})
                }
                \leq (1-t)\norm{W_{\omega_{0}}\xi_{0} - g(\omega_{0})}
                    + t\norm{W_{\omega_{0}}\xi_{1} - g(\omega_{0})}
                < r
            $
        strictly.
        Since
            ${f:\Omega \ni \omega \mapsto W_{\omega}\xi_{t} - g(\omega) \in \BanachRaum}$
        is norm-continuous,
        it follows that
            $U \coloneqq f^{-1}(\{\eta \in \BanachRaum \mid \norm{\eta} < r\})$
        is an open subset containing $\omega_{0}$.
        For each $\omega \in U$ one has
            $
                \norm{
                    W_{\omega}\xi_{t} - g(\omega)
                }
                = \norm{f(\omega)}
                < r
            $
        and thus
            $(\omega,\xi_{t}) \in P_{r} \cap \Omega \times V$.
    \end{proof}

\begin{prop}
\makelabel{prop:trotter-kato:selection:selection:sig:article-free-raj-dahya}
    Let $\Omega$ be a compact topological space,
        $\BanachRaum$ a Banach space,
    and
        $\{W_{\omega}\}_{\omega \in \Omega} \subseteq \BoundedOps{\BanachRaum}$
        a strongly continuous family of operators each with dense image.
    Then for all
        $g \in \Cts{\Omega}{\BanachRaum}$
        and
        $\eps > 0$,
    there exists $f \in \Cts{\Omega}{\BanachRaum}$
    such that

        \begin{restoremargins}
        \begin{equation}
        \label{eq:cogenerator-approx-michael-selection:sig:article-free-raj-dahya}
            \sup_{\omega \in \Omega}
                \norm{W_{\omega}f(\omega) - g(\omega)}
            < \eps
        \end{equation}
        \end{restoremargins}

    \continueparagraph
    holds.
\end{prop}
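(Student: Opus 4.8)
The plan is to realise $f$ as a continuous selection of the set-valued map supplied by \Cref{prop:trotter-kato:selection:lsc:sig:article-free-raj-dahya}, by appealing to Michael's continuous selection theorem. Fix $r \coloneqq \tfrac{\eps}{2}$ and consider ${\Phi : \Omega \to \Pot(\BanachRaum)}$, $\Phi(\omega) \coloneqq P_{r}(\omega) = \{\xi \in \BanachRaum \mid \norm{W_{\omega}\xi - g(\omega)} \leq r\}$, with $P_{r}$ as in that proposition. The hypotheses of Michael's theorem are met: lower semi-continuity of $\Phi$ together with $\Proj_{\Omega}P_{r} = \Omega$ (whence $\Phi(\omega) \neq \emptyset$ for all $\omega$) is precisely the content of \Cref{prop:trotter-kato:selection:lsc:sig:article-free-raj-dahya}, and each value $\Phi(\omega)$ is closed and convex, being the preimage of the closed ball $\{\eta \in \BanachRaum \mid \norm{\eta} \leq r\}$ under the norm-continuous affine map ${\xi \mapsto W_{\omega}\xi - g(\omega)}$.

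Since $\Omega$ is compact Hausdorff, hence paracompact, and $\BanachRaum$ is complete, Michael's theorem then yields $f \in \Cts{\Omega}{\BanachRaum}$ with $f(\omega) \in \Phi(\omega)$ for every $\omega \in \Omega$, i.e. $\norm{W_{\omega}f(\omega) - g(\omega)} \leq r = \tfrac{\eps}{2}$ for all $\omega$; taking the supremum over $\omega$ gives $\sup_{\omega \in \Omega}\norm{W_{\omega}f(\omega) - g(\omega)} \leq \tfrac{\eps}{2} < \eps$, which is \eqcref{eq:cogenerator-approx-michael-selection:sig:article-free-raj-dahya}. (The factor $\tfrac{1}{2}$ serves only to convert the non-strict estimate produced by the selection theorem into the strict one required.)

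The only substantive ingredient is the lower semi-continuity of $\Phi$, and this is already proved in \Cref{prop:trotter-kato:selection:lsc:sig:article-free-raj-dahya}, so no genuine obstacle remains. Should one prefer to avoid the selection theorem altogether, the same $f$ can be constructed by hand by reusing the idea in the proof of \Cref{prop:trotter-kato:selection:lsc:sig:article-free-raj-dahya}: for each $\omega_{0} \in \Omega$ pick, by density of $\ran(W_{\omega_{0}})$, a vector $\xi_{\omega_{0}}$ with $\norm{W_{\omega_{0}}\xi_{\omega_{0}} - g(\omega_{0})} < \tfrac{\eps}{2}$; since ${\omega \mapsto W_{\omega}\xi_{\omega_{0}} - g(\omega)}$ is norm-continuous, there is an open $U_{\omega_{0}} \ni \omega_{0}$ on which $\norm{W_{\omega}\xi_{\omega_{0}} - g(\omega)} < \tfrac{\eps}{2}$; by compactness extract a finite subcover $U_{\omega_{1}},\ldots,U_{\omega_{n}}$, take a continuous partition of unity $\{\rho_{j}\}_{j=1}^{n}$ subordinate to it, and set $f \coloneqq \sum_{j=1}^{n}\rho_{j}\,\xi_{\omega_{j}} \in \Cts{\Omega}{\BanachRaum}$. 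Using $\sum_{j}\rho_{j} \equiv 1$ and convexity of the norm, one then gets, for each $\omega$, $\norm{W_{\omega}f(\omega) - g(\omega)} = \norm{\sum_{j=1}^{n}\rho_{j}(\omega)(W_{\omega}\xi_{\omega_{j}} - g(\omega))} \leq \sum_{j=1}^{n}\rho_{j}(\omega)\norm{W_{\omega}\xi_{\omega_{j}} - g(\omega)} < \tfrac{\eps}{2}$, since every summand with $\rho_{j}(\omega) > 0$ has $\omega \in \supp(\rho_{j}) \subseteq U_{\omega_{j}}$; taking the supremum yields \eqcref{eq:cogenerator-approx-michael-selection:sig:article-free-raj-dahya}.
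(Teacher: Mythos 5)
Your main argument is exactly the paper's proof: apply Michael's selection theorem to the lower semi-continuous map $\omega \mapsto P_{\eps/2}(\omega)$ from \Cref{prop:trotter-kato:selection:lsc:sig:article-free-raj-dahya}, whose values are non-empty, closed and convex, and then pass from the bound $\eps/2$ to the strict inequality. Your alternative partition-of-unity construction is also correct and arguably more elementary, though note that both it and the selection-theorem route tacitly require $\Omega$ to be Hausdorff (so that compactness gives paracompactness/normality), an assumption the paper itself leaves implicit in its appeal to Michael's theorem.
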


    \begin{proof}
        Consider the lower semi-continuous set
            $P_{\eps/2}$
        in \Cref{prop:trotter-kato:selection:lsc:sig:article-free-raj-dahya}.
        Since in addition
            $\{\xi \in \BanachRaum \mid (\omega,\xi) \in P_{\eps/2}\}$
        is a non-empty closed convex subset of $\BanachRaum$
        for each $\omega \in \Omega$,
        Michael's selection theorem
        (%
            see
            \cite[Theorem~3.2'']{Michael1956ArticleI},
            \cite[Theorem~1]{Michael1956ArticleII}%
        )
        can be applied
        to obtain a continuous function
        $f \in \Cts{\Omega}{\BanachRaum}$
        such that $\Gph{f} \subseteq P_{\eps/2}$.
        By construction of $P_{\eps/2}$, it follows
        that $
            \sup_{\omega \in \Omega}
                \norm{W_{\omega}f(\omega) - g(\omega)}
            \leq \eps/2 < \eps
        $.
    \end{proof}

We can now proceed to prove \Cref{thm:trotter-kato-cogen:sig:article-free-raj-dahya}.
The following proof is inspired by the approach presented
in \cite[Theorem~III.4.8]{EngelNagel2000semigroupTextBook}.

    \def\beweislabel{thm:trotter-kato-cogen:sig:article-free-raj-dahya}
    \begin{proof}[of \Cref{\beweislabel}]
        The implication
            \punktcref{T}{}\ensuremath{\implies}{}\punktcref{R}
        is a straightforward consequence of the
        integral representation of the resolvent

            \begin{displaymath}
                \opResolvent{A_{\omega}}{\lambda}
                    = \int_{t=0}^{\infty}
                            e^{- \lambda t}
                            T_{\omega}(t)
                        \:\dee t
            \end{displaymath}

        \continueparagraph
        for $\lambda \in \complex$ with $\Re \lambda > 0$,
        where the integral is computed strongly via Bochner integrals
        (see \exempli \cite[Theorem~II.1.10]{EngelNagel2000semigroupTextBook}).
        The implication
            \punktcref{R}{}\ensuremath{\implies}{}\punktcref{R1}
        is obvious.
        And recalling that
            $
                V_{\omega} = \onematrix - 2\opResolvent{A_{\omega}}{1}
            $
            for $\omega \in \Omega$,
        the equivalence
        \punktcref{R1}{}\ensuremath{\iff}{}\punktcref{V}
        is clear.
        In the remainder of the proof
        we establish the final implication
        \punktcref{V}{}\ensuremath{\implies}{}\punktcref{T}.

        \paragraph{Diagonal construction:}
        Consider the Banach space
            $\BanachRaum_{\Omega} \coloneqq \Cts{\Omega}{\BanachRaum}$
        of bounded continuous functions on $\Omega$ with values in $\BanachRaum$,
        endowed with the (complete) norm

            \begin{displaymath}
                \norm{f}_{\Omega}
                \coloneqq
                \sup_{\omega \in \Omega}\norm{f(\omega)}
            \end{displaymath}

        \continueparagraph
        for $f \in \Cts{\Omega}{\BanachRaum}$.%
        \footnote{%
            Note that no conditions need be imposed
            upon $\Omega$ or the Banach space $\BanachRaum$
            in oder to obtain the Cauchy-completeness of
            the normed vector space
                $(\Cts{\Omega}{\BanachRaum},\norm{\cdot}_{\Omega})$
            (see \exempli
                \cite[Theorem~7.9~a)]{Kelley1955Book}%
            ).
        }

        Under assumption \punktcref{V},
        \idest that
            ${\Omega \ni \omega \mapsto V_{\omega} \in \OpSpaceC{\BanachRaum}}$
        is $\topSOT$\=/continuous,
        it is routine to verify that
            ${
                V_{\Omega} : \Cts{\Omega}{\BanachRaum} \to \FnRm{\Omega}{\BanachRaum}
            }$
        defined by

            \begin{displaymath}
                (V_{\Omega}f)(\omega)
                    \coloneqq
                        V_{\omega}\:f(\omega)
            \end{displaymath}

        \continueparagraph
        for $f \in \Cts{\Omega}{\BanachRaum}$,
        $\omega\in\Omega$
        is linear,
        maps $\Cts{\Omega}{\BanachRaum}$ to $\Cts{\Omega}{\BanachRaum}$,
        and satisfies
            $
                \norm{V_{\Omega}}
                = \sup_{\omega \in \Omega}\norm{V_{\omega}}
                \leq 1
            $,
        \idest $V_{\Omega} \in \OpSpaceC{\BanachRaum_{\Omega}}$.
        We refer to this as a \emph{diagonal construction}.
        We now claim that $V_{\Omega}$
        is the co-generator of a (necessarily unique)
        contractive $\Cnought$\=/semigroup.
        By \cite[Theorem~2.2]{EisnerZwart2008cogenerator},
        this holds if and only if

        \begin{enumerate}[label={\upshape\bfseries {\roman*})}]
            \item\label{cogen:1:\beweislabel}
                $\onematrix - V_{\Omega}$ is injective,
            \item\label{cogen:2:\beweislabel}
                $
                    \norm{
                        (\onematrix - V_{\Omega})
                        \opResolvent{V_{\Omega}}{\mu}
                    }
                    \leq \frac{2}{\mu + 1}
                $
                for all $\mu > 1$, and
            \item\label{cogen:3:\beweislabel}
                $\onematrix - V_{\Omega}$ has dense range
        \end{enumerate}

        \continueparagraph
        all hold.
        Property \ref{cogen:1:\beweislabel} for $V_{\Omega}$
        easily follows from \ref{cogen:1:\beweislabel} holding for all $V_{\omega}$.
        Towards \ref{cogen:2:\beweislabel},
        let $\mu > 1$ and $f \in \BanachRaum_{\Omega}$ be arbitrary.
        Since $V_{\Omega}$ and each $V_{\omega}$ are contractions,
        one has
            $\norm{\mu^{-1}V_{\Omega}} < 1$
            and
            $\norm{\mu^{-1}V_{\omega}} < 1$
        for each $\omega \in \Omega$.
        One thus computes

            \begin{shorteqnarray}
                \norm{
                    (\onematrix - V_{\Omega})
                    \opResolvent{V_{\Omega}}{\mu}
                    f
                }_{\Omega}
                &= &\norm{
                    \sum_{k=0}^{\infty}
                        \mu^{-(k+1)}
                        (\onematrix - V_{\Omega})
                        V_{\Omega}^{k}
                        f
                    }_{\Omega}
                    \\
                &= &\sup_{\omega \in \Omega}
                    \norm{
                        \sum_{k=0}^{\infty}
                        \Big(
                            \mu^{-(k+1)}
                            (\onematrix - V_{\Omega})
                            V_{\Omega}^{k}
                            f
                        \Big)(\omega)
                    }
                    \\
                &= &\sup_{\omega \in \Omega}
                    \norm{
                        \sum_{k=0}^{\infty}
                            \mu^{-(k+1)}
                            (\onematrix - V_{\omega})
                            V_{\omega}^{k}
                            f(\omega)
                        }
                    \\
                &= &\sup_{\omega \in \Omega}
                    \norm{
                        (\onematrix - V_{\omega})
                        \opResolvent{V_{\omega}}{\mu}
                        f(\omega)
                    }
                    \\
                &\leq &\sup_{\omega \in \Omega}
                    \norm{
                        (\onematrix - V_{\omega})
                        \opResolvent{V_{\omega}}{\mu}
                    }
                    \sup_{\omega \in \Omega}
                        \norm{f(\omega)}
                    \\
                &\leq &\frac{2}{\mu + 1}
                    \norm{f}_{\Omega},
                \\
            \end{shorteqnarray}

        \continueparagraph
        whereby the final inequality follows
        from \ref{cogen:2:\beweislabel}
        for each $V_{\omega}$.
        So $
            \norm{
                (\onematrix - V_{\Omega})
                \opResolvent{V_{\Omega}}{\mu}
            }_{\Omega}
            \leq \frac{2}{\mu + 1}
        $.
        Towards \ref{cogen:3:\beweislabel},
        let $g \in \Cts{\Omega}{\BanachRaum}$ and $\eps > 0$ be arbitrary.
        Since \ref{cogen:3:\beweislabel} holds for each $V_{\omega}$,
        one has that
            $W_{\omega} \coloneqq \onematrix - V_{\omega}$
        has dense range for each $\omega \in \Omega$.
        Since ${\Omega \ni \omega \mapsto V_{\omega} \in \BoundedOps{\BanachRaum}}$
        is strongly continuous,
        we may apply \Cref{prop:trotter-kato:selection:selection:sig:article-free-raj-dahya},
        and obtain a continuous function
            $f \in \Cts{\Omega}{\BanachRaum}$,
        such that
            $
                \norm{(\onematrix - V_{\Omega})f - g}_{\Omega}
                = \sup_{\omega \in \Omega}
                    \norm{(\onematrix - V_{\omega})f(\omega) - g(\omega)}
                = \sup_{\omega \in \Omega}
                    \norm{W_{\omega}f(\omega) - g(\omega)}
                \eqcrefoverset{eq:cogenerator-approx-michael-selection:sig:article-free-raj-dahya}{<} \eps
            $.

        So $V_{\Omega}$ satisfies \ref{cogen:1:\beweislabel}, \ref{cogen:2:\beweislabel}, and \ref{cogen:3:\beweislabel},
        and is thus the co-generator of a contractive $\Cnought$\=/semigroup
            $T_{\Omega}$
        on $\BanachRaum_{\Omega}$.
        We now establish that $T_{\Omega}$
        is a diagonal construction.
        Fix an arbitrary
            $\omega \in \Omega$
        and let
            ${
                \pi_{\omega}
                : \BanachRaum_{\Omega} \to \BanachRaum
            }$
        denote the surjective contraction
        defined by
            $\pi_{\omega}f = f(\omega)$
        for $f\in\BanachRaum_{\Omega}$.
        By construction we have
            $
                \pi_{\omega}\:V_{\Omega}f
                =(V_{\Omega}f)(\omega)
                = V_{\omega}f(\omega)
                = V_{\omega}\:\pi_{\omega}f
            $
        for all $f \in \BanachRaum_{\Omega}$,
        \idest
            $
                \pi_{\omega}\:V_{\Omega}
                = V_{\omega}\:\pi_{\omega}
            $.
        By induction it follows that
            $
                \pi_{\omega}\:V_{\Omega}^{n}
                = V_{\omega}^{n}\:\pi_{\omega}
            $
        for $n \in \naturalsZero$
        and thus
            $
                \pi_{\omega}\:p(V_{\Omega})
                = p(V_{\omega})\:\pi_{\omega}
            $
        for all polynomials $p \in \complex[X]$.
        Let $t \in \realsNonNeg$ be arbitrary.
        By \Cref{lemm:poly-approx-cogenerator:sig:article-free-raj-dahya}
        there exists a net
            $(p^{(\alpha)}_{t})_{\alpha \in \Lambda}$
        of real-valued polynomials
        such that
            ${
                p^{(\alpha)}_{t}(V_{\omega})
                \underset{\alpha}{\overset{\tinytopSOT}{\longrightarrow}}
                T_{\omega}(t)
            }$
        and
            ${
                p^{(\alpha)}_{t}(V_{\Omega})
                \underset{\alpha}{\overset{\tinytopSOT}{\longrightarrow}}
                T_{\Omega}(t)
            }$.
        Thus

            \begin{restoremargins}
            \begin{equation}
            \label{eq:commutation-sg-proj:\beweislabel}
            \everymath={\displaystyle}
            \begin{array}[m]{rcl}
                \pi_{\omega} \: T_{\Omega}(t)
                    &= &\pi_{\omega} \: \lim_{\alpha} p^{(\alpha)}_{t}(V_{\Omega})\\
                    &= &\lim_{\alpha} \pi_{\omega} \: p^{(\alpha)}_{t}(V_{\Omega})\\
                    &= &\lim_{\alpha} p^{(\alpha)}_{t}(V_{\omega}) \: \pi_{\omega}\\
                    &= &\Big(\lim_{\alpha} p^{(\alpha)}_{t}(V_{\omega})\Big) \: \pi_{\omega}
                    = T_{\omega}(t) \: \pi_{\omega}\\
            \end{array}
            \end{equation}
            \end{restoremargins}

        \continueparagraph
        for all $t\in\realsNonNeg$.

        \paragraph{Proof of \punktcref{V}{}\ensuremath{\implies}{}\punktcref{T}:}
        Relying on the diagonal construction
        we now demonstrate that \punktcref{T} holds.
        Fix arbitrary
            $\eps > 0$,
            $\hat{\omega} \in \Omega$,
            $\hat{\xi} \in \BanachRaum$,
            and
            compact $K \subseteq \realsNonNeg$.
        Let ${\emb : \BanachRaum \to \BanachRaum_{\Omega}}$
        be the isometric embedding
        defined by
            $(\emb\:\xi)(\omega) = \xi$
        for $\xi\in\BanachRaum$, $\omega\in\Omega$.
        In particular
            $\pi_{\omega}\:\emb = \id_{\BanachRaum}$
        for $\omega \in \Omega$.

        Observe firstly that for each $t \in \realsNonNeg$,
        since
            $
                f_{t}
                \coloneqq T_{\Omega}(t)\:\emb\:\hat{\xi}
                \in \BanachRaum_{\Omega}
                = \Cts{\Omega}{\BanachRaum}
            $,
        there exists a neighbourhood
            $W_{t} \subseteq \Omega$
        of $\hat{\omega}$
        such that

            \begin{restoremargins}
            \begin{equation}
            \label{eq:1:\beweislabel}
            \everymath={\displaystyle}
            \begin{array}[m]{rcl}
                \norm{
                    (
                        T_{\omega}(t)
                        -
                        T_{\hat{\omega}}(t)
                    )
                    \:\hat{\xi}
                }
                    &= &\norm{
                            T_{\omega}(t)
                            \:\pi_{\omega}
                            \:\emb\:\hat{\xi}
                            -
                            T_{\hat{\omega}}(t)
                            \:\pi_{\hat{\omega}}
                            \:\emb\:\hat{\xi}
                        }\\
                    &\eqcrefoverset{eq:commutation-sg-proj:\beweislabel}{=}
                        &\norm{
                            \pi_{\omega}
                            \:T_{\Omega}(t)
                            \:\emb\:\hat{\xi}
                            -
                            \pi_{\hat{\omega}}
                            \:T_{\Omega}(t)
                            \:\emb\:\hat{\xi}
                        }\\
                    &=
                        &\norm{
                            \pi_{\omega}f_{t}
                            -
                            \pi_{\hat{\omega}}f_{t}
                        }\\
                    &= &\norm{
                            f_{t}(\omega)
                            -
                            f_{t}(\hat{\omega})
                        }
                    < \eps/4
            \end{array}
            \end{equation}
            \end{restoremargins}

        \continueparagraph
        for $\omega \in W_{t}$.
        Observe secondly that since $T_{\Omega}$ is $\topSOT$\=/continuous,
            $
                \{
                    T_{\Omega}(t)\emb\:\hat{\xi}
                    \mid
                    t \in K
                \}
            $
        is a norm-compact subset of $\BanachRaum_{\Omega}$.
        As such there exists a finite subset
            $F \subseteq K$
        such that

            \begin{restoremargins}
            \begin{equation}
            \label{eq:2:\beweislabel}
                \bigcup_{\tau \in F}
                    \oBall{\eps/4}{T_{\Omega}(\tau)\emb\:\hat{\xi}}
                \supseteq
                \{
                    T_{\Omega}(t)\emb\:\hat{\xi}
                    \mid
                    t \in K
                \}
            \end{equation}
            \end{restoremargins}

        \continueparagraph
        holds.
        Consider now arbitrary
            $t \in K$.
        By \eqcref{eq:2:\beweislabel},
        there exists $\tau_{t} \in F$
        such that
            $
                \norm{
                    T_{\Omega}(t)\emb\:\hat{\xi}
                    -
                    T_{\Omega}(\tau_{t})\emb\:\hat{\xi}
                }
                < \eps/4
            $.
        So

            \begin{restoremargins}
            \begin{equation}
            \label{eq:3:\beweislabel}
            \everymath={\displaystyle}
            \begin{array}[m]{rcl}
                \norm{
                    (
                        T_{\omega}(t)
                        -
                        T_{\omega}(\tau_{t})
                    )
                    \:\hat{\xi}
                }
                    &= &\norm{
                            T_{\omega}(t)
                            \:\pi_{\omega}
                            \:\emb\:\hat{\xi}
                            -
                            T_{\omega}(\tau_{t})
                            \:\pi_{\omega}
                            \:\emb\:\hat{\xi}
                        }\\
                    &\eqcrefoverset{eq:commutation-sg-proj:\beweislabel}{=}
                        &\norm{
                            \pi_{\omega}
                            \:
                            T_{\Omega}(t)
                            \:\emb\:\hat{\xi}
                            -
                            \pi_{\omega}
                            T_{\Omega}(\tau_{t})
                            \:\emb\:\hat{\xi}
                        }\\
                    &\leq &\norm{\pi_{\omega}}
                        \norm{
                            T_{\Omega}(t)\emb\:\hat{\xi}
                            -
                            T_{\Omega}(\tau_{t})\emb\:\hat{\xi}
                        }\\
                    &\eqcrefoverset{eq:2:\beweislabel}{<}
                        &1 \cdot \tfrac{\eps}{4}
            \end{array}
            \end{equation}
            \end{restoremargins}

        \continueparagraph
        for all $\omega \in \Omega$.

        Set
            $
                \hat{W}
                \coloneqq
                \bigcap_{\tau \in F}W_{\tau}
            $,
        which is a neighbourhood of $\hat{\omega}$.
        The above two observations yield
            $
                \norm{
                    (
                        T_{\omega}(t)
                        -
                        T_{\hat{\omega}}(t)
                    )
                    \:\hat{\xi}
                }
                    \leq \norm{
                                (
                                    T_{\omega}(t)
                                    -
                                    T_{\omega}(\tau_{t})
                                )
                                \:\hat{\xi}
                            }
                            + \norm{
                                (
                                    T_{\omega}(\tau_{t})
                                    -
                                    T_{\hat{\omega}}(\tau_{t})
                                )
                                \:\hat{\xi}
                            }
                            + \norm{
                                (
                                    T_{\hat{\omega}}(\tau_{t})
                                    -
                                    T_{\hat{\omega}}(t)
                                )
                                \:\hat{\xi}
                            }
                    \textoverset{
                        \eqcref{eq:1:\beweislabel}
                        +
                        \eqcref{eq:3:\beweislabel}
                    }{<}
                        \tfrac{\eps}{4}
                        + \tfrac{\eps}{4}
                        + \tfrac{\eps}{4}
            $
        for all $\omega$ in the neighbourhood $\hat{W}$ of $\hat{\omega}$.
        Hence
            $
                \sup_{t \in K}
                    \norm{
                        (
                            T_{\omega}(t)
                            -
                            T_{\hat{\omega}}(t)
                        )
                        \:\hat{\xi}
                    }
                \leq 3\eps/4
                < \eps
            $
        for all $\omega \in \hat{W}$.
        This establishes the $\toplocSOT$\=/continuity
        of the map
        ${
            \Omega
            \ni
            \omega
            \mapsto T_{\omega}
        }$.
    \end{proof}

\begin{rem}
    The classical Trotter--Kato theorem can be recovered
    by considering the one point compactification
    $\Omega = \naturals \cup \{\infty\}$.
    This requires us to assert the strong convergence
    ${V_{n} \underset{n}{\longrightarrow} V_{\infty}}$
    for ${n \longrightarrow \infty}$,
    or equivalent statements about the generators.
\end{rem}

The above formulation of the Trotter--Kato theorem immediately implies several results,
some of which are interesting in and of themselves.

\begin{prop}
\makelabel{prop:trotter-kato-cogen:sot-star:sig:article-free-raj-dahya}
    Let $\Omega$ be a compact topological space.
    Let $\{T_{\omega}\}_{\omega \in \Omega}$
    be a family of contractive $\Cnought$\=/semigroups
    on a Hilbert space $\HilbertRaum$.%
    \footnote{%
        Note that
            $\{T_{\omega}(\cdot)^{\ast}\}_{\omega\in\Omega}$
        is also a family of contractive semigroups on $\HilbertRaum$,
        which are weakly and thus strongly continuous
        (see \exempli
            \cite[Theorem~I.5.8]{EngelNagel2000semigroupTextBook},
            \cite[Theorem~9.3.1 and Theorem~10.2.1--3]{Hillephillips1957faAndSg}%
        ).
    }
    Let further $V_{\omega}$
    denote the co-generator
    of $T_{\omega}$ for each $\omega \in \Omega$.
    Then
        $\{T_{\omega}\}_{\omega \in \Omega}$
        is $\toplocSOTstar$\=/continuous in $\Omega$
    if and only if
        ${\Omega \ni \omega \mapsto V_{\omega} \in \OpSpaceC{\HilbertRaum}}$
    is $\topSOTstar$\=/continuous.%
    \footnote{%
        \idest
        both
        ${\Omega \ni \omega \mapsto V_{\omega} \in \OpSpaceC{\HilbertRaum}}$
        and
        ${\Omega \ni \omega \mapsto V_{\omega}^{\ast} \in \OpSpaceC{\HilbertRaum}}$
        are $\topSOT$\=/continuous.
    }
\end{prop}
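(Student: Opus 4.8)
The plan is to derive everything from two applications of \Cref{thm:trotter-kato-cogen:sig:article-free-raj-dahya}: one to the family $\{T_{\omega}\}_{\omega \in \Omega}$ itself and one to the family of adjoint semigroups $\{T_{\omega}(\cdot)^{\ast}\}_{\omega \in \Omega}$. The one preliminary fact I would establish first is that, on a Hilbert space, the co-generator of the adjoint semigroup is the adjoint of the co-generator. Concretely, if $A$ is the generator of a contractive $\Cnought$\=/semigroup $T$ on $\HilbertRaum$, then $A^{\ast}$ generates the contractive $\Cnought$\=/semigroup $T(\cdot)^{\ast}$ (it is plainly a contraction semigroup, and its strong continuity follows from weak continuity, as noted in the footnote to the statement), and since $\opResolvent{A^{\ast}}{1} = (\opResolvent{A}{1})^{\ast}$ one obtains

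\begin{displaymath}
    \onematrix - 2\opResolvent{A^{\ast}}{1}
    = (\onematrix - 2\opResolvent{A}{1})^{\ast},
\end{displaymath}

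\continueparagraph
\idest the co-generator of $T(\cdot)^{\ast}$ equals $V^{\ast}$, where $V$ is the co-generator of $T$. Applied to each index, this says the co-generator of $T_{\omega}(\cdot)^{\ast}$ is $V_{\omega}^{\ast}$.

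Next I would unwind the two $\star$\=/type continuity notions appearing in the claim. By \Cref{defn:k-sot-star-top:sig:article-free-raj-dahya}, $\{T_{\omega}\}_{\omega \in \Omega}$ is $\toplocSOTstar$\=/continuous in $\Omega$ if and only if \emph{both} $\{T_{\omega}\}_{\omega \in \Omega}$ and $\{T_{\omega}(\cdot)^{\ast}\}_{\omega \in \Omega}$ are $\toplocSOT$\=/continuous in $\Omega$; and likewise $\omega \mapsto V_{\omega}$ is $\topSOTstar$\=/continuous if and only if \emph{both} $\omega \mapsto V_{\omega}$ and $\omega \mapsto V_{\omega}^{\ast}$ are $\topSOT$\=/continuous. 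Now apply the equivalence of conditions (a) and (d) of \Cref{thm:trotter-kato-cogen:sig:article-free-raj-dahya} to the family $\{T_{\omega}\}_{\omega \in \Omega}$: this makes the $\toplocSOT$\=/continuity of $\{T_{\omega}\}_{\omega \in \Omega}$ in $\Omega$ equivalent to the $\topSOT$\=/continuity of $\omega \mapsto V_{\omega}$. Apply the same equivalence to the family $\{T_{\omega}(\cdot)^{\ast}\}_{\omega \in \Omega}$ — which is again a family of contractive $\Cnought$\=/semigroups on $\HilbertRaum$, with co-generators $V_{\omega}^{\ast}$ by the preliminary fact — to make the $\toplocSOT$\=/continuity of $\{T_{\omega}(\cdot)^{\ast}\}_{\omega \in \Omega}$ in $\Omega$ equivalent to the $\topSOT$\=/continuity of $\omega \mapsto V_{\omega}^{\ast}$. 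Conjoining the two equivalences and feeding in the reformulations above yields the asserted biconditional.

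I do not expect a real obstacle here: the only genuine content beyond bookkeeping with \Cref{defn:k-sot-star-top:sig:article-free-raj-dahya} is the identity relating the co-generator of $T(\cdot)^{\ast}$ to $V^{\ast}$, which rests on the standard Hilbert-space facts that $A^{\ast}$ generates $T(\cdot)^{\ast}$ and that $\opResolvent{A^{\ast}}{\lambda} = (\opResolvent{A}{\bar{\lambda}})^{\ast}$. The heavy lifting — in particular the Michael-selection argument needed for the implication (d)$\,\Rightarrow\,$(a) over a non-discrete index set — has already been carried out in the proof of \Cref{thm:trotter-kato-cogen:sig:article-free-raj-dahya}, so this proposition is essentially a corollary.
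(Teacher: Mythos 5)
Your proposal is correct and is exactly the argument the paper intends: the proposition is stated there as an immediate consequence of \Cref{thm:trotter-kato-cogen:sig:article-free-raj-dahya} (no separate proof is written out), namely applying the equivalence \punktcref{T}{}\ensuremath{\iff}{}\punktcref{V} once to $\{T_{\omega}\}_{\omega\in\Omega}$ and once to $\{T_{\omega}(\cdot)^{\ast}\}_{\omega\in\Omega}$, whose co-generators are $V_{\omega}^{\ast}$ since $A_{\omega}^{\ast}$ generates the adjoint semigroup and $\opResolvent{A_{\omega}^{\ast}}{1} = (\opResolvent{A_{\omega}}{1})^{\ast}$. Your bookkeeping with \Cref{defn:k-sot-star-top:sig:article-free-raj-dahya} and the preliminary identity are both sound, so nothing is missing.
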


Let $\Omega$ be compact
and $\BanachRaum$ a Banach space.
Consider the Banach space
    $\BanachRaum_{\Omega} \coloneqq \Cts{\Omega}{\BanachRaum}$,
the isometric embedding
    ${\emb : \BanachRaum \to \Cts{\Omega}{\BanachRaum}}$,
and the surjective contractions
    ${\pi_{\omega} : \Cts{\Omega}{\BanachRaum} \to \BanachRaum}$
for each $\omega\in\Omega$
defined as above.

\begin{prop}[First diagonalisation]
\makelabel{prop:first-diagonalisation:sig:article-free-raj-dahya}
    A family
        $\{T_{\omega}\}_{\omega \in \Omega}$
    of contractive $\Cnought$\=/semigroups
    on $\BanachRaum$
    is $\toplocSOT$\=/continuous in the index set
    if and only if
    there exists a
    (necessarily unique)
    contractive $\Cnought$\=/semigroup
        $T_{\Omega}$
    on $\BanachRaum_{\Omega}$
    satisfying
        $
            \pi_{\omega}\:T_{\Omega}(t)
            = T_{\omega}(t)\:\pi_{\omega}
        $
    and thus

        \begin{restoremargins}
        \begin{equation}
        \label{eq:first-diagonalisation:dilation:sig:article-free-raj-dahya}
            \pi_{\omega}\:T_{\Omega}(t)\:\emb = T_{\omega}(t)
        \end{equation}
        \end{restoremargins}

    \continueparagraph
    for all $\omega \in \Omega$ and $t \in \realsNonNeg$.
\end{prop}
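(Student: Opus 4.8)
The plan is to obtain this as a repackaging of the diagonal construction already carried out inside the proof of the Trotter--Kato theorem for co-generators (\Cref{thm:trotter-kato-cogen:sig:article-free-raj-dahya}). For the forward implication, assume $\{T_{\omega}\}_{\omega \in \Omega}$ is $\toplocSOT$\=/continuous in the index set. By \Cref{thm:trotter-kato-cogen:sig:article-free-raj-dahya} the map ${\Omega \ni \omega \mapsto V_{\omega} \in \OpSpaceC{\BanachRaum}}$ is then $\topSOT$\=/continuous, so the diagonal construction in that proof applies verbatim: the diagonal operator $V_{\Omega}$ given by $(V_{\Omega}f)(\omega) = V_{\omega}\:f(\omega)$ is a contraction on $\BanachRaum_{\Omega} = \Cts{\Omega}{\BanachRaum}$ satisfying the co-generator criteria of \cite[Theorem~2.2]{EisnerZwart2008cogenerator}, hence is the co-generator of a contractive $\Cnought$\=/semigroup $T_{\Omega}$ on $\BanachRaum_{\Omega}$; and, using the polynomial approximants of \Cref{lemm:poly-approx-cogenerator:sig:article-free-raj-dahya}, that same proof yields the intertwining relation $\pi_{\omega}\:T_{\Omega}(t) = T_{\omega}(t)\:\pi_{\omega}$ for all $\omega \in \Omega$ and $t \in \realsNonNeg$. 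Precomposing with the isometric embedding $\emb$ and using $\pi_{\omega}\:\emb = \id_{\BanachRaum}$ gives $\pi_{\omega}\:T_{\Omega}(t)\:\emb = T_{\omega}(t)$, which is \eqref{eq:first-diagonalisation:dilation:sig:article-free-raj-dahya}; this settles existence. Uniqueness is then immediate: if $S$ and $S'$ are contractive $\Cnought$\=/semigroups on $\BanachRaum_{\Omega}$ both satisfying the intertwining, then for every $f \in \BanachRaum_{\Omega}$ and $\omega \in \Omega$ one has $(S(t)f)(\omega) = \pi_{\omega}\:S(t)f = T_{\omega}(t)\:f(\omega) = \pi_{\omega}\:S'(t)f = (S'(t)f)(\omega)$, whence $S(t) = S'(t)$; in particular $T_{\Omega}(t)$ is forced to be the diagonal operator $f \mapsto (\omega \mapsto T_{\omega}(t)\:f(\omega))$.

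For the reverse implication, suppose a contractive $\Cnought$\=/semigroup $T_{\Omega}$ on $\BanachRaum_{\Omega}$ with the stated properties exists, and fix $\hat{\xi} \in \BanachRaum$, $\hat{\omega} \in \Omega$, a compact $K \subseteq \realsNonNeg$, and $\eps > 0$. For $t \in \realsNonNeg$ put $f_{t} \coloneqq T_{\Omega}(t)\:\emb\:\hat{\xi} \in \Cts{\Omega}{\BanachRaum}$; by \eqref{eq:first-diagonalisation:dilation:sig:article-free-raj-dahya} we have $f_{t}(\omega) = \pi_{\omega}\:T_{\Omega}(t)\:\emb\:\hat{\xi} = T_{\omega}(t)\:\hat{\xi}$ for every $\omega$. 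The argument now proceeds exactly as in the final part of the proof of \Cref{thm:trotter-kato-cogen:sig:article-free-raj-dahya}: strong continuity of $T_{\Omega}$ makes $\{f_{t} \mid t \in K\}$ norm-compact in $\Cts{\Omega}{\BanachRaum}$, so it is covered by finitely many $\tfrac{\eps}{4}$\=/balls centred at $f_{\tau}$, $\tau \in F \subseteq K$ finite; continuity of each $f_{\tau}$ supplies a neighbourhood $W_{\tau}$ of $\hat{\omega}$ on which $\sup_{\omega}\norm{f_{\tau}(\omega) - f_{\tau}(\hat{\omega})} < \tfrac{\eps}{4}$; and on $\hat{W} \coloneqq \bigcap_{\tau \in F}W_{\tau}$ the triangle inequality gives $\sup_{t \in K}\norm{(T_{\omega}(t) - T_{\hat{\omega}}(t))\:\hat{\xi}} < \eps$. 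This is precisely $\toplocSOT$\=/continuity of ${\Omega \ni \omega \mapsto T_{\omega}}$.

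The point to stress is that no genuinely new difficulty arises here: the substantive content -- that the ``diagonal'' assignment $f \mapsto (\omega \mapsto T_{\omega}(t)\:f(\omega))$ really does land in $\Cts{\Omega}{\BanachRaum}$ and assembles into a strongly continuous contraction semigroup -- is exactly the diagonal construction inside \Cref{thm:trotter-kato-cogen:sig:article-free-raj-dahya}, which in turn rests on the Eisner--Zwart co-generator criterion, on \Cref{lemm:poly-approx-cogenerator:sig:article-free-raj-dahya}, and on Michael's selection theorem. Consequently the only pieces that need to be written out are the elementary uniqueness computation above and the (essentially verbatim) covering argument for the reverse implication; I expect no further obstacle.
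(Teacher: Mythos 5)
Your proposal is correct and matches the paper's intended argument: the paper itself only remarks that the proposition ``can be directly extracted from the proof of the Trotter--Kato theorem for co-generators, noting that $\pi_{\omega}\,\emb = \onematrix$'', and your write-up is precisely that extraction (forward direction via the implication to $\topSOT$-continuity of the co-generators plus the diagonal construction, reverse direction via the covering argument, with the routine uniqueness check added). No gaps.
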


\Cref{prop:first-diagonalisation:sig:article-free-raj-dahya}
can be directly extracted from the proof of
\Cref{thm:trotter-kato-cogen:sig:article-free-raj-dahya},
noting that $\pi_{\omega}\:\emb = \onematrix$
for each $\omega \in \Omega$.
This result, in particular \eqcref{eq:first-diagonalisation:dilation:sig:article-free-raj-dahya},
states that families of contractive $\Cnought$\=/semigroups
are $\toplocSOT$\=/continuous in their index set
exactly in case they arise as certain
Banach space dilations
of a single contractive $\Cnought$\=/semigroup.
In fact by using Stroescu's dilation theorem,
this can be strengthened to a Banach space representation:

\begin{prop}[Second diagonalisation]
\makelabel{prop:second-diagonalisation:sig:article-free-raj-dahya}
    A family
        $\{T_{\omega}\}_{\omega \in \Omega}$
    of contractive $\Cnought$\=/semigroups
    on $\BanachRaum$
    is $\toplocSOT$\=/continuous in the index set
    if and only if
    there exists
    a Banach space $\tilde{\BanachRaum}$,
    an isometric embedding
        $r \in \BoundedOps{\BanachRaum}{\tilde{\BanachRaum}}$,
    strongly continuous family
        $\{j_{\omega}\}_{\omega \in \Omega} \in \BoundedOps{\tilde{\BanachRaum}}{\BanachRaum}$
    of surjective isometries,
    and an
        $\topSOT$\=/continuous representation
        $U_{\Omega}\in\Repr{\reals}{\tilde{\BanachRaum}}$
    consisting of surjective isometries on $\tilde{\BanachRaum}$,%
    \footnoteref{ft:1:\beweislabel}
    such that

        \begin{restoremargins}
        \begin{equation}
        \label{eq:second-diagonalisation:dilation:sig:article-free-raj-dahya}
            j_{\omega}\:U_{\Omega}(t)\:r = T_{\omega}(t)
        \end{equation}
        \end{restoremargins}

    \continueparagraph
    for all $\omega \in \Omega$ and $t \in \realsNonNeg$.
\end{prop}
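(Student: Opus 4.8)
The plan is to deduce \Cref{prop:second-diagonalisation:sig:article-free-raj-dahya} from the First diagonalisation (\Cref{prop:first-diagonalisation:sig:article-free-raj-dahya}) by applying Stroescu's isometric dilation theorem to the single diagonal semigroup it produces; the converse implication is a short compactness argument. For the converse, suppose we are handed $\tilde{\BanachRaum}$, $r$, $\{j_{\omega}\}_{\omega\in\Omega}$ and $U_{\Omega}$ as in the statement. Fix $\hat{\omega}\in\Omega$, $\xi\in\BanachRaum$ and a compact $K\subseteq\realsNonNeg$. Since $U_{\Omega}$ is $\topSOT$\=/continuous, ${t\mapsto U_{\Omega}(t)\,r\,\xi}$ is norm\=/continuous, so ${C\coloneqq\{U_{\Omega}(t)\,r\,\xi\mid t\in K\}}$ is a norm\=/compact subset of $\tilde{\BanachRaum}$. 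From ${(T_{\omega}(t)-T_{\hat{\omega}}(t))\xi=(j_{\omega}-j_{\hat{\omega}})\bigl(U_{\Omega}(t)\,r\,\xi\bigr)}$ and the fact that $\{j_{\omega}\}_{\omega}$ is uniformly bounded and strongly continuous — hence converges uniformly on norm\=/compact sets — one obtains ${\sup_{t\in K}\norm{(T_{\omega}(t)-T_{\hat{\omega}}(t))\xi}\leq\sup_{y\in C}\norm{(j_{\omega}-j_{\hat{\omega}})y}\longrightarrow 0}$ as $\omega\to\hat{\omega}$, which is precisely the $\toplocSOT$\=/continuity of ${\omega\mapsto T_{\omega}}$.

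For the forward implication, assume ${\{T_{\omega}\}_{\omega\in\Omega}}$ is $\toplocSOT$\=/continuous. \Cref{prop:first-diagonalisation:sig:article-free-raj-dahya} furnishes a (unique) contractive $\Cnought$\=/semigroup $T_{\Omega}$ on ${\BanachRaum_{\Omega}\coloneqq\Cts{\Omega}{\BanachRaum}}$ with ${\pi_{\omega}\,T_{\Omega}(t)=T_{\omega}(t)\,\pi_{\omega}}$, in particular ${\pi_{\omega}\,T_{\Omega}(t)\,\emb=T_{\omega}(t)}$ by \eqcref{eq:first-diagonalisation:dilation:sig:article-free-raj-dahya}, for all $\omega$ and $t$. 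I would then invoke Stroescu's dilation theorem in its $\Cnought$\=/semigroup form for the single semigroup $T_{\Omega}$: it yields a Banach space $\tilde{\BanachRaum}$, an isometric embedding ${s\colon\BanachRaum_{\Omega}\to\tilde{\BanachRaum}}$, a contraction ${P\colon\tilde{\BanachRaum}\to\BanachRaum_{\Omega}}$ with ${P\,s=\onematrix}$, and a strongly continuous one\=/parameter group ${U_{\Omega}\in\Repr{\reals}{\tilde{\BanachRaum}}}$ of surjective isometries of $\tilde{\BanachRaum}$ with ${T_{\Omega}(t)=P\,U_{\Omega}(t)\,s}$ for $t\in\realsNonNeg$. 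Now put ${r\coloneqq s\circ\emb}$, which is an isometric embedding of $\BanachRaum$ into $\tilde{\BanachRaum}$ as a composite of such, and ${j_{\omega}\coloneqq\pi_{\omega}\circ P}$ for each $\omega$. Then ${j_{\omega}\,U_{\Omega}(t)\,r=\pi_{\omega}\,P\,U_{\Omega}(t)\,s\,\emb=\pi_{\omega}\,T_{\Omega}(t)\,\emb=T_{\omega}(t)}$, which is \eqcref{eq:second-diagonalisation:dilation:sig:article-free-raj-dahya}; the family ${\omega\mapsto j_{\omega}=\pi_{\omega}\circ P}$ is strongly continuous because ${\omega\mapsto\pi_{\omega}}$ is — that being nothing but the continuity of each ${f\in\Cts{\Omega}{\BanachRaum}}$ — and each $j_{\omega}$, a composite of the canonical metric surjection $\pi_{\omega}$ (split by $\emb$) with the norm\=/one projection $P$, is a metric surjection of $\tilde{\BanachRaum}$ onto $\BanachRaum$ split by $r$.

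The only genuinely non\=/bookkeeping step is the appeal to Stroescu's theorem in the right generality: one needs the \emph{$\Cnought$\=/semigroup} version, producing a \emph{strongly continuous} group of surjective isometries on $\tilde{\BanachRaum}$ together with a norm\=/one projection back onto $\BanachRaum_{\Omega}$, and it is here that it matters that $T_{\Omega}$ is a bona fide contractive $\Cnought$\=/semigroup — a fact delivered by \Cref{prop:first-diagonalisation:sig:article-free-raj-dahya}, which rests in turn on the compactness of $\Omega$ and on \Cref{thm:trotter-kato-cogen:sig:article-free-raj-dahya}. A secondary point of care is how the $j_{\omega}$ are to be understood: what the construction produces are metric surjections (the Banach\=/space analogue of co\=/isometries, here split by the isometric embedding $r$), and it is their strong continuity in $\omega$, inherited from the evaluation maps $\pi_{\omega}$, that must be checked. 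The remaining verifications — that $r$ is isometric, that the intertwining relations collapse to \eqcref{eq:second-diagonalisation:dilation:sig:article-free-raj-dahya}, and the compactness argument underlying the converse — are routine.
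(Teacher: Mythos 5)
Your proposal is correct and follows essentially the same route as the paper: in the forward direction you compose the first diagonalisation (the single semigroup $T_{\Omega}$ on $\Cts{\Omega}{\BanachRaum}$ together with $\emb$ and the evaluations $\pi_{\omega}$) with Stroescu's dilation of $T_{\Omega}$ and set $r = s\circ\emb$, $j_{\omega} = \pi_{\omega}\circ P$, exactly as the paper does with its $r_{0}$ and $j_{0}$, while your converse is the same uniform-boundedness-plus-strong-continuity argument yielding uniform convergence on the norm-compact orbit $\{U_{\Omega}(t)\,r\xi \mid t \in K\}$, which the paper merely spells out via a finite $\eps/4$-net. Your side remark that the construction delivers metric surjections $j_{\omega}$ split by $r$ (rather than literal surjective isometries) is accurate and coincides with what the paper's own proof actually produces.
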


    \footnotetext[ft:1:\beweislabel]{%
        Note that surjective isometries are
        the Banach space counterpart to
        unitary operators on Hilbert spaces.
    }

    \begin{proof}
        Towards the \usesinglequotes{only if}-direction,
        %% contractive diagonalisation
        first apply \Cref{prop:first-diagonalisation:sig:article-free-raj-dahya}
        which yields
        a contractive $\Cnought$\=/semigroup
            $T_{\Omega}$
        on the Banach space $\BanachRaum_{\Omega} = \Cts{\Omega}{\BanachRaum}$,
        an embedding $\emb \in \BoundedOps{\BanachRaum}{\BanachRaum_{\Omega}}$,
        and a family
            $\{\pi_{\omega}\}_{\omega \in \Omega} \subseteq \BoundedOps{\BanachRaum_{\Omega}}{\BanachRaum}$
        of surjective contractions,
        such that \eqcref{eq:first-diagonalisation:dilation:sig:article-free-raj-dahya}
        holds.
        %% Stroescu-dilation
        By the Stroescu dilation theorem \cite[Corollary~1, p.~259]{Stroescu1973ArticleBanachDilations},
        there exists a Banach space $\tilde{\BanachRaum}$,
        a strongly continuous representation
            $U_{\Omega}\in\Repr{\reals}{\tilde{\BanachRaum}}$
            consisting of surjective isometries,
        an isometric embedding
            $r_{0} \in \BoundedOps{\BanachRaum_{\Omega}}{\tilde{\BanachRaum}}$
        and a surjective isometry
            $j_{0} \in \BoundedOps{\tilde{\BanachRaum}}{\BanachRaum_{\Omega}}$,
        such that
            $j_{0}\:U_{\Omega}(t)\:r_{0} = T_{\Omega}(t)$
        for all $t \in \realsNonNeg$.
        Finally we set
            $r \coloneqq r_{0}\:\emb \in \BoundedOps{\BanachRaum}{\tilde{\BanachRaum}}$,
            which is an isometric embedding,
            and
            $j_{\omega} \coloneqq \pi_{\omega}\:j_{0} \in \BoundedOps{\tilde{\BanachRaum}}{\BanachRaum}$,
            which are surjective contractions
            for each $\omega\in\Omega$.
        For $\xi \in \tilde{\BanachRaum}$
        and all $\omega \in \Omega$
        one has
            $j_{\omega}\xi = \pi_{\omega}\:(j_{0}\xi) = f(\omega)$,
        where $f = j_{0}\xi \in \BanachRaum_{\Omega} = \Cts{\Omega}{\BanachRaum}$.
        So
            ${\Omega \ni \omega \mapsto j_{\omega}\xi \in \BanachRaum}$
        is norm-continuous for all $\xi \in \tilde{\BanachRaum}$.
        Thus $\{j_{\omega}\}_{\omega\in\Omega}$ is a strongly continuous family.
        %% dilation property
        Finally, by the Stroescu-dilation,
        one has
            $
                j_{\omega}\:U_{\Omega}(t)\:r
                = \pi_{\omega}\:j_{0}\:U_{\Omega}(t)\:r_{0}\:\emb
                = \pi_{\omega}\:T_{\Omega}(t)\:\emb
                = T_{\omega}(t)
            $
        for all $\omega \in \Omega$ and $t \in \realsNonNeg$.

        Towards the \usesinglequotes{if}-direction,
        suppose that $\{T_{\omega}\}_{\omega\in\Omega}$
        is given by such a diagonalisation.
        Let
            $K \subseteq \realsNonNeg$ be compact,
            $\omega\in\Omega$,
            $\xi \in \BanachRaum$,
            and
            $\eps > 0$.
        By the strong continuity of $U_{\Omega}$,
        there exists a finite subset $F \subseteq K$
        such that
            $
                \{U_{\Omega}(t)\:r\xi \mid t \in K\}
                \subseteq
                \bigcup_{s \in F}
                    \oBall{\eps/4}{U_{\Omega}(s)\:r\xi}
            $.
        And by the strong continuity of
            $\{j_{\omega}\}_{\omega\in\Omega}$,
        we can find an open neighbourhood
            $W \subseteq \Omega$ of $\omega$,
        such that
            $
                \sup_{\omega' \in W}
                    \norm{
                        (j_{\omega'} - j_{\omega})
                        \:U_{\Omega}(s)\:r\xi
                    }
                < \eps/4
            $
        for all $s \in F$.
        Applying these inequalities
        to the dilation \eqcref{eq:second-diagonalisation:dilation:sig:article-free-raj-dahya},
        one obtains

            \begin{shorteqnarray}
                \sup_{t \in K}
                \norm{
                    (
                        T_{\omega'}(t)
                        -
                        T_{\omega'}(t)
                    )
                    \xi
                }
                &=
                    &\sup_{t \in K}
                        \norm{
                            (
                                j_{\omega'}
                                -
                                j_{\omega}
                            )
                            U_{\Omega}(t)\:r\xi
                        }\\
                &\leq
                    &\sup_{t \in K}
                    \min_{s \in F}
                    \Big(
                        \norm{
                            (
                                j_{\omega'}
                                -
                                j_{\omega}
                            )
                            U_{\Omega}(s)\:r\xi
                        }
                        +
                        \norm{j_{\omega'} - j_{\omega}}
                        \norm{
                            U_{\Omega}(t)\:r\xi
                            -
                            U_{\Omega}(s)\:r\xi
                        }
                    \Big)
                    \\
                &\leq
                    &\max_{s \in F}
                        \norm{
                            (
                                j_{\omega'}
                                -
                                j_{\omega}
                            )
                            U_{\Omega}(s)\:r\xi
                        }
                        +
                    2
                    \sup_{t \in K}
                    \min_{s \in F}
                        \norm{
                            U_{\Omega}(t)\:r\xi
                            -
                            U_{\Omega}(s)\:r\xi
                        }
                    \\
                &\leq
                    &\frac{\eps}{4}
                    +
                    2\cdot\frac{\eps}{4}
                < \eps
            \end{shorteqnarray}

        \continueparagraph
        for $\omega' \in \Omega$.
        Thus $\{T_{\omega}\}_{\omega \in \Omega}$
        is $\toplocSOT$\=/continuous in the index set.
    \end{proof}

\begin{rem}[Properties of the embeddings]
\makelabel{rem:second-diagonalisation:properties:sig:article-free-raj-dahya}
    In light of the Banach space dilation
    in \eqcref{eq:second-diagonalisation:dilation:sig:article-free-raj-dahya}
    one necessarily has that
        $j_{\omega}\:r = \onematrix$
    for all $\omega \in \Omega$.
    It follows that
        $
            \{P_{\omega} \coloneqq r\:j_{\omega}\}_{\omega \in \Omega}
            \subseteq \BoundedOps{\tilde{\BanachRaum}}
        $
    constitutes a strongly continuous family of
    idempotent Banach space contractions.%
    \footnote{%
        Note that unlike in the Hilbert space setting,
        Banach space idempotents are not necessarily contractions.
    }
    Moreover following the construction
    in the proof of \Cref{prop:second-diagonalisation:sig:article-free-raj-dahya},
    one has
    $
        P_{\omega'}P_{\omega}
        = (r_{0}\:\emb\:\pi_{\omega'}\:j_{0})
            (r_{0}\:\emb\:\pi_{\omega}\:j_{0})
        = r_{0}\:\emb\:\pi_{\omega'}
            \:\cancel{j_{0}\:r_{0}}
            \:\emb\:\pi_{\omega}\:j_{0}
        = r_{0}\:\emb
            \:\cancel{\pi_{\omega'}\:\emb}
            \:\pi_{\omega}\:j_{0}
        = r_{0}\:\emb\:\pi_{\omega}\:j_{0}
        = r\:j_{\omega}
        = P_{\omega}
    $
    for all $\omega,\omega'\in\Omega$.
\end{rem}

%% ********** END OF FILE: body/sec-3-result-concrete/sec-4-trotter-kato.tex **********

%% ********************************************************************************
%% FILE: body/sec-3-result-concrete/sec-5-free-dilation-II.tex
%% ********************************************************************************

\subsection[Proof of the IInd main result]{Proof of the \Second main result}
\label{sec:result-concrete:proof-2nd:sig:article-free-raj-dahya}

\firstparagraph
Using the Trotter--Kato theorem
and our explicit proof of the \First main result,
we are now able to prove the
\Second free dilation theorem:

\def\beweislabel{thm:free-dilation:2nd:sig:article-free-raj-dahya}
\begin{proof}[of \Cref{thm:free-dilation:2nd:sig:article-free-raj-dahya}]
    Let $V_{\omega}$ denote the co-generator of $T_{\omega}$
    for each $\omega \in \Omega$.
    We recall the constructions
    in \S{}\ref{sec:result-concrete:discrete-dilation:sig:article-free-raj-dahya}
    of the discrete-time free dilation:
    Let
        $H_{0} \coloneqq \HilbertRaum \otimes \ell^{2}(\naturalsZero)$
        and
        $H_{1} \coloneqq H_{0} \otimes \complex^{2}$,
    and let
        $r_{1} \in \BoundedOps{\HilbertRaum}{H_{1}}$
        be the isometry defined via
        $r_{1} \coloneqq \onematrix \otimes \BaseVector{0} \otimes \BaseVector{0}$.
    We recall the isometries \resp unitaries
    constructed in
    \eqcref{eq:schaeffer-nagy-dilation:isometry:sig:article-free-raj-dahya}
    \resp
    \eqcref{eq:schaeffer-nagy-dilation:unitary:sig:article-free-raj-dahya}:

        \begin{restoremargins}
        \begin{equation}
        \label{eq:1:\beweislabel}
        \everymath={\displaystyle}
        \begin{array}[m]{rcl}
            \NagyIso_{V_{\omega}}
                &=
                    &V_{\omega} \otimes \ElementaryMatrix{0}{0}
                    + \HalmosOp_{V_{\omega}} \otimes \ElementaryMatrix{1}{0}
                    + I \otimes \sum_{n=1}^{\infty} \ElementaryMatrix{n+1}{n}
                \in \BoundedOps{H_{0}},~\text{and}\\
            \NagyUtr_{V_{\omega}}
                &\coloneqq
                    &\NagyIso_{V_{\omega}} \otimes \ElementaryMatrix{0}{0}
                    + (\onematrix - \NagyIso_{V_{\omega}}\NagyIso_{V_{\omega}}^{\ast}) \otimes \ElementaryMatrix{0}{1}
                    + \NagyIso_{V_{\omega}}^{\ast} \otimes \ElementaryMatrix{1}{1}
                \in \BoundedOps{H_{1}},
        \end{array}
        \end{equation}
        \end{restoremargins}

    \continueparagraph
    for each $\omega \in \Omega$.

    As per the proof
    in \S{}\ref{sec:result-concrete:discrete-dilation:sig:article-free-raj-dahya}
    of \Cref{thm:free-dilation-discrete-time:sig:article-free-raj-dahya},
        $(H_{1},r_{1},\{\NagyUtr_{V_{\omega}}\}_{\omega\in\Omega})$
        is a \uline{discrete-time free unitary dilation} of
        $(\HilbertRaum,\{V_{\omega}\}_{\omega \in \Omega})$.
    And by the proof
    in \S{}\ref{sec:result-concrete:proof-1st:sig:article-free-raj-dahya}
    of \Cref{thm:free-dilation:1st:sig:article-free-raj-dahya},
        $(H_{1},r_{1},\{U_{\omega}\}_{\omega \in \Omega})$
        is a \uline{continuous-time free unitary dilation} of
        $(\HilbertRaum,\{T_{\omega}\}_{\omega \in \Omega})$,
    where
        $U_{\omega}$
    is the $\topSOT$\=/continuous unitary representation
    of $\reals$ on $H_{1}$
    with co-generator $\NagyUtr_{V_{\omega}}$.
    So to prove the claim of the theorem
    by the Trotter--Kato theorem
    (see
        \Cref{thm:trotter-kato-cogen:sig:article-free-raj-dahya}
        and
        \Cref{prop:trotter-kato-cogen:sot-star:sig:article-free-raj-dahya}
        as well as
        \Cref{rem:equivalence-of-topologies-in-unitary-case:sig:article-free-raj-dahya}%
    ),
    it suffices to verify the equivalence of the following statements:

    \begin{kompaktenum}{\bfseries (a)}[\rtab]
    \item\punktlabel{1}
        ${\Omega \ni \omega \mapsto V_{\omega} \in \OpSpaceC{\HilbertRaum}}$
        is $\topSOTstar$\=/continuous;
    \item\punktlabel{2}
        ${\Omega \ni \omega \mapsto \NagyUtr_{V_{\omega}} \in \OpSpaceU{H_{1}}}$
        is $\topSOT$\=/continuous.
    \end{kompaktenum}

    \paragraph{\punktcref{1}{}\ensuremath{\implies}{}\punktcref{2}:}
    Since
        ${\omega \mapsto V_{\omega}}$
    and
        ${\omega \mapsto V_{\omega}^{\ast}}$
    are uniformly bounded and $\topSOT$\=/continuous,
    one has that
        ${\omega \mapsto V_{\omega}V_{\omega}^{\ast}}$
    is $\topSOT$\=/continuous.
    Applying the spectral theory of self-adjoint operators,
    since
        ${f : \reals \ni t \mapsto \sqrt{1 - \max\{1,\abs{t}\}}}$
    is a bounded continuous function,
    it follows that
        $f$ is $\topSOT$\=/continuous
    on the space of self-adjoint operators
    (see \exempli \cite[Theorem~4.3.2]{Murphy1990}).
    Hence
        ${
            \omega
            \mapsto
            f(V_{\omega}V_{\omega}^{\ast})
            = \sqrt{\onematrix - V_{\omega}V_{\omega}^{\ast}}
            = \HalmosOp_{V_{\omega}}
        }$
    is $\topSOT$\=/continuous.
    By the constructions in \eqcref{eq:1:\beweislabel}
    it follows that
        ${\omega \mapsto \NagyIso_{V_{\omega}}}$
        and
        ${\omega \mapsto \NagyIso_{V_{\omega}}^{\ast}}$
    are $\topSOT$\=/continuous.
    Since these maps are uniformly bounded,
        ${\omega \mapsto \NagyIso_{V_{\omega}}\NagyIso_{V_{\omega}}^{\ast}}$
    is also $\topSOT$\=/continuous.
    Appealing again to the constructions in \eqcref{eq:1:\beweislabel},
    one readily sees that
        ${\omega \mapsto \NagyUtr_{V_{\omega}}}$
    is $\topSOT$\=/continuous.

    \paragraph{\punktcref{2}{}\ensuremath{\implies}{}\punktcref{1}:}
    Since the $\topSOT$ and $\topSOTstar$
    topologies coincide on $\OpSpaceU{H_{1}}$,
    ${\omega \mapsto \NagyUtr_{V_{\omega}}}$
    is $\topSOTstar$\=/continuous.
    Applying the discrete-time free dilation mentioned in the second paragraph,
    one has
        $V_{\omega} = r_{1}^{\ast}\:\NagyUtr_{V_{\omega}}\:r_{1}$
    and thus also
        $V_{\omega}^{\ast} = r_{1}^{\ast}\:\NagyUtr_{V_{\omega}}^{\ast}\:r_{1}$
    for $\omega\in\Omega$.
    Hence
        ${\omega \mapsto V_{\omega}}$
    and
        ${\omega \mapsto V_{\omega}^{\ast}}$
    are uniformly bounded and $\topSOT$\=/continuous.
\end{proof}

%% ********** END OF FILE: body/sec-3-result-concrete/sec-5-free-dilation-II.tex **********

%% ********** END OF FILE: body/sec-3-result-concrete/.index.tex **********

%% ********************************************************************************
%% FILE: body/sec-4-result-abstract/.index.tex
%% ********************************************************************************

\section[Free dilations implicitly constructed]{Free dilations implicitly constructed}
\label{sec:result-abstract:sig:article-free-raj-dahya}

\firstparagraph
The na\"{i}ve approach mentioned in \Cref{rem:naive-approach:sig:article-free-raj-dahya}
can be made to work by studying the structure of dilations.
This paves the way for an abstract proof of the \First
free dilation theorem.

%% ********************************************************************************
%% FILE: body/sec-4-result-abstract/sec-1-sarason.tex
%% ********************************************************************************

\subsection[Structure theorems]{Structure theorems}
\label{sec:result-abstract:theorems:sig:article-free-raj-dahya}

\firstparagraph
In this section we work with semigroups defined over topological monoids
and dilations \emph{either} to the semigroups over the same monoid
\emph{or} to an extension of the topological monoid to a topological group
(recall the definitions in \S{}\ref{sec:introduction:notation:sig:article-free-raj-dahya}).
Recall also the natural
\onetoone\=/correspondence
between ($\topSOT$\=/continuous) unitary semigroups defined over $M$
and ($\topSOT$\=/continuous) unitary representations of $G$
in the special case of
    $(G,M) = (\reals^{d},\realsNonNeg^{d})$,
    $d\in\naturals$.

\begin{lemm}[Sarason, 1965]
\makelabel{lemm:sarason:sig:article-free-raj-dahya}
    Let $M$ be a (topological) monoid,
    $\HilbertRaum,\HilbertRaum^{\prime}$ be Hilbert spaces
    and
    $r \in \BoundedOps{\HilbertRaum}{\HilbertRaum^{\prime}}$
    an isometry.
    Let ${U : M \to \BoundedOps{\HilbertRaum^{\prime}}}$
    be a(n $\topSOT$\=/continuous) semigroup over $M$ on $\HilbertRaum^{\prime}$.
    Consider the (continuous) operator-valued function
        ${
            T \coloneqq r^{\ast}\:U(\cdot)\:r
            : M \to \BoundedOps{\HilbertRaum}
        }$.
    Then $T$ satisfies the semigroup law,
    \idest constitutes a(n $\topSOT$\=/continuous) semigroup over $M$ on $\HilbertRaum$,
    if and only if there exists a decomposition

    \begin{displaymath}
        \HilbertRaum^{\prime}
        = r\HilbertRaum \oplus H_{0} \oplus H_{1}
    \end{displaymath}

    \continueparagraph
    such that the subspaces
        $r\HilbertRaum \oplus H_{0}$
    and
        $H_{0}$
    are $U$-invariant.
\end{lemm}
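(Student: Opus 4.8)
The plan is to pass to the concrete picture by identifying $\HilbertRaum$ with the closed subspace $r\HilbertRaum \subseteq \HilbertRaum^{\prime}$, so that $r$ becomes an inclusion, $r^{\ast}$ the orthogonal projection $p \coloneqq rr^{\ast}$ onto $r\HilbertRaum$, and $T(m) = p\:U(m)\restr{r\HilbertRaum}$ for $m \in M$. The continuity clause then costs nothing: if $U$ is $\topSOT$\=/continuous, so is $T = r^{\ast}\:U(\cdot)\:r$, being a composition of $U(\cdot)$ with the fixed bounded operators $r^{\ast},r$; and $T(e) = r^{\ast}\:U(e)\:r = r^{\ast}r = \onematrix_{\HilbertRaum}$ always holds since $r$ is an isometry. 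Hence the whole content of the lemma is the equivalence between multiplicativity $T(m_{1}m_{2}) = T(m_{1})T(m_{2})$ for all $m_{1},m_{2} \in M$ and the existence of the claimed orthogonal decomposition, where a subspace $K$ is called $U$-invariant if $U(m)K \subseteq K$ for every $m \in M$.

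For the \usesinglequotes{if}-direction, assume $\HilbertRaum^{\prime} = r\HilbertRaum \oplus H_{0} \oplus H_{1}$ with both $r\HilbertRaum \oplus H_{0}$ and $H_{0}$ being $U$-invariant. Fix $\xi \in r\HilbertRaum$ and $m_{1},m_{2} \in M$. Since $r\HilbertRaum \oplus H_{0}$ is $U$-invariant, decompose $U(m_{2})\xi = \eta + \zeta$ with $\eta \in r\HilbertRaum$, $\zeta \in H_{0}$; then $T(m_{2})\xi = p\:U(m_{2})\xi = \eta$. Now
\[
    T(m_{1}m_{2})\xi
    = p\:U(m_{1})U(m_{2})\xi
    = p\:U(m_{1})\eta + p\:U(m_{1})\zeta,
\]
and the second summand vanishes: $H_{0}$ is $U$-invariant and $H_{0} \perp r\HilbertRaum$, so $U(m_{1})\zeta \in H_{0} \subseteq \ker p$. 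Therefore $T(m_{1}m_{2})\xi = p\:U(m_{1})\eta = T(m_{1})\eta = T(m_{1})T(m_{2})\xi$, which together with $T(e) = \onematrix_{\HilbertRaum}$ gives the semigroup law.

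The \usesinglequotes{only if}-direction is, as I expect, the crux. Assuming $T$ is a semigroup, I would take as the candidate middle space the \emph{defect space}
\[
    H_{0}
    \coloneqq
    \quer{\linspann}\{
        (\onematrix - p)\:U(m)\:\xi
        \mid
        m \in M,~\xi \in r\HilbertRaum
    \},
\]
which is closed by construction and satisfies $H_{0} \perp r\HilbertRaum$. The key step is to verify that $H_{0}$ is $U$-invariant. For a spanning vector $v = (\onematrix - p)\:U(m)\:\xi$ with $\xi \in r\HilbertRaum$ and any $m^{\prime} \in M$, one rewrites $U(m^{\prime})v = U(m^{\prime}m)\xi - U(m^{\prime})\big(p\:U(m)\xi\big)$; splitting $U(m^{\prime}m)\xi = T(m^{\prime}m)\xi + (\onematrix - p)\:U(m^{\prime}m)\xi$ and, since $p\:U(m)\xi = T(m)\xi \in r\HilbertRaum$, splitting $U(m^{\prime})\big(T(m)\xi\big) = T(m^{\prime})T(m)\xi + (\onematrix - p)\:U(m^{\prime})\:T(m)\xi$, one obtains that $U(m^{\prime})v$ equals $\big(T(m^{\prime}m) - T(m^{\prime})T(m)\big)\xi$ plus a vector lying in $H_{0}$; by the multiplicativity of $T$ the first term is $\zerovector$, so $U(m^{\prime})v \in H_{0}$, and since $U(m^{\prime})$ is linear and bounded and $H_{0}$ is closed, $U(m^{\prime})H_{0} \subseteq H_{0}$. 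That $r\HilbertRaum \oplus H_{0}$ is $U$-invariant is then immediate: for $\xi \in r\HilbertRaum$ one has $U(m)\xi = p\:U(m)\xi + (\onematrix - p)\:U(m)\xi \in r\HilbertRaum \oplus H_{0}$, and $H_{0}$ is already invariant. Setting $H_{1} \coloneqq \HilbertRaum^{\prime} \ominus (r\HilbertRaum \oplus H_{0})$ completes the decomposition. I note that only the monoid-homomorphism property of $U$ and $T$ enters, so the argument does not distinguish between $M$ being a group or merely a monoid, and — in the topological setting — continuity of $U$ is not needed for the decomposition, only for the (automatic) continuity of $T$.
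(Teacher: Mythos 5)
Your proof is correct. The paper does not prove this lemma in-text — it refers to Sarason's Lemma~0 and Shalit's Theorem~3.10 — and your argument is essentially the standard one found there: the easy computation for the ``if''-direction, and for the ``only if''-direction the semi-invariance construction, taking $H_{0}$ to be the closed span of $(\onematrix - p)\,U(m)\,r\HilbertRaum$ (equivalently, the smallest $U$-invariant subspace containing $r\HilbertRaum$, minus $r\HilbertRaum$) and using multiplicativity of $T$ to kill the compression defect; your observations that the monoid (rather than group) structure suffices and that continuity plays no role in the decomposition are also accurate.
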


See \cite[Lemma~0]{Sarason1965Article},
\cite[Theorem~3.10]{Shalit2021DilationBook}
for a proof
(noting that we have reformulated things for our context).%
\footnote{%
    \viz the decomposition is usually (equivalently)
    stated as
        $\HilbertRaum^{\prime} = H_{0} \oplus H_{1}$,
    where
        $H_{0} \supseteq r\HilbertRaum$,
    and such that
        $H_{0}$ and $H_{0} \ominus r\HilbertRaum$
    are $U$-invariant subspaces.
}

%% ********** END OF FILE: body/sec-4-result-abstract/sec-1-sarason.tex **********

%% ********************************************************************************
%% FILE: body/sec-4-result-abstract/sec-2-cooper.tex
%% ********************************************************************************

\begin{lemm}[Cooper, 1947]
\makelabel{lemm:coopers-thm:sig:article-free-raj-dahya}
    Let $V$ be a $\Cnought$\=/semigroup of isometries on a Hilbert space $\HilbertRaum$.
    Then $V$ admits a unitary dilation of the form
        $(\HilbertRaum\oplus\HilbertRaum,\iota_{1},U)$
    for some strongly continuous representation
        $U \in \Repr{\reals}{\HilbertRaum\oplus\HilbertRaum}$.%
    \footnoteref{ft:ith-inclusion:\beweislabel}
\end{lemm}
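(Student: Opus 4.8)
The plan is to reduce to the discrete-time setting by passing to the co-generator, in the spirit of our explicit proof of \Cref{thm:free-dilation:1st:sig:article-free-raj-dahya}. Let $V$ be a $\Cnought$-semigroup of isometries on $\HilbertRaum$; being isometric it is in particular contractive, so it has a co-generator $W \in \BoundedOps{\HilbertRaum}$, and by the theory recalled in \S{}\ref{sec:result-concrete:cogenerators:sig:article-free-raj-dahya} the operator $W$ is an isometry with $1 \notin \sigma_{p}(W)$. It then suffices to build a \emph{unitary} operator $\tilde{W}$ on $\HilbertRaum \oplus \HilbertRaum$ with $1 \notin \sigma_{p}(\tilde{W})$ which is a power dilation of $W$, \idest $W^{n} = \iota_{1}^{\ast}\,\tilde{W}^{n}\,\iota_{1}$ for all $n \in \naturalsZero$, where $\iota_{1}\colon \HilbertRaum \to \HilbertRaum\oplus\HilbertRaum$ is the inclusion onto the first summand. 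Indeed, given such $\tilde{W}$, the co-generator theory produces a unique unitary $\Cnought$-semigroup $U$ on $\HilbertRaum\oplus\HilbertRaum$ with co-generator $\tilde{W}$, which extends canonically to a strongly continuous representation $U \in \Repr{\reals}{\HilbertRaum\oplus\HilbertRaum}$; applying \Cref{lemm:poly-approx-cogenerator:sig:article-free-raj-dahya} to $V$ and to $U$ furnishes, for each $t \in \realsNonNeg$, a common net $(p^{(\alpha)}_{t})_{\alpha}$ of real polynomials with $p^{(\alpha)}_{t}(W) \longrightarrow V(t)$ and $p^{(\alpha)}_{t}(\tilde{W}) \longrightarrow U(t)$ strongly. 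Since each $p^{(\alpha)}_{t}$ is a finite linear combination of powers and the power-dilation identity holds termwise, linearity gives $p^{(\alpha)}_{t}(W) = \iota_{1}^{\ast}\,p^{(\alpha)}_{t}(\tilde{W})\,\iota_{1}$, and passing to the limit in $\alpha$ yields $V(t) = \iota_{1}^{\ast}\,U(t)\,\iota_{1}$ for all $t \in \realsNonNeg$. Hence $(\HilbertRaum\oplus\HilbertRaum,\iota_{1},U)$ is the sought dilation.

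To construct $\tilde{W}$, I would use the Wold decomposition of the single isometry $W$ (see \cite[\S{}I.1]{Nagy1970}): $\HilbertRaum = \HilbertRaum_{u} \oplus \HilbertRaum_{s}$ with both summands reducing $W$, with $W\restr{\HilbertRaum_{u}}$ unitary, and with $W\restr{\HilbertRaum_{s}}$ unitarily equivalent to the unilateral shift of multiplicity $L \coloneqq \HilbertRaum_{s} \ominus W\HilbertRaum_{s}$ acting on $\ell^{2}(\naturalsZero)\otimes L \cong \HilbertRaum_{s}$. On the unitary part put $(W\restr{\HilbertRaum_{u}}) \oplus (W\restr{\HilbertRaum_{u}})$ on $\HilbertRaum_{u}\oplus\HilbertRaum_{u}$; on the shift part take the bilateral shift of multiplicity $L$ on $\ell^{2}(\integers)\otimes L$, identified with $\HilbertRaum_{s}\oplus\HilbertRaum_{s}$ via $\ell^{2}(\integers) = \ell^{2}(\naturalsZero) \oplus \ell^{2}(\integers_{<0})$ and a reflection of the negative part onto $\ell^{2}(\naturalsZero)$. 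Their direct sum $\tilde{W}$ acts on $(\HilbertRaum_{u}\oplus\HilbertRaum_{u}) \oplus (\HilbertRaum_{s}\oplus\HilbertRaum_{s}) \cong (\HilbertRaum_{u}\oplus\HilbertRaum_{s}) \oplus (\HilbertRaum_{u}\oplus\HilbertRaum_{s}) = \HilbertRaum\oplus\HilbertRaum$ as a unitary operator, and under this re-bracketing the canonical copy of $\HilbertRaum = \HilbertRaum_{u}\oplus\HilbertRaum_{s}$ (the first $\HilbertRaum_{u}$-summand together with the $\naturalsZero$-supported subspace) embeds via precisely $\iota_{1}$. That $\tilde{W}$ is a power dilation of $W$ reduces to the trivial identity for the unitary part together with the elementary fact that the bilateral shift leaves the $\naturalsZero$-supported subspace invariant and restricts there to the unilateral shift, so that $\iota_{1}^{\ast}\tilde{W}^{n}\iota_{1}$ equals $W^{n}$ for every $n\in\naturalsZero$. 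Finally $1\notin\sigma_{p}(\tilde{W})$: the bilateral shift has empty point spectrum, while $\sigma_{p}(W\restr{\HilbertRaum_{u}}) \subseteq \sigma_{p}(W) \not\ni 1$, whence also $1\notin\sigma_{p}\big((W\restr{\HilbertRaum_{u}})\oplus(W\restr{\HilbertRaum_{u}})\big)$.

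The only genuinely delicate points are organisational: one must land on the space $\HilbertRaum\oplus\HilbertRaum$ exactly, and one must verify $1\notin\sigma_{p}(\tilde{W})$ before the co-generator machinery may be applied. The first explains why the bare minimal unitary extension of $W$ will not do — it lives on $\HilbertRaum$ together with an $L$-multiplicity shift space and is short one copy of the unitary part — so that separating the unitary summand via Wold and \emph{doubling} it is what makes the dimension count work; the substantive ingredient is thus the classical Wold decomposition. As an alternative, closer to Cooper's original argument, one may bypass co-generators and instead decompose $V$ itself via the continuous Wold/Cooper decomposition into a unitary $\Cnought$-group and a pure part unitarily equivalent to the right-translation semigroup on $L^{2}(\realsNonNeg)\otimes K$ for a multiplicity space $K$; the latter dilates to the translation group on $L^{2}(\reals)\otimes K \cong \big(L^{2}(\realsNonNeg)\otimes K\big)\oplus\big(L^{2}(\realsNonNeg)\otimes K\big)$ with inclusion-by-zero as the embedding, and again doubling the unitary part assembles everything into the required $(\HilbertRaum\oplus\HilbertRaum,\iota_{1},U)$.
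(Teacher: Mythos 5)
Your proposal is correct, but your primary argument takes a genuinely different route from the paper's. The paper proves the lemma directly in continuous time: it invokes Cooper's decomposition of the isometric semigroup $V$ into a unitary part on $H_{u}$ and a part unitarily equivalent to the right-translation semigroup on $L^{2}(\realsNonNeg)\otimes H$, and then writes down an explicit representation $U$ of $\reals$ on $\HilbertRaum\oplus\HilbertRaum$ by letting translation act on $L^{2}(\reals)\otimes H$ realised via the two half-lines (folding the negative half-line into the second copy of $H_{s}$ by reflection), while the second copy of $H_{u}$ simply carries the identity. Your main argument instead passes to the co-generator $W$ of $V$, which is an isometry with $1\notin\sigma_{p}(W)$, uses the \emph{discrete} Wold decomposition to build a unitary power dilation $\tilde{W}$ on $\HilbertRaum\oplus\HilbertRaum$ (bilateral shift over the shift part, doubled unitary part) with $1\notin\sigma_{p}(\tilde{W})$, and then transports the dilation identity back to the semigroups via \Cref{lemm:poly-approx-cogenerator:sig:article-free-raj-dahya} --- exactly the mechanism of \S{}\ref{sec:result-concrete:proof-1st:sig:article-free-raj-dahya}. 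This is sound: $p(W)=\iota_{1}^{\ast}\,p(\tilde{W})\,\iota_{1}$ holds for all polynomials by linearity of the power-dilation identity, strong limits pass through the bounded map $\iota_{1}^{\ast}(\cdot)\,\iota_{1}$, and your eigenvalue check ($\sigma_{p}(W\restr{\HilbertRaum_{u}})\subseteq\sigma_{p}(W)$ since $\HilbertRaum_{u}$ reduces $W$, plus empty point spectrum of the bilateral shift) is what licenses the co-generator machinery. What each approach buys: the paper's route is fully explicit about $U$ but leans on Cooper's 1947 structure theorem for continuous one-parameter isometric semigroups, whereas your route needs only the elementary discrete Wold decomposition plus the co-generator tools already developed in \S{}\ref{sec:result-concrete:cogenerators:sig:article-free-raj-dahya}, so it is more self-contained relative to the paper's toolkit (it proves the lemma attributed to Cooper without using Cooper's theorem), at the cost of describing $U$ only implicitly through its co-generator; the minor point that your canonical copy of $\HilbertRaum$ sits as $\iota_{1}\HilbertRaum$ only after the obvious re-bracketing unitary is harmless. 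Your closing alternative --- continuous Wold/Cooper decomposition and dilation of the right-translation semigroup to the translation group on $L^{2}(\reals)\otimes K$ --- is essentially the paper's own proof, modulo the immaterial choice of doubling the unitary part rather than placing the identity on its second copy.
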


\footnotetext[ft:ith-inclusion:\beweislabel]{%
    Recall that
        ${\iota_{i} : \HilbertRaum \to \HilbertRaum\oplus\HilbertRaum}$
    denotes the canonical inclusion into the $i$\textsuperscript{th} component
    for $i \in \{1,2\}$.
}

    \begin{proof}[Sketch]
        By \cite{Cooper1947Article},
        there exist
            $V$-invariant subspaces $H_{u},H_{s} \subseteq \HilbertRaum$,
            a Hilbert space $H$,
            and
            a unitary operator
                $\theta \in \BoundedOps{L^{2}(\realsNonNeg) \otimes H}{H_{s}}$,
        such that
            $\HilbertRaum = H_{u} \oplus H_{s}$
        and such that
            $V(\cdot)\restr{H_{u}}$
        is a unitary semigroup and
            $
                V(\cdot)\restr{H_{s}}
                = \theta\:(\ShiftRight(\cdot) \otimes \id_{H})\:\theta^{\ast}
            $,
        where $\ShiftRight$
        here denotes the continuous-time forwards-shift.%
        \footnote{%
            \idest
            $\ShiftRight(t)f = \einser_{[t, \infty)}(\cdot)\:f(\cdot - t)$
            for $t \in \realsNonNeg$, $f \in L^{2}(\realsNonNeg)$
        }

        Let $W \in \Repr{\reals}{H_{u}}$
        be the strongly continuous unitary representation
        corresponding to $V(\cdot)\restr{H_{u}}$.
        For $f \in L^{2}(\realsNonNeg)$
        let $\tilde{f} \in L^{2}(\reals)$
        denote the unique function extending $f$
        and equal to $0$ on $(-\infty,\:0)$.
        And for each $t \in \reals$
        let
            $f_{t,+},f_{t,-}\in L^{2}(\realsNonNeg)$
        denote the unique elements satisfying
            $
                \tilde{f}(\cdot - t)
                = f_{t,+}(\cdot)
                + f_{t,-}(-\cdot)
            $.
        Finally, construct $U \in \Repr{\reals}{\HilbertRaum\oplus\HilbertRaum}$
        via the following conditions

            \begin{shorteqnarray}
                U(t)\:\iota_{1}\xi &= &\iota_{1}\:W(t)\xi,\\
                U(t)\:\iota_{2}\xi &= &\iota_{2}\:\xi,\\
                U(t)\:\iota_{1} \theta (f \otimes \eta)
                    &= &\iota_{1}\:\theta\:(f_{t,+} \otimes \eta)
                        + \iota_{2}\:\theta\:(f_{t,-} \otimes \eta),
                ~\text{and}\\
                U(t)\:\iota_{2} \theta (f \otimes \eta)
                    &= &\iota_{1}\:\theta\:(f_{-t,-} \otimes \eta)
                        + \iota_{2}\:\theta\:(f_{-t,+} \otimes \eta)
            \end{shorteqnarray}

        \continueparagraph
        for
            $t \in \reals$,
            $\xi \in H_{u}$,
            $\eta \in H$,
            and
            $f \in L^{2}(\realsNonNeg)$.
        It is now routine to see that
        $U$ is indeed a strongly continuous representation
        and that $(\HilbertRaum\oplus\HilbertRaum,\iota_{1},U)$
        dilates $(\HilbertRaum,V)$.
    \end{proof}

Cooper's result provides us uniform means to dilate
semigroups of isometries to semigroups of unitaries,
whereby the construction exhibits the intertwining property,
\viz
    $U(\cdot) \circ \iota_{1} = \iota_{1} \circ V(\cdot)$.
This however holds in general in the abstract setting.

\begin{prop}[Intertwining property]
\makelabel{prop:invariance-of-rH-under-utr-dil-of-isometry:sig:article-free-raj-dahya}
    Let $V$ be a(n $\topSOT$\=/continuous) semigroup
    of isometries over a (topological) monoid $M$
    on a Hilbert space $\HilbertRaum$.
    Let $(\HilbertRaum^{\prime},r,U)$
    be any dilation of $(\HilbertRaum,V)$
    to a(n $\topSOT$\=/continuous) semigroup of
    unitaries over $M$ on $\HilbertRaum^{\prime}$.
    Then

    \begin{displaymath}
        U(x)\:r = r\:V(x)
    \end{displaymath}

    \continueparagraph
    for all $x \in M$.
    In particular, $r\HilbertRaum$ is $U$-invariant.
\end{prop}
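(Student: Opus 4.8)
The statement is purely algebraic: the continuity hypotheses play no role, so I would fix an arbitrary $x \in M$ and argue pointwise. The single computational idea is to estimate the defect $\norm{U(x)\:r\xi - r\:V(x)\xi}$ for $\xi \in \HilbertRaum$ and show it vanishes, exploiting the fact that $V(x)$ is an isometry.

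First I would record the three facts available: (i) $r$ is an isometry, so $\norm{r\eta} = \norm{\eta}$ and $r^{\ast}r = \onematrix$; (ii) $U(x)$ is unitary, hence isometric; and (iii) $V(x)$ is an isometry, so $\norm{V(x)\xi} = \norm{\xi}$, and by the dilation relation $r^{\ast}\:U(x)\:r = T(x) = V(x)$. Then I would expand
\begin{displaymath}
    \norm{U(x)\:r\xi - r\:V(x)\xi}^{2}
    = \norm{U(x)\:r\xi}^{2}
      - 2\Re\brkt{U(x)\:r\xi}{r\:V(x)\xi}
      + \norm{r\:V(x)\xi}^{2}.
\end{displaymath}
By (i) and (ii) the first term equals $\norm{\xi}^{2}$; by (i) and (iii) the third term equals $\norm{\xi}^{2}$; and the middle inner product rewrites as $\brkt{r^{\ast}\:U(x)\:r\xi}{V(x)\xi} = \brkt{V(x)\xi}{V(x)\xi} = \norm{\xi}^{2}$ using the dilation relation and (iii). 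Hence the right-hand side is $\norm{\xi}^{2} - 2\norm{\xi}^{2} + \norm{\xi}^{2} = 0$, so $U(x)\:r\xi = r\:V(x)\xi$ for every $\xi$, i.e. $U(x)\:r = r\:V(x)$.

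Finally, for the \enquote{in particular} clause, from $U(x)\:r = r\:V(x)$ I get $U(x)(r\HilbertRaum) = r\:V(x)\HilbertRaum \subseteq r\HilbertRaum$ for all $x \in M$, so $r\HilbertRaum$ is $U$-invariant. There is no real obstacle here; the only thing worth being careful about is that $V(x)$ being merely an isometry (not unitary) is exactly what makes the cross term collapse to $\norm{\xi}^{2}$, and that one should not attempt to also claim $U(x)^{\ast}r = r\:V(x)^{\ast}$, which is false in general.
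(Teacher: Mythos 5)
Your proof is correct and is essentially the same argument as the paper's: the paper decomposes $U(x)\,r\xi$ along $\HilbertRaum^{\prime}=\ran(r)\oplus\ker(r^{\ast})$ and uses Pythagoras together with $\norm{V(x)\xi}=\norm{\xi}$ to kill the component orthogonal to $r\HilbertRaum$, while you expand $\norm{U(x)\,r\xi-r\,V(x)\xi}^{2}$ directly; both rest on exactly the same ingredients (isometry of $r$ and $V(x)$, unitarity of $U(x)$, and the dilation relation $r^{\ast}U(x)r=V(x)$). The only cosmetic slip is writing $r^{\ast}U(x)r=T(x)$ where the dilated semigroup here is called $V$, which does not affect the argument.
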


    \begin{proof}
        Let $x \in M$ and $\xi \in \HilbertRaum$ be arbitrary.
        Since $\HilbertRaum^{\prime} = \ran(r) \oplus \ker(r^{\ast})$,
        there exist $\xi^{\prime} \in \HilbertRaum$
        and $\eta \in \ker(r^{\ast})$,
        such that $U(x)\:r\xi = r\xi^{\prime} + \eta$.
        The dilation yields
        $
            V(x)\xi
            = r^{\ast}\:U(x)\:r\xi
            = \xi^{\prime} + \zerovector
        $.
        Observing that
        $
            \norm{\eta}^{2}
            = \norm{r\xi^{\prime} + \eta}^{2}
            - \norm{r\xi^{\prime}}^{2}
            = \norm{\xi}^{2}
            - \norm{\xi^{\prime}}^{2}
        $
        and
        $
            \norm{\xi^{\prime}}
            = \norm{V(x)\xi}
            = \norm{\xi}
        $,
        it follows that $\eta = \zerovector$.
        So $U(x)\:r\xi = \xi^{\prime} + \zerovector = r\:V(x)\xi$.
    \end{proof}

%% ********** END OF FILE: body/sec-4-result-abstract/sec-2-cooper.tex **********

%% ********************************************************************************
%% FILE: body/sec-4-result-abstract/sec-3-free-dilation.tex
%% ********************************************************************************

\subsection[Proof of the Ist main result]{Proof of the \First main result}
\label{sec:result-abstract:proof-1st:sig:article-free-raj-dahya}

\firstparagraph
In this section we prove an abstract result which establishes
a second full proof of \Cref{thm:free-dilation:1st:sig:article-free-raj-dahya}
as an immediate consequence.
We first provide a definition and a simple technical manipulation.
Let $I$ be a non-empty index set,
$M_{i}$ topological monoids for each $i \in I$,
and $\HilbertRaum$ a Hilbert space.
Let $\{T_{i}\}_{i \in I}$ be a family of operator-valued functions
with $T_{i}$ being an $\topSOT$\=/continuous contractive semigroup over $M_{i}$ on $\HilbertRaum$
for each $i \in I$.
The results here will work with both variants of dilation
defined in \S{}\ref{sec:introduction:notation:sig:article-free-raj-dahya}.

\begin{defn}
\makelabel{defn:free-dilation-as-families:sig:article-free-raj-dahya}
    Let $\HilbertRaum^{\prime}$ be a Hilbert space,
    $r \in \BoundedOps{\HilbertRaum}{\HilbertRaum^{\prime}}$ an isometry,
    and $\{U_{i}\}_{i \in I}$
    a family of operator-valued functions with
        $U_{i}$ being an $\topSOT$\=/continuous unitary/isometric semigroup
        over $M_{i}$ on $\HilbertRaum^{\prime}$
    for each $i \in I$.
    We say that $(\HilbertRaum^{\prime}, r, \{U_{i}\}_{i \in I})$
    is an $\topSOT$\=/continuous \highlightTerm{free unitary/isometric dilation}
    of $(\HilbertRaum, \{T_{i}\}_{i \in I})$ if

    \begin{restoremargins}
    \begin{equation}
    \label{eq:free-dilation:abstract:sig:article-free-raj-dahya}
        \prod_{k=1}^{N}
            T_{i_{k}}(x_{k})
        = r^{\ast}
            \:\Big(
                \prod_{k=1}^{N}
                    U_{i_{k}}(x_{k})
            \Big)
            \:r
    \end{equation}
    \end{restoremargins}

    \continueparagraph
    for all $N \in \naturals$,
    all sequences $(i_{k})_{k=1}^{N} \subseteq I$,
    and all
        $(x_{k})_{k=1}^{N} \in \prod_{k=1}^{N}M_{i_{k}}$.

    In the \usesinglequotes{unitary} case,
    if we instead consider pairs $(G_{i},M_{i})$
    of topological groups and submonoids for each $i \in I$,
    then the above concept is analogously defined
    with instead each $U_{i}$ being a (continuos) unitary representation
    of $G_{i}$ on $\HilbertRaum^{\prime}$.
\end{defn}

\begin{prop}
\makelabel{prop:replace-hilbert-and-embedding-by-same-objects:sig:article-free-raj-dahya}
    If each $T_{i}$ admits a dilation
    to an $\topSOT$\=/continuous unitary semigroup over $M_{i}$,
    then there exists
        a common Hilbert space $\HilbertRaum^{\prime}$
        and
        a common isometric embedding $r\in\BoundedOps{\HilbertRaum}{\HilbertRaum^{\prime}}$,
        as well as
        a family $\{U_{i}\}_{i \in I}$
        with $U_{i}$ being an $\topSOT$\=/continuous unitary semigroup over $M_{i}$ on $\HilbertRaum^{\prime}$
        for each $i \in I$,
    such that
        $(\HilbertRaum^{\prime},r,U_{i})$
        is a dilation of $(\HilbertRaum,T_{i})$
    for each $i \in I$.
\end{prop}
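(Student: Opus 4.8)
The plan is to choose for each index an individual unitary dilation, collect all the resulting \emph{error spaces} $\ker(r_{i}^{\ast})$ into one large Hilbert space, and then correct each dilation by a fixed unitary so that it acts as the identity on the error spaces belonging to the other indices; the trivial action on those summands is exactly what allows a single embedding $r$ to serve all $i$ simultaneously.

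First I would fix, for each $i \in I$, an $\topSOT$\=/continuous unitary dilation $(\HilbertRaum_{i}, r_{i}, U_{i})$ of $(\HilbertRaum, T_{i})$, which exists by hypothesis. Writing $K_{i} \coloneqq \ker(r_{i}^{\ast}) = \ran(r_{i})^{\perp}$, one has the orthogonal decomposition $\HilbertRaum_{i} = \ran(r_{i}) \oplus K_{i}$, and since $r_{i}$ is an isometry the operator $w_{i} \coloneqq r_{i}^{\ast}\restr{\ran(r_{i})} \oplus \id_{K_{i}} \in \BoundedOps{\HilbertRaum_{i}}{\HilbertRaum \oplus K_{i}}$ is a surjective isometry, hence unitary, and satisfies $w_{i}\:r_{i}\xi = \xi \oplus \zerovector$ for $\xi \in \HilbertRaum$. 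Next I would set $\HilbertRaum^{\prime} \coloneqq \HilbertRaum \oplus \bigoplus_{i \in I} K_{i}$ (the Hilbert space direct sum) and let $r \in \BoundedOps{\HilbertRaum}{\HilbertRaum^{\prime}}$ be the canonical isometric inclusion into the first summand. For each fixed $i$ I split $\HilbertRaum^{\prime} = (\HilbertRaum \oplus K_{i}) \oplus L_{i}$ with $L_{i} \coloneqq \bigoplus_{k \neq i} K_{k}$, and define $\tilde{U}_{i}$ over $M_{i}$ on $\HilbertRaum^{\prime}$ by letting it act as $w_{i}\:U_{i}(\cdot)\:w_{i}^{\ast}$ on the first block and as the identity on $L_{i}$. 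Being the orthogonal direct sum of a unitary semigroup (a conjugate of $U_{i}$ by a fixed unitary) and the trivial unitary semigroup, each $\tilde{U}_{i}$ is again an $\topSOT$\=/continuous unitary semigroup over $M_{i}$ on $\HilbertRaum^{\prime}$, since conjugation by a fixed unitary and orthogonal direct sums both preserve the semigroup law and $\topSOT$\=/continuity.

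It then remains to verify $r^{\ast}\:\tilde{U}_{i}(x)\:r = T_{i}(x)$ for all $x \in M_{i}$, which is pure bookkeeping with the identifications: $r\xi$ corresponds to $(\xi \oplus \zerovector) \oplus \zerovector$ under the splitting, $w_{i}^{\ast}(\xi \oplus \zerovector) = r_{i}\xi$, the $\HilbertRaum$\=/component of $w_{i}\eta$ equals $r_{i}^{\ast}\eta$ for every $\eta \in \HilbertRaum_{i}$ (because $r_{i}^{\ast}$ annihilates $K_{i}$), and $\tilde{U}_{i}$ fixes the $L_{i}$\=/summand; hence $r^{\ast}\:\tilde{U}_{i}(x)\:r\xi = r_{i}^{\ast}\:U_{i}(x)\:r_{i}\xi = T_{i}(x)\xi$, as $(\HilbertRaum_{i},r_{i},U_{i})$ dilates $(\HilbertRaum,T_{i})$. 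The ``unitary'' group variant is identical: starting instead from dilating representations $U_{i} \in \Repr{G_{i}}{\HilbertRaum_{i}}$, the same prescription $\tilde{U}_{i} \coloneqq (w_{i}\:U_{i}(\cdot)\:w_{i}^{\ast}) \oplus \id_{L_{i}}$ yields $\tilde{U}_{i} \in \Repr{G_{i}}{\HilbertRaum^{\prime}}$ with the correct restriction to $M_{i}$. There is no genuine obstacle here; the only subtlety is keeping the per\=/index unitary corrections $w_{i}$ straight so that one embedding $r$ works for every $i$ at once, which — as noted informally in \Cref{rem:naive-approach:sig:article-free-raj-dahya} — is precisely the role of the trivial action on $L_{i}$.
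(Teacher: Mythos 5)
Your proposal is correct and follows essentially the same route as the paper: enlarge to a direct sum, extend each dilating semigroup trivially on the orthogonal complement, and conjugate by fixed unitaries so that a single embedding $r$ serves every index. The only (harmless) difference is bookkeeping: you sum only the defect spaces $K_{i} = \ker(r_{i}^{\ast})$ onto one copy of $\HilbertRaum$ and build the aligning unitaries $w_{i}$ explicitly, whereas the paper takes $\bigoplus_{i} H_{i}$ and invokes the isomorphism of the embedded copies $(\iota_{i}\circ r_{i})\HilbertRaum$ to obtain the $w_{i}$.
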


    \begin{proof}
        By assumption there exists an $\topSOT$\=/continuous unitary dilation
            $(H_{i}, r_{i}, U_{i})$
            of
            $(\HilbertRaum, T_{i})$
        for each $i \in I$.
        Set $\HilbertRaum^{\prime} \coloneqq \bigoplus_{i \in I}H_{i}$.
        Let $i \in I$.
        Setting
            $
                U'_{i}(\cdot)
                \coloneqq
                U_{i}(\cdot) \oplus \onematrix_{\HilbertRaum^{\prime} \ominus H_{i}}
            $,
        it is easy to verify that
            $(\HilbertRaum^{\prime}, \iota_{i} \circ r_{i},U'_{i})$
        is a unitary dilation of $(\HilbertRaum,T_{i})$
        to an $\topSOT$\=/continuous unitary semigroup over $M_{i}$ on $\HilbertRaum^{\prime}$.%
        \footnote{%
            Recall that
                ${\iota_{i} : H_{i} \to \HilbertRaum^{\prime}}$
            denotes the canonical inclusion into the $i$\textsuperscript{th} component
            for $i \in I$.
        }
        Now, since the subspaces
            $(\iota_{i} \circ r_{i})\HilbertRaum \subseteq \HilbertRaum^{\prime}$
        are all isomorphic,
        we can find unitary operators $w_{i} \in \BoundedOps{\HilbertRaum^{\prime}}$
        such that $\iota_{i} \circ r_{i} = w_{i} \circ r$
        for each $i \in I$
        and some isometry $r \in \BoundedOps{\HilbertRaum}{\HilbertRaum^{\prime}}$.
        Setting
            $U''_{i}(\cdot) \coloneqq w_{i}^{\ast}\:U'_{i}(\cdot)\:w_{i}$
        one can easily verify that $(\HilbertRaum^{\prime},r,U''_{i})$
        is a dilation of $(\HilbertRaum,T_{i})$
        to an $\topSOT$\=/continuous semigroup of unitaries over $M_{i}$ on $\HilbertRaum^{\prime}$
        for each $i \in I$.
    \end{proof}

As per the discussion in \Cref{rem:naive-approach:sig:article-free-raj-dahya}
this result does not immediately give us a free dilation.
Using structure theorems however allows us to attain this goal.

\begin{highlightboxWithBreaks}
\begin{thm}[Free dilations, abstract formulation]
\makelabel{thm:free-dilation:abstract:sig:article-free-raj-dahya}
    Let $I$ be a non-empty index set
    and let $M_{i} \in \DilatableMonoids$
    be topological monoids for each $i \in I$.%
    \footnoteref{ft:all-semigroups-have-a-dilation:single:sec:result-abstract:proof-1st:sig:article-free-raj-dahya}
    Let $\{T_{i}\}_{i \in I}$
    be a family of operator-valued functions
    with $T_{i}$ being an $\topSOT$\=/continuous contractive semigroup over $M_{i}$ on $\HilbertRaum$
    for each $i \in I$.
    Then
        $(\HilbertRaum, \{T_{i}\}_{i \in I})$
    admits a free dilation to a family
    of $\topSOT$\=/continuous unitary semigroups.
\end{thm}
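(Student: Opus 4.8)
The plan is to repair the na\"{i}ve approach of \Cref{rem:naive-approach:sig:article-free-raj-dahya} by interposing a family of \emph{isometric} dilations between the $T_{i}$ and the desired unitary dilations. The obstruction to the na\"{i}ve argument is that the copy $r\HilbertRaum$ of $\HilbertRaum$ need not be co-invariant under a unitary dilation of $T_{i}$; the one genuinely non-formal point is therefore to use \Cref{lemm:sarason:sig:article-free-raj-dahya} to extract, from each unitary dilation of $T_{i}$, an isometric dilation with respect to which $\HilbertRaum$ \emph{is} co-invariant. Once this is arranged on a common space, the intertwining property \Cref{prop:invariance-of-rH-under-utr-dil-of-isometry:sig:article-free-raj-dahya} together with \Cref{prop:replace-hilbert-and-embedding-by-same-objects:sig:article-free-raj-dahya} will do the rest.

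\emph{Step 1 (co-invariant isometric reductions).} Fix $i \in I$. Since $M_{i} \in \DilatableMonoids$, choose an $\topSOT$-continuous unitary dilation $(\tilde{\HilbertRaum}_{i}, \tilde{r}_{i}, \tilde{U}_{i})$ of $(\HilbertRaum, T_{i})$. Because $T_{i} = \tilde{r}_{i}^{\ast}\,\tilde{U}_{i}(\cdot)\,\tilde{r}_{i}$ satisfies the semigroup law, \Cref{lemm:sarason:sig:article-free-raj-dahya} provides a decomposition $\tilde{\HilbertRaum}_{i} = \tilde{r}_{i}\HilbertRaum \oplus H_{0}^{(i)} \oplus H_{1}^{(i)}$ in which $\mathcal{R}_{i} \coloneqq \tilde{r}_{i}\HilbertRaum \oplus H_{0}^{(i)}$ and $H_{0}^{(i)}$ are $\tilde{U}_{i}$-invariant. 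Restricting, $V_{i} \coloneqq \tilde{U}_{i}(\cdot)\restr{\mathcal{R}_{i}}$ is an $\topSOT$-continuous semigroup of isometries over $M_{i}$ on $\mathcal{R}_{i}$ and $(\mathcal{R}_{i}, \tilde{r}_{i}, V_{i})$ dilates $(\HilbertRaum, T_{i})$; moreover, since $H_{0}^{(i)} = \mathcal{R}_{i} \ominus \tilde{r}_{i}\HilbertRaum$ is invariant under each $V_{i}(x)$, its orthogonal complement $\tilde{r}_{i}\HilbertRaum$ is invariant under each $V_{i}(x)^{\ast}$, i.e.\ $\tilde{r}_{i}^{\ast}\,V_{i}(x) = T_{i}(x)\,\tilde{r}_{i}^{\ast}$ for all $x \in M_{i}$. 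A direct-sum-plus-unitary-adjustment argument exactly as in \Cref{prop:replace-hilbert-and-embedding-by-same-objects:sig:article-free-raj-dahya}, carried out for isometric semigroups and using that inflations by the identity and conjugations by unitaries preserve the co-invariance relation, then yields a single Hilbert space $\hat{\HilbertRaum}$, a single isometry $\rho \in \BoundedOps{\HilbertRaum}{\hat{\HilbertRaum}}$, and $\topSOT$-continuous isometric semigroups over $M_{i}$ on $\hat{\HilbertRaum}$, again denoted $V_{i}$, such that $(\hat{\HilbertRaum}, \rho, V_{i})$ dilates $(\HilbertRaum, T_{i})$ and $\rho^{\ast}\,V_{i}(x) = T_{i}(x)\,\rho^{\ast}$ for all $i \in I$ and $x \in M_{i}$.

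\emph{Step 2 (unitary dilations on a common space).} Each $V_{i}$ is an $\topSOT$-continuous contractive semigroup over $M_{i}$, hence (as $M_{i} \in \DilatableMonoids$) admits an $\topSOT$-continuous unitary dilation over $M_{i}$. Applying \Cref{prop:replace-hilbert-and-embedding-by-same-objects:sig:article-free-raj-dahya} to the family $\{V_{i}\}_{i \in I}$ gives a common Hilbert space $\HilbertRaum^{\prime}$, a common isometry $s \in \BoundedOps{\hat{\HilbertRaum}}{\HilbertRaum^{\prime}}$, and $\topSOT$-continuous unitary semigroups $U_{i}$ over $M_{i}$ on $\HilbertRaum^{\prime}$ with $(\HilbertRaum^{\prime}, s, U_{i})$ dilating $(\hat{\HilbertRaum}, V_{i})$ for each $i$. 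Since each $V_{i}$ is a semigroup of \emph{isometries}, \Cref{prop:invariance-of-rH-under-utr-dil-of-isometry:sig:article-free-raj-dahya} applies to each of these dilations and forces $U_{i}(x)\,s = s\,V_{i}(x)$ for all $i \in I$ and $x \in M_{i}$; in particular $s\hat{\HilbertRaum}$ is $U_{i}$-invariant for \emph{every} $i$ simultaneously, which is precisely what permits a common peeling in the next step.

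\emph{Step 3 (the free identity).} Set $r \coloneqq s\rho \in \BoundedOps{\HilbertRaum}{\HilbertRaum^{\prime}}$, an isometry. Let $N \in \naturals$, $(i_{k})_{k=1}^{N} \subseteq I$, and $(x_{k})_{k=1}^{N} \in \prod_{k=1}^{N} M_{i_{k}}$. Peeling from the right with $U_{i_{k}}(x_{k})\,s = s\,V_{i_{k}}(x_{k})$ — legitimate since $V_{i_{k+1}}(x_{k+1})\cdots V_{i_{N}}(x_{N})\,\rho$ takes values in $\hat{\HilbertRaum}$ — gives $\big(\prod_{k=1}^{N} U_{i_{k}}(x_{k})\big)\,s\,\rho = s\,\big(\prod_{k=1}^{N} V_{i_{k}}(x_{k})\big)\,\rho$, whence $r^{\ast}\big(\prod_{k=1}^{N} U_{i_{k}}(x_{k})\big)r = \rho^{\ast}s^{\ast}s\big(\prod_{k=1}^{N} V_{i_{k}}(x_{k})\big)\rho = \rho^{\ast}\big(\prod_{k=1}^{N} V_{i_{k}}(x_{k})\big)\rho$. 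Peeling from the left with $\rho^{\ast}\,V_{i_{k}}(x_{k}) = T_{i_{k}}(x_{k})\,\rho^{\ast}$ and finishing with $\rho^{\ast}\rho = \onematrix$ gives $r^{\ast}\big(\prod_{k=1}^{N} U_{i_{k}}(x_{k})\big)r = \prod_{k=1}^{N} T_{i_{k}}(x_{k})$, so $(\HilbertRaum^{\prime}, r, \{U_{i}\}_{i \in I})$ is a free dilation of $(\HilbertRaum, \{T_{i}\}_{i \in I})$ in the sense of \Cref{defn:free-dilation-as-families:sig:article-free-raj-dahya}. Specialising to $M_{i} = \realsNonNeg \in \DilatableMonoids$ for all $i$ recovers \Cref{thm:free-dilation:1st:sig:article-free-raj-dahya}. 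The only step carrying real content is the co-invariance produced in Step 1; everything else is organisational, resting on \Cref{prop:replace-hilbert-and-embedding-by-same-objects:sig:article-free-raj-dahya} and the intertwining lemma \Cref{prop:invariance-of-rH-under-utr-dil-of-isometry:sig:article-free-raj-dahya}.
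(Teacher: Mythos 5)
Your proposal is correct, and it follows the same two-stage strategy as the paper's proof: use \Cref{lemm:sarason:sig:article-free-raj-dahya} to interpose a free \emph{isometric} dilation, then pass to unitaries via \Cref{prop:replace-hilbert-and-embedding-by-same-objects:sig:article-free-raj-dahya} and the intertwining property \Cref{prop:invariance-of-rH-under-utr-dil-of-isometry:sig:article-free-raj-dahya}, finishing by peeling the products from both sides. The only divergence is organisational, in how the isometric stage is assembled on a common space: the paper first places all the unitary dilations on one space, applies Sarason there, and then enlarges the space and adjusts by unitaries so that the auxiliary subspaces $H^{(i)}_{0}$ become mutually orthogonal, extending each restricted semigroup by the identity on the other auxiliary pieces so that $p\,\tilde{V}_{i}(x)\,(\onematrix - p) = \zeromatrix$; you instead apply Sarason per index, record the co-invariance relation $\tilde{r}_{i}^{\ast}\,V_{i}(x) = T_{i}(x)\,\tilde{r}_{i}^{\ast}$ explicitly, and obtain the common space by the direct-sum-plus-unitary-adjustment argument for isometric semigroups, checking that inflation and conjugation preserve co-invariance. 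The two devices carry the same content (your co-invariance relation is exactly the paper's $p\,\tilde{V}_{i}(x)\,(\onematrix - p) = \zeromatrix$), but your arrangement sidesteps the paper's dimension-expansion and orthogonalisation manoeuvre, and the paper itself notes (in the remark following the theorem) that \Cref{prop:replace-hilbert-and-embedding-by-same-objects:sig:article-free-raj-dahya} adapts verbatim to isometric dilations, so that step of yours is legitimate.
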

\end{highlightboxWithBreaks}

\footnotetext[ft:all-semigroups-have-a-dilation:single:sec:result-abstract:proof-1st:sig:article-free-raj-dahya]{%
    \idest any $\topSOT$\=/continuous contractive semigroup over $M_{i}$
    admits a dilation to an $\topSOT$\=/continuous unitary semigroup over $M_{i}$
    for each $i \in I$
    (see \S{}\ref{sec:introduction:notation:sig:article-free-raj-dahya}).
}

Note that by \cite[Theorem~I.8.1]{Nagy1970},
\Cref{thm:free-dilation:abstract:sig:article-free-raj-dahya}
directly implies the concrete version of
the \First free dilation theorem
(\Cref{thm:free-dilation:1st:sig:article-free-raj-dahya}).
The proof of this abstract result consists of two parts:
1) Construction of a family of semigroups of isometries,
    instrumentalising Sarason's lemma;
2) Dilation of these to semigroups of unitaries,
    relying on the intertwining property.

\def\beweislabel{thm:free-dilation:abstract:sig:article-free-raj-dahya}
\begin{proof}[of \Cref{\beweislabel}]
    \paragraph{Construction of a free isometric dilation:}
    By assumption there is a(n $\topSOT$\=/continuous) unitary dilation
    $(H_{i},r_{i},V_{i})$
    of
    $(\HilbertRaum,T_{i})$
    for each $i \in I$.
    Moreover, by \Cref{prop:replace-hilbert-and-embedding-by-same-objects:sig:article-free-raj-dahya}
    we may assume that
    each $H_{i} = H$
    and $r_{i} = r$
    for some Hilbert space $H$
    and some isometry $r \in \BoundedOps{\HilbertRaum}{H}$.
    By Sarason's result (\Cref{lemm:sarason:sig:article-free-raj-dahya}),
    there exist decompositions

    \begin{shorteqnarray}
        H
        = r\HilbertRaum \oplus H^{(i)}_{0} \oplus H^{(i)}_{1}
    \end{shorteqnarray}

    \continueparagraph
    where $r\HilbertRaum \oplus H^{(i)}_{0}$
    and $H^{(i)}_{0}$
    are $V_{i}$-invariant for each $i \in I$.
    By expanding the underlying spaces of the dilations,
    we may assume that
        $
            \dim(H \ominus r\HilbertRaum)
            \geq
            \dim(\bigoplus_{i \in I}H^{(i)}_{0})
        $.%
    \footnote{%
        Simply replace $H$ by $H \oplus H'$
        for a large enough Hilbert space $H'$
        and set
        $
            \tilde{V}_{i}(\cdot)
            \coloneqq
            V_{i}(\cdot) \oplus \onematrix_{H'}
        $
        for each $i \in I$.
    }
    Since the auxiliary spaces
        $H^{(i)}_{0} \oplus H^{(i)}_{1}$
    are large enough,
    we may adjust the dilations
    via unitaries
    and further ensure that
        $\{H^{(i)}_{0}\}_{i \in I}$
    is a family of orthogonal subspaces of $H$.
    Set

        \begin{shorteqnarray}
            H_{0} &\coloneqq &\bigoplus_{i \in I}H^{(i)}_{0},\\
            \tilde{H}
                &\coloneqq &r\HilbertRaum \oplus H_{0},
            ~\text{and}\\
            \tilde{V}_{i}(\cdot)
                &\coloneqq
                    &V_{i}(\cdot)\restr{r\HilbertRaum \oplus H^{(0)}_{i}}
                    \oplus
                    \onematrix_{H_{0} \ominus H^{(0)}_{i}}
        \end{shorteqnarray}

    \continueparagraph
    for each $i \in I$
    and let $\tilde{r}\in\BoundedOps{\HilbertRaum}{\tilde{H}}$
    be the isometry defined by
        $\tilde{r}\xi = r\xi$
    for $\xi \in \HilbertRaum$.
    The invariance of the subspaces guarantees that
    each $\tilde{V}_{i}$ is a well-defined
    ($\topSOT$\=/continuous) semigroup of isometries over $M$ on $\tilde{H}$
    and each
        $(\tilde{H},r,\tilde{V}_{i})$
    remains a dilation
    of $(\HilbertRaum,T_{i})$.%
    \footnote{%
        By restricting to the constructed invariant subspaces,
        $\tilde{V}_{i}(x)$ may cease to be surjective
        for $x \in M_{i}$, $i \in I$.
    }

    Set $p \coloneqq \tilde{r}\tilde{r}^{\ast}$,
    which is the projection in $\tilde{H}$
    onto $\ran(\tilde{r}) = \ran(r) = r\HilbertRaum$.
    Since for each $i \in I$,
    both $H_{i}$
    and $H_{0} \ominus H_{i} = \bigoplus_{j \in I \setminus \{i\}}H_{j}$
    are $\tilde{V}_{i}$-invariant subspaces,
    one has
        $
            p\:\tilde{V}_{i}(\cdot)\:(\onematrix - p)
            = \zeromatrix
        $.
    Thus

        \begin{restoremargins}
        \begin{equation}
        \label{eq:0:\beweislabel}
            p\:\tilde{V}_{i}(x)\:p\:\tilde{V}_{j}(y)
            = p\:\tilde{V}_{i}(x)\:(p + (\onematrix - p))\:\tilde{V}_{j}(y)
            = p\:\tilde{V}_{i}(x)\tilde{V}_{j}(y)
        \end{equation}
        \end{restoremargins}

    \continueparagraph
    for $x,y \in M$, $i,j \in I$.
    For,
        for $N\in\naturals$,
        $(i_{k})_{k=1}^{N} \subseteq I$,
        and
        $(x_{k})_{k=1}^{N} \subseteq M$,
    we obtain

        \begin{displaymath}
            \prod_{k=1}^{N}
                T_{i_{k}}(x_{k})
            = \prod_{k=1}^{N}
                    \tilde{r}^{\ast}\:\tilde{V}_{i_{k}}(x_{k})\:\tilde{r}
            = \tilde{r}^{\ast}
                \:\Big(
                    \prod_{k=1}^{N}
                        p
                        \:\tilde{V}_{i_{k}}(x_{k})
                \Big)
                \:\tilde{r},
        \end{displaymath}

    \continueparagraph
    and applying \eqcref{eq:0:\beweislabel},
    the product in parentheses
    can be reduced successively to
        $p\:\tilde{V}_{i_{1}}(x_{1})\tilde{V}_{i_{2}}(x_{2})\ldots\tilde{V}_{i_{N}}(x_{N})$.
    So

        \begin{displaymath}
            \prod_{k=1}^{N}
                T_{i_{k}}(x_{k})
            = \tilde{r}^{\ast}
                p
                \:\Big(
                    \prod_{k=1}^{N}
                    \tilde{V}_{i_{k}}(x_{k})
                \Big)
                \:\tilde{r},
        \end{displaymath}

    \continueparagraph
    whence
    $(\tilde{H},\tilde{r},\{\tilde{V}_{i}\}_{i \in I})$
    is a (continuous) free isometric dilation
    of $(\HilbertRaum, \{T_{i}\}_{i \in I})$,
    since $\tilde{r}^{\ast}p = \tilde{r}^{\ast}$.

    \paragraph{Transition to a free unitary dilation:}
    Applying the main assumption again,
    there is a(n $\topSOT$\=/continuous) unitary dilation
        $(\breve{H}_{i},s_{i},U_{i})$
        of
        $(\tilde{H},\tilde{V}_{i})$
    for each $i \in I$.
    And by \Cref{prop:replace-hilbert-and-embedding-by-same-objects:sig:article-free-raj-dahya}
    we may assume that
    each $\breve{H}_{i} = \breve{H}$ and $s_{i} = s$
    for some Hilbert space $\breve{H}$
    and some isometry $s \in \BoundedOps{\HilbertRaum}{\breve{H}}$.
    By the intertwining property of unitary dilations of semigroups of isometries
    (see \Cref{prop:invariance-of-rH-under-utr-dil-of-isometry:sig:article-free-raj-dahya}),
    we have
        $U_{i}(x)\:s = s\:V_{i}(t)$
    for $x \in M_{i}$, $i \in I$.
    Repeated applications of this intertwining yields

        \begin{displaymath}
            (s \circ \tilde{r})^{\ast}
            \:\Big(
                \prod_{k=1}^{N}
                    U_{i_{k}}(x_{k})
            \Big)
            \:(s \circ \tilde{r})
            = \tilde{r}^{\ast}
                \:s^{\ast}
                \:\Big(
                    \prod_{k=1}^{N}
                        U_{i_{k}}(x_{k})
                \Big)
                \:s
                \:\tilde{r}
            = \tilde{r}^{\ast}
                \:\cancel{s^{\ast}s}
                \:\Big(
                    \prod_{k=1}^{N}
                        \tilde{V}_{i_{k}}(x_{k})
                \Big)
                \:\tilde{r}
        \end{displaymath}

    \continueparagraph
    for $N\in\naturals$,
        $(i_{k})_{k=1}^{N} \subseteq I$,
        and $(x_{k})_{k=1}^{N} \in \prod_{k=1}^{N}M_{i_{k}}$.
    As $\{\tilde{V}_{i}\}_{i \in I}$
    is a free dilation of $\{T_{i}\}_{i \in I}$,
    it follows that
        $(\breve{H}, s \circ \tilde{r}, \{U_{i}\}_{i \in I})$
    is a(n $\topSOT$\=/continuous) free unitary dilation of
    $(\HilbertRaum,\{T_{i}\}_{i \in I})$.
\end{proof}

\begin{rem}
    It is a straightforward exercise to see that
    the first claims in
    \Cref{prop:replace-hilbert-and-embedding-by-same-objects:sig:article-free-raj-dahya}
    and
    \Cref{thm:free-dilation:abstract:sig:article-free-raj-dahya},
    continue to hold if
    \usesinglequotes{unitary}
    is replaced by \usesinglequotes{isometric}
    and $\DilatableMonoids$
    is replaced by the class of monoids $M$
    for which all
        $\topSOT$\=/continuous contractive semigroups over $M$
    admit a $\topSOT$\=/continuous isometric dilation over $M$.
\end{rem}

\begin{rem}
\makelabel{rem:free-dilation:abstract:repr-version:sig:article-free-raj-dahya}
    In \Cref{prop:replace-hilbert-and-embedding-by-same-objects:sig:article-free-raj-dahya}
    and \Cref{thm:free-dilation:abstract:sig:article-free-raj-dahya}
    we may instead consider pairs $(G_{i},M_{i})$
    of topological groups and submonoids for each $i \in I$.
    In \Cref{prop:replace-hilbert-and-embedding-by-same-objects:sig:article-free-raj-dahya},
    it is straightforward to see that if each $T_{i}$ admits a dilation
    to an $\topSOT$\=/continuous unitary representation of $G_{i}$,
    then the result continues to hold with
    $U_{i}$ being instead a continuos unitary representation of $G_{i}$.
    Relying on this version of the \namecref{prop:replace-hilbert-and-embedding-by-same-objects:sig:article-free-raj-dahya},
    if each $(G_{i},M_{i}) \in \DilatableGroupMonoids$
    in \Cref{thm:free-dilation:abstract:sig:article-free-raj-dahya},%
    \footnote{%
        \idest any $\topSOT$\=/continuous contractive semigroup over $M_{i}$
        admits a dilation to an $\topSOT$\=/continuous unitary representation of $G_{i}$
        for each $i \in I$
        (see \S{}\ref{sec:introduction:notation:sig:article-free-raj-dahya}).
    }
    then arguing in essentially the same way as above,
    one obtains that
        $(\HilbertRaum, \{T_{i}\}_{i \in I})$
    admits a free dilation to a family
    of $\topSOT$\=/continuous unitary representations.
\end{rem}

%% ********** END OF FILE: body/sec-4-result-abstract/sec-3-free-dilation.tex **********

%% ********** END OF FILE: body/sec-4-result-abstract/.index.tex **********

%% ********************************************************************************
%% FILE: body/sec-5-algebra/.index.tex
%% ********************************************************************************

\section[Semigroups over free topological products]{Semigroups over free topological products}
\label{sec:algebra:sig:article-free-raj-dahya}

\firstparagraph
Our aim is to now reformulate
the \First free dilation theorem
in terms of semigroups defined over a submonoid
of free products of topological groups.
This provides a natural algebraic setting
for non-commuting families of semigroups.
We first recall some algebraic notions.

%% ********************************************************************************
%% FILE: body/sec-5-algebra/sec-1-free-products.tex
%% ********************************************************************************

\subsection[Free topological products]{Free topological products}
\label{sec:algebra:free-products:sig:article-free-raj-dahya}

\firstparagraph
Consider topological groups $G_{i}$ for $i \in I$
and some non-empty index set $I$.

\begin{defn}[Free topological product]
\makelabel{defn:free-prod-group:sig:article-free-raj-dahya}
    A topological group $G$
    together with a continuous homomorphism
    ${\iota_{i} : G_{i} \to G}$
    constitute a \highlightTerm{free topological product}
    of $(G_{i})_{i \in I}$
    if

    \begin{enumerate}[label={\upshape\texttt{FP}\textsubscript{\arabic*}}]
        \item\label{ax:freeproducts:gen:sig:article-free-raj-dahya}
            $G$ is algebraically generated by $\bigcup_{i \in I}\iota_{i}(G_{i})$;
            and

        \item\label{ax:freeproducts:ext:sig:article-free-raj-dahya}
            For each topological group $\Gamma$
            and continuous homomorphisms
            ${\theta_{i} : G_{i} \to \Gamma}$
            for $i \in I$,
            there exists a continuous homomorphism
            ${\theta : G \to \Gamma}$
            satisfying $\theta \circ \iota_{i} = \theta_{i}$
            for each $i \in I$.
    \end{enumerate}

    \nvraum{1.5}

\end{defn}

There are a few things to immediately observe:
In the presence of axiom \eqcref{ax:freeproducts:gen:sig:article-free-raj-dahya},
the induced homomorphism $\theta$ in \eqcref{ax:freeproducts:ext:sig:article-free-raj-dahya}
is necessarily unique.
And by \eqcref{ax:freeproducts:ext:sig:article-free-raj-dahya} each
    $\iota_{i}$
is necessarily a homeomorphic embedding.%
\footnote{%
    Let $\Gamma \coloneqq G_{i}$.
    Consider the continuous homomorphisms
        ${\theta_{i} \coloneqq \id_{\Gamma} : \Gamma \ni x \mapsto x \in G_{i}}$
    and
        ${\theta_{j} \coloneqq \ConstOne_{j} : \Gamma \ni x \mapsto e \in G_{j}}$
    for $j \in I \without \{i\}$,
    By \eqcref{ax:freeproducts:ext:sig:article-free-raj-dahya}
    there exists a (unique) continuous homomorphism
        ${\pi_{i} : G \to \Gamma = G_{i}}$
    satisfying
        $\pi_{i} \circ \iota_{i} = \id_{\Gamma}$
        and
        $\pi_{i} \circ \iota_{j} = \ConstOne_{j}$
    for $j \in I \without \{i\}$.
    Since
        $\pi_{i} \circ \iota_{i} = \id_{G_{i}}$
    and all three functions in this expressions are continuous,
    it follows that $\iota_{i}$ is injective
    and bi-continuous.
}

\begin{thm}[Graev, 1950]
\makelabel{thm:graev:free-prod-exists:sig:article-free-raj-dahya}
    The free topological product exists
    and is unique up to algebraic and topological isomorphism.
\end{thm}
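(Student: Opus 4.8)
The plan is to dispatch uniqueness by the standard universal-property argument and to establish existence by equipping the abstract free product of the underlying groups with the finest group topology rendering all canonical inclusions continuous.

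\emph{Uniqueness.} Suppose $(G,(\iota_i)_{i\in I})$ and $(G',(\iota_i')_{i\in I})$ both satisfy the conditions of \Cref{defn:free-prod-group:sig:article-free-raj-dahya}. Applying axiom \eqcref{ax:freeproducts:ext:sig:article-free-raj-dahya} to $G$ with target $\Gamma=G'$ and the maps $\iota_i'$ gives a continuous homomorphism $\phi\colon G\to G'$ with $\phi\circ\iota_i=\iota_i'$, and symmetrically a continuous homomorphism $\psi\colon G'\to G$ with $\psi\circ\iota_i'=\iota_i$. Then $\psi\circ\phi$ and $\mathrm{id}_G$ are continuous homomorphisms $G\to G$ that agree with $\iota_i$ on each $\iota_i(G_i)$; since axiom \eqcref{ax:freeproducts:gen:sig:article-free-raj-dahya} forces a homomorphism out of $G$ to be determined by its restrictions to the $\iota_i(G_i)$ (this is exactly the uniqueness clause noted right after the definition), we get $\psi\circ\phi=\mathrm{id}_G$ and likewise $\phi\circ\psi=\mathrm{id}_{G'}$. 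Hence $\phi$ is an isomorphism of topological groups.

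\emph{Existence.} Let $G^{\mathrm{alg}}$ be the algebraic free product of the underlying abstract groups, with canonical inclusions $\iota_i\colon G_i\hookrightarrow G^{\mathrm{alg}}$; it is generated by $\bigcup_i\iota_i(G_i)$ and has the universal property for abstract group homomorphisms. Let $\mathcal{T}$ be the set of all group topologies on $G^{\mathrm{alg}}$ making every $\iota_i$ continuous; it is non-empty, as it contains the indiscrete topology. Set $\tau^{\ast}$ to be the topology generated by $\bigcup\mathcal{T}$ as a subbasis, i.e.\ the supremum of $\mathcal{T}$ in the lattice of topologies. One checks that $\tau^{\ast}$ is again a group topology via the characterisation of group topologies through the neighbourhood filter of the identity: a base of $\tau^{\ast}$-neighbourhoods of $e$ is given by finite intersections of members of the corresponding filters for the topologies in $\mathcal{T}$, and the defining conditions (for each $V$ a symmetric $W$ with $W^{2}\subseteq V$, and for each $V$ and $g$ a $W$ with $gWg^{-1}\subseteq V$) are stable under finite intersection. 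Each $\iota_i$ is $\tau^{\ast}$-continuous because $\iota_i^{-1}$ commutes with unions and finite intersections and carries every member of every $\tau\in\mathcal{T}$ to an open subset of $G_i$. Axiom \eqcref{ax:freeproducts:gen:sig:article-free-raj-dahya} is inherited from the algebraic free product. For axiom \eqcref{ax:freeproducts:ext:sig:article-free-raj-dahya}, given a topological group $\Gamma$ and continuous homomorphisms $\theta_i\colon G_i\to\Gamma$, the algebraic universal property yields a unique homomorphism $\theta\colon G^{\mathrm{alg}}\to\Gamma$ with $\theta\circ\iota_i=\theta_i$, and only its continuity is at issue. The initial topology $\tau_\theta\coloneqq\{\theta^{-1}(U)\mid U\subseteq\Gamma\text{ open}\}$ is a group topology (preimage of a group topology under a homomorphism) making each $\iota_i$ continuous, since $(\iota_i)^{-1}\circ\theta^{-1}=(\theta\circ\iota_i)^{-1}=\theta_i^{-1}$; hence $\tau_\theta\in\mathcal{T}$, so $\tau_\theta\subseteq\tau^{\ast}$, and therefore $\theta\colon(G^{\mathrm{alg}},\tau^{\ast})\to\Gamma$ is continuous. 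Uniqueness of $\theta$ follows from generation. Thus $(G^{\mathrm{alg}},\tau^{\ast})$ with $(\iota_i)_{i\in I}$ is a free topological product.

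\emph{Main obstacle.} The only genuinely non-formal point is that $\tau^{\ast}$, the supremum of a family of group topologies on the fixed group $G^{\mathrm{alg}}$, is again a group topology; everything else reduces to routine manipulation of preimages and the abstract universal property. I expect the cleanest route to be the identity-neighbourhood-filter criterion and its stability under finite intersections, as sketched above. (An alternative, closer to Graev's original construction, would be to build $\tau^{\ast}$ explicitly from admissible sequences of symmetric generating neighbourhoods; but the supremum description makes the verification of \eqcref{ax:freeproducts:ext:sig:article-free-raj-dahya} essentially automatic.)
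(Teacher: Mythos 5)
Your argument is correct, but it does not mirror anything in the paper: for this statement the paper offers no proof at all and simply refers to Graev's and Hulanicki's papers. Your uniqueness step is the standard universal-property argument and is fine, since the uniqueness of the induced homomorphism under axiom \eqcref{ax:freeproducts:gen:sig:article-free-raj-dahya} is exactly the observation the paper records right after \Cref{defn:free-prod-group:sig:article-free-raj-dahya}. Your existence step — take the algebraic free product and endow it with the supremum $\tau^{\ast}$ of all group topologies making the canonical inclusions continuous — is a legitimate and well-known soft construction; the one non-formal point you correctly isolate (that a supremum of group topologies is again a group topology) does go through via the identity-neighbourhood-filter criterion, because the filter generated by finite intersections of identity neighbourhoods from the various topologies satisfies the axioms ($W^{2}\subseteq V$, symmetry, conjugation-invariance) componentwise, and the resulting group topology coincides with the lattice supremum. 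Likewise your verification of \eqcref{ax:freeproducts:ext:sig:article-free-raj-dahya} via the initial topology $\tau_{\theta}$ (a group topology, lying in $\mathcal{T}$, hence below $\tau^{\ast}$) is sound. The trade-off versus Graev's route, which the paper implicitly relies on, is worth noting: the supremum argument proves exactly the stated existence and uniqueness but gives essentially no hold on what $\tau^{\ast}$ looks like, whereas Graev's explicit construction of the topology is what delivers the finer facts the paper uses later — notably that the free product of Hausdorff groups is Hausdorff (\Cref{thm:free-topological-product-Hausdorff:graev:sig:article-free-raj-dahya}), which does not follow from your construction without additional work. (The fact that each $\iota_{i}$ is a homeomorphic embedding is not an issue either way, since the paper derives it from the axioms alone in a footnote.)
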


See \cite{Graev1950ArticleFreeProducts},
    \cite{Hulanicki1967Article},
    \etcetera
for a proof.
We highlight some basic aspects of these free products:

\begin{kompaktenum}{\bfseries 1.}
\item
    The underlying group can be defined as the (unique up to isomorphism)
    free algebraic product, $G$, of the $G_{i}$.
    Formally this construction consists of
    injective homomorphisms
        ${\iota_{i} : G_{i} \to G}$
    for each $i \in I$,
    such that the images $\iota_{i}(G_{i})$
    are pairwise disjoint.
    Each element of $x \in G$
    has a unique presentation

    \begin{restoremargins}
    \begin{equation}
    \label{eq:free-prod-elements:sig:article-free-raj-dahya}
        x = \prod_{k=1}^{N} \iota_{i_{k}}(x_{k})
    \end{equation}
    \end{restoremargins}

    \continueparagraph
    for some $N \in \naturalsZero$,
    $(i_{k})_{k=1}^{N} \subseteq I$ bubble-swap free,
    and some
    $(x_{k})_{k=1}^{N} \in \prod_{k=1}^{N}(G_{k} \without \{e\})$.
    We refer to this as the \highlightTerm{minimal representation} of $x$.

\item
    Given two elements $x,y \neq e$ in the free product,
    say with minimal representations
        $x = \prod_{j=1}^{M}\iota_{i_{j}}(x_{j})$
        and
        $y = \prod_{j=1}^{N}\iota_{i'_{j}}(y_{j})$
    (see \eqcref{eq:free-prod-elements:sig:article-free-raj-dahya}),
    the product $xy$ is determined by checking
    which elements
    \usesinglequotes{at the end} of $x$
    cancel out with elements
    \usesinglequotes{at the start} of $y$.
    Supposing that $l \leq \max\{M,N\}$
    is maximal with
    $i_{M+1-k} = i'_{k}$
    for $k \in \{1,2,\ldots,l\}$
    and
    $y_{k} = x_{M+1-k}^{-1}$
    for $k \in \{1,2,\ldots,l-1\}$
    one has

        \begin{displaymath}
            xy = \begin{displaycases}
                \prod_{j=1}^{M-l}\iota_{i_{j}}(x_{j})
                \cdot
                \prod_{j=l+1}^{N}\iota_{i'_{j}}(y_{j})
                    &: &y_{l} = x_{l}^{-1}\\
                \prod_{j=1}^{M-l}\iota_{i_{j}}(x_{j})
                \cdot
                \iota_{i'_{l}}(x_{l}y_{l})
                \cdot
                \prod_{j=l+1}^{N}\iota_{i'_{j}}(y_{j})
                    &: &\text{otherwise}\\
            \end{displaycases}
        \end{displaymath}

    \continueparagraph
    as the minimal representation of $xy$.
    \Cref{fig:free-product:sig:article-free-raj-dahya}
    depicts some examples.
\end{kompaktenum}

\begin{conv}
    We denote the underlying set of the topological free product
    of $(G_{i})_{i \in I}$
    by $\FreeProduct{i \in I}{G_{i}}$.
    Given subsets $S_{i} \subseteq G_{i}$ for $i \in I$
    we let
        $
            \FreeProduct{i \in I}{S_{i}}
            \coloneqq
            \bigcup_{N\in\naturalsZero}
            \{
                \prod_{k=1}^{N}
                    \iota_{i_{k}}(x_{k})
                \mid
                (i_{k})_{k=1}^{N} \subseteq I,
                ~(x_{k})_{k=1}^{N} \in \prod_{k=1}^{N}S_{i_{k}}
            \}
        $.
    If each $G_{i} = G$ for some (topological) group $G$
    and each $S_{i} = S$ for some subset $S \subseteq G$,
    then we shorten the above to
        $\FreePower{I}{G}$
        \resp
        $\FreePower{I}{S}$,
    or
        $\FreePower{d}{G}$
        \resp
        $\FreePower{d}{S}$
    if $I = \{1,2,\ldots,d\}$ for some $d\in\naturals$.
\end{conv}

%% ********************************************************************************
%% FILE: figures/free-product-reals.tex
%% ********************************************************************************

\begin{figure}[!ht]
\begin{mdframed}[%
    backgroundcolor=background_light_grey,%
    linewidth=0pt,%
]
    \centering
    \begin{subfigure}[t]{0.3\textwidth}
        \centering
        \begin{tikzpicture}[node distance=1cm,thick]
            \pgfmathsetmacro\hunit{1}
            \pgfmathsetmacro\vunit{1}
            \pgfmathsetmacro\hscale{1}
            \pgfmathsetmacro\vscale{1}

            %% box
            \draw [draw=black, line width=0.5pt, fill=none]
                ({-1.8599999999999999 * \hscale * \hunit}, {-1.4400000000000002 * \vscale * \vunit})
                -- ({1.2 * \hscale * \hunit}, {-1.4400000000000002 * \vscale * \vunit})
                -- ({1.2 * \hscale * \hunit}, {1.44 * \vscale * \vunit})
                -- ({-1.8599999999999999 * \hscale * \hunit}, {1.44 * \vscale * \vunit})
                -- cycle;

            %% plot label
            \draw [draw=none, fill=none]
                ({-1.8599999999999999 * \hscale * \hunit}, {1.44 * \vscale * \vunit})
                node[label={[below right, xshift=0pt, yshift=-2pt] \scriptsize $\reals^{2}$}]{};

            %% origin
            \node (O) at (0, 0) {$\bullet$};

            %% x-axis
            \draw [->, dashed, draw=black, line width=0.5pt, fill=none]
                ({-1.8599999999999999 * \hscale * \hunit}, {0 * \vscale * \vunit})
                -- ({1.2 * \hscale * \hunit}, {0 * \vscale * \vunit});

            %% y-axis
            \draw [->, dashed, draw=black, line width=0.5pt, fill=none]
                ({0 * \hscale * \hunit}, {-1.4400000000000002 * \vscale * \vunit})
                -- ({0 * \hscale * \hunit}, {1.44 * \vscale * \vunit});

            %% element
            \draw [->, draw=black, line width=1pt, fill=none]
                ({0.0 * \hscale * \hunit}, {0.0 * \vscale * \vunit})
                -- ({-0.5 * \hscale * \hunit}, {0.0 * \vscale * \vunit});
            \draw [->, draw=black, line width=1pt, fill=none]
                ({-0.5 * \hscale * \hunit}, {0.0 * \vscale * \vunit})
                -- ({-0.5 * \hscale * \hunit}, {1.2 * \vscale * \vunit});
            \draw [->, draw=black, line width=1pt, fill=none]
                ({-0.5 * \hscale * \hunit}, {1.2 * \vscale * \vunit})
                -- ({1.0 * \hscale * \hunit}, {1.2 * \vscale * \vunit});
            \draw [->, draw=black, line width=1pt, fill=none]
                ({1.0 * \hscale * \hunit}, {1.2 * \vscale * \vunit})
                -- ({1.0 * \hscale * \hunit}, {0.7999999999999999 * \vscale * \vunit});
            \draw [->, draw=black, line width=1pt, fill=none]
                ({1.0 * \hscale * \hunit}, {0.7999999999999999 * \vscale * \vunit})
                -- ({0.7 * \hscale * \hunit}, {0.7999999999999999 * \vscale * \vunit});

        \end{tikzpicture}
        \caption{Element $x$}
        \label{fig:free-product:a:sig:article-free-raj-dahya}
    \end{subfigure}
    \begin{subfigure}[t]{0.3\textwidth}
        \centering
        \begin{tikzpicture}[node distance=1cm,thick]
            \pgfmathsetmacro\hunit{1}
            \pgfmathsetmacro\vunit{1}
            \pgfmathsetmacro\hscale{1}
            \pgfmathsetmacro\vscale{1}

            %% box
            \draw [draw=black, line width=0.5pt, fill=none]
                ({-1.8599999999999999 * \hscale * \hunit}, {-1.4400000000000002 * \vscale * \vunit})
                -- ({1.2 * \hscale * \hunit}, {-1.4400000000000002 * \vscale * \vunit})
                -- ({1.2 * \hscale * \hunit}, {1.44 * \vscale * \vunit})
                -- ({-1.8599999999999999 * \hscale * \hunit}, {1.44 * \vscale * \vunit})
                -- cycle;

            %% plot label
            \draw [draw=none, fill=none]
                ({-1.8599999999999999 * \hscale * \hunit}, {1.44 * \vscale * \vunit})
                node[label={[below right, xshift=0pt, yshift=-2pt] \scriptsize $\reals^{2}$}]{};

            %% origin
            \node (O) at (0, 0) {$\bullet$};

            %% x-axis
            \draw [->, dashed, draw=black, line width=0.5pt, fill=none]
                ({-1.8599999999999999 * \hscale * \hunit}, {0 * \vscale * \vunit})
                -- ({1.2 * \hscale * \hunit}, {0 * \vscale * \vunit});

            %% y-axis
            \draw [->, dashed, draw=black, line width=0.5pt, fill=none]
                ({0 * \hscale * \hunit}, {-1.4400000000000002 * \vscale * \vunit})
                -- ({0 * \hscale * \hunit}, {1.44 * \vscale * \vunit});

            %% element
            \draw [->, draw=black, line width=1pt, fill=none]
                ({0.0 * \hscale * \hunit}, {0.0 * \vscale * \vunit})
                -- ({0.3 * \hscale * \hunit}, {0.0 * \vscale * \vunit});
            \draw [->, draw=black, line width=1pt, fill=none]
                ({0.3 * \hscale * \hunit}, {0.0 * \vscale * \vunit})
                -- ({0.3 * \hscale * \hunit}, {0.4 * \vscale * \vunit});
            \draw [->, draw=black, line width=1pt, fill=none]
                ({0.3 * \hscale * \hunit}, {0.4 * \vscale * \vunit})
                -- ({-0.3 * \hscale * \hunit}, {0.4 * \vscale * \vunit});
            \draw [->, draw=black, line width=1pt, fill=none]
                ({-0.3 * \hscale * \hunit}, {0.4 * \vscale * \vunit})
                -- ({-0.3 * \hscale * \hunit}, {-1.2000000000000002 * \vscale * \vunit});
            \draw [->, draw=black, line width=1pt, fill=none]
                ({-0.3 * \hscale * \hunit}, {-1.2000000000000002 * \vscale * \vunit})
                -- ({-1.55 * \hscale * \hunit}, {-1.2000000000000002 * \vscale * \vunit});

        \end{tikzpicture}
        \caption{Element $y$}
        \label{fig:free-product:b:sig:article-free-raj-dahya}
    \end{subfigure}
    \begin{subfigure}[t]{0.3\textwidth}
        \centering
        \begin{tikzpicture}[node distance=1cm,thick]
            \pgfmathsetmacro\hunit{1}
            \pgfmathsetmacro\vunit{1}
            \pgfmathsetmacro\hscale{1}
            \pgfmathsetmacro\vscale{1}

            %% box
            \draw [draw=black, line width=0.5pt, fill=none]
                ({-1.8599999999999999 * \hscale * \hunit}, {-1.4400000000000002 * \vscale * \vunit})
                -- ({1.2 * \hscale * \hunit}, {-1.4400000000000002 * \vscale * \vunit})
                -- ({1.2 * \hscale * \hunit}, {1.44 * \vscale * \vunit})
                -- ({-1.8599999999999999 * \hscale * \hunit}, {1.44 * \vscale * \vunit})
                -- cycle;

            %% plot label
            \draw [draw=none, fill=none]
                ({-1.8599999999999999 * \hscale * \hunit}, {1.44 * \vscale * \vunit})
                node[label={[below right, xshift=0pt, yshift=-2pt] \scriptsize $\reals^{2}$}]{};

            %% origin
            \node (O) at (0, 0) {$\bullet$};

            %% x-axis
            \draw [->, dashed, draw=black, line width=0.5pt, fill=none]
                ({-1.8599999999999999 * \hscale * \hunit}, {0 * \vscale * \vunit})
                -- ({1.2 * \hscale * \hunit}, {0 * \vscale * \vunit});

            %% y-axis
            \draw [->, dashed, draw=black, line width=0.5pt, fill=none]
                ({0 * \hscale * \hunit}, {-1.4400000000000002 * \vscale * \vunit})
                -- ({0 * \hscale * \hunit}, {1.44 * \vscale * \vunit});

            %% element
            \draw [->, draw=black, line width=1pt, fill=none]
                ({0.0 * \hscale * \hunit}, {0.0 * \vscale * \vunit})
                -- ({-0.5 * \hscale * \hunit}, {0.0 * \vscale * \vunit});
            \draw [->, draw=black, line width=1pt, fill=none]
                ({-0.5 * \hscale * \hunit}, {0.0 * \vscale * \vunit})
                -- ({-0.5 * \hscale * \hunit}, {1.2 * \vscale * \vunit});
            \draw [->, draw=black, line width=1pt, fill=none]
                ({-0.5 * \hscale * \hunit}, {1.2 * \vscale * \vunit})
                -- ({0.4 * \hscale * \hunit}, {1.2 * \vscale * \vunit});
            \draw [->, draw=black, line width=1pt, fill=none]
                ({0.4 * \hscale * \hunit}, {1.2 * \vscale * \vunit})
                -- ({0.4 * \hscale * \hunit}, {-0.40000000000000013 * \vscale * \vunit});
            \draw [->, draw=black, line width=1pt, fill=none]
                ({0.4 * \hscale * \hunit}, {-0.40000000000000013 * \vscale * \vunit})
                -- ({-0.85 * \hscale * \hunit}, {-0.40000000000000013 * \vscale * \vunit});

        \end{tikzpicture}
        \caption{Product of elements $xy$}
        \label{fig:free-product:c:sig:article-free-raj-dahya}
    \end{subfigure}

    \null

    \begin{subfigure}[t]{0.3\textwidth}
        \centering
        \begin{tikzpicture}[node distance=1cm,thick]
            \pgfmathsetmacro\hunit{1}
            \pgfmathsetmacro\vunit{1}
            \pgfmathsetmacro\hscale{1}
            \pgfmathsetmacro\vscale{1}

            %% box
            \draw [draw=black, line width=0.5pt, fill=none]
                ({-1 * \hscale * \hunit}, {-1 * \vscale * \vunit})
                -- ({2.16 * \hscale * \hunit}, {-1 * \vscale * \vunit})
                -- ({2.16 * \hscale * \hunit}, {1.92 * \vscale * \vunit})
                -- ({-1 * \hscale * \hunit}, {1.92 * \vscale * \vunit})
                -- cycle;

            %% plot label
            \draw [draw=none, fill=none]
                ({-1 * \hscale * \hunit}, {1.92 * \vscale * \vunit})
                node[label={[below right, xshift=0pt, yshift=-2pt] \scriptsize $\reals^{2}$}]{};

            %% origin
            \node (O) at (0, 0) {$\bullet$};

            %% x-axis
            \draw [->, dashed, draw=black, line width=0.5pt, fill=none]
                ({-1 * \hscale * \hunit}, {0 * \vscale * \vunit})
                -- ({2.16 * \hscale * \hunit}, {0 * \vscale * \vunit});

            %% y-axis
            \draw [->, dashed, draw=black, line width=0.5pt, fill=none]
                ({0 * \hscale * \hunit}, {-1 * \vscale * \vunit})
                -- ({0 * \hscale * \hunit}, {1.92 * \vscale * \vunit});

            %% element
            \draw [->, draw=black, line width=1pt, fill=none]
                ({0.0 * \hscale * \hunit}, {0.0 * \vscale * \vunit})
                -- ({0.5 * \hscale * \hunit}, {0.0 * \vscale * \vunit});
            \draw [->, draw=black, line width=1pt, fill=none]
                ({0.5 * \hscale * \hunit}, {0.0 * \vscale * \vunit})
                -- ({0.5 * \hscale * \hunit}, {1.0 * \vscale * \vunit});
            \draw [->, draw=black, line width=1pt, fill=none]
                ({0.5 * \hscale * \hunit}, {1.0 * \vscale * \vunit})
                -- ({0.8 * \hscale * \hunit}, {1.0 * \vscale * \vunit});
            \draw [->, draw=black, line width=1pt, fill=none]
                ({0.8 * \hscale * \hunit}, {1.0 * \vscale * \vunit})
                -- ({0.8 * \hscale * \hunit}, {1.3 * \vscale * \vunit});

        \end{tikzpicture}
        \caption{Element $x^{\prime} \in \FreePower{d}{\realsNonNeg}$}
        \label{fig:free-product:d:sig:article-free-raj-dahya}
    \end{subfigure}
    \begin{subfigure}[t]{0.3\textwidth}
        \centering
        \begin{tikzpicture}[node distance=1cm,thick]
            \pgfmathsetmacro\hunit{1}
            \pgfmathsetmacro\vunit{1}
            \pgfmathsetmacro\hscale{1}
            \pgfmathsetmacro\vscale{1}

            %% box
            \draw [draw=black, line width=0.5pt, fill=none]
                ({-1 * \hscale * \hunit}, {-1 * \vscale * \vunit})
                -- ({2.16 * \hscale * \hunit}, {-1 * \vscale * \vunit})
                -- ({2.16 * \hscale * \hunit}, {1.92 * \vscale * \vunit})
                -- ({-1 * \hscale * \hunit}, {1.92 * \vscale * \vunit})
                -- cycle;

            %% plot label
            \draw [draw=none, fill=none]
                ({-1 * \hscale * \hunit}, {1.92 * \vscale * \vunit})
                node[label={[below right, xshift=0pt, yshift=-2pt] \scriptsize $\reals^{2}$}]{};

            %% origin
            \node (O) at (0, 0) {$\bullet$};

            %% x-axis
            \draw [->, dashed, draw=black, line width=0.5pt, fill=none]
                ({-1 * \hscale * \hunit}, {0 * \vscale * \vunit})
                -- ({2.16 * \hscale * \hunit}, {0 * \vscale * \vunit});

            %% y-axis
            \draw [->, dashed, draw=black, line width=0.5pt, fill=none]
                ({0 * \hscale * \hunit}, {-1 * \vscale * \vunit})
                -- ({0 * \hscale * \hunit}, {1.92 * \vscale * \vunit});

            %% element
            \draw [->, draw=black, line width=1pt, fill=none]
                ({0.0 * \hscale * \hunit}, {0.0 * \vscale * \vunit})
                -- ({0.0 * \hscale * \hunit}, {0.3 * \vscale * \vunit});
            \draw [->, draw=black, line width=1pt, fill=none]
                ({0.0 * \hscale * \hunit}, {0.3 * \vscale * \vunit})
                -- ({1.0 * \hscale * \hunit}, {0.3 * \vscale * \vunit});

        \end{tikzpicture}
        \caption{Element $y^{\prime} \in \FreePower{d}{\realsNonNeg}$}
        \label{fig:free-product:e:sig:article-free-raj-dahya}
    \end{subfigure}
    \begin{subfigure}[t]{0.3\textwidth}
        \centering
        \begin{tikzpicture}[node distance=1cm,thick]
            \pgfmathsetmacro\hunit{1}
            \pgfmathsetmacro\vunit{1}
            \pgfmathsetmacro\hscale{1}
            \pgfmathsetmacro\vscale{1}

            %% box
            \draw [draw=black, line width=0.5pt, fill=none]
                ({-1 * \hscale * \hunit}, {-1 * \vscale * \vunit})
                -- ({2.16 * \hscale * \hunit}, {-1 * \vscale * \vunit})
                -- ({2.16 * \hscale * \hunit}, {1.92 * \vscale * \vunit})
                -- ({-1 * \hscale * \hunit}, {1.92 * \vscale * \vunit})
                -- cycle;

            %% plot label
            \draw [draw=none, fill=none]
                ({-1 * \hscale * \hunit}, {1.92 * \vscale * \vunit})
                node[label={[below right, xshift=0pt, yshift=-2pt] \scriptsize $\reals^{2}$}]{};

            %% origin
            \node (O) at (0, 0) {$\bullet$};

            %% x-axis
            \draw [->, dashed, draw=black, line width=0.5pt, fill=none]
                ({-1 * \hscale * \hunit}, {0 * \vscale * \vunit})
                -- ({2.16 * \hscale * \hunit}, {0 * \vscale * \vunit});

            %% y-axis
            \draw [->, dashed, draw=black, line width=0.5pt, fill=none]
                ({0 * \hscale * \hunit}, {-1 * \vscale * \vunit})
                -- ({0 * \hscale * \hunit}, {1.92 * \vscale * \vunit});

            %% element
            \draw [->, draw=black, line width=1pt, fill=none]
                ({0.0 * \hscale * \hunit}, {0.0 * \vscale * \vunit})
                -- ({0.5 * \hscale * \hunit}, {0.0 * \vscale * \vunit});
            \draw [->, draw=black, line width=1pt, fill=none]
                ({0.5 * \hscale * \hunit}, {0.0 * \vscale * \vunit})
                -- ({0.5 * \hscale * \hunit}, {1.0 * \vscale * \vunit});
            \draw [->, draw=black, line width=1pt, fill=none]
                ({0.5 * \hscale * \hunit}, {1.0 * \vscale * \vunit})
                -- ({0.8 * \hscale * \hunit}, {1.0 * \vscale * \vunit});
            \draw [->, draw=black, line width=1pt, fill=none]
                ({0.8 * \hscale * \hunit}, {1.0 * \vscale * \vunit})
                -- ({0.8 * \hscale * \hunit}, {1.6 * \vscale * \vunit});
            \draw [->, draw=black, line width=1pt, fill=none]
                ({0.8 * \hscale * \hunit}, {1.6 * \vscale * \vunit})
                -- ({1.8 * \hscale * \hunit}, {1.6 * \vscale * \vunit});

        \end{tikzpicture}
        \caption{Product of elements $x^{\prime}y^{\prime} \in \FreePower{d}{\realsNonNeg}$}
        \label{fig:free-product:f:sig:article-free-raj-dahya}
    \end{subfigure}
    \caption{%
        Examples of elements
        and operations of the free product
        $\reals^{\freepr d}$
        with $d=2$.\\
        The elements are visualised via continuously parameterised
        paths in $\reals^{d}$ starting in the origin.
    }
    \label{fig:free-product:sig:article-free-raj-dahya}
\end{mdframed}
\end{figure}
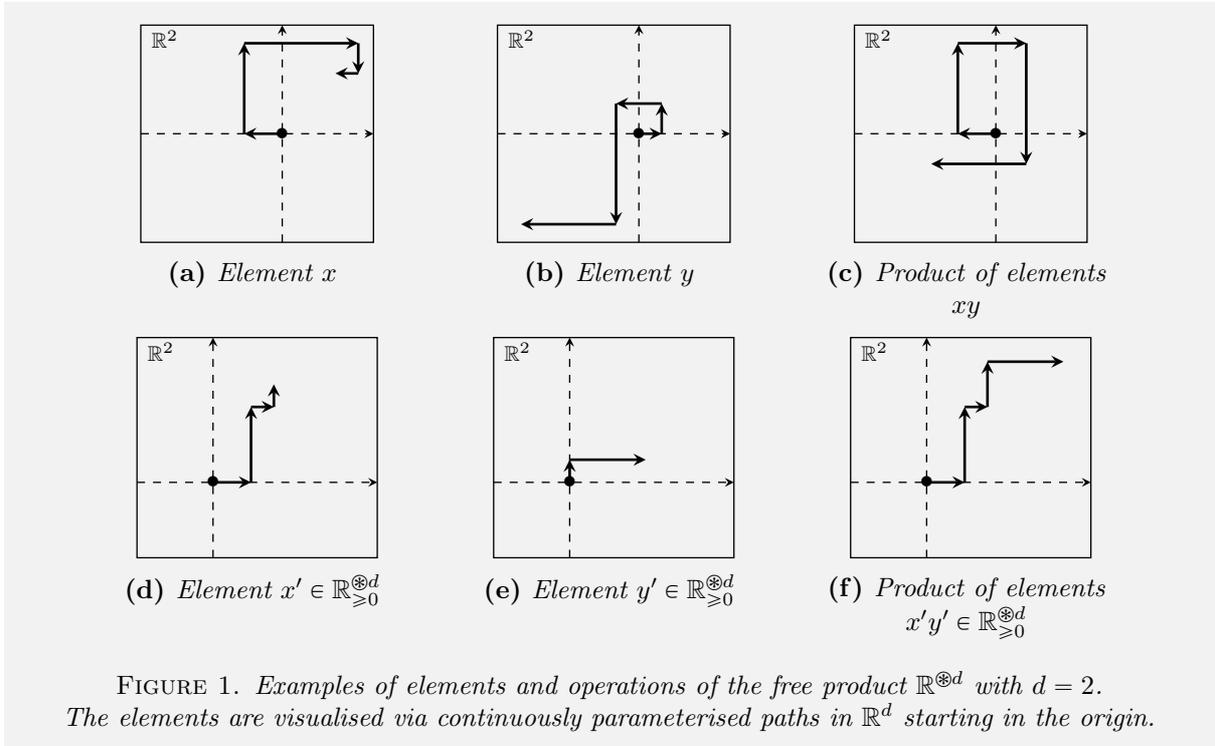

%% ********** END OF FILE: figures/free-product-reals.tex **********

\begin{thm}[Graev, 1950]
\makelabel{thm:free-topological-product-Hausdorff:graev:sig:article-free-raj-dahya}
    If each $G_{i}$ is Hausdorff,
    then so too is the free product.
    In particular
    $\FreePower{I}{\reals}$
    is a Hausdorff topological group
    for each non-empty index set $I$.
\end{thm}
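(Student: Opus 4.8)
The plan is to reduce the Hausdorff property to separation of points by continuous homomorphisms. Since $G = \FreeProduct{i \in I}{G_{i}}$ is a topological group, it is Hausdorff if and only if $\{e\}$ is closed, i.e.\ $N \coloneqq \overline{\{e\}} = \{e\}$. Now $N$ is a closed normal subgroup, so $G/N$ is a Hausdorff topological group and the quotient map $q : G \to G/N$ is a continuous homomorphism with $\ker q = N$; on the other hand every continuous homomorphism $\theta$ of $G$ into a Hausdorff group has closed kernel, hence $\ker\theta \supseteq \overline{\{e\}} = N$. Therefore $N = \bigcap\{\ker\theta\}$, the intersection ranging over all continuous homomorphisms $\theta : G \to \Gamma$ with $\Gamma$ Hausdorff, and $G$ is Hausdorff iff for every $x \in G \without \{e\}$ some such $\theta$ satisfies $\theta(x) \neq e$. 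By the universal property \eqcref{ax:freeproducts:ext:sig:article-free-raj-dahya} (with uniqueness supplied by \eqcref{ax:freeproducts:gen:sig:article-free-raj-dahya}), such homomorphisms correspond precisely to families $(\theta_{i} : G_{i} \to \Gamma)_{i \in I}$ of continuous homomorphisms, and $\theta\big(\prod_{k=1}^{N}\iota_{i_{k}}(x_{k})\big) = \prod_{k=1}^{N}\theta_{i_{k}}(x_{k})$. So, writing $x$ in its minimal representation $x = \prod_{k=1}^{N}\iota_{i_{k}}(x_{k})$ ($N \geq 1$, each $x_{k} \neq e$, $(i_{k})_{k=1}^{N}$ bubble-swap free), the entire problem becomes: \emph{produce a Hausdorff topological group $\Gamma$ and continuous homomorphisms $\theta_{i} : G_{i} \to \Gamma$ with $\prod_{k=1}^{N}\theta_{i_{k}}(x_{k}) \neq e$.} Only the finitely many indices actually occurring in $x$ are relevant, since one may take $\theta_{i} \equiv e$ for the rest.

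For the general statement this last step is exactly Graev's theorem, and I would cite \cite{Graev1950ArticleFreeProducts,Hulanicki1967Article}. The underlying idea is that the abstract free product already admits, for any nontrivial $x$, an algebraic homomorphism into a group separating $x$ from $e$ — for instance via the left action on the set of reduced words, which faithfully encodes $x$. Separation \emph{as abstract groups} is therefore routine; the real content, and the main obstacle, is topologising the target group so that the separating homomorphism becomes continuous with respect to the given topologies on the $G_{i}$. This is the delicate part that makes the theorem nontrivial, and I would lean on Graev's (or Hulanicki's) construction for it rather than reprove it.

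For the ``in particular'' — the case $\FreePower{I}{\reals}$ — I would argue directly, which sidesteps those subtleties. Here the finitely many active factors are copies of $\reals$, and one maps each to a continuous one-parameter subgroup of a real nilpotent Lie group $\Gamma$ (say the simply connected free nilpotent Lie group of class $N$ on the active generators, or concretely a group of real unitriangular matrices of sufficiently large size), which is automatically Hausdorff. It then remains to choose these one-parameter subgroups so that the specific product $\prod_{k=1}^{N}\theta_{i_{k}}(t_{k})$ is nontrivial in $\Gamma$; such a choice exists because $\FreePower{r}{\reals}$ (with $r$ the number of active indices) is residually torsion-free nilpotent — a standard consequence of Magnus-type embeddings / results on free products of residually nilpotent groups — so a length-$N$ reduced word already survives in a nilpotent quotient of bounded class, which one realises by a nilpotent Lie group and scales by the $t_{k}$. (Alternatively, one may invoke that a free product of finitely many linear groups is linear, embed $\FreePower{r}{\reals} \hookrightarrow \GL(n,\reals[t,t^{-1}])$ with the restriction to each $\reals$-factor continuous, and specialise $t$ to a real value avoiding the finitely many zeros of the entries of the image of $x$, landing in the Hausdorff group $\GL(n,\reals)$.) Either route yields the required continuous homomorphism into a Hausdorff group, and with the reduction of the first paragraph this completes the proof.
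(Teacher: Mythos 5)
The first thing to note is that the paper does not prove this theorem at all: the statement is attributed to Graev and the text immediately following it says only ``See \cite{Graev1950ArticleFreeProducts} for a proof.'' So there is no in-paper argument to compare against, and your decision to delegate the general Hausdorffness statement to Graev (after the correct reduction, via $\overline{\{e\}}=\bigcap\ker\theta$ and the universal property \eqcref{ax:freeproducts:ext:sig:article-free-raj-dahya}, to separating a nontrivial reduced word by a continuous homomorphism into some Hausdorff group) is consistent with how the paper treats the result. Your added value is the direct treatment of the case actually used later, $\FreePower{I}{\reals}$, which the paper obtains only as a special case of the cited theorem.

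In that special case your outline is essentially right, but the step ``$\FreePower{r}{\reals}$ is residually torsion-free nilpotent, which one realises by a nilpotent Lie group'' has a continuity gap as stated: residual torsion-free nilpotence of the \emph{abstract} group produces some nilpotent quotient in which $x$ survives, but the restriction of that quotient map to each $\reals$-factor is only an abstract homomorphism, and abstract homomorphisms from $(\reals,+)$ into a Lie group need not be continuous; so this does not by itself produce the continuous $\theta_{i}$ your reduction requires. The fix is to run your ``free nilpotent Lie group'' alternative concretely: let $A$ be the free associative algebra on the active generators $X_{1},\ldots,X_{r}$ truncated in degree $>N$, let $\Gamma\coloneqq 1+\mathfrak{m}$ (with $\mathfrak{m}$ the augmentation ideal), a finite-dimensional unipotent, hence Hausdorff, Lie group, and set $\theta_{i}(s)\coloneqq\exp(sX_{i})$, which is visibly continuous. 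Expanding $\prod_{k=1}^{N}\exp(t_{k}X_{i_{k}})$, the coefficient of the monomial $X_{i_{1}}X_{i_{2}}\cdots X_{i_{N}}$ equals $t_{1}t_{2}\cdots t_{N}\neq 0$: any other exponent pattern $X_{i_{1}}^{a_{1}}\cdots X_{i_{N}}^{a_{N}}$ of total degree $N$ either has all $a_{k}=1$ or some $a_{k}=0$, and in the latter case it has at most $N-1$ maximal constant blocks, whereas $X_{i_{1}}\cdots X_{i_{N}}$ has exactly $N$ because $(i_{k})_{k=1}^{N}$ is bubble-swap free. Hence $\theta$ separates $x$ from $e$, and with your first paragraph this completes the $\FreePower{I}{\reals}$ case. (Your linearisation-and-specialisation alternative can also be made to work, but it likewise needs the continuity of the factor maps and the Nisnevich-type embedding spelled out, so the truncated-algebra route is the cleaner repair.)
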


See \cite{Graev1950ArticleFreeProducts} for a proof.
For our purposes, we are particularly concerned with $\FreePower{I}{\reals}$.
We note in particular, that since $\reals$ is a non-trivial, locally compact, connected group,
by
    \cite[Theorem~(vi)]{Morris1975ArticleLocalCompactness},
    \cite{Ordman1974Article},
    \cite[Theorem~3]{Morris1976ArticleLocalInvariance},
the free product
$\FreePower{d}{\reals}$
is Hausdorff (but not locally compact!)
for $d\in\naturals$ with $d \geq 2$.

Before proceeding,
we fix some notation for representations
and show a basic result.
Let $I$ be a non-empty index set,
    $(G_{i},M_{i})$
be pairs of topological groups and submonoids

For any
    $N \in \naturalsZero$
    and
    $\mathbf{i} \coloneqq (i_{k})_{k=1}^{N} \in I^{N}$
denote
    $G_{\mathbf{i}} \coloneqq \prod_{i \in I}G_{i}$
and
    $M_{\mathbf{i}} \coloneqq \prod_{i \in I}M_{i}$.

\begin{conv}
    For $x \in \FreeProduct{i \in I}G_{i}$
    and
    $N\in\naturalsZero$,
    $\mathbf{i} \coloneqq (i_{k})_{k=1}^{N} \in I^{N}$,
    and
    $\mathbf{x} \coloneqq (x_{k})_{k=1}^{N} \in G_{\mathbf{i}}$
    with
    $
        \prod_{k=1}^{N}\iota_{i_{k}}(x_{k}) = x
    $,
    we shall say that
    $(N,\mathbf{i},\mathbf{x})$
    or simply
    $(\mathbf{i},\mathbf{x})$
    is a representation of $x$.
\end{conv}

Under certain conditions,
minimal representations of elements
within certain substructures of a freely presented group
can be obtained via reductions occurring entirely
with the substructure
(\cf \exempli
    \cite[\S{}1]{Chouraqui2009MonoidEmbedding}%
).
The following is a simple instance of this well-known result:

\begin{prop}
\makelabel{prop:free-product-submonoids:sig:article-free-raj-dahya}
    Given any representation
        $(N,\mathbf{i},\mathbf{x})$
    of some element $x \in \FreeProduct{i \in I}M_{i}$
    with $\mathbf{x} \in M_{\mathbf{i}}$,
    there exists
        $m \in \naturals$
    and a sequence
        $
            \{
                (
                    N^{(s)},
                    \mathbf{i}^{(s)},
                    \mathbf{x}^{(s)}
                )
            \}_{s=0}^{m}
        $
    such that

    \begin{kompaktenum}{\bfseries (a)}[\rtab]
        \item\punktlabel{N-seq}
            $(N^{(s)})_{s=0}^{m} \subseteq \naturalsZero$
            is strictly monotone descending;
        \item\punktlabel{ix-seq}
            each $\mathbf{i}^{(s)} \in I^{N^{(s)}}$
            and each $\mathbf{x}^{(s)} \in M_{\mathbf{i}^{(s)}}$;
        \item\punktlabel{repr}
            $(N^{(s)},\mathbf{i}^{(s)},\mathbf{x}^{(s)})$
            is a representation of $x$
            for $s \in \{0,1,\ldots,m\}$;
        \item\punktlabel{init}
            $
                (N^{(0)},\mathbf{i}^{(0)},\mathbf{x}^{(0)})
                = (N,\mathbf{i},\mathbf{x})
            $; and
        \item\punktlabel{min}
            $(N^{(m)},\mathbf{i}^{(m)},\mathbf{x}^{(m)})$
            is a minimal representation of $x$,
            \idest $(i^{(m)}_{k})_{k=1}^{N^{(m)}}$ is bubble-swap free
            and each $x^{(m)}_{k} \in M_{i^{(m)}_{k}} \setminus \{e\}$.
    \end{kompaktenum}

    \nvraum{1.5}

\end{prop}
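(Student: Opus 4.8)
The plan is to reduce the statement to a single \emph{reduction move} on representations and then iterate it. Note first that the assertion is purely algebraic: the topologies on the $G_{i}$ are irrelevant, and all that is used is that each $\iota_{i}$ is a group homomorphism (so $\iota_{i}(e) = e$ and $\iota_{i}(ab) = \iota_{i}(a)\,\iota_{i}(b)$) and that each $M_{i}$ is a submonoid, \idest $e \in M_{i}$ and $M_{i}$ is closed under the group operation. Given a representation with index sequence $(i_{k})_{k=1}^{n}$ and component sequence $(x_{k})_{k=1}^{n}$, all $x_{k} \in M_{i_{k}}$, that is \emph{not} minimal, I would define a reduction to a strictly shorter representation of the same element $x$ with components still in the respective monoids, in one of two cases. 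If $x_{k} = e$ for some $k$, delete the $k$-th entry; since $\iota_{i_{k}}(e) = e$ the product is unchanged, and membership in the $M_{i_{k}}$'s is trivially preserved. Otherwise all $x_{k} \neq e$ but, failing minimality, two consecutive indices agree, say $i_{k} = i_{k+1}$; then merge those two factors into the single factor $\iota_{i_{k}}(x_{k}x_{k+1})$, which leaves $x$ unchanged because $\iota_{i_{k}}$ is a homomorphism and stays inside the monoid because $x_{k}x_{k+1} \in M_{i_{k}}$ by closure. Either way the length drops by exactly one.

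The second ingredient is the observation that a representation admits \emph{no} reduction move exactly when every component is $\neq e$ and no two consecutive indices coincide --- \idest the index sequence is bubble-swap free and each component lies in the corresponding $M_{i_{k}} \setminus \{e\}$ --- which is precisely the definition of a minimal representation; by the uniqueness of minimal representations in a free product (recalled after \Cref{thm:graev:free-prod-exists:sig:article-free-raj-dahya}) such an irreducible triple is then \emph{the} minimal representation of $x$.

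Combining the two ingredients, I would argue by descent on length: set $(N^{(0)},\mathbf{i}^{(0)},\mathbf{x}^{(0)}) \coloneqq (N,\mathbf{i},\mathbf{x})$, and so long as the current triple is not minimal, apply a reduction move to obtain $(N^{(s+1)},\mathbf{i}^{(s+1)},\mathbf{x}^{(s+1)})$. Since $N^{(0)} > N^{(1)} > \cdots$ is a strictly descending sequence of non-negative integers, the construction halts after some $m \leq N$ steps at an irreducible --- hence minimal --- triple. By construction the resulting finite sequence $\{(N^{(s)},\mathbf{i}^{(s)},\mathbf{x}^{(s)})\}_{s=0}^{m}$ has strictly descending lengths, consists throughout of representations of $x$ with components in the appropriate products of monoids, starts with the given triple, and ends with the minimal representation of $x$ --- that is, (a)--(e) hold. (If the given triple is already minimal one is in a degenerate situation, either excluded as trivial or accommodated by a length-zero sequence.) I do not expect a genuine obstacle here: the only points requiring a little care are that the merge move stays inside the submonoids --- this is exactly where the hypothesis $\mathbf{x} \in M_{\mathbf{i}}$ and the submonoid property are used --- and the identification of ``irreducible'' with ``minimal,'' which is no more than the normal-form description of elements of a free product recalled in the text.
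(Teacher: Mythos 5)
Your proposal is correct and follows essentially the same route as the paper: the paper's proof likewise iterates exactly your two reduction moves (delete an identity component, or merge two consecutive factors with equal index using closure of $M_{i}$), terminates by strict descent of the length $N^{(s)}$, and identifies the irreducible endpoint with the minimal representation. Your parenthetical remark about the degenerate case of an already-minimal input is a fair (and harmless) point of care that the paper glosses over as well.
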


    \begin{proof}
        We first define a sequence
        $
            \{
                (
                    N^{(s)},
                    \mathbf{i}^{(s)},
                    \mathbf{x}^{(s)}
                )
            \}_{s=0}^{\infty}
        $
        recursively as follows:
        Set
            $
                (N^{(0)},\mathbf{i}^{(0)},\mathbf{x}^{(0)})
                \coloneqq (N,\mathbf{i},\mathbf{x})
            $.
        Given $s \in \naturalsZero$
        and supposing that
            $
                \{
                    (
                        N^{(s')},
                        \mathbf{i}^{(s')},
                        \mathbf{x}^{(s')}
                    )
                \}_{s'=0}^{s}
            $
        have been defined such that
            \punktcref{ix-seq},
            \punktcref{repr},
            and
            \punktcref{init},
        hold for all $s' \in \{0,1,\ldots,s\}$,
        there are three cases:

        \begin{kompaktenum}{\itshape {Case} 1.}
        \item\label{case:1:\beweislabel}
            For some $k_{s} \in \{1,2,\ldots,N^{(s)}\}$
            (\withoutlog we may choose the smallest such $k_{s}$)
            it holds that
                $x^{(s)}_{k_{s}} = 0$.
            In this case
            set $N^{(s+1)} \coloneqq N^{(s)} - 1$,

                \begin{displaymath}
                \everymath={\displaystyle}
                \begin{array}[m]{rclccl}
                    \mathbf{i}^{(s+1)} &= &\Big(
                        \ldots,
                        &i^{(s)}_{k_{s}-1},
                        &i^{(s)}_{k_{s}+1},
                        &\ldots
                    \Big),
                    ~\text{and}\\
                    \mathbf{x}^{(s+1)} &= &\Big(
                        \ldots,
                        &x^{(s)}_{k_{s}-1},
                        &x^{(s)}_{k_{s}+1},
                        &\ldots
                    \Big).
                \end{array}
                \end{displaymath}

        \item\label{case:2:\beweislabel}
            For some $k_{s} \in \{1,2,\ldots,N^{(s)}-1\}$
            (\withoutlog we may choose the smallest such $k_{s}$)
            it holds that
                $i^{(s)}_{k_{s}} = i^{(s)}_{k_{s}+1} \eqqcolon \hat{i}_{s}$.
            In this case
            set $N^{(s+1)} \coloneqq N^{(s)} - 1$,

                \begin{displaymath}
                \everymath={\displaystyle}
                \begin{array}[m]{rclcccl}
                    \mathbf{i}^{(s+1)}
                    &= &\Big(
                            \ldots,
                            &i^{(s)}_{k_{s}-1},
                            &\hat{i}_{s},
                            &i^{(s)}_{k_{s}+2},
                            &\ldots
                        \Big),
                    ~\text{and}\\
                    \mathbf{x}^{(s+1)}
                    &= &\Big(
                            \ldots,
                            &x^{(s)}_{k_{s}-1},
                            &x^{(s)}_{k_{s}} \cdot x^{(s)}_{k_{s}+1},
                            &x^{(s)}_{k_{s}+2},
                            &\ldots
                        \Big).
                \end{array}
                \end{displaymath}

        \item\label{case:3:\beweislabel}
            Otherwise each $x^{(s)}_{k} \in M_{i^{(s)}_{k}} \setminus \{e\}$.
            and $(i^{(s)}_{k})_{k=1}^{N^{(s)}}$ must be bubble-swap free.
            By \punktcref{repr},
                $(N^{(s)},\mathbf{i}^{(s)},\mathbf{x}^{(s)})$
            is already a (the) minimal representation of $x$.
            In this case set
                $
                    (N^{(s+1)},\mathbf{i}^{(s+1)},\mathbf{x}^{(s+1)})
                    \coloneqq (N^{(s)},\mathbf{i}^{(s)},\mathbf{x}^{(s)})
                $.
        \end{kompaktenum}

        It is now a simple matter to verify that
        the sequence
        $
            \{
                (
                    N^{(s)},
                    \mathbf{i}^{(s)},
                    \mathbf{x}^{(s)}
                )
            \}_{s=0}^{\infty}
        $
        satisfies
            \punktcref{N-seq},
            \punktcref{ix-seq},
            \punktcref{repr},
            and
            \punktcref{init},
        Furthermore, the sequence must terminate,
        otherwise only Cases~\ref{case:1:\beweislabel} and \ref{case:2:\beweislabel}
        occur for each $s\in\naturalsZero$,
        in which case
        $N = N^{(0)} > N^{(1)} > \ldots$,
        which is a contradiction.
        Letting $m$ be the smallest $s\in\naturalsZero$
        for which Case~\ref{case:3:\beweislabel} holds,
        we thus obtain a finite sequence
        satisfying
            \punktcref{N-seq},
            \punktcref{ix-seq},
            \punktcref{repr},
            \punktcref{init},
            and
            \punktcref{min}.
    \end{proof}

%% ********** END OF FILE: body/sec-5-algebra/sec-1-free-products.tex **********

%% ********************************************************************************
%% FILE: body/sec-5-algebra/sec-2-free-dilation.tex
%% ********************************************************************************

\subsection[Algebraic formulation of free dilations]{Algebraic formulation of free dilations}
\label{sec:algebra:proof-1st:sig:article-free-raj-dahya}

\firstparagraph
Let $I$ be a non-empty index set and $\HilbertRaum$ a Hilbert space.
Further let
    $(G_{i},M_{i})$
be pairs of topological groups and submonoids
for each $i \in I$.
%% MAIN DIRECTION
Consider a family
    $\{U_{i}\}_{i \in I}$
where $U_{i} \in \Repr{G_{i}}{\HilbertRaum}$
for each $i \in I$.
By definition of the free product,
in particular axiom \eqcref{ax:freeproducts:ext:sig:article-free-raj-dahya},
there exists a (necessarily unique)
continuous homomorphism
    ${\mathcal{U} : \FreeProduct{i \in I}{G_{i}} \to (\OpSpaceU{\HilbertRaum},\topSOT)}$,
    \idest
    ${\mathcal{U} \in \Repr{\FreeProduct{i \in I}{G_{i}}}{\HilbertRaum}}$,
such that
    $\mathcal{U} \circ \iota_{i} = U_{i}$
for each $i \in I$.
%% CONVERSE
\emph{Conversely}, starting with
    ${\mathcal{U} \in \Repr{\FreeProduct{i \in I}{G_{i}}}{\HilbertRaum}}$,
it is easy to see that
    $\mathcal{U} \circ \iota_{i} \in \Repr{G_{i}}{\HilbertRaum}$
for each $i \in I$.
%% 1:1
There is thus a natural \onetoone\=/correspondence
between families
    $
        \{U_{i}\}_{i \in I}
        \in\prod_{i \in I}\Repr{G_{i}}{\HilbertRaum}
    $
of strongly continuous unitary representations
and strongly continuous unitary representations
in
    $\Repr{\FreeProduct{i \in I}{G_{i}}}{\HilbertRaum}$.
We shall denote the representation corresponding
to $\{U_{i}\}_{i \in I}$
via $\freeprBig_{i \in I}U_{i}$
or if $I = \{1,2,\ldots,d\}$ for some $d\in\naturals$,
we write $\freeprBig_{i=1}^{d}U_{i}$
or ${U_{1} \freepr U_{2} \freepr \ldots \freepr U_{d}}$.

We can ask if something similar holds with
\usesinglequotes{$\topSOT$\=/continuous unitary representations}
replaced by \usesinglequotes{$\topSOT$\=/continuous contractive semigroups}.
The \textbf{algebraic part} of this is straightforward:
Consider a family
    $\{T_{i}\}_{i \in I}$
where each
    ${T_{i} : M_{i} \to \BoundedOps{\HilbertRaum}}$
is a (not necessarily continuous) contractive (\resp isometric \resp unitary) semigroup
over $M_{i}$ on $\HilbertRaum$.
First observe the following:

\begin{prop}
\makelabel{prop:free-product-well-defined:sig:article-free-raj-dahya}
    Let $x \in \FreeProduct{i \in I}{M_{i}}$.
    Given any representation
        $(N,\mathbf{i},\mathbf{x})$
    of $x$
    with $\mathbf{x} \in M_{\mathbf{i}}$,
    the operators

    \begin{restoremargins}
    \begin{equation}
    \label{eq:free-product-well-defined:sig:article-free-raj-dahya}
        \mathcal{T}_{\mathbf{i},\mathbf{x}}(x)
            \coloneqq
            \prod_{k=1}^{N}
                T_{i_{k}}(x_{k}).
    \end{equation}
    \end{restoremargins}

    \continueparagraph
    are independent of the choice of representation.
\end{prop}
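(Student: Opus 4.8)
The plan is to deduce the claim from the reduction procedure of \Cref{prop:free-product-submonoids:sig:article-free-raj-dahya} together with the uniqueness of minimal (reduced) representations in a free product. First I would check that each of the two elementary operations appearing in that proposition leaves the operator $\prod_{k=1}^{N}T_{i_{k}}(x_{k})$ unchanged. In Case~1 an entry $x^{(s)}_{k_{s}}$ equal to the neutral element $e$ of $M_{i^{(s)}_{k_{s}}}$ is deleted; since $T_{i}(e)=\onematrix$ for every $i\in I$ by the definition of a semigroup over a monoid, removing the corresponding factor does not affect the product. In Case~2 two adjacent factors $T_{\hat{i}_{s}}(x^{(s)}_{k_{s}})$ and $T_{\hat{i}_{s}}(x^{(s)}_{k_{s}+1})$ (with $\hat{i}_{s}=i^{(s)}_{k_{s}}=i^{(s)}_{k_{s}+1}$) are replaced by the single factor $T_{\hat{i}_{s}}(x^{(s)}_{k_{s}}\cdot x^{(s)}_{k_{s}+1})$; by the semigroup law $T_{\hat{i}_{s}}(x^{(s)}_{k_{s}})\,T_{\hat{i}_{s}}(x^{(s)}_{k_{s}+1})=T_{\hat{i}_{s}}(x^{(s)}_{k_{s}}\cdot x^{(s)}_{k_{s}+1})$, so again the product is preserved (this remains valid even when the merged element is neutral, the next step then being a Case~1 deletion). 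Case~3 is the terminal step and modifies nothing.

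Next, given any representation $(N,\mathbf{i},\mathbf{x})$ of $x\in\FreeProduct{i\in I}{M_{i}}$ with $\mathbf{x}\in M_{\mathbf{i}}$, I would invoke \Cref{prop:free-product-submonoids:sig:article-free-raj-dahya} to obtain a finite chain $\{(N^{(s)},\mathbf{i}^{(s)},\mathbf{x}^{(s)})\}_{s=0}^{m}$ starting at $(N,\mathbf{i},\mathbf{x})$, in which each triple arises from its predecessor by one of the above operations, and whose last member is a minimal representation of $x$. By the observation above, the operator $\prod_{k}T_{i^{(s)}_{k}}(x^{(s)}_{k})$ does not depend on $s$; in particular $\mathcal{T}_{\mathbf{i},\mathbf{x}}(x)$ coincides with the operator attached to the minimal representation produced by the procedure.

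Finally I would appeal to uniqueness of the reduced word in the group free product $\FreeProduct{i\in I}{G_{i}}$ (the normal-form theorem recalled in \S\ref{sec:algebra:free-products:sig:article-free-raj-dahya}): any minimal representation of $x$ has bubble-swap free index sequence and non-neutral entries lying in the respective $G_{i}$, and two such presentations of the same element must coincide. The minimal representation $(N^{(m)},\mathbf{i}^{(m)},\mathbf{x}^{(m)})$ produced by \Cref{prop:free-product-submonoids:sig:article-free-raj-dahya} is of exactly this form (its entries even lie in $M_{i^{(m)}_{k}}\setminus\{e\}$), hence it is the unique minimal representation of $x$ and is the same for every starting representation. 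Therefore, for any two representations $(N,\mathbf{i},\mathbf{x})$ and $(N',\mathbf{i}',\mathbf{x}')$ of $x$ with entries in the respective monoids, both reduce through the procedure to this common minimal representation, and consequently $\mathcal{T}_{\mathbf{i},\mathbf{x}}(x)=\mathcal{T}_{\mathbf{i}',\mathbf{x}'}(x)$, proving independence of the choice of representation.

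I do not anticipate a genuine obstacle: the argument is a routine induction along the reduction steps. The only points demanding a little care are the bookkeeping in Case~2 when the merged element is neutral (handled above) and the explicit use of uniqueness of reduced words in a free product, which is the standard fact already cited in the excerpt; I would state it once and use it as a black box.
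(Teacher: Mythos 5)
Your proposal is correct and follows essentially the same route as the paper: both verify that each elementary reduction step of \Cref{prop:free-product-submonoids:sig:article-free-raj-dahya} (deleting a neutral entry via $T_{i}(e)=\onematrix$, merging adjacent entries via the semigroup law) leaves the operator product unchanged, and then conclude by reducing any representation to the unique minimal one. Your extra remark about a Case~2 merge possibly producing a neutral entry (handled by a subsequent Case~1 deletion) is a fine point of care that the paper's argument implicitly absorbs in the same way.
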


    \begin{proof}
        By
            \Cref{prop:free-product-submonoids:sig:article-free-raj-dahya},
        a finite sequence
            $
                \{
                    (
                        N^{(s)},
                        \mathbf{i}^{(s)},
                        \mathbf{x}^{(s)}
                    )
                \}_{s=0}^{m}
            $
        exists with properties
            \eqcref{it:N-seq:prop:free-product-submonoids:sig:article-free-raj-dahya},
            \eqcref{it:ix-seq:prop:free-product-submonoids:sig:article-free-raj-dahya},
            \eqcref{it:repr:prop:free-product-submonoids:sig:article-free-raj-dahya},
            \eqcref{it:init:prop:free-product-submonoids:sig:article-free-raj-dahya},
            and
            \eqcref{it:min:prop:free-product-submonoids:sig:article-free-raj-dahya}.
        Note that by
            \eqcref{it:ix-seq:prop:free-product-submonoids:sig:article-free-raj-dahya}
            and
            \eqcref{it:repr:prop:free-product-submonoids:sig:article-free-raj-dahya},
            $(N^{(s)}, \mathbf{i}^{(s)}, \mathbf{x}^{(s)})$
            is a representation of $x$
            with $\mathbf{x}^{(s)} \in M_{\mathbf{i}^{(s)}}$
        and by
            \eqcref{it:init:prop:free-product-submonoids:sig:article-free-raj-dahya}
            and
            \eqcref{it:min:prop:free-product-submonoids:sig:article-free-raj-dahya}
        this sequence iteratively reduces the representation
            $
                (N,\mathbf{i},\mathbf{x})
                = (N^{(0)},\mathbf{i}^{(0)},\mathbf{x}^{(0)})
            $
        to the (unique!) minimal representation
            $
                (N',\mathbf{i}',\mathbf{x}')
                \coloneqq (N^{(0)},\mathbf{i}^{(0)},\mathbf{x}^{(0)})
            $
        of $x$.
        Now by that proof,
        for each $s \in \{0,1,\ldots,m-1\}$ either
        \emph{Case~\ref{case:1:prop:free-product-submonoids:sig:article-free-raj-dahya}}
        holds, in which case

            \begin{longeqnarray}
                \mathcal{T}_{\mathbf{i}^{(s)},\mathbf{x}^{(s)}}(x)
                    &\eqcrefoverset{eq:free-product-well-defined:sig:article-free-raj-dahya}{=}
                        &\prod_{k=1}^{k_{s}-1}
                            T_{i^{(s)}_{k}}(x^{(s)}_{k})
                        \:\cdot
                            T_{i^{(s)}_{k_{s}}}(0)
                        \:\cdot
                        \prod_{k_{s}+1}^{N^{(s)}}
                            T_{i^{(s)}_{k}}(x^{(s)}_{k})\\
                    &=
                        &\prod_{k=1}^{k_{s}-1}
                            T_{i^{(s+1)}_{k}}(x^{(s+1)}_{k})
                        \:\cdot
                            \onematrix
                        \:\cdot
                        \prod_{k_{s}+1}^{N^{(s)}}
                            T_{i^{(s+1)}_{k-1}}(x^{(s+1)}_{k-1})\\
                    &=
                        &\prod_{k=1}^{N^{(s+1)}}
                            T_{i^{(s+1)}_{k}}(x^{(s+1)}_{k}),
            \end{longeqnarray}

        \continueparagraph
        or else \emph{Case~\ref{case:2:prop:free-product-submonoids:sig:article-free-raj-dahya}}
        holds, in which case

            \begin{longeqnarray}
                \mathcal{T}_{\mathbf{i}^{(s)},\mathbf{x}^{(s)}}(x)
                    &\eqcrefoverset{eq:free-product-well-defined:sig:article-free-raj-dahya}{=}
                        &\prod_{k=1}^{k_{s}-1}
                            T_{i^{(s)}_{k}}(x^{(s)}_{k})
                        \:\cdot
                            T_{\hat{i}_{s}}(x^{(s)}_{k_{s}})
                            T_{\hat{i}_{s}}(x^{(s)}_{k_{s}+1})
                        \:\cdot
                        \prod_{k_{s}+2}^{N^{(s)}}
                            T_{i^{(s)}_{k}}(x^{(s)}_{k})\\
                    &=
                        &\prod_{k=1}^{k_{s}-1}
                            T_{i^{(s+1)}_{k}}(x^{(s+1)}_{k})
                        \:\cdot
                            T_{\hat{i}_{s}}(x^{(s)}_{k_{s}}x^{(s)}_{k_{s}+1})
                        \:\cdot
                        \prod_{k_{s}+2}^{N^{(s)}}
                            T_{i^{(s+1)}_{k-1}}(x^{(s+1)}_{k-1})\\
                    &=
                        &\prod_{k=1}^{N^{(s+1)}}
                            T_{i^{(s+1)}_{k}}(x^{(s+1)}_{k}),
            \end{longeqnarray}

        So
            $
                \mathcal{T}_{\mathbf{i}^{(s+1)},\mathbf{x}^{(s+1)}}(x)
                =\mathcal{T}_{\mathbf{i}^{(s)},\mathbf{x}^{(s)}}(x)
            $
        for all $s \in \{0,1,\ldots,m-1\}$.
        In particular
            $
                \mathcal{T}_{\mathbf{i},\mathbf{x}}(x)
                = \mathcal{T}_{\mathbf{i}^{(0)},\mathbf{x}^{(0)}}(x)
                = \mathcal{T}_{\mathbf{i}^{(m)},\mathbf{x}^{(m)}}(x)
                = \mathcal{T}_{\mathbf{i}',\mathbf{x}'}(x)
            $.
        Since the minimal representation
            $(N',\mathbf{i}',\mathbf{x}')$
        of $x$ is unique,
        one similarly obtains
            $
                \mathcal{T}_{\mathbf{i}'',\mathbf{x}''}(x)
                = \mathcal{T}_{\mathbf{i}',\mathbf{x}'}(x)
            $
        for any other representation
            $(N'',\mathbf{i}'',\mathbf{x}'')$
        of $x$.
        Hence the products in \eqcref{eq:free-product-well-defined:sig:article-free-raj-dahya}
        depend only on $x$ and not the representation.
    \end{proof}

%% MAIN DIRECTION
Continuing the above discussion,
by \Cref{prop:free-product-well-defined:sig:article-free-raj-dahya}
we may thus define
    ${\mathcal{T} : \FreeProduct{i \in I}{M_{i}} \to \BoundedOps{\HilbertRaum}}$
via
    $\mathcal{T}(x) \coloneqq \mathcal{T}_{\mathbf{i},\mathbf{x}}(x)$
for $x \in \FreeProduct{i \in I}{M_{i}}$
and any representation
    $(N,\mathbf{i},\mathbf{x})$
    of $x$
with $\mathbf{x} \in M_{\mathbf{i}}$.
We shall denote this construction
via $\freeprBig_{i \in I}T_{i}$
or if $I = \{1,2,\ldots,d\}$ for some $d\in\naturals$,
we write $\freeprBig_{i=1}^{d}T_{i}$
or ${T_{1} \freepr T_{2} \freepr \ldots \freepr T_{d}}$.
Clearly, $\mathcal{T}$ is a well-defined
contraction/isometry/unitary-valued (but possibly not continuous), homomorphism,%
\footnote{%
    \idest
    $\mathcal{T}(e) = \onematrix$,
    where $e$ is the identity of $\FreeProduct{i \in I}{G_{i}}$
    (which also lies in $\FreeProduct{i \in I}{M_{i}}$),
    and
    $\mathcal{T}(xy) = \mathcal{T}(x)\mathcal{T}(y)$ for $x,y\in\FreeProduct{i \in I}{M_{i}}$.
}
\idest
a \highlightTerm{%
    contractive/isometric/unitary semigroup
    over $\FreeProduct{i \in I}{M_{i}}$ on $\HilbertRaum$%
}.
It clearly holds that $\mathcal{T} \circ \iota_{i}\restr{M_{i}} = T_{i}$
for each $i \in I$.
%% CONVERSE
\emph{Conversely}, given a contractive/isometric/unitary semigroup
$\mathcal{T}$ over $\FreeProduct{i \in I}{M_{i}}$ on $\HilbertRaum$,
one has that
    $
        \{
            T_{i} \coloneqq \mathcal{T} \circ \iota_{i}\restr{M_{i}}
        \}_{i \in I}
    $
is a family of (not necessarily continuous)
contractive/isometric/unitary semigroups on $\HilbertRaum$
and since $\mathcal{T}$ is a homomorphism,
one has
    $
        \freeprBig_{i \in I}T_{i}
        = \freeprBig_{i \in I} (\mathcal{T} \circ \iota_{i}\restr{M_{i}})
        = \mathcal{T}
    $.
%% 1:1
There is thus a natural \onetoone\=/correspondence
between families $\{T_{i}\}_{i \in I}$
of contractive/isometric/unitary semigroups
(where each $T_{i}$ is a contractive/isometric/unitary semigroup over $M_{i}$ on $\HilbertRaum$)
and contractive/isometric/unitary semigroups over $\FreeProduct{i \in I}{M_{i}}$ on $\HilbertRaum$.
To extend this to a \textbf{topological correspondence},
we make use of free unitary dilations.

\begin{lemm}
\makelabel{lemm:correspondence-free-product-continuity:sig:article-free-raj-dahya}
    Let $I$ be a non-empty index set and $\HilbertRaum$ a Hilbert space.
    Let further
        $(G_{i},M_{i}) \in \DilatableGroupMonoids$
    for each $i \in I$.%
    \footnoteref{ft:all-semigroups-have-a-dilation:pair:sec:algebra:proof-1st:sig:article-free-raj-dahya}
    Let $\{T_{i}\}_{i \in I}$ be a family of operator-valued functions
    with each $T_{i}$ being a contractive/isometric/unitary semigroup
    over $M_{i}$ on $\HilbertRaum$.
    And consider the corresponding
    contractive/isometric/unitary semigroup
        $\mathcal{T} \coloneqq \freeprBig_{i \in I}T_{i}$
    over $\FreeProduct{i \in I}{M_{i}}$ on $\HilbertRaum$.
    Then $\mathcal{T}$ is $\topSOT$\=/continuous if and only if each of the $T_{i}$ are.
\end{lemm}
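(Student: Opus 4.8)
The plan is to prove the two implications separately. The forward implication is immediate; the converse is where the standing hypothesis $(G_{i},M_{i})\in\DilatableGroupMonoids$ is genuinely needed, and where one must route through the free unitary dilation rather than attempt a direct argument.

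For the forward implication, suppose $\mathcal{T}$ is $\topSOT$-continuous. Recall that the canonical map $\iota_{i}\colon G_{i}\to\FreeProduct{i \in I}{G_{i}}$ is a homeomorphic embedding --- an immediate consequence of \eqcref{ax:freeproducts:ext:sig:article-free-raj-dahya} --- so its corestriction $\iota_{i}\restr{M_{i}}\colon M_{i}\to\FreeProduct{i \in I}{M_{i}}$ is continuous when both sides carry their relative topologies. Since $T_{i}=\mathcal{T}\circ(\iota_{i}\restr{M_{i}})$, each $T_{i}$ is $\topSOT$-continuous.

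For the converse, the obvious idea --- namely, to write $\mathcal{T}(x)=\prod_{k}T_{i_{k}}(x_{k})$ using the minimal representation of $x$ and read off continuity --- fails, since the word-reduction producing the minimal representation is discontinuous: the letter count jumps down exactly at those $x$ where cancellation occurs. Instead I would pass through a free unitary dilation. Since each $(G_{i},M_{i})\in\DilatableGroupMonoids$, \Cref{thm:free-dilation:abstract:sig:article-free-raj-dahya} in its representation formulation (see \Cref{rem:free-dilation:abstract:repr-version:sig:article-free-raj-dahya}) supplies a Hilbert space $\HilbertRaum^{\prime}$, an isometry $r\in\BoundedOps{\HilbertRaum}{\HilbertRaum^{\prime}}$, and $\topSOT$-continuous unitary representations $U_{i}\in\Repr{G_{i}}{\HilbertRaum^{\prime}}$ satisfying the free dilation identity \eqcref{eq:free-dilation:abstract:sig:article-free-raj-dahya}. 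Form $\mathcal{U}\coloneqq\freeprBig_{i \in I}U_{i}\in\Repr{\FreeProduct{i \in I}{G_{i}}}{\HilbertRaum^{\prime}}$, the unique homomorphism with $\mathcal{U}\circ\iota_{i}=U_{i}$. Because $(\OpSpaceU{\HilbertRaum^{\prime}},\topSOT)$ is a topological group --- on the unitaries the $\topSOT$ and $\topSOTstar$ topologies coincide, so both multiplication and inversion are $\topSOT$-continuous --- and each $U_{i}$ is a continuous homomorphism into it, the universal property \eqcref{ax:freeproducts:ext:sig:article-free-raj-dahya} forces $\mathcal{U}$ itself to be $\topSOT$-continuous.

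It then remains to exhibit $\mathcal{T}$ as a compression of $\mathcal{U}$. Given $x\in\FreeProduct{i \in I}{M_{i}}$ and any representation $(N,\mathbf{i},\mathbf{x})$ with $\mathbf{x}\in M_{\mathbf{i}}$, the free dilation identity together with the homomorphism property of $\mathcal{U}$ yields $\mathcal{T}(x)=\prod_{k=1}^{N}T_{i_{k}}(x_{k})=r^{\ast}\bigl(\prod_{k=1}^{N}U_{i_{k}}(x_{k})\bigr)\,r=r^{\ast}\bigl(\prod_{k=1}^{N}\mathcal{U}(\iota_{i_{k}}(x_{k}))\bigr)\,r=r^{\ast}\,\mathcal{U}(x)\,r$. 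Restricting the $\topSOT$-continuous map $x\mapsto\mathcal{U}(x)$ to $\FreeProduct{i \in I}{M_{i}}$ with its relative topology, and composing with the $\topSOT$-to-$\topSOT$ continuous map $A\mapsto r^{\ast}Ar$ --- concretely, $x\mapsto\mathcal{T}(x)\xi=r^{\ast}\bigl(\mathcal{U}(x)(r\xi)\bigr)$ is norm-continuous for each $\xi\in\HilbertRaum$ --- shows that $\mathcal{T}$ is $\topSOT$-continuous. The main obstacle is the one already flagged: since word-reduction is discontinuous, a direct computation will not work, so the detour through the free unitary dilation is essential, and this is exactly the point at which $(G_{i},M_{i})\in\DilatableGroupMonoids$ is used. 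The isometric and unitary cases are handled by the identical argument, since one still dilates to unitaries and the compression formula $\mathcal{T}(\cdot)=r^{\ast}\,\mathcal{U}(\cdot)\,r$ retains full force.
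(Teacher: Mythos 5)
Your proposal is correct and follows essentially the same route as the paper: the forward direction via $T_{i}=\mathcal{T}\circ\iota_{i}\restr{M_{i}}$, and the converse via the free unitary dilation of \Cref{thm:free-dilation:abstract:sig:article-free-raj-dahya} (representation version), the induced continuous representation $\mathcal{U}=\freeprBig_{i\in I}U_{i}$ from the universal property, and the compression identity $\mathcal{T}(\cdot)=r^{\ast}\,\mathcal{U}(\cdot)\,r$. Your explicit remarks --- that $(\OpSpaceU{\HilbertRaum^{\prime}},\topSOT)$ is a topological group and that the compression formula holds for an arbitrary representation of $x$ --- merely spell out steps the paper labels as routine.
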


\footnotetext[ft:all-semigroups-have-a-dilation:pair:sec:algebra:proof-1st:sig:article-free-raj-dahya]{%
    \idest any $\topSOT$\=/continuous contractive semigroup over $M_{i}$
    admits a dilation to an $\topSOT$\=/continuous unitary representation of $G_{i}$
    for each $i \in I$
    (see \S{}\ref{sec:introduction:notation:sig:article-free-raj-dahya}).
}

    \begin{proof}
        The \usesinglequotes{only if}-direction is clear,
        since $T_{i} = \mathcal{T} \circ \iota_{i}\restr{M_{i}}$
        and
            ${
                \iota_{i}\restr{M_{i}}
                : M_{i} \to \FreeProduct{j \in I}{M_{j}}
            }$
        is $\topSOT$\=/continuous for each $i \in I$.
        Towards the \usesinglequotes{if}-direction,
        suppose that each of the $T_{i}$
        are $\topSOT$\=/continuous.
        By the abstract version of the free dilation result
        (see \Cref{thm:free-dilation:abstract:sig:article-free-raj-dahya}
        and \Cref{rem:free-dilation:abstract:repr-version:sig:article-free-raj-dahya}),
        there exists a free unitary dilation
            $(\HilbertRaum^{\prime},r,\{U_{i}\}_{i \in I})$
        of $(\HilbertRaum,\{T_{i}\}_{i \in I})$,
        where
            $
                \{U_{i}\}_{i \in I}
                \in
                \prod_{i \in I}\Repr{G_{i}}{\HilbertRaum^{\prime}}
            $
        are $\topSOT$\=/continuous unitary representations.
        By the above discussion,
            $\{U_{i}\}_{i \in I}$
            corresponds to a unique $\topSOT$\=/continuous representation
            $
                \mathcal{U}
                \coloneqq \freeprBig_{i \in I}U_{i}
                \in \Repr{\FreeProduct{i \in I}{G_{i}}}{\HilbertRaum^{\prime}}
            $.
        Applying the correspondences to the dilation,
        it is now routine to see that
            \eqcref{eq:free-dilation:abstract:sig:article-free-raj-dahya}
        can be expressed as
            $\mathcal{T}(x) = r^{\ast}\:\mathcal{U}(x)\:r$
        for $x \in \FreeProduct{i \in I}{M_{i}}$.
        Since $\mathcal{U}$ is $\topSOT$\=/continuous,
        so too is $\mathcal{T}$.
    \end{proof}

\begin{rem}
    Considering \exempli the case $(G_{i},M_{i}) = (\reals,\realsNonNeg)$,
    it would be of interest to know if there is a dilation-free proof
    of the continuity of semigroups
        $\freeprBig_{i \in I}T_{i}$
    corresponding to families of contractive $\Cnought$\=/semigroups $\{T_{i}\}_{i \in I}$.
\end{rem}

The above proof immediately yields the following
algebraic reformulation of
\Cref{thm:free-dilation:abstract:sig:article-free-raj-dahya}
and thus generalisation of
\Cref{thm:free-dilation:1st:sig:article-free-raj-dahya}:

\begin{highlightboxWithBreaks}
\begin{thm}[Free dilations, free algebraic formulation]
\makelabel{thm:free-dilation:algebraic:sig:article-free-raj-dahya}
    Let $I$ be a non-empty index set and $\HilbertRaum$ a Hilbert space.
    Let further
        $(G_{i},M_{i}) \in \DilatableGroupMonoids$
    for each $i \in I$.%
    \footnoteref{ft:all-semigroups-have-a-dilation:pair:sec:algebra:proof-1st:sig:article-free-raj-dahya}
    For every $\topSOT$\=/continuous contractive semigroup
        $\mathcal{T}$ over $\FreeProduct{i \in I}{M_{i}}$ on $\HilbertRaum$
    (\idest an $\topSOT$\=/continuous contraction-valued homomorphism),
    there exists
        a Hilbert space $\HilbertRaum^{\prime}$,
        an isometry $r \in \BoundedOps{\HilbertRaum}{\HilbertRaum^{\prime}}$,
        and an $\topSOT$\=/continuous unitary representation
        $\mathcal{U} \in \Repr{\FreeProduct{i \in I}{G_{i}}}{\HilbertRaum^{\prime}}$,
        such that
            $\mathcal{T}(x) = r^{\ast}\:\mathcal{U}(x)\:r$
        for all $x \in \FreeProduct{i \in I}{M_{i}}$.
\end{thm}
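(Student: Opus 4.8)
The plan is to deduce this theorem directly from the abstract free dilation result (\Cref{thm:free-dilation:abstract:sig:article-free-raj-dahya}, in its representation form \Cref{rem:free-dilation:abstract:repr-version:sig:article-free-raj-dahya}) together with the one-to-one correspondence between families of semigroups over the factors $M_{i}$ and semigroups over the free product $\FreeProduct{i \in I}{M_{i}}$ developed above. Almost all of the work is already contained in the proof of \Cref{lemm:correspondence-free-product-continuity:sig:article-free-raj-dahya}; what remains is to collect the pieces and record the intertwining identity.

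First I would pass from $\mathcal{T}$ to the factors. Setting $T_{i} \coloneqq \mathcal{T} \circ \iota_{i}\restr{M_{i}}$ for each $i \in I$, each $T_{i}$ is an $\topSOT$\=/continuous contractive semigroup over $M_{i}$ on $\HilbertRaum$ --- continuity is inherited from that of $\mathcal{T}$ since $\iota_{i}\restr{M_{i}} : M_{i} \to \FreeProduct{j \in I}{M_{j}}$ is continuous --- and, since $\mathcal{T}$ is a homomorphism, $\mathcal{T} = \freeprBig_{i \in I}T_{i}$ by \Cref{prop:free-product-well-defined:sig:article-free-raj-dahya} and the discussion preceding it.

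Next, because each $(G_{i},M_{i}) \in \DilatableGroupMonoids$, I would invoke the representation form of the abstract free dilation theorem (\Cref{thm:free-dilation:abstract:sig:article-free-raj-dahya} together with \Cref{rem:free-dilation:abstract:repr-version:sig:article-free-raj-dahya}) to obtain a Hilbert space $\HilbertRaum^{\prime}$, an isometry $r \in \BoundedOps{\HilbertRaum}{\HilbertRaum^{\prime}}$, and a family $\{U_{i}\}_{i \in I}$ of $\topSOT$\=/continuous unitary representations $U_{i} \in \Repr{G_{i}}{\HilbertRaum^{\prime}}$ satisfying the free dilation identity \eqcref{eq:free-dilation:abstract:sig:article-free-raj-dahya}. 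By the correspondence for representations established above, the family $\{U_{i}\}_{i \in I}$ determines a unique $\topSOT$\=/continuous representation $\mathcal{U} \coloneqq \freeprBig_{i \in I}U_{i} \in \Repr{\FreeProduct{i \in I}{G_{i}}}{\HilbertRaum^{\prime}}$ with $\mathcal{U} \circ \iota_{i} = U_{i}$ for each $i$.

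Finally I would verify the intertwining $\mathcal{T}(x) = r^{\ast}\:\mathcal{U}(x)\:r$ for all $x \in \FreeProduct{i \in I}{M_{i}}$. Given such an $x$, choose any representation $(N,\mathbf{i},\mathbf{x})$ of $x$ with $\mathbf{x} \in M_{\mathbf{i}}$; then $\mathcal{T}(x) = \prod_{k=1}^{N}T_{i_{k}}(x_{k})$ by \Cref{prop:free-product-well-defined:sig:article-free-raj-dahya}, while $\mathcal{U}(x) = \prod_{k=1}^{N}U_{i_{k}}(x_{k})$ because $\mathcal{U}$ is a genuine homomorphism on the whole free product group, so its value depends only on $x$ and not on the chosen representation. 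Combining these two facts with \eqcref{eq:free-dilation:abstract:sig:article-free-raj-dahya} gives $\mathcal{T}(x) = r^{\ast}\big(\prod_{k=1}^{N}U_{i_{k}}(x_{k})\big)r = r^{\ast}\:\mathcal{U}(x)\:r$, as required. There is no genuine obstacle here: the only point requiring a moment's care is exactly that $\mathcal{U}(x)$ is manifestly independent of the chosen representation, which lets the identity inherited from \eqcref{eq:free-dilation:abstract:sig:article-free-raj-dahya} on products of the form $\prod_{k}U_{i_{k}}(x_{k})$ propagate to all group elements; the substantive content --- the actual construction of the unitary dilation --- was already discharged in \Cref{thm:free-dilation:abstract:sig:article-free-raj-dahya}.
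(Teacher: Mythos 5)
Your proposal is correct and follows essentially the same route as the paper: pass to the factors $T_{i} = \mathcal{T}\circ\iota_{i}\restr{M_{i}}$, invoke \Cref{thm:free-dilation:abstract:sig:article-free-raj-dahya} in its representation form (\Cref{rem:free-dilation:abstract:repr-version:sig:article-free-raj-dahya}), assemble $\mathcal{U} = \freeprBig_{i \in I}U_{i}$ via the universal property, and transport the dilation identity \eqcref{eq:free-dilation:abstract:sig:article-free-raj-dahya} to all of $\FreeProduct{i \in I}{M_{i}}$ using independence of the chosen representation. This is exactly how the paper extracts the theorem from the proof of \Cref{lemm:correspondence-free-product-continuity:sig:article-free-raj-dahya}.
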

\end{highlightboxWithBreaks}

As a direct consequence of
\Cref{thm:free-dilation:algebraic:sig:article-free-raj-dahya}
as well as classical dilation results
(see
    \cite[Theorem~I.4.2 and Theorem~I.8.1]{Nagy1970},
    \cite{Ando1963pairContractions}
    \cite{Slocinski1974},
    and
    \cite[Theorem~2]{Slocinski1982}%
),
we immediately obtain:

\begin{highlightboxWithBreaks}
\begin{cor}
\makelabel{cor:M2-closed-under-free-products:sig:article-free-raj-dahya}
    The class
        $\DilatableGroupMonoids$
    contains
        $
            \{
                (\integers,\naturalsZero),
                (\integers^{2},\naturalsZero^{2}),
                (\reals,\realsNonNeg),
                (\reals^{2},\realsNonNeg^{2})
            \}
        $
    and is closed under free topological products.%
    \footnoteref{ft:products-of-pairs:\beweislabel}
\end{cor}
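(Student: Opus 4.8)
The plan is to verify the two assertions separately, with all the analytic content outsourced to results already established. For the four listed pairs one merely translates the classical dilation theorems into the language of $\DilatableGroupMonoids$. Since $\naturalsZero$ carries the discrete topology, an $\topSOT$\=/continuous contractive semigroup over $\naturalsZero$ is nothing but a single contraction $S = T(1)$; Sz.-Nagy's unitary dilation theorem (\cite[Theorem~I.4.2]{Nagy1970}) produces a Hilbert space $\HilbertRaum^{\prime}$, an isometry $r$ and a unitary $U$ with $S^{n} = r^{\ast}U^{n}r$ for $n\in\naturalsZero$, and $\integers \ni n \mapsto U^{n}$ is a (trivially $\topSOT$\=/continuous) representation of $\integers$ whose restriction to $\naturalsZero$ dilates $T$, so $(\integers,\naturalsZero)\in\DilatableGroupMonoids$. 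Identically, an $\topSOT$\=/continuous contractive semigroup over $\naturalsZero^{2}$ is a commuting pair $S_{1},S_{2}$ of contractions; And\^{o}'s theorem \cite{Ando1963pairContractions} yields a commuting pair of unitaries dilating it, which assembles into a representation of $\integers^{2}$, giving $(\integers^{2},\naturalsZero^{2})\in\DilatableGroupMonoids$. The continuous cases run along the same lines: $(\reals,\realsNonNeg)\in\DilatableGroupMonoids$ by Sz.-Nagy's unitary dilation of contractive $\Cnought$\=/semigroups (\cite[Theorem~I.8.1]{Nagy1970}), and $(\reals^{2},\realsNonNeg^{2})\in\DilatableGroupMonoids$ by S\l{}oci\'{n}ski's simultaneous unitary dilation of commuting pairs of contractive $\Cnought$\=/semigroups (\cite{Slocinski1974}, \cite[Theorem~2]{Slocinski1982}); in each of these two cases the dilating strongly continuous unitary semigroup over $\realsNonNeg^{d}$ extends canonically --- via $U(-t) = U(t)^{\ast}$, using commutativity when $d = 2$ --- to a strongly continuous unitary representation of $\reals^{d}$, exactly as recorded in \S{}\ref{sec:introduction:notation:sig:article-free-raj-dahya}.

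For closure under free topological products, suppose $(G_{i},M_{i})\in\DilatableGroupMonoids$ for each $i$ in a non-empty index set $I$. By Graev's theorem (\Cref{thm:graev:free-prod-exists:sig:article-free-raj-dahya}) the free topological product $\FreeProduct{i\in I}{G_{i}}$ is a topological group, and $\FreeProduct{i\in I}{M_{i}}$ is a submonoid of it carrying the relative topology. But under precisely this hypothesis, \Cref{thm:free-dilation:algebraic:sig:article-free-raj-dahya} asserts that every $\topSOT$\=/continuous contractive semigroup $\mathcal{T}$ over $\FreeProduct{i\in I}{M_{i}}$ on a Hilbert space $\HilbertRaum$ admits a Hilbert space $\HilbertRaum^{\prime}$, an isometry $r \in \BoundedOps{\HilbertRaum}{\HilbertRaum^{\prime}}$, and an $\topSOT$\=/continuous unitary representation $\mathcal{U} \in \Repr{\FreeProduct{i\in I}{G_{i}}}{\HilbertRaum^{\prime}}$ with $\mathcal{T}(x) = r^{\ast}\:\mathcal{U}(x)\:r$ for all $x \in \FreeProduct{i\in I}{M_{i}}$; in other words, $\mathcal{U}\restr{\FreeProduct{i\in I}{M_{i}}}$ dilates $\mathcal{T}$. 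Hence $(\FreeProduct{i\in I}{G_{i}},\FreeProduct{i\in I}{M_{i}})\in\DilatableGroupMonoids$, which is exactly the asserted closure.

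As the foregoing makes plain, this corollary is in essence a bookkeeping statement: the closure half is a verbatim reformulation of \Cref{thm:free-dilation:algebraic:sig:article-free-raj-dahya} --- whose proof, by way of \Cref{lemm:correspondence-free-product-continuity:sig:article-free-raj-dahya} and \Cref{thm:free-dilation:abstract:sig:article-free-raj-dahya}, is where the substance resides --- and the base cases are classical. The only point at which I would take care is the elementary identification in the base cases: confirming that the abstract notion of a ``$\topSOT$\=/continuous contractive semigroup over $M$'' for $M \in \{\naturalsZero,\naturalsZero^{2},\realsNonNeg,\realsNonNeg^{2}\}$ coincides with the object to which the cited theorem applies, and that the dilating unitaries extend to a necessarily $\topSOT$\=/continuous representation of the ambient group. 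For $\integers$ and $\integers^{2}$ both facts are automatic, and for $\reals$ and $\reals^{2}$ the extension is the canonical one for unitary $\Cnought$\=/semigroups noted in \S{}\ref{sec:introduction:notation:sig:article-free-raj-dahya}. I anticipate no further difficulty.
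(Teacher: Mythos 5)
Your proposal is correct and follows essentially the same route as the paper, which also obtains the corollary directly from \Cref{thm:free-dilation:algebraic:sig:article-free-raj-dahya} for closure under free topological products and from the cited classical results of Sz.-Nagy, And\^{o}, and S\l{}oci\'{n}ski for the four base pairs. The identifications you flag (semigroups over $\naturalsZero$, $\naturalsZero^{2}$ as single contractions \resp commuting pairs, and the canonical extension of unitary semigroups over $\realsNonNeg^{d}$ to representations of $\reals^{d}$) are exactly the bookkeeping the paper implicitly relies on.
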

\end{highlightboxWithBreaks}

\footnotetext[ft:products-of-pairs:\beweislabel]{%
    Here we specifically mean,
    that
        $
            (\FreeProduct{i \in I}{G_{i}},\FreeProduct{i \in I}{M_{i}})
            \in
            \DilatableGroupMonoids
        $
    for
        $\{(G_{i},M_{i})\}_{i \in I} \subseteq \DilatableGroupMonoids$
    and non-empty index sets $I$.
}

%% ********** END OF FILE: body/sec-5-algebra/sec-2-free-dilation.tex **********

%% ********** END OF FILE: body/sec-5-algebra/.index.tex **********

%% ********************************************************************************
%% FILE: body/sec-6-residuality/.index.tex
%% ********************************************************************************

\section[Residuality results]{Residuality results}
\label{sec:residuality:sig:article-free-raj-dahya}

\firstparagraph
Free dilations entail various immediate residuality results,
when considering appropriately topologised spaces of semigroups.

%% ********************************************************************************
%% FILE: body/sec-6-residuality/sec-1-approx.tex
%% ********************************************************************************

\subsection[Weak unitary approximations]{Weak unitary approximations}
\label{sec:residuality:approx:sig:article-free-raj-dahya}

\firstparagraph
In \cite[Theorem~2.1 and Remark~2.3]{Krol2009},
Kr\'{o}l established that contractive $\Cnought$\=/semigroups
on infinite-dimensional Hilbert spaces
could be weakly approximated by unitary ones.
This was generalised to contractive semigroups over submonoids of locally compact groups
via the complete boundedness of certain maps on group \TextCStarAlgs
(see \cite[Theorem~1.18]{Dahya2024approximation}).
This machinery cannot be applied to
semigroups over the submonoid
    $\FreePower{I}{\realsNonNeg}$
of the non-locally compact group
    $\FreePower{I}{\reals}$.
Nonetheless, weak approximability may be obtained
as a corollary of the free dilation theorem.

\begin{highlightboxWithBreaks}
\begin{cor}[Weak unitary approximations]
\makelabel{cor:approximation-free-semigroup:sig:article-free-raj-dahya}
    Let $\HilbertRaum$ be an infinite dimensional Hilbert space
    and $I$ a non-empty index set with $\card{I} \leq \dim(\HilbertRaum)$.
    Let $\{T_{i}\}_{i \in I}$ be a family of contractive $\Cnought$\=/semigroups on $\HilbertRaum$.
    Then there exists a family
        $(\{U^{(\alpha)}_{i}\}_{i \in I})_{\alpha \in \Lambda}$
    of families of strongly continuous unitary representations of $\reals$ on $\HilbertRaum$,
    such that for all $\xi,\eta \in \HilbertRaum$

    \begin{restoremargins}
    \begin{equation}
    \label{eq:free-unitary-approx:sig:article-free-raj-dahya}
        \sup_{\substack{
            N \in \naturalsZero,\\
            \{i_{k}\}_{k=1}^{N} \subseteq  I,\\
            (t_{k})_{k=1}^{N} \subseteq \realsNonNeg
        }}
            \absLong{\brktLong{
                \Big(
                    \prod_{k=1}^{N}T_{i_{k}}(t_{k})
                    -
                    \prod_{k=1}^{N}U^{(\alpha)}_{i_{k}}(t_{k})
                \Big)
                \xi
            }{
                \eta
            }}
        \underset{\alpha}{\longrightarrow}
        0.
    \end{equation}
    \end{restoremargins}

    \nvraum{1.5}

\end{cor}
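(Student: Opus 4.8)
The plan is to deduce this directly from the \First free dilation theorem (\Cref{thm:free-dilation:1st:sig:article-free-raj-dahya}): the free dilation simultaneously compresses \emph{all} the products $\prod_{k=1}^{N}T_{i_{k}}(t_{k})$ into a single unitary representation, so the uniformity over $N$, $(i_{k})_{k}$ and $(t_{k})_{k}$ that \eqcref{eq:free-unitary-approx:sig:article-free-raj-dahya} demands collapses to a one-line estimate. First I would apply \Cref{thm:free-dilation:1st:sig:article-free-raj-dahya} to produce a Hilbert space $\HilbertRaum^{\prime}$, an isometry $r \in \BoundedOps{\HilbertRaum}{\HilbertRaum^{\prime}}$, and $\topSOT$\=/continuous unitary representations $\{U_{i}\}_{i \in I} \subseteq \Repr{\reals}{\HilbertRaum^{\prime}}$ with $\prod_{k=1}^{N}T_{i_{k}}(t_{k}) = r^{\ast}\big(\prod_{k=1}^{N}U_{i_{k}}(t_{k})\big)r$. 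By the explicit construction in \S{}\ref{sec:result-concrete:sig:article-free-raj-dahya} one may take $\HilbertRaum^{\prime} = \HilbertRaum \otimes \ell^{2}(\naturalsZero) \otimes \complex^{2}$, so $\dim(\HilbertRaum^{\prime}) = \aleph_{0}\cdot\dim(\HilbertRaum) = \dim(\HilbertRaum)$ since $\HilbertRaum$ is infinite-dimensional (the hypothesis $\card{I} \leq \dim(\HilbertRaum)$ yields the same equality if one prefers the abstract construction of \S{}\ref{sec:result-abstract:sig:article-free-raj-dahya}). Fixing a unitary $\Phi \in \BoundedOps{\HilbertRaum^{\prime}}{\HilbertRaum}$, putting $w \coloneqq \Phi\,r \in \BoundedOps{\HilbertRaum}$ (an isometry) and $\tilde{U}_{i}(\cdot) \coloneqq \Phi\,U_{i}(\cdot)\,\Phi^{\ast} \in \Repr{\reals}{\HilbertRaum}$ (still $\topSOT$\=/continuous), the compression identity becomes $\prod_{k=1}^{N}T_{i_{k}}(t_{k}) = w^{\ast}\big(\prod_{k=1}^{N}\tilde{U}_{i_{k}}(t_{k})\big)w$ with everything now acting on $\HilbertRaum$.

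Next I would construct the net, indexing $\Lambda$ by the finite subsets $F \subseteq \HilbertRaum$ directed by inclusion. For each $F$, since $\HilbertRaum$ is infinite-dimensional and $\linspann(F)$ is finite-dimensional, the subspaces $\HilbertRaum \ominus \linspann(F)$ and $\HilbertRaum \ominus w(\linspann(F))$ have the same (namely $\dim(\HilbertRaum)$) Hilbert dimension, so the isometry $w\restr{\linspann(F)}$ extends to a unitary $w^{(F)} \in \OpSpaceU{\HilbertRaum}$; in particular $w^{(F)}\xi = w\xi$ for all $\xi \in F$. I then define $U^{(F)}_{i}(\cdot) \coloneqq (w^{(F)})^{\ast}\,\tilde{U}_{i}(\cdot)\,w^{(F)}$, which for each $i \in I$ is a $\topSOT$\=/continuous unitary representation of $\reals$ on $\HilbertRaum$, and telescoping the cancellations $w^{(F)}(w^{(F)})^{\ast} = \onematrix$ gives $\prod_{k=1}^{N}U^{(F)}_{i_{k}}(t_{k}) = (w^{(F)})^{\ast}\,A\,w^{(F)}$, where $A \coloneqq \prod_{k=1}^{N}\tilde{U}_{i_{k}}(t_{k}) \in \OpSpaceU{\HilbertRaum}$ is the same operator appearing in the displayed compression identity.

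Finally I would estimate, for fixed $\xi,\eta \in \HilbertRaum$ and arbitrary $N$, $(i_{k})_{k=1}^{N} \subseteq I$, $(t_{k})_{k=1}^{N} \subseteq \realsNonNeg$, the scalar $\brkt{(w^{\ast}Aw - (w^{(F)})^{\ast}Aw^{(F)})\xi}{\eta} = \brkt{Aw\xi}{w\eta} - \brkt{Aw^{(F)}\xi}{w^{(F)}\eta}$; inserting $\pm\brkt{Aw^{(F)}\xi}{w\eta}$ and using $\norm{A} \leq 1$, $\norm{w\eta} = \norm{\eta}$ and $\norm{w^{(F)}\xi} = \norm{\xi}$, its modulus is at most $\norm{(w - w^{(F)})\xi}\,\norm{\eta} + \norm{\xi}\,\norm{(w - w^{(F)})\eta}$, a bound free of $N$, $(i_{k})_{k}$ and $(t_{k})_{k}$. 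Hence the supremum appearing in \eqcref{eq:free-unitary-approx:sig:article-free-raj-dahya} is dominated by the same bound, and it vanishes as soon as $\{\xi,\eta\} \subseteq F$ (then $w^{(F)}\xi = w\xi$ and $w^{(F)}\eta = w\eta$), so \eqcref{eq:free-unitary-approx:sig:article-free-raj-dahya} follows. I expect the only genuinely non-routine points to be the dimension count $\dim(\HilbertRaum^{\prime}) = \dim(\HilbertRaum)$, which is what permits transplanting the dilating representations onto $\HilbertRaum$ itself, and the structural observation that a single free-dilation compression identity already encodes the full uniformity over products; the remaining steps are elementary Hilbert space geometry.
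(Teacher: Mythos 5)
Your proof is correct, and its finishing move is the same one the paper uses: conjugate the dilating representations by unitaries that agree with the dilation isometry on a finite-dimensional subspace containing $\xi,\eta$, so that the compression identity makes the matrix coefficients agree \emph{exactly} once the index is large enough (your net of finite subsets $F$ is just the paper's net of finite-rank projections $p$ with $w_{p}p = rp$ in different clothing, and your uniform two-term estimate is a harmless extra). Where you genuinely diverge is in how you arrange $\dim(\HilbertRaum^{\prime}) = \dim(\HilbertRaum)$: the paper first passes to the free-product semigroup $\freeprBig_{i\in I}T_{i}$ via \Cref{lemm:correspondence-free-product-continuity:sig:article-free-raj-dahya} and \Cref{thm:free-dilation:algebraic:sig:article-free-raj-dahya}, and then cuts the dilation space down to $\quer{\mathcal{U}(\FreePower{I}{\rationals})\,r\HilbertRaum}$, which is where the density of $\FreePower{I}{\rationals}$ and the hypothesis $\card{I} \leq \dim(\HilbertRaum)$ are used; you instead read the dimension off the explicit construction $H_{1} = \HilbertRaum \otimes \ell^{2}(\naturalsZero) \otimes \complex^{2}$ of \S{}\ref{sec:result-concrete:sig:article-free-raj-dahya}, which does not depend on $I$ at all, so your route never needs the cardinality hypothesis --- a small sharpening. (Your parenthetical that the abstract construction of \S{}\ref{sec:result-abstract:sig:article-free-raj-dahya} ``yields the same equality'' from $\card{I}\leq\dim(\HilbertRaum)$ is too quick as stated --- the abstract dilation space has no a priori dimension bound, and one would need precisely the paper's density-plus-cardinality cut-down to shrink it --- but this remark is not load-bearing for your argument.) The trade-off: the paper's route also yields the stronger, algebraically packaged statement over $\FreePower{I}{\realsNonNeg}$ that feeds directly into \Cref{cor:residuality-approximation-free-semigroup:sig:article-free-raj-dahya}, whereas yours is more self-contained and slightly more general in the index set.
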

\end{highlightboxWithBreaks}

    \begin{proof}
        As per \Cref{lemm:correspondence-free-product-continuity:sig:article-free-raj-dahya}
        $\mathcal{T} \coloneqq \freeprBig_{i \in I}T_{i}$
        is an $\topSOT$\=/continuous contractive semigroup
        with $\mathcal{T} \circ \iota_{i} = T_{i}$
        for each $i \in I$.
        By \Cref{thm:free-dilation:algebraic:sig:article-free-raj-dahya},
        there exists
            a Hilbert space $\HilbertRaum^{\prime}$,
            an $\topSOT$\=/continuous unitary representation
            $\mathcal{U} \in \Repr{\FreePower{I}{\reals}}{\HilbertRaum^{\prime}}$,
            and
            an isometry $r \in \BoundedOps{\HilbertRaum}{\HilbertRaum^{\prime}}$,
        such that
            $\mathcal{T}(x) = r^{\ast}\:\mathcal{U}(x)\:r$
        for all $x \in \FreePower{I}{\realsNonNeg}$.
        Now it is a simple exercise
        to observe that $\FreePower{I}{\rationals}$
        is dense in $\FreePower{I}{\reals}$.
        By a simple cardinality computation
        one has
        $
            \card{\FreePower{I}{\rationals}}
            \leq \dim(\HilbertRaum)
        $.%
        \footnote{%
            Due to the representation of the elements in the free product,
            there is a surjection
            from
                $
                    c_{00}(I,\rationals)
                    = \{
                        f \in \FnRm{I}{\rationals}
                        \mid
                        \supp(f)~\text{finite}
                    \}
                $
            to $\FreePower{I}{\rationals}$.
            If $I$ is finite,
            then
                $
                    \card{\FreePower{I}{\rationals}}
                    \leq \card{c_{00}(I,\rationals)}
                    = \card{\rationals^{I}}
                    = \aleph_{0}
                    \leq \dim(\HilbertRaum)
                $.
            Otherwise
                $c_{00}(I,\rationals)$
            is bijectively equivalent to
                $P \times \rationals$,
            where $P \coloneqq \{F \subseteq I \mid F~\text{finite}\}$.
            Since $\card{P} = \card{I}$,
            one has
                $
                    \card{\FreePower{I}{\rationals}}
                    \leq \card{c_{00}(I,\rationals)}
                    = \card{P \times \rationals}
                    \leq \max\{
                        \card{P},
                        \card{\rationals}
                    \}
                    \leq \max\{\card{I}, \aleph_{0}\}
                    \leq \dim(\HilbertRaum)
                $.
        }
        By $\topSOT$\=/continuity of $\mathcal{U}$
        we can thus replace $\HilbertRaum^{\prime}$
        by
            $H \coloneqq \quer{\mathcal{U}(\FreePower{I}{\rationals})\:r\HilbertRaum}$,
        whilst still ensuring
            $\mathcal{T}(\cdot) = r^{\ast}\:\mathcal{U}(\cdot)\:r$
        on $\FreePower{I}{\realsNonNeg}$.
        Noting that
            $\dim(H) = \dim(\HilbertRaum)$,%
        \footnote{%
            Let $B$ be an ONB for $\HilbertRaum$.
            Then
                $
                    \dim(\HilbertRaum)
                    = \dim(r\HilbertRaum)
                    \leq \dim(H)
                    \leq \card{\FreePower{I}{\rationals} \times B}
                    = \max\{
                        \card{\FreePower{I}{\rationals}},
                        \card{B}
                    \}
                    \leq \dim(\HilbertRaum)
                $.
        }
        we may assume \withoutlog that
            $\HilbertRaum^{\prime} = \HilbertRaum$.

        Let
            $P \subseteq \BoundedOps{\HilbertRaum}$
        be the index set consisting of finite-rank projections on $\HilbertRaum$,
        directly ordered by $p \succeq q$ $:\Leftrightarrow$ $\ran(p) \supseteq \ran(q)$.
        For each $p \in P$,
        since $rp$ is a finite-rank partial-isometry,
        there exists some unitary $w_{p}$
        with $w_{p}p = rp$.
        We now consider the net
            $
                (\mathcal{U}^{(p)} \coloneqq w_{p}^{\ast}\mathcal{U}(\cdot)w_{p})_{p \in P}
                \subseteq \Repr{R}{\HilbertRaum}
            $
        of families of $\topSOT$\=/continuous unitary representations
        of $R$ on $\HilbertRaum$
        and let
            $U_{i}^{(p)} \coloneqq \mathcal{U}^{(p)}\circ\iota_{i}$
        for each $i \in I$, $p \in P$.
        %% CONVERGENCE
        Let now $\xi, \eta \in \HilbertRaum$ be arbitrary.
        Let $p_{0} \in P$
        be the projection onto $\linspann\{\xi,\eta\}$
        and consider an arbitrary $p \in P$ with $p \succeq p_{0}$.
        For each $x \in \FreePower{I}{\realsNonNeg}$
        with (not necessarily minimal) representation
            $x = \prod_{k=1}^{N}\iota_{i_{k}}(x_{k})$
        one computes

        \begin{shorteqnarray}
            \brktLong{
                \Big(
                    \prod_{k=1}^{N}
                        T_{i_{k}}(x_{k})
                    -
                    \prod_{k=1}^{N}
                        U^{(p)}_{i_{k}}(x_{k})
                \Big)
                \xi
            }{
                \eta
            }
                &= &\brkt{
                        (
                            \mathcal{T}(x)
                            -
                            \mathcal{U}^{(p)}(x)
                        )
                        \xi
                    }{
                        \eta
                    }\\
                &= &\brkt{\mathcal{T}(x)\xi}{\eta}
                    - \brkt{w_{p}^{\ast}\mathcal{U}(x)w_{p}\:\xi}{\eta}\\
                &= &\brkt{\mathcal{T}(x)\xi}{\eta}
                    - \brkt{\mathcal{U}(x)w_{p}\:p\xi}{w_{p}\:p\eta}\\
                &&\text{since $\xi,\eta \in \ran(p_{0}) \subseteq \ran(p)$}\\
                &= &\brkt{\mathcal{T}(x)\xi}{\eta}
                    - \brkt{\mathcal{U}(x)\:r\xi}{r\eta}\\
                &&\text{since $w_{p}p = rp$}\\
                &= &\brkt{(\mathcal{T}(x) - r^{\ast}\mathcal{U}(x)r)\xi}{\eta}
                = 0,
        \end{shorteqnarray}

        \continueparagraph
        since $(\HilbertRaum,r,\mathcal{U})$ is a dilation of $\mathcal{T}$.
        Hence the claim approximation holds.
    \end{proof}

This result can be further strengthened if the Hilbert space is separable.

\begin{highlightboxWithBreaks}
\begin{cor}[Residuality of unitary semigroups]
\makelabel{cor:residuality-approximation-free-semigroup:sig:article-free-raj-dahya}
    Let $\HilbertRaum$ be a separable infinite dimensional Hilbert space
    and $I$ a non-empty countable index set.
    Consider the space $\RaumX$ of
        $\topSOT$\=/continuous contractive semigroups
        over $\FreePower{I}{\realsNonNeg}$ on $\HilbertRaum$,
    and viewed with the topology $\toplocWOT$
    with the topology ($\toplocWOT$)
    of uniform weak convergence on compact subsets of $\FreePower{I}{\realsNonNeg}$.%
    \footnoteref{ft:kwot-top:sec:residuality:approx:sig:article-free-raj-dahya}
    Then the subspace
        $\RaumX_{u} \subseteq \RaumX$ of unitary semigroups
    is dense $G_{\delta}$ in $(\RaumX,\toplocWOT)$.
\end{cor}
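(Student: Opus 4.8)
The plan is to treat the two assertions—density of $\RaumX_{u}$ and its being $G_{\delta}$—separately. Density is essentially a restatement of \Cref{cor:approximation-free-semigroup:sig:article-free-raj-dahya}; the $G_{\delta}$ part requires expressing unitarity as a countable intersection of $\toplocWOT$\=/open conditions.

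Towards density, recall from \Cref{lemm:correspondence-free-product-continuity:sig:article-free-raj-dahya} (using $(\reals,\realsNonNeg) \in \DilatableGroupMonoids$, \cf \Cref{cor:M2-closed-under-free-products:sig:article-free-raj-dahya}) that the elements of $\RaumX$ are exactly the semigroups $\freeprBig_{i \in I}T_{i}$ for families $\{T_{i}\}_{i \in I}$ of contractive $\Cnought$\=/semigroups on $\HilbertRaum$, and that $\freeprBig_{i \in I}U_{i} \in \RaumX_{u}$ whenever $\{U_{i}\}_{i \in I}$ is a family of $\topSOT$\=/continuous unitary representations of $\reals$. Given $\mathcal{T} = \freeprBig_{i \in I}T_{i} \in \RaumX$, since $\HilbertRaum$ is infinite-dimensional and $I$ is countable we have $\card{I} \leq \dim(\HilbertRaum)$, so \Cref{cor:approximation-free-semigroup:sig:article-free-raj-dahya} yields a net $(\{U^{(\alpha)}_{i}\}_{i \in I})_{\alpha}$ of such unitary families satisfying \eqcref{eq:free-unitary-approx:sig:article-free-raj-dahya}. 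Put $\mathcal{U}^{(\alpha)} \coloneqq \freeprBig_{i \in I}U^{(\alpha)}_{i} \in \RaumX_{u}$. The convergence in \eqcref{eq:free-unitary-approx:sig:article-free-raj-dahya} is uniform over \emph{all} products, hence in particular uniform on every compact subset of $\FreePower{I}{\realsNonNeg}$ and for every pair $\xi,\eta$, so $\mathcal{U}^{(\alpha)} \to \mathcal{T}$ in $\toplocWOT$. Thus $\RaumX_{u}$ is $\toplocWOT$\=/dense in $\RaumX$.

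For the $G_{\delta}$ claim, I would first reduce unitarity to countably many conditions. Fix a countable dense set $D \subseteq \HilbertRaum$. For $\mathcal{T} \in \RaumX$ write $T_{i} \coloneqq \mathcal{T} \circ \iota_{i}$, a contractive $\Cnought$\=/semigroup on $\HilbertRaum$ whose adjoint semigroup $T_{i}(\cdot)^{\ast}$ is again strongly continuous. Since every element of $\FreePower{I}{\realsNonNeg}$ is a product of generators $\iota_{i}(t)$ and a product of unitaries is unitary, $\mathcal{T} \in \RaumX_{u}$ if and only if each $T_{i}(t)$ ($i \in I$, $t \in \realsNonNeg$) is unitary, and—using norm-continuity of $t \mapsto \norm{T_{i}(t)\xi}$, of $t \mapsto \norm{T_{i}(t)^{\ast}\xi}$, and of $\xi \mapsto \norm{S\xi}$ for a contraction $S$—this holds if and only if $\norm{T_{i}(t)\xi} = \norm{\xi}$ and $\norm{T_{i}(t)^{\ast}\xi} = \norm{\xi}$ for all $i \in I$, all $t \in \rationalsNonNeg$, and all $\xi \in D$. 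Now the evaluation $\RaumX \ni \mathcal{T} \mapsto \mathcal{T}(\iota_{i}(t)) \in (\BoundedOps{\HilbertRaum},\topWOT)$ is $\toplocWOT$\=/continuous (uniform weak convergence on compacta restricts to weak convergence at the single point $\iota_{i}(t)$), and weak operator convergence passes to adjoints, so $\mathcal{T} \mapsto \mathcal{T}(\iota_{i}(t))^{\ast}$ is $\toplocWOT$\=/continuous as well; and $S \mapsto \norm{S\xi}$ is $\topWOT$\=/lower semi-continuous, being a supremum of $\topWOT$\=/continuous functions. Hence for each $(i,t,\xi,n) \in I \times \rationalsNonNeg \times D \times \naturals$ the sets $\{\mathcal{T} \in \RaumX \mid \norm{\mathcal{T}(\iota_{i}(t))\xi} > \norm{\xi} - \tfrac{1}{n}\}$ and the analogous set with $\mathcal{T}(\iota_{i}(t))^{\ast}$ are $\toplocWOT$\=/open, and $\RaumX_{u}$ is their intersection—a countable intersection of open sets.

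Together these give that $\RaumX_{u}$ is dense $G_{\delta}$; to read the result as a residuality statement one also records that $(\RaumX,\toplocWOT)$ is a Baire space, which I would deduce from separability of $\HilbertRaum$ (so that the weak topology on the contraction ball is Polish) and $\sigma$\=/compactness of $\FreePower{I}{\realsNonNeg}$ for countable $I$ (giving a countable fundamental sequence of compacta, hence metrizability of $\toplocWOT$ on $\RaumX$), plus a routine completeness argument for $\RaumX$. I expect this last point—verifying the topological regularity of $(\RaumX,\toplocWOT)$—to be the main obstacle, together with the careful bookkeeping that strong continuity of $T_{i}$ and $T_{i}(\cdot)^{\ast}$ genuinely permits restricting $t$ to the rationals and $\xi$ to a countable dense set; the lower-semicontinuity and adjoint-continuity inputs are standard once phrased in the correct topology.
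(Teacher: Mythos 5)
Your argument for the stated corollary is correct, and the density half is essentially the paper's: you invoke \Cref{cor:approximation-free-semigroup:sig:article-free-raj-dahya} and observe that the free products $\freeprBig_{i\in I}U^{(\alpha)}_{i}$, restricted to $\FreePower{I}{\realsNonNeg}$, lie in $\RaumX_{u}$ and that the uniform convergence in \eqcref{eq:free-unitary-approx:sig:article-free-raj-dahya} dominates $\toplocWOT$\=/convergence. For the $G_{\delta}$ half you take a genuinely different (though closely related) route. The paper pulls back the known fact that $\OpSpaceU{\HilbertRaum}$ is $G_{\delta}$ in $(\OpSpaceC{\HilbertRaum},\topWOT)$ (\cite{Eisner2010typicalContraction}) under the $\toplocWOT$\=/$\topWOT$\=/continuous point evaluations $p_{x}$, $x$ ranging over the countable dense set $\FreePower{I}{\rationalsNonNeg}$, and then upgrades ``unitary on rational points'' to ``unitary everywhere'' using strong continuity of $T_{i}$ and of the adjoint semigroup. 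You instead re-prove the relevant $G_{\delta}$ structure inline: unitarity of $\mathcal{T}$ reduces to the countably many open conditions $\norm{\mathcal{T}(\iota_{i}(t))\xi}>\norm{\xi}-\tfrac{1}{n}$ and $\norm{\mathcal{T}(\iota_{i}(t))^{\ast}\xi}>\norm{\xi}-\tfrac{1}{n}$ for $i\in I$, $t\in\rationalsNonNeg$, $\xi$ in a countable dense set, using $\topWOT$\=/lower semicontinuity of $S\mapsto\norm{S\xi}$ and $\topWOT$\=/continuity of the adjoint; the closing density-of-rationals step is the same as the paper's. Your version only needs evaluations at the generators $\iota_{i}(t)$ rather than at all of $\FreePower{I}{\rationalsNonNeg}$, and is self-contained in that it avoids citing the external $G_{\delta}$ result; the paper's version is shorter because it outsources exactly that lemma.

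One caution on your final paragraph: the claim that $(\RaumX,\toplocWOT)$ is a Baire space is not needed for the statement (which asserts only ``dense $G_{\delta}$''), and your sketch of it does not go through as written. $\sigma$\=/compactness of $\FreePower{I}{\realsNonNeg}$ does not by itself yield a countable fundamental sequence of compacta (hemicompactness), and even granting metrisability of $\toplocWOT$, complete metrisability of $\RaumX$ is far from a ``routine completeness argument'' --- the analogous statements over $\realsNonNeg^{d}$ required the dedicated machinery of \cite{Dahya2022complmetrproblem}. Indeed the paper explicitly remarks, immediately after this corollary, that since $\FreePower{I}{\reals}$ is not locally compact it is not apparent whether $(\RaumX,\toplocWOT)$ is Baire, and that it would suffice to show $(\RaumX_{u},\toplocWOT)$ is Baire. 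So you should either drop that claim or flag it as open; the proof of the corollary itself stands without it.
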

\end{highlightboxWithBreaks}

\footnotetext[ft:kwot-top:sec:residuality:approx:sig:article-free-raj-dahya]{%
    The $\toplocWOT$\=/topology is given by the convergence condition
    ${
        \mathcal{T}^{(\alpha)}
        \underset{\alpha}{\overset{\tinytoplocWOT}{\longrightarrow}}
        \mathcal{T}
    }$
    if and only if
    ${
        \sup_{x \in K}
            \abs{
            \brkt{
                (
                    \mathcal{T}^{(\alpha)}(x)
                    -
                    \mathcal{T}(x)
                )
                \xi
            }{\eta}}
        \underset{\alpha}{\longrightarrow}
        0
    }$
    for all compact $K \subseteq \FreePower{I}{\realsNonNeg}$
    and all $\xi,\eta\in\HilbertRaum$.
}

    \begin{proof}
        The density part follows immediately from \Cref{cor:approximation-free-semigroup:sig:article-free-raj-dahya},
        noting that the map
            $
                \Repr{\reals}{\HilbertRaum}
                \ni
                \mathcal{U}
                \mapsto
                \mathcal{U}\restr{\FreePower{I}{\realsNonNeg}}
                \in
                \RaumX_{u}
            $
        is well-defined (in fact a bijection).
        It remains to demonstrate that $\RaumX_{u}$
        is $G_{\delta}$ in $(\RaumX,\toplocWOT)$.
        To this end first recall
        that $\OpSpaceU{\HilbertRaum}$
        is (dense) $G_{\delta}$
        in  $(\OpSpaceC{\HilbertRaum},\topWOT)$.%
        \footnote{%
            That is, the set of unitary operators on $\HilbertRaum$
            is a dense $G_{\delta}$-subset
            in the space of contractions under the $\topWOT$\=/topology.
            See \exempli \cite[Theorem~2.2]{Eisner2010typicalContraction}.
        }
        As in the proof of \Cref{cor:approximation-free-semigroup:sig:article-free-raj-dahya},
        one has that
            $D \coloneqq \FreePower{I}{\rationalsNonNeg}$
        is countable and dense
        in $\FreePower{I}{\realsNonNeg}$.
        Since
            $
                p_{x} :
                \RaumX
                \ni
                \mathcal{T}
                \mapsto
                \mathcal{T}(x)
                \in
                \OpSpaceC{\HilbertRaum}
            $
        is clearly $\toplocWOT$\=/$\topWOT$ continuous
        for each $x \in \FreePower{I}{\realsNonNeg}$,
        the set

        \begin{displaymath}
            \RaumX^{\ast}
            \coloneqq
            \{
                \mathcal{T} \in \RaumX
                \mid
                \forall{x \in D:~}
                \mathcal{T}(x) \in \OpSpaceU{\HilbertRaum}
            \}
            = \bigcap_{x \in D}
                p_{x}^{-1}(\OpSpaceU{\HilbertRaum})
        \end{displaymath}

        \continueparagraph
        is thus $G_{\delta}$ in $(\RaumX,\toplocWOT)$.
        So it suffices to prove that $\RaumX_{u} = \RaumX^{\ast}$.
        Clearly, $\RaumX_{u} \subseteq \RaumX^{\ast}$.
        For the other inclusion,
        consider an arbitrary $\mathcal{T} \in \RaumX^{\ast}$.
        Let $\{T_{i}\}_{i \in I}$ be the family of
        contractive $\Cnought$\=/semigroups corresponding to $\mathcal{T}$
        as per \Cref{lemm:correspondence-free-product-continuity:sig:article-free-raj-dahya}.
        Let $i \in I$ be arbitrary.
        By construction of $\RaumX^{\ast}$
        one has that
            $T_{i}(t) = \mathcal{T}(\iota_{i}(t)) \in \OpSpaceU{\HilbertRaum}$
        for each $t \in \rationalsNonNeg$.
        Note further that
            $
                T_{i}(\cdot)^{\ast}
                =
                \{T_{i}(t)^{\ast}\}_{t \in \realsNonNeg}
            $
        is weakly and thus strongly continuous
        (see \exempli
            \cite[Theorem~I.5.8]{EngelNagel2000semigroupTextBook},
            \cite[Theorem~9.3.1 and Theorem~10.2.1--3]{Hillephillips1957faAndSg}%
        ).
        Since
            $T_{i}$, $T_{i}(\cdot)^{\ast}$,
        are strongly continuous
        and unitary valued on the dense subspace
            $\rationalsNonNeg\subseteq\realsNonNeg$,
        one concludes that
            $T_{i}$
        is in fact unitary-valued on all of $\realsNonNeg$.
        Applying \Cref{lemm:correspondence-free-product-continuity:sig:article-free-raj-dahya} again,
        we thus have that $\mathcal{T}$ is unitary,
        \idest $\mathcal{T} \in \RaumX_{u}$.
    \end{proof}

\begin{rem}
    Recall that $\FreePower{I}{\reals}$ is not locally compact if $\card{I} > 1$.
    So it is not apparent whether $(\RaumX,\toplocWOT)$ is a Baire space.
    To prove this, by \cite[Lemma 3.7]{Dahya2022weakproblem}
    it would suffice to prove that the dense subspace
        $(\RaumX_{u},\toplocWOT)$
    itself is a Baire space.
\end{rem}

%% ********** END OF FILE: body/sec-6-residuality/sec-1-approx.tex **********

%% ********************************************************************************
%% FILE: body/sec-6-residuality/sec-2-non-commute.tex
%% ********************************************************************************

\subsection[Residual non-commutativity of free dilations]{Residual non-commutativity of free dilations}
\label{sec:residuality:non-commute:sig:article-free-raj-dahya}

\firstparagraph
In both discrete- and continuous-time
the commutativity of free dilations
necessitates the commutativity
of the underling family of contractions \resp contractive semigroups.
We shall show that (and how often) the converse fails.
In the following let $d \in \naturals$ and $\HilbertRaum$ a Hilbert space.

%% ********************************************************************************
%% FILE: body/sec-6-residuality/sec-2-para-1-discrete.tex
%% ********************************************************************************

\paragraph{Discrete-time setting:}
Consider the space $X$ of all commuting $d$-tuples
    $\{S_{i}\}_{i=1}^{d}$
of contractions on $\HilbertRaum$
endowed with the \highlightTerm{weak polynomial} topology ($\topPW$).%
\footnote{%
    The $\topPW$\=/topology is given by the convergence condition
    ${
        \{S^{(\alpha)}_{i}\}_{i=1}^{d}
        \underset{\alpha}{\overset{\tinytopPW}{\longrightarrow}}
        \{S_{i}\}_{i=1}^{d}
    }$
    if and only if
    ${
        \prod_{i=1}^{d}(S^{(\alpha)}_{i})^{n_{i}}
        \underset{\alpha}{\overset{\tinytopWOT}{\longrightarrow}}
        \prod_{i=1}^{d}S^{n_{i}}_{i}
    }$
    for all $\mathbf{n} = (n_{i})_{i=1}^{d} \in \naturalsZero^{d}$.
}
By \cite[Remark~1.1]{Dahya2022complmetrproblem} and
\cite[Theorem~4.2]{Dahya2022complmetrproblem},
\cf also \cite[Theorem~4.1]{Eisnermaitrai2013typicalOperators},
$(X,\topPW)$ is a Polish space
if $\HilbertRaum$ is separable.

Say that $\{S_{i}\}_{i=1}^{d} \in X$
has \highlightTerm{strictly non-commuting free dilations}
just in case none of its
(discrete-time)
free unitary dilations
consist of commuting unitaries.
And we let $X^{\ast} \subseteq X$
be the subset of such families.

\begin{prop}
    If $d \leq 2$, then $X^{\ast}=\emptyset$.
    If $d \geq 3$ and $\HilbertRaum$ is infinite dimensional,
    then $X^{\ast} \neq \emptyset$.
    If furthermore $\HilbertRaum$ is separable,
    then $X^{\ast}$ is residual in $(X,\topPW)$,
    \idest
    almost all
    (in the Baire-category sense)
    commuting $d$-tuples of contractions
    have strictly non-commuting free dilations.
\end{prop}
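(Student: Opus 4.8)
The plan is to first rephrase the condition defining $X^{\ast}$ in terms of commuting unitary \emph{power} dilations. If $\{S_{i}\}_{i=1}^{d}$ is commuting and $(\HilbertRaum^{\prime},r,\{V_{i}\}_{i=1}^{d})$ is a commuting unitary power dilation of it, then this tuple is automatically a free unitary dilation: for any sequence one has $\prod_{k}S_{i_{k}}^{n_{k}}=\prod_{i}S_{i}^{m_{i}}$ with $m_{i}=\sum_{k\,:\,i_{k}=i}n_{k}$ by commutativity, and likewise for the $V_{i}$, so the defining identity collapses to $S^{\mathbf{m}}=r^{\ast}V^{\mathbf{m}}r$. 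Conversely, a commuting free unitary dilation is in particular a commuting unitary power dilation (restrict its identity to the sequence $(1,2,\ldots,d)$). Since free unitary dilations always exist by \Cref{thm:free-dilation-discrete-time:sig:article-free-raj-dahya}, this gives, for $\{S_{i}\}\in X$, the equivalence: $\{S_{i}\}\notin X^{\ast}$ exactly when $\{S_{i}\}$ admits a commuting unitary power dilation. I would also record two elementary \emph{compression observations}: a commuting unitary power dilation of a block-diagonal tuple $\{A_{i}\oplus B_{i}\}$ on $H_{A}\oplus H_{B}$ restricts (via the isometry $r$ precomposed with the inclusion of $H_{B}$) to one of $\{B_{i}\}$; and a commuting unitary power dilation of $\{C_{i}\}_{i=1}^{d}$ restricts (by embedding multi-indices) to one of $\{C_{i}\}_{i=1}^{d'}$ for any $d'\le d$.

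The first two clauses are then immediate. For $d\in\{1,2\}$, Sz.-Nagy \resp And\^{o} (\cite[Theorem~I.4.2]{Nagy1970}, \cite{Ando1963pairContractions}) furnish a commuting unitary power dilation of every commuting $d$-tuple, so $X^{\ast}=\emptyset$ by the equivalence. For $d\ge 3$ with $\HilbertRaum$ infinite-dimensional, take Parrott's commuting triple on a separable space $H_{0}$ with no commuting unitary dilation (\cite[\S3]{Parrott1970counterExamplesDilation}); fix an isometric embedding $H_{0}\hookrightarrow\HilbertRaum$, let $R_{1},R_{2},R_{3}$ be the corresponding contractions on $\HilbertRaum$ (extended by $0$ on $\HilbertRaum\ominus H_{0}$), and put $R_{4}=\cdots=R_{d}\coloneqq 0$. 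Then $\{R_{i}\}_{i=1}^{d}$ is a commuting $d$-tuple of contractions on $\HilbertRaum$; compressing any hypothetical commuting unitary power dilation to $H_{0}$ and restricting to the first three coordinates would contradict Parrott, so $\{R_{i}\}\in X^{\ast}$, and in particular $X^{\ast}\neq\emptyset$.

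For the residuality claim, now with $\HilbertRaum$ separable, I would first show $D\coloneqq X\setminus X^{\ast}$ is $\topPW$-closed. By the equivalence, $D$ is the set of commuting $d$-tuples admitting a commuting unitary dilation, which by the classical dilation theory of completely contractive representations of the polydisc algebra $A(\mathbb{D}^{d})$ (whose $\mathrm{C}^{\ast}$-envelope is $C(\Torus^{d})$; see \exempli \cite{Shalit2021DilationBook}) coincides with the set of $\{S_{i}\}\in X$ for which $\norm{p(S_{1},\ldots,S_{d})}\le\norm{p}_{\Torus^{d}}$ holds for every matrix-valued polynomial $p$. For fixed such $p$, commutativity makes $\{S_{i}\}\mapsto p(S)$ a fixed finite linear combination of the monomial evaluations $\{S_{i}\}\mapsto S^{\mathbf{n}}$, hence $\topPW$-to-$\topWOT$ continuous; as the operator norm is $\topWOT$-lower semicontinuous, the set $\{\,\{S_{i}\}\in X\mid\norm{p(S)}\le\norm{p}_{\Torus^{d}}\,\}$ is $\topPW$-closed, and intersecting over all $p$ shows $D$ is closed, so $X^{\ast}$ is $\topPW$-open.

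It remains to show $X^{\ast}$ is $\topPW$-dense. By the second clause fix $\{R_{i}\}_{i=1}^{d}\in X^{\ast}$; by the first compression observation $\{S_{i}\oplus R_{i}\}_{i=1}^{d}$, a commuting $d$-tuple of contractions on $\HilbertRaum\oplus\HilbertRaum$, has no commuting unitary power dilation, for \emph{every} $\{S_{i}\}\in X$. Given $\{S_{i}\}\in X$ and a basic $\topPW$-neighbourhood determined by finitely many $\mathbf{n}^{(1)},\ldots,\mathbf{n}^{(m)}\in\naturalsZero^{d}$, vectors $\xi_{1},\ldots,\xi_{p},\eta_{1},\ldots,\eta_{p}\in\HilbertRaum$ and $\eps>0$, put $E\coloneqq\linspann\{\xi_{l},\eta_{l}\mid 1\le l\le p\}$. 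Since $\HilbertRaum\ominus E$ and $(\HilbertRaum\ominus E)\oplus\HilbertRaum$ are both separable and infinite-dimensional, there is a unitary $u\colon\HilbertRaum\oplus\HilbertRaum\to\HilbertRaum$ with $u(\xi\oplus 0)=\xi$ for all $\xi\in E$. Then $S_{i}^{\prime}\coloneqq u\,(S_{i}\oplus R_{i})\,u^{\ast}$ is a commuting $d$-tuple of contractions on $\HilbertRaum$ with no commuting unitary power dilation, i.e. $\{S_{i}^{\prime}\}\in X^{\ast}$, and since $u^{\ast}\xi_{l}=\xi_{l}\oplus 0$ and $u^{\ast}\eta_{l}=\eta_{l}\oplus 0$,
\begin{displaymath}
    \brkt{(S^{\prime})^{\mathbf{n}^{(j)}}\xi_{l}}{\eta_{l}}
    = \brkt{(S\oplus R)^{\mathbf{n}^{(j)}}(\xi_{l}\oplus 0)}{\eta_{l}\oplus 0}
    = \brkt{S^{\mathbf{n}^{(j)}}\xi_{l}}{\eta_{l}}
\end{displaymath}
for all $j,l$, so $\{S_{i}^{\prime}\}$ lies in the given neighbourhood. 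Hence $X^{\ast}$ is open and dense in the Polish space $(X,\topPW)$, therefore residual. The step I expect to be the main obstacle is the closedness of $D$: identifying it with the matricial von Neumann inequality over the polydisc invokes the nonelementary dilation theory of completely contractive representations of $A(\mathbb{D}^{d})$, whereas the remaining ingredients are either the cited classical theorems (Sz.-Nagy, And\^{o}, Parrott) or the soft direct-sum-and-transport construction.
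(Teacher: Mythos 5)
Your proof is correct, and its skeleton coincides with the paper's up to the point where the paper stops proving and starts citing: the paper establishes exactly your first reduction (a commuting free unitary dilation of a commuting tuple is the same thing as a commuting unitary power dilation, so $\{S_{i}\}\in X^{\ast}$ iff no power dilation exists) and then settles all three clauses by invoking Sz.-Nagy, And\^{o}, Parrott, and \cite[Corollary~1.7~b)]{Dahya2024interpolation} for the residuality. Where you genuinely diverge is that you re-prove the outsourced parts. For $d\geq 3$ you make explicit the padding of Parrott's triple (extension by $0$ to $\HilbertRaum$ and by zero coordinates up to $d$, justified by your two compression observations), which the paper leaves implicit in the citation. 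For residuality you give a self-contained argument: openness of $X^{\ast}$ via the Arveson-type characterisation of commuting unitary dilatability by the matrix-valued von Neumann inequality over $\Torus^{d}$, combined with $\topPW$-to-$\topWOT$ continuity of polynomial evaluations on commuting tuples and $\topWOT$-lower semicontinuity of the norm; and density via forming $\{S_{i}\oplus R_{i}\}$ with a fixed non-dilatable tuple and transporting it back to $\HilbertRaum$ by a unitary fixing the finitely many relevant vectors. This buys a stronger conclusion than the paper states (you show $X^{\ast}$ is open and dense, not merely residual) at the cost of invoking the nonelementary but standard dilation theory of completely contractive representations of the polydisc algebra, which is indeed the only non-soft ingredient; the paper's route is shorter but rests entirely on the earlier interpolation paper for this clause. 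All the individual steps you use (the compression arguments, the exact matching of the prescribed $\topWOT$ functionals since $u^{\ast}$ fixes $E$, and the preservation of non-dilatability under unitary equivalence) check out.
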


\begin{proof}
    Let $\{S_{i}\}_{i=1}^{d} \in X$ be arbitrary.
    Suppose that at least one free unitary dilation
        $(\HilbertRaum^{\prime},r,\{V_{i}\}_{i=1}^{d})$
        of
        $(\HilbertRaum,\{S_{i}\}_{i=1}^{d})$
    exists,
    for which the unitaries commute.
    By inspecting \eqcref{eq:free-dilation:discrete-time:sig:article-free-raj-dahya},
    the free dilation is clearly a power dilation.
    Conversely, if
        $\{T_{i}\}_{i=1}^{d}$
    admits a power dilation
        $(\HilbertRaum^{\prime},r,\{U_{i}\}_{i=1}^{d})$,
    then by commutativity
    \eqcref{eq:free-dilation:discrete-time:sig:article-free-raj-dahya}
    clearly holds
    and the dilation is also a free unitary dilation.
    Thus $\{S_{i}\}_{i=1}^{d} \in X^{\ast}$
    if and only if $\{S_{i}\}_{i=1}^{d}$
    admits no power dilation.
    %% reduction to results about non-dilatability
    By the dilation results of Sz.-Nagy \cite[Theorem~I.4.2]{Nagy1970}
    and And\^{o} \cite{Ando1963pairContractions},
    as well as Parrott's counterexample \cite[\S{}3]{Parrott1970counterExamplesDilation}
    and the results in
    \cite[Corollary~1.7~b)]{Dahya2024interpolation}
    the three claims immediately follow.
\end{proof}

%% ********** END OF FILE: body/sec-6-residuality/sec-2-para-1-discrete.tex **********

%% ********************************************************************************
%% FILE: body/sec-6-residuality/sec-2-para-2-continuous.tex
%% ********************************************************************************

\paragraph{Continuous-time setting:}
Consider the space $X$ of all commuting $d$-tuples
    $\{T_{i}\}_{i=1}^{d}$
of contractive $\Cnought$\=/semigroups on $\HilbertRaum$
endowed with the topology ($\toplocWOT$)
of uniform weak convergence on compact subsets of $\realsNonNeg^{d}$.%
\footnote{%
    This topology is provided by \Cref{defn:k-wot-top:sig:article-free-raj-dahya}
    applied to the semigroups over $\realsNonNeg^{d}$
    corresponding to such commuting families.
}
By \cite[Corollary~4.4]{Dahya2022complmetrproblem},
$(X,\toplocWOT)$ is a Polish space
if $\HilbertRaum$ is separable.

Say that $\{T_{i}\}_{i=1}^{d} \in X$
has \highlightTerm{strictly non-commuting free dilations}
just in case none of its
(continuous-time)
free unitary dilations
consist of commuting unitary representations.
And we let $X^{\ast} \subseteq X$
be the subset of such families.

\begin{prop}
    If $d \leq 2$, then $X^{\ast}=\emptyset$.
    If $d \geq 3$ and $\HilbertRaum$ is infinite dimensional,
    then $X^{\ast} \neq \emptyset$.
    If furthermore $\HilbertRaum$ is separable,
    then $X^{\ast}$ is residual in $(X,\toplocWOT)$,
    \idest almost all
    (in the Baire-category sense)
    commuting $d$-tuples
    of contractive $\Cnought$\=/semigroups
    have strictly non-commuting free dilations.
\end{prop}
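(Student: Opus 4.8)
The plan is to mirror the discrete-time argument verbatim, exploiting the same dichotomy between free and simultaneous unitary dilations. First I would record the following equivalence: a commuting family $\{T_{i}\}_{i=1}^{d}\in X$ lies in $X^{\ast}$ if and only if it admits no simultaneous unitary dilation. For the forward implication, suppose $(\HilbertRaum^{\prime},r,\{U_{i}\}_{i=1}^{d})$ is a free unitary dilation of $(\HilbertRaum,\{T_{i}\}_{i=1}^{d})$ whose representations $U_{1},\ldots,U_{d}$ pairwise commute; specialising \eqcref{eq:free-dilation:cts-time:sig:article-free-raj-dahya} to $N=d$ with $(i_{k})_{k=1}^{d}=(1,2,\ldots,d)$ yields $\prod_{i=1}^{d}T_{i}(t_{i}) = r^{\ast}\big(\prod_{i=1}^{d}U_{i}(t_{i})\big)\,r$ for all $\mathbf{t}\in\realsNonNeg^{d}$, which is precisely the statement that $(\HilbertRaum^{\prime},r,\{U_{i}\}_{i=1}^{d})$ is a simultaneous unitary dilation. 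For the converse, if $\{T_{i}\}_{i=1}^{d}$ admits a simultaneous unitary dilation $(\HilbertRaum^{\prime},r,\{U_{i}\}_{i=1}^{d})$, then for arbitrary $N$, $(i_{k})_{k=1}^{N}$ and $(t_{k})_{k=1}^{N}\subseteq\realsNonNeg$ the mutual commutativity of the $T_{i}$ together with the one-parameter semigroup laws give $\prod_{k=1}^{N}T_{i_{k}}(t_{k}) = \prod_{i=1}^{d}T_{i}(s_{i})$ with $s_{i}\coloneqq\sum_{k\,:\,i_{k}=i}t_{k}$, and analogously $\prod_{k=1}^{N}U_{i_{k}}(t_{k}) = \prod_{i=1}^{d}U_{i}(s_{i})$; the simultaneous dilation property then yields \eqcref{eq:free-dilation:cts-time:sig:article-free-raj-dahya}, so the dilation is also a free unitary dilation. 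Hence $X^{\ast}$ is exactly the set of commuting $d$-tuples of contractive $\Cnought$-semigroups on $\HilbertRaum$ that admit no simultaneous unitary dilation.

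With this equivalence established, all three assertions follow from the literature. For $d\le 2$, every commuting $d$-tuple of contractive $\Cnought$-semigroups on a Hilbert space admits a simultaneous unitary dilation --- Sz.-Nagy for $d=1$ \cite[Theorem~I.8.1]{Nagy1970} and S\l{}oci\'{n}ski for $d=2$ \cite[Theorem~2]{Slocinski1982} --- so $X^{\ast}=\emptyset$. For $d\ge 3$ with $\HilbertRaum$ infinite dimensional, \cite[Theorem~1.5]{Dahya2024interpolation} produces commuting $d$-tuples of contractive $\Cnought$-semigroups on $\HilbertRaum$ admitting no simultaneous unitary dilation, whence $X^{\ast}\neq\emptyset$. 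For the residuality statement, if $\HilbertRaum$ is moreover separable then $(X,\toplocWOT)$ is a Polish space by \cite[Corollary~4.4]{Dahya2022complmetrproblem}, and by \cite[Corollary~1.7~b)]{Dahya2024interpolation} the commuting $d$-tuples in $X$ admitting no simultaneous unitary dilation form a residual subset of $(X,\toplocWOT)$; by the equivalence this subset coincides with $X^{\ast}$, completing the proof.

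I expect the only point requiring attention --- rather than any genuine difficulty --- to be the bookkeeping in the converse half of the equivalence: one must verify that the collapse $\prod_{k=1}^{N}T_{i_{k}}(t_{k}) = \prod_{i=1}^{d}T_{i}(s_{i})$ is legitimate, which uses both the pairwise commutativity built into membership in $X$ (to regroup the factors by index) and the semigroup identities $T_{i}(s)T_{i}(t)=T_{i}(s+t)$, with the same remarks for the $U_{i}$. One should also note in passing that the topology $\toplocWOT$ on $X$ used here is, by the convention of \Cref{defn:k-wot-top:sig:article-free-raj-dahya} applied to the semigroups over $\realsNonNeg^{d}$ corresponding to commuting families, the same topology featuring in \cite[Corollary~4.4]{Dahya2022complmetrproblem}; everything else is a direct appeal to known results.
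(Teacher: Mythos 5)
Your proposal is correct and follows essentially the same route as the paper: establish that membership in $X^{\ast}$ is equivalent to the non-existence of a simultaneous unitary dilation (free dilation with commuting representations $\Leftrightarrow$ simultaneous dilation, using commutativity and the semigroup law to regroup products), and then invoke Sz.-Nagy, S\l{}oci\'{n}ski, and the counterexamples/residuality results of \cite{Dahya2024interpolation} together with the Polishness of $(X,\toplocWOT)$ from \cite{Dahya2022complmetrproblem}. Your write-up is merely more explicit than the paper's about the regrouping step in the converse direction; there is no gap.
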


\begin{proof}
    Let $\{T_{i}\}_{i=1}^{d} \in X$ be arbitrary.
    Suppose that at least one free unitary dilation
        $(\HilbertRaum^{\prime},r,\{U_{i}\}_{i=1}^{d})$
        of
        $(\HilbertRaum,\{T_{i}\}_{i=1}^{d})$
    exists,
    for which the unitary representations commute.
    By inspecting \eqcref{eq:free-dilation:cts-time:sig:article-free-raj-dahya}
    the free dilation is clearly a simultaneous unitary dilation.
    Conversely, if
        $\{T_{i}\}_{i=1}^{d}$
    admits a simultaneous unitary dilation
        $(\HilbertRaum^{\prime},r,\{U_{i}\}_{i=1}^{d})$,
    then by commutativity
    \eqcref{eq:free-dilation:cts-time:sig:article-free-raj-dahya}
    clearly holds and
    the dilation is also a free unitary dilation.
    Thus $\{T_{i}\}_{i=1}^{d} \in X^{\ast}$
    if and only if $\{T_{i}\}_{i=1}^{d}$
    admits no simultaneous unitary dilation.
    %% reduction to results about non-dilatability
    By the dilation results of Sz.-Nagy \cite[Theorem~I.8.1]{Nagy1970}
    and S\l{}oci\'{n}ski \cite{%
        Slocinski1974,%
        Slocinski1982%
    },
    and the counterexamples and results in
    \cite[Theorem~1.5 and Corollary~1.7~b')]{Dahya2024interpolation},
    the three claims immediately follow.
\end{proof}

%% ********** END OF FILE: body/sec-6-residuality/sec-2-para-2-continuous.tex **********

%% ********** END OF FILE: body/sec-6-residuality/sec-2-non-commute.tex **********

%% ********** END OF FILE: body/sec-6-residuality/.index.tex **********

%% ********************************************************************************
%% FILE: body/sec-7-applications/.index.tex
%% ********************************************************************************

\section[Applications to time-dependent evolutions]{Applications to time-dependent evolutions}
\label{sec:applications:sig:article-free-raj-dahya}

\firstparagraph
Throughout this section we let $\interval \subseteq \reals$ denote a connected subset,
$\Delta_{\reals} \coloneqq \{(t,s) \in \reals^{2} \mid t \geq s\}$,
and $\Delta \coloneqq \Delta_{\reals} \cap \interval^{2}$.
The \Second free dilation theorem (\Cref{thm:free-dilation:2nd:sig:article-free-raj-dahya})
and the tools we used to develop this
in \S{}\ref{sec:result-concrete:trotter-kato:sig:article-free-raj-dahya}
were concerned with continuously indexed families of semigroups,
\exempli
    $
        \Big\{
            T_{\tau}\coloneqq\{T_{\tau}(t)\}_{t\in\realsNonNeg}
        \Big\}_{\tau \in \interval}
    $,
where each $T_{\tau}(\cdot)$ is a (contractive) $\Cnought$\=/semigroup
over some common Banach space $\BanachRaum$.
As indicated in \S{}\ref{sec:introduction:motivation:sig:article-free-raj-dahya},
a natural context in which such objects are studied are time-dependent systems
in which product expressions of the form

\begin{restoremargins}
\begin{equation}
\label{eq:continuous-products:motivation:sig:article-free-raj-dahya}
    T_{\tau_{n}}(\delta\tau_{n})
    \cdot
    \ldots
    \cdot
        T_{\tau_{2}}(\delta\tau_{2})
    \cdot
        T_{\tau_{1}}(\delta\tau_{1})
\end{equation}
\end{restoremargins}

\continueparagraph
naturally arise, where
$\tau_{0} \leq \tau_{1} \leq \ldots \leq \tau_{n}$
is a partition of some subinterval $[s,\:t]\subseteq\interval$
and each $\delta\tau_{k} = \tau_{k} - \tau_{k-1}$.
A further natural class of product expressions can be found in the quantum setting
via alternations

\begin{restoremargins}
\begin{equation}
\label{eq:continuous-alternating-products:motivation:sig:article-free-raj-dahya}
    P_{\tau_{n}}
    \cdot U(\delta\tau_{n})
    \cdot
    \ldots
    \cdot P_{\tau_{2}}
    \cdot U\:(\delta\tau_{2})
    \cdot P_{\tau_{1}}
    \cdot U(\delta\tau_{1})
\end{equation}
\end{restoremargins}

\continueparagraph
of a continuous-time \usesinglequotes{unitary} evolution $\{U(t)\}_{t \in \realsNonNeg}$
interrupted by \usesinglequotes{measurements} at discrete time points
arising from a parameterised family of
\usesinglequotes{monitoring operators} $\{P_{\tau}\}_{\tau\in\interval}$.
Note that whilst the typical setting of such systems are operations defined on \TextCStarAlgs,
for our purposes we shall work with operators defined on Banach spaces.%
\footnote{%
    Recall that surjective isometries constitute a generalisation
    of unitaries to the Banach space settings.
}
Products of the above kind are studied in
so called \highlightTerm{collision models}
    \cite{%
        RybarFilippovZiman2012Article,%
        HarwoodBrunelliSerafini2023Article%
    }
as well as
\highlightTerm{continuously monitored} quantum systems
    \cite{%
        ScottMilburn2000Article,%
        WisemanMilburn2010Book,%
        Jacobs2014Book%
    },
which trace their origins back to
\highlightTerm{quantum Zeno dynamics} (QZD)
    \cite{%
        BeskowNilsson1966Article,%
        ExnerIchinose2005Article,%
        ExnerIchinose2006Inproceedings,%
        FacchiLigabo2010Article,%
        HermanShaydulinSun2023Article%
    }.
In typical treatments of QZD, the monitoring component involves repeated use of the same measurement apparatus.
In our setting we allow the monitoring operators to vary with time,
which generalise dynamics subject to \highlightTerm{adiabatic change}
(\cf
    \cite{Kitano1997Article}%
).
Yet another use of such alternations
is the application of interleaving gate operations in quantum computing
as a means to mitigate against decoherence of computational states
(see \exempli
    \cite{ZhangSouzaBrandao2014Article}%
).

Observe that the key qualitative difference between
\eqcref{eq:continuous-products:motivation:sig:article-free-raj-dahya}
and
\eqcref{eq:continuous-alternating-products:motivation:sig:article-free-raj-dahya}
is that in the latter the time indices are decoupled from the time durations.
The first goal of this section is to formalise frameworks
to work with both kinds of processes
as well as the problem of obtaining limits
of the above product expressions.
And our main goal is to demonstrate
how the tools of free dilation
can be leveraged to translate
the more classically defined time-dependent systems of evolution
to continuously monitored processes.

%% ********************************************************************************
%% FILE: body/sec-7-applications/sec-1-background.tex
%% ********************************************************************************

\subsection[Background]{Background}
\label{sec:applications:background:sig:article-free-raj-dahya}

\firstparagraph
We first provide a natural context in which the product expressions
in \eqcref{eq:continuous-products:motivation:sig:article-free-raj-dahya} arise.
Kato and later more formally Howland and Evans
(\cf
    \cite{Kato1953Article},
    \cite[\S{}1]{Howland1974Article},
    \cite[Definition~1.4]{Evans1976ArticlePerturb}%
)
introduced semigroup theoretic means
to solve time-dependent PDEs of the form

\begin{shorteqnarray}
    \left\{
    \begin{array}[m]{rcl}
        u^{\prime}(t) &= &A_{t}u(t) + g(t),\quad t \in \interval,\:t \geq s,\\
        u(s) &= &\xi,
    \end{array}
    \right.
\end{shorteqnarray}

\continueparagraph
where $s \in \interval$,
each $A_{\tau}$ is the generator of
some $\Cnought$\=/semigroup
        $T_{\tau} = \{T_{\tau}(t)\}_{t\in\realsNonNeg}$
on a common Banach space $\BanachRaum$,
    $\xi\in\opDomain{A_{s}} \subseteq \BanachRaum$,
and
    ${g : \interval \to \BanachRaum}$
is a forcing term.
Under certain conditions on the family
    $\{A_{\tau}\}_{\tau \in \interval}$,
\usesinglequotes{mild solutions}
can be determined via

\begin{shorteqnarray}
    u(t) = \mathcal{T}(t,s)\xi + \int_{\tau=s}^{t}\mathcal{T}(t,\tau)g(\tau)\:\dee\tau
\end{shorteqnarray}

\continueparagraph
for $(t,s) \in \interval$,
where
    $
        \{\mathcal{T}(t,s)\}_{(t,s) \in \Delta}
        \subseteq\BoundedOps{\BanachRaum}
    $
is an $\topSOT$\=/continuous family
satisfying

\begin{shorteqnarray}
    \mathcal{T}(t,t) = \onematrix
    \quad\text{and}\quad
    \mathcal{T}(t,s)\mathcal{T}(s,r)
        = \mathcal{T}(t,r)
\end{shorteqnarray}

\continueparagraph
for $(t, s), (s, r) \in \Delta$.
Such two-parameter families are referred to as
\highlightTerm{evolution families} or \highlightTerm{propagators}.
We refer the interested reader to
    \cite[Chapter~5]{Pazy1983Book},
    \cite[\S{}3.1]{ChiconeLatushkin1999Book}
for full details.
If the first requirement is discarded,
we shall refer to
    $\{\mathcal{T}(t,s)\}_{(t,s) \in \Delta}$
as a \highlightTerm{pseudo evolution family}.

One may now ask how the evolution family $\mathcal{T}$
can be extracted constructively from the family
    $\{T_{\tau}\}_{\tau\in\interval}$
of semigroups.
To this end,
consider arbitrary $(t,s) \in \Delta$ and
let $s = \tau_{0} \leq \tau_{1} \leq \ldots \leq \tau_{n} = t$
be an arbitrary finite partition of the interval $[s,\:t] \subseteq \interval$.
Letting $\delta\tau_{k} \coloneqq \tau_{k} - \tau_{k-1}$
for each $k\in\{1,2,\ldots,n\}$, one has

\begin{restoremargins}
\begin{equation}
\label{eq:evolution-family:multi-time:sig:article-free-raj-dahya}
    \mathcal{T}(t,s)
        = \mathcal{T}(\tau_{n},\tau_{n-1})
            \cdot
            \ldots
            \cdot
            \mathcal{T}(\tau_{2},\tau_{1})
            \cdot
            \mathcal{T}(\tau_{1},\tau_{0})
        = \revProd_{k=1}^{n}\mathcal{T}(\tau_{k},\tau_{k} - \delta\tau_{k}),
\end{equation}
\end{restoremargins}

\continueparagraph
where $\revProd$ indicates that
the order of multiplication in the product is reversed.
Intuitively, one may attempt to approximate
    $\mathcal{T}(\tau, \tau-\delta)$
by $T_{\tau}(\delta\tau)$
for small values of $\delta\tau$.
Indeed, under certain assumptions on the generators
(see \exempli
    \cite[\S{}3 and Theorem~3]{Faris1967Article},
    \cite[Theorem~5.3.1]{Pazy1983Book}%
), products of the form \eqcref{eq:continuous-products:motivation:sig:article-free-raj-dahya}
converge in a certain sense to the expression in
\eqcref{eq:evolution-family:multi-time:sig:article-free-raj-dahya}
as the partitions become \usesinglequotes{finer}.
We now have sufficient motivation to
set aside both the context of PDEs and any concerns about generators,
and simply study the convergence
of expressions in \eqcref{eq:continuous-products:motivation:sig:article-free-raj-dahya}.

%% ********** END OF FILE: body/sec-7-applications/sec-1-background.tex **********

%% ********************************************************************************
%% FILE: body/sec-7-applications/sec-2-partitions.tex
%% ********************************************************************************

\subsection[Partition systems]{Partition systems}
\label{sec:applications:partitions:sig:article-free-raj-dahya}

\firstparagraph
Before formalising the main problems,
we first need to fix notions of partitions and limits.
Consider the family of finite subsets of $[0,\:1]$

\begin{displaymath}
    \filterunit
    \coloneqq
    \{
        \Xi\subseteq[0,\:1]
        \mid
        N(\Xi) \coloneqq \card{\Xi} - 1 < \infty,
        ~\min(\Xi) = 0,
        ~\max(\Xi) = 1
    \},
\end{displaymath}

\continueparagraph
directly ordered by reverse-inclusion.
We may identify the elements of $\filterunit$ with \highlightTerm{partitions} of $[0,\:1]$.
Given $\Xi \in \filterunit$,
we let $0=\tau^{\Xi}_{0}<\tau^{\Xi}_{1}<\ldots<\tau^{\Xi}_{N(\Xi)}=1$
be the ordered enumeration of elements in $\Xi$
and set $\delta\tau^{\Xi}_{k} \coloneqq \tau^{\Xi}_{k} - \tau^{\Xi}_{k-1}$
for each $k\in\{1,2,\ldots,N(\Xi)\}$.
It is a simple exercise to verify that
    ${
        \delta\Xi
        \coloneqq
            \max_{k}\delta\tau^{\Xi}_{k}
        \longrightarrow 0
    }$
as the partitions are made finer.

Observe for each $\Xi\in\filterunit$ and $(t,s)\in\Delta_{\reals}$
that $s + (t-s)\Xi$ is a partition of $[s,\:t]$ if $t > s$
and $s + (t-s)\Xi = \{s\}$ if $t = s$.
Thus for convenience we may define

\begin{shorteqnarray}
    \Xi^{(t,s)}
        &\coloneqq
        &s + (t-s)\:\Xi,
    \\
    \tau^{\Xi(t,s)}_{j}
        &\coloneqq
        &s + (t-s)\:\tau^{\Xi}_{j},
        ~\text{and}
    \\
    \delta\tau^{\Xi(t,s)}_{k}
        &\coloneqq
        &\tau^{\Xi(t,s)}_{k}
        - \tau^{\Xi(t,s)}_{k-1}
        = (t-s)\:\delta\tau^{\Xi}_{k}
\end{shorteqnarray}

\continueparagraph
for
    $j,k\in\{0,1,\ldots,N(\Xi)\}$
with $k \geq 1$.
Note that
$
    \tau^{\Xi(t,s)}_{0}
    \leq \tau^{\Xi(t,s)}_{1}
    \leq \ldots
    \leq \tau^{\Xi(t,s)}_{N(\Xi)}
$
is a (possibly non-injective) ordered enumeration
of the elements in $\Xi^{(t,s)}$,
whereby the inequalities are strict
(and thus $\card{\Xi^{(t,s)}} = \card{\Xi} = N(\Xi)$)
provided $t > s$.
Otherwise $\Xi^{(t,s)} = \{s\}$,
whence $\card{\Xi^{(t,s)}} = 1 < 2 \leq \card{\Xi} = N(\Xi)$ if $t = s$.

\begin{defn}
    We shall say that a non-empty subset $\mathbf{P} \subseteq \filterunit$
    is a \highlightTerm{self-similar system of partitions}
    if for each $\Xi \in \mathbf{P}$ and $\alpha \in [0,\:1]$
    there exist $\Gamma_{1},\Gamma_{2},\Gamma_{3}\in\mathbf{P}$
    such that

        \begin{restoremargins}
        \begin{equation}
        \label{eq:self-similarity:sig:article-free-raj-dahya}
            \Gamma_{3} = (\alpha \Gamma_{1}) \cup (\alpha + (1-\alpha)\Gamma_{2})
            \supseteq
                \Xi
        \end{equation}
        \end{restoremargins}

    \continueparagraph
    holds.
    We shall refer to this as \highlightTerm{self-similarity}.
\end{defn}

It is a simple exercise to prove that
self-similarity implies that $\mathbf{P}$
is \highlightTerm{cofinal} in $(\filterunit,\supseteq)$,
\idest for each $\Xi \in \filterunit$,
there exists a \usesinglequotes{finer} partition
    $\Xi' \supseteq \Xi$
with $\Xi'\in\mathbf{P}$.
Cofinality in turn entails that $(\mathbf{P},\supseteq)$
itself is a directed index set
and can thus be used as index sets to study limits.
In particular by the afore mentioned scaling properties,
they can be applied to objects defined on arbitrary time intervals.
The following are simple examples of self-similar systems:

\begin{e.g.}[Homogenous $m$-partitions]
\makelabel{e.g.:m-homog-partitions:sig:article-free-raj-dahya}
    For $m \in \naturals$ say that a partition
    $\Xi\in\filterunit$
    is \highlightTerm{$m$-homogenous}
    if $m \mid N(\Xi)$
    and we let

    \begin{displaymath}
        \filterunit^{(m)}
        \coloneqq
        \{
            \Xi\in\filterunit
            \mid
            \Xi~\text{$m$-homogenous}
        \},
    \end{displaymath}

    \continueparagraph
    which is clearly equal to $\filterunit$
    in case $m=1$.
    Equivalently,
    $
        \filterunit^{(m)}
        = \{\Xi^{(m)} \mid \Xi \in \filterunit\}
    $,
    where for $\Xi \in \filterunit$
    we define the partition
        $
            \Xi^{(m)}
            \coloneqq
            \bigcup_{k=1}^{N(\Xi)}
                \{
                    \tau^{\Xi}_{k-1}
                    + \tfrac{r}{m}\delta\tau^{\Xi}_{k} \mid r \in \{0,1,\ldots,m\}
                \}
        $,
    which corresponds to the union of the uniform partitions of
    each subinterval $[\tau^{\Xi}_{k-1},\:\tau^{\Xi}_{k}]$
    into $m$ pieces.
    It is a straightforward exercise to verify
    that $\filterunit^{(m)}$
    exhibits \eqcref{eq:self-similarity:sig:article-free-raj-dahya}
    and thus forms a self-similar system of partitions,
    which we shall refer to as the system of
    \highlightTerm{homogenous $m$-partitions}.
\end{e.g.}

%% ********** END OF FILE: body/sec-7-applications/sec-2-partitions.tex **********

%% ********************************************************************************
%% FILE: body/sec-7-applications/sec-3-evolution.tex
%% ********************************************************************************

\subsection[Pre-evolutions]{Pre-evolutions}
\label{sec:applications:pre-evolution:sig:article-free-raj-dahya}

\firstparagraph
We are now in a position to formalise the first main problem:

\begin{problem}[Evolution problem]
\makelabel{problem:pre-evolution:sig:article-free-raj-dahya}
    Let $T = \{T_{\tau}\}_{\tau \in \interval}$
    be a family of contractive $\Cnought$\=/semigroups
    on a Banach space $\BanachRaum$.
    Define

        \begin{restoremargins}
        \begin{equation}
        \label{eq:pre-evolution:sig:article-free-raj-dahya}
            \mathcal{T}^{\Xi}(t, s)
                \coloneqq
                    \revProd_{k=1}^{N(\Xi)}
                        T_{\tau^{\Xi(t,s)}_{k}}(\delta \tau^{\Xi(t,s)}_{k})
        \end{equation}
        \end{restoremargins}

    \continueparagraph
    for $(t,s) \in \Delta$, $\Xi \in \filterunit$.
    Let $\mathbf{P} \subseteq \filterunit$ be a self-similar system of partitions.
    Does an operator-valued function
    $\mathcal{T} = \{\mathcal{T}(t,s)\}_{(t,s) \in \Delta}$
    exist such that

        \begin{restoremargins}
        \begin{equation}
        \label{eq:pre-evolution:lim:sig:article-free-raj-dahya}
            \mathcal{T}^{\Xi}
            \underset{\Xi}{\overset{\tinytoplocSOT}{\longrightarrow}}
            \mathcal{T}
        \end{equation}
        \end{restoremargins}

    \continueparagraph
    holds, where the limit is computed over $\Xi \in \mathbf{P}$?
\end{problem}

Given the relation to product expressions \eqcref{eq:evolution-family:multi-time:sig:article-free-raj-dahya}
discussed in \S{}\ref{sec:applications:background:sig:article-free-raj-dahya},
we shall refer to the data
    $(\BanachRaum, \mathbf{P}, T)$
as a \highlightTerm{pre-evolution}.
Concretely, the construction of each
    $\mathcal{T}^{\Xi}(t, s)$
admits the following interpretation:
For each $k$ from $1$ to $N(\Xi)$
the system undergoes continuous-time evolution
according to $T_{\tau^{\Xi(t,s)}_{k}}$ for the duration of
    $[\tau^{\Xi(t,s)}_{k-1},\:\tau^{\Xi(t,s)}_{k})$.
Strictly speaking, we have applied right-adapted definitions.
A natural alternative would be to
apply $T_{\tau^{\Xi(t,s)}_{k-1}}$
for the duration of
    $[\tau^{\Xi(t,s)}_{k-1},\:\tau^{\Xi(t,s)}_{k})$
for each $k \in \{1,2,\ldots,N(\Xi)\}$.
For demonstrative purposes and simplicity
we shall confine ourselves to the variant presented above.

By the following result
(which will be proved in \S{}\ref{sec:applications:results:sig:article-free-raj-dahya}),
solutions to the evolution problem
provide us with general means to construct to evolution families:

\begin{prop}
\makelabel{prop:solution-to-pre-evolution-problem-is-evolution-family:sig:article-free-raj-dahya}
    Suppose $(\BanachRaum, \mathbf{P}, T)$
    is a pre-evolution in which $\{T_{\tau}\}_{\tau \in \interval}$
    is $\toplocSOT$\=/continuous in the index set.
    Suppose further that $\interval$ is compact.
    If the corresponding evolution problem has a positive solution,
    then $\{\mathcal{T}(t,s)\}_{(t,s) \in \Delta}$
    defined in \eqcref{eq:pre-evolution:lim:sig:article-free-raj-dahya}
    constitutes an evolution family.
\end{prop}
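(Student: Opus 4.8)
The plan is to verify the three axioms of an evolution family for $\mathcal{T} = \{\mathcal{T}(t,s)\}_{(t,s)\in\Delta}$, namely that $\mathcal{T}(t,t) = \onematrix$, that $\mathcal{T}(t,s)\,\mathcal{T}(s,r) = \mathcal{T}(t,r)$ for $(t,s),(s,r)\in\Delta$, and that $(t,s)\mapsto\mathcal{T}(t,s)$ is $\topSOT$\=/continuous on $\Delta$. The identity on the diagonal is immediate: for every $\Xi\in\filterunit$ one has $\delta\tau^{\Xi(t,t)}_{k} = (t-t)\,\delta\tau^{\Xi}_{k} = 0$, so every factor of \eqcref{eq:pre-evolution:sig:article-free-raj-dahya} equals $\onematrix$, hence $\mathcal{T}^{\Xi}(t,t) = \onematrix$, and passing to the limit in \eqcref{eq:pre-evolution:lim:sig:article-free-raj-dahya} gives $\mathcal{T}(t,t) = \onematrix$.

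For continuity I would first observe that $\toplocSOT$\=/continuity of $\{T_{\tau}\}_{\tau\in\interval}$ in the index set, combined with the strong continuity of each $T_{\tau}$, makes $\interval\times\realsNonNeg\ni(\tau,t)\mapsto T_{\tau}(t)$ jointly $\topSOT$\=/continuous: one estimates $T_{\tau'}(t')\xi - T_{\tau}(t)\xi$ by splitting through $T_{\tau}(t')\xi$, controlling the first difference uniformly for $t'$ in a compact neighbourhood of $t$ via index-continuity and the second by strong continuity of $T_{\tau}$. Since $(t,s)\mapsto\tau^{\Xi(t,s)}_{k} = s+(t-s)\tau^{\Xi}_{k}$ and $(t,s)\mapsto\delta\tau^{\Xi(t,s)}_{k} = (t-s)\delta\tau^{\Xi}_{k}$ are affine with values in $\interval$ resp.\ $\realsNonNeg$ on $\Delta$, each factor of \eqcref{eq:pre-evolution:sig:article-free-raj-dahya} depends $\topSOT$\=/continuously on $(t,s)$; being contractions, their finite product $\mathcal{T}^{\Xi}$ is $\topSOT$\=/continuous on $\Delta$. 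As $\interval$, and hence $\Delta$, is compact, \eqcref{eq:pre-evolution:lim:sig:article-free-raj-dahya} is uniform strong convergence on $\Delta$, and a uniform strong limit of $\topSOT$\=/continuous maps is $\topSOT$\=/continuous.

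The heart of the matter is the cocycle identity, where self-similarity of $\mathbf{P}$ enters. We may assume $r<s<t$ (otherwise it reduces to the first axiom) and put $\alpha\coloneqq\tfrac{s-r}{t-r}\in(0,1)$. The key step is: for any $\Xi_{0},\Xi_{1},\Xi_{2}\in\mathbf{P}$ there exist $\Gamma_{1},\Gamma_{2},\Gamma_{3}\in\mathbf{P}$ with $\Gamma_{1}\supseteq\Xi_{1}$, $\Gamma_{2}\supseteq\Xi_{2}$, $\Gamma_{3}\supseteq\Xi_{0}$, and $\Gamma_{3} = (\alpha\Gamma_{1})\cup(\alpha+(1-\alpha)\Gamma_{2})$. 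Indeed, using cofinality of $\mathbf{P}$ (a consequence of self-similarity) pick $\Xi\in\mathbf{P}$ refining $\Xi_{0}\cup(\alpha\Xi_{1})\cup(\alpha+(1-\alpha)\Xi_{2})\in\filterunit$, then apply \eqcref{eq:self-similarity:sig:article-free-raj-dahya} to $\Xi$ and $\alpha$ to obtain $\Gamma_{1},\Gamma_{2},\Gamma_{3}$ with $\Gamma_{3}\supseteq\Xi$; since $\alpha\in\Gamma_{3}$ while $\alpha\Gamma_{1}\subseteq[0,\alpha]$ and $\alpha+(1-\alpha)\Gamma_{2}\subseteq[\alpha,1]$, one gets $\alpha\Gamma_{1} = \Gamma_{3}\cap[0,\alpha]$ and $\alpha+(1-\alpha)\Gamma_{2} = \Gamma_{3}\cap[\alpha,1]$, whence $\Xi_{1}\subseteq\Gamma_{1}$ and $\Xi_{2}\subseteq\Gamma_{2}$ drop out of the two halves of $\Gamma_{3}$. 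Rescaling by $(t,r)$ one checks $\Gamma_{3}^{(t,r)} = \Gamma_{1}^{(s,r)}\cup\Gamma_{2}^{(t,s)}$, a partition of $[r,t]$ that splits exactly at $s$, so the reversed product in \eqcref{eq:pre-evolution:sig:article-free-raj-dahya} factors as $\mathcal{T}^{\Gamma_{3}}(t,r) = \mathcal{T}^{\Gamma_{2}}(t,s)\,\mathcal{T}^{\Gamma_{1}}(s,r)$.

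To finish, fix $\xi\in\BanachRaum$ and $\eps>0$, and use \eqcref{eq:pre-evolution:lim:sig:article-free-raj-dahya} (with the compact sets $\{(t,r)\}$, $\{(s,r)\}$, $\{(t,s)\}$) to pick $\Xi_{0},\Xi_{1},\Xi_{2}\in\mathbf{P}$ such that $\|\mathcal{T}^{\Gamma}(t,r)\xi - \mathcal{T}(t,r)\xi\| < \eps$ for every $\Gamma\in\mathbf{P}$ refining $\Xi_{0}$, $\|\mathcal{T}^{\Gamma}(s,r)\xi - \mathcal{T}(s,r)\xi\| < \eps$ for every $\Gamma$ refining $\Xi_{1}$, and $\|(\mathcal{T}^{\Gamma}(t,s) - \mathcal{T}(t,s))\,\mathcal{T}(s,r)\xi\| < \eps$ for every $\Gamma$ refining $\Xi_{2}$. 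Choosing $\Gamma_{1},\Gamma_{2},\Gamma_{3}$ as in the previous paragraph and substituting $\mathcal{T}^{\Gamma_{3}}(t,r) = \mathcal{T}^{\Gamma_{2}}(t,s)\,\mathcal{T}^{\Gamma_{1}}(s,r)$, a three-term triangle estimate exploiting $\|\mathcal{T}^{\Gamma_{2}}(t,s)\|\le 1$ gives $\|\mathcal{T}(t,r)\xi - \mathcal{T}(t,s)\,\mathcal{T}(s,r)\xi\| < 3\eps$; since $\eps$ and $\xi$ are arbitrary, $\mathcal{T}(t,r) = \mathcal{T}(t,s)\,\mathcal{T}(s,r)$. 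I expect the main obstacle to be precisely that combinatorial step: reconciling the refinement ordering on $\mathbf{P}$, over which the limit \eqcref{eq:pre-evolution:lim:sig:article-free-raj-dahya} is taken, with a self-similar splitting of one partition into two sub-partitions that must \emph{both} be made arbitrarily fine; once this is arranged, the remainder is a routine triangle-inequality chase together with the elementary facts that products of uniformly bounded strongly continuous families are strongly continuous and that uniform strong limits preserve continuity.
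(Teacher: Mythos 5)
Your proof is correct, and on the algebraic part it follows the same route as the paper: the diagonal identity is the same trivial computation, and your cocycle argument --- refining $\Xi_{0}\cup(\alpha\Xi_{1})\cup(\alpha+(1-\alpha)\Xi_{2})$ inside $\mathbf{P}$ by cofinality, applying the self-similarity property \eqcref{eq:self-similarity:sig:article-free-raj-dahya}, and reading off $\Gamma_{1},\Gamma_{2}$ from the two halves of $\Gamma_{3}$ so that the reversed product splits at $s$ --- is exactly the argument the paper invokes when it says the multiplicative property ``can be shown exactly as in the proof of \Cref{prop:solution-to-quantum-problem-is-evolution-family:sig:article-free-raj-dahya}''; your $3\eps$ estimate is just that proof's appeal to joint $\topSOT$\=/continuity of multiplication on bounded sets written out explicitly. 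Where you genuinely diverge is the continuity part. The paper does not argue directly: it invokes \Cref{lemm:reduction:pre-evolution-to-cts-monitored-proc:sig:article-free-raj-dahya} to dilate the pre-evolution to a quantum process continuously monitored via passive measurements, applies \Cref{prop:solution-to-quantum-problem-is-evolution-family:sig:article-free-raj-dahya} with $m=1$ to conclude that $\mathcal{P}\ltimes U$ is an $\topSOT$\=/continuous pseudo evolution family, and then transfers continuity back through the Banach space dilation \eqcref{eq:dilation:pre-evolution-cts-monitored-proc:sig:article-free-raj-dahya}. You instead prove joint strong continuity of $(\tau,t)\mapsto T_{\tau}(t)$ from the $\toplocSOT$\=/continuity in the index set together with the $\Cnought$\=/property, deduce that each finite product $\mathcal{T}^{\Xi}$ is $\topSOT$\=/continuous on $\Delta$, and use that the limit \eqcref{eq:pre-evolution:lim:sig:article-free-raj-dahya} is uniform on the compact set $\Delta$ (compactness of $\interval$ makes $\Delta$ compact). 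This is valid and in fact mirrors the ``strong continuity'' paragraph of the paper's proof of the monitored-process proposition; what it buys is a self-contained, dilation-free argument using exactly the stated hypotheses, whereas the paper's detour is shorter on the page (given the reduction lemma is already proved) and doubles as an illustration of how the Trotter--Kato/diagonalisation machinery is meant to be used.
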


%% ********** END OF FILE: body/sec-7-applications/sec-3-evolution.tex **********

%% ********************************************************************************
%% FILE: body/sec-7-applications/sec-4-monitoring.tex
%% ********************************************************************************

\subsection[Continuously monitored processes]{Continuously monitored processes}
\label{sec:applications:monitoring:sig:article-free-raj-dahya}

\firstparagraph
The following class of problems is inspired by physical phenomena
mentioned at the start of this section.

\begin{problem}[Monitoring problem]
\makelabel{problem:monitoring:sig:article-free-raj-dahya}
    Let
        $\mathcal{X} = \{X_{\tau}\}_{\tau \in \interval}$
    be an $\topSOT$\=/continuous
    family of contractions on a Banach space $\BanachRaum$.
    And let $T$ be a contractive $\Cnought$\=/semigroup on $\BanachRaum$.
    Define

        \begin{restoremargins}
        \begin{equation}
        \label{eq:monitoring:sig:article-free-raj-dahya}
            (\mathcal{X} \ltimes T)^{\Xi}(t,s)
                \coloneqq
                    \revProd_{k=1}^{N(\Xi)}
                        X_{\tau^{\Xi}_{k}}
                        \:T(\delta \tau^{\Xi}_{k})
        \end{equation}
        \end{restoremargins}

    \continueparagraph
    for $(t,s) \in \Delta$, $\Xi \in \filterunit$.
    Let $\mathbf{P} \subseteq \filterunit$ be a self-similar system of partitions.
    Does an operator-valued function
        $
            \mathcal{X} \ltimes T
            = \{(\mathcal{X} \ltimes T)(t,s)\}_{(t,s) \in \Delta}
        $
    exist such that

        \begin{restoremargins}
        \begin{equation}
        \label{eq:monitoring:lim:sig:article-free-raj-dahya}
            (\mathcal{X} \ltimes T)^{\Xi}
            \:\underset{\Xi}{\overset{\tinytoplocSOT}{\longrightarrow}}
            \:\mathcal{X} \ltimes T
        \end{equation}
        \end{restoremargins}

    \continueparagraph
    holds, where the limit is computed over $\Xi \in \mathbf{P}$?
\end{problem}

We refer to the data
    $(\BanachRaum, \mathbf{P}, \mathcal{X}, T)$
as a \highlightTerm{continuous monitoring}
or \highlightTerm{continuously monitored process}
and operators in the family $\mathcal{X}$
as \highlightTerm{monitoring operators}.
If $T = U(\cdot)\restr{\realsNonNeg}$
for some $\topSOT$\=/continuous representation via surjective isometries
    $U \in \Repr{\reals}{\BanachRaum}$,
we may refer to the evolution as
    $(\BanachRaum, \mathbf{P}, \mathcal{X}, U)$
instead.

If we inspect the expressions in \eqcref{eq:monitoring:sig:article-free-raj-dahya}
under the special case of $(t,s) \in \Delta$ with $t=s$,
the convergence in \eqcref{eq:monitoring:lim:sig:article-free-raj-dahya}
necessitates the convergence of integer powers
of the monitoring operators
$\{X_{t}^{N(\Xi)}\}_{\Xi\in\mathbf{P}}$
for each $t\in\interval$.
One such way to achieve this is to impose idempotency conditions.
To this end we make use of the following terminology:
For $m\in\naturals$ and an operator $X$ on a Banach space,
say that $X$ is \highlightTerm{$m$-idempotent}, if $X^{m}X = X$.
By induction, $m$-idempotency of $X$
implies that $X^{m + s} = X^{s}$ for all $s \in \naturals$,
and thus
    $X^{km} = X^{m}$
    and
    $X^{km + 1} = X$
for all $k \in \naturals$.
Note that a $1$-idempotent operator
is just an idempotent one, also referred to as a \highlightTerm{projection}.
And simple examples of $m$-idempotent operators
include permutations (or reflections in the case of $m=2$).
Using this terminology we obtain the following:

\begin{prop}
\makelabel{prop:solution-to-quantum-problem-is-evolution-family:sig:article-free-raj-dahya}
    Suppose that \Cref{problem:monitoring:sig:article-free-raj-dahya}
    has a positive solution.
    If each $X_{\tau}$ is $m$-idempotent
    and $\mathbf{P} \subseteq \filterunit^{(m)}$
    for some $m\in\naturals$,%
    \footnote{%
        See \Cref{e.g.:m-homog-partitions:sig:article-free-raj-dahya}.
        Note that families of idempotent operators
        trivially satisfy this requirement for $m=1$.
    }
    then the family
    $\{(\mathcal{X} \ltimes T)(t,s)\}_{(t,s) \in \Delta}$
    defined in \eqcref{eq:monitoring:lim:sig:article-free-raj-dahya}
    constitutes a pseudo evolution family
    with
        $
            (\mathcal{X} \ltimes T)(t,t)
            = X_{t}^{m}
        $
    for $t \in \interval$.
\end{prop}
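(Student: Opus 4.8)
The plan is to verify separately the two assertions in the \namecref{prop:solution-to-quantum-problem-is-evolution-family:sig:article-free-raj-dahya}: that $(\mathcal{X}\ltimes T)(t,t)=X_t^m$, and that $\{(\mathcal{X}\ltimes T)(t,s)\}_{(t,s)\in\Delta}$ satisfies the propagator identity. Throughout I would use two standing observations. First, every $(\mathcal{X}\ltimes T)^{\Xi}(t,s)$ is a finite product of contractions (the $X_{\tau}$ are contractions and $T$ is a contractive semigroup), hence a contraction, so the limit $(\mathcal{X}\ltimes T)(t,s)$ furnished by the hypothesis is again a contraction; and second, each $(\mathcal{X}\ltimes T)^{\Xi}$ is $\topSOT$\=/continuous on $\Delta$ (a product of the uniformly bounded $\topSOT$\=/continuous maps $(t,s)\mapsto X_{\tau^{\Xi(t,s)}_{k}}$ and $(t,s)\mapsto T(\delta\tau^{\Xi(t,s)}_{k})$), so, the convergence in \eqcref{eq:monitoring:lim:sig:article-free-raj-dahya} being locally uniform in the strong topology, the limit $(\mathcal{X}\ltimes T)$ is $\topSOT$\=/continuous. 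It then remains only to check the algebraic identities, and for that I will use repeatedly that \eqcref{eq:monitoring:lim:sig:article-free-raj-dahya} entails, for each fixed $(t,s)\in\Delta$ and $\xi\in\BanachRaum$, the norm convergence $(\mathcal{X}\ltimes T)^{\Xi}(t,s)\xi\to(\mathcal{X}\ltimes T)(t,s)\xi$ over $\Xi\in\mathbf{P}$.

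First I would dispose of the diagonal. For $\Xi\in\mathbf{P}\subseteq\filterunit^{(m)}$ (see \Cref{e.g.:m-homog-partitions:sig:article-free-raj-dahya}) one has $m\mid N(\Xi)$, say $N(\Xi)=m\ell$ with $\ell\in\naturals$, and, as already noted before the statement, $(\mathcal{X}\ltimes T)^{\Xi}(t,t)=X_t^{N(\Xi)}$; by $m$\=/idempotency $X_t^{m\ell}=X_t^m$, so the net $(\mathcal{X}\ltimes T)^{\Xi}(t,t)$ is constantly $X_t^m$, whence $(\mathcal{X}\ltimes T)(t,t)=X_t^m$; in particular $X_t^m$ is idempotent, $(X_t^m)^2=X_t^{2m}=X_t^m$. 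Next I would establish the \emph{absorption} identities $(\mathcal{X}\ltimes T)(t,r)\,X_r^m=(\mathcal{X}\ltimes T)(t,r)$ and $X_t^m\,(\mathcal{X}\ltimes T)(t,r)=(\mathcal{X}\ltimes T)(t,r)$ for all $(t,r)\in\Delta$; this is the step where the hypothesis $\mathbf{P}\subseteq\filterunit^{(m)}$ is genuinely consumed. Writing $\Gamma=\Xi^{(m)}$, the $m$ smallest (resp.\ $m$ largest) points of $\Gamma$ lie within $m\,\delta\Gamma$ of $0$ (resp.\ of $1$), so, as $\Gamma$ runs cofinally through $\mathbf{P}$, the rightmost $m$ factors $B_{\Gamma}$ of $(\mathcal{X}\ltimes T)^{\Gamma}(t,r)$ converge $\topSOT$ to $X_r^m$, using only the strong continuity of $\mathcal{X}$ and of $T$ and boundedness; since also $B_{\Gamma}\,X_r^m\to X_r^m X_r^m=X_r^m$, we obtain $\norm{(\mathcal{X}\ltimes T)^{\Gamma}(t,r)(X_r^m-\onematrix)\xi}\le\norm{B_{\Gamma}(X_r^m-\onematrix)\xi}\to 0$ for each $\xi$, and passing to the limit over $\Gamma$ (applied both to $\xi$ and to $X_r^m\xi$) gives the right absorption identity; the symmetric argument on the leftmost $m$ factors gives the left one.

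The heart of the matter is the propagator identity $(\mathcal{X}\ltimes T)(t,s)\,(\mathcal{X}\ltimes T)(s,r)=(\mathcal{X}\ltimes T)(t,r)$ for $t\ge s\ge r$. The degenerate cases $s=r$, $s=t$ and $t=r$ reduce at once to the absorption identities and the idempotency of $X_s^m$, since $(\mathcal{X}\ltimes T)(r,r)=X_r^m$. For $r<s<t$ I would set $\alpha\coloneqq\frac{s-r}{t-r}\in(0,1)$ and first upgrade the self-similarity axiom to a \emph{combination lemma}: given $\Gamma_1^{\ast},\Gamma_2^{\ast},\Gamma_3^{\ast}\in\mathbf{P}$ there exist $\Gamma_1\supseteq\Gamma_1^{\ast}$, $\Gamma_2\supseteq\Gamma_2^{\ast}$, $\Gamma_3\supseteq\Gamma_3^{\ast}$ in $\mathbf{P}$ with $\Gamma_3=(\alpha\Gamma_1)\cup(\alpha+(1-\alpha)\Gamma_2)$. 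For this I would pick, using that $\mathbf{P}$ is cofinal in $(\filterunit,\supseteq)$ (a consequence of self-similarity), some $\Xi\in\mathbf{P}$ refining $(\alpha\Gamma_1^{\ast})\cup(\alpha+(1-\alpha)\Gamma_2^{\ast})\cup\Gamma_3^{\ast}$, apply \eqcref{eq:self-similarity:sig:article-free-raj-dahya} to $(\Xi,\alpha)$ to get $\Gamma_1,\Gamma_2,\Gamma_3$, and then deduce $\Gamma_1\supseteq\Gamma_1^{\ast}$ and $\Gamma_2\supseteq\Gamma_2^{\ast}$ by intersecting the inclusion $\Gamma_3\supseteq\Xi$ with $[0,\alpha]$ and with $[\alpha,1]$ respectively, using $\Gamma_3\cap[0,\alpha]=\alpha\Gamma_1$ and $\Gamma_3\cap[\alpha,1]=\alpha+(1-\alpha)\Gamma_2$. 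Rescaling by $(t,r)$, the identity $r+(t-r)\alpha=s$ yields $\Gamma_3^{(t,r)}=\Gamma_1^{(s,r)}\cup\Gamma_2^{(t,s)}$ with $s$ the sole common point, so the bookkeeping in \eqcref{eq:monitoring:sig:article-free-raj-dahya} splits the reversed product as $(\mathcal{X}\ltimes T)^{\Gamma_3}(t,r)=(\mathcal{X}\ltimes T)^{\Gamma_2}(t,s)\,(\mathcal{X}\ltimes T)^{\Gamma_1}(s,r)$. Finally, given $\xi$ and $\eps>0$, I would choose $\Gamma_1^{\ast}$ with $\norm{(\mathcal{X}\ltimes T)^{\Gamma_1}(s,r)\xi-(\mathcal{X}\ltimes T)(s,r)\xi}<\eps$ for $\Gamma_1\supseteq\Gamma_1^{\ast}$, put $\zeta\coloneqq(\mathcal{X}\ltimes T)(s,r)\xi$, choose $\Gamma_2^{\ast}$ with $\norm{(\mathcal{X}\ltimes T)^{\Gamma_2}(t,s)\zeta-(\mathcal{X}\ltimes T)(t,s)\zeta}<\eps$ for $\Gamma_2\supseteq\Gamma_2^{\ast}$, and $\Gamma_3^{\ast}$ with $\norm{(\mathcal{X}\ltimes T)^{\Gamma_3}(t,r)\xi-(\mathcal{X}\ltimes T)(t,r)\xi}<\eps$ for $\Gamma_3\supseteq\Gamma_3^{\ast}$; feeding the combination lemma's $\Gamma_1,\Gamma_2,\Gamma_3$ into the factorisation and using that $(\mathcal{X}\ltimes T)^{\Gamma_2}(t,s)$ is a contraction gives $\norm{(\mathcal{X}\ltimes T)(t,r)\xi-(\mathcal{X}\ltimes T)(t,s)(\mathcal{X}\ltimes T)(s,r)\xi}<3\eps$, and letting $\eps\downarrow 0$ with $\xi$ arbitrary proves the identity.

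I expect the main obstacle to be precisely this combination lemma — extracting, from the rather weak self-similarity property (which only allows one to refine a single partition while splitting it at one point), the stronger statement that one may fix cofinally fine partitions on both sub-intervals simultaneously — together with the attendant care needed in checking that the monitoring indices and durations of $\Gamma_3$ over $[r,t]$ restrict correctly to those of $\Gamma_1$ over $[r,s]$ and of $\Gamma_2$ over $[s,t]$. The absorption step is the only place the hypothesis $\mathbf{P}\subseteq\filterunit^{(m)}$ is used, and it should be carried out first. This mirrors the proof of \Cref{prop:solution-to-pre-evolution-problem-is-evolution-family:sig:article-free-raj-dahya}, where the same self-similarity machinery produces a genuine evolution-family identity; the only structural difference here is the extra idempotency bookkeeping forced by the monitoring operators being decoupled from the time durations.
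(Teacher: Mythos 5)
Your proposal is correct and follows essentially the same route as the paper: the same diagonal computation $(\mathcal{X}\ltimes T)^{\Xi}(t,t)=X_{t}^{N(\Xi)}=X_{t}^{m}$, the same cofinality-plus-self-similarity refinement yielding the exact splitting $(\mathcal{X}\ltimes T)^{\Gamma_{3}}(t,r)=(\mathcal{X}\ltimes T)^{\Gamma_{2}}(t,s)\,(\mathcal{X}\ltimes T)^{\Gamma_{1}}(s,r)$ followed by a $3\eps$/contraction estimate, and the same locally-uniform-limit argument for strong continuity. Your separate absorption identities for the degenerate cases $s=r$ and $s=t$ are a sound (indeed slightly more careful) supplement to the paper's handling of $\alpha\in\{0,1\}$ within the general splitting; the only small wobble is that the advertised ``symmetric argument'' for the left absorption would compose a strongly convergent block with partition-dependent vectors, which is unnecessary because the leftmost monitoring factor is exactly $X_{t}$, so $m$-idempotency gives $X_{t}^{m}\,(\mathcal{X}\ltimes T)^{\Gamma}(t,r)=(\mathcal{X}\ltimes T)^{\Gamma}(t,r)$ exactly at every finite stage.
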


    \begin{proof}
        Set $\mathcal{T} \coloneqq \mathcal{X} \ltimes T$.
        We first establish the final property:
        Letting $t \in \interval$ and $\Xi \in \mathbf{P} \subseteq \filterunit^{(m)}$,
        one computes
            $
                \mathcal{T}^{\Xi}(t, t)
                = \revProd_{k=1}^{N(\Xi_{t})}
                    X_{\tau^{\Xi(t,t)}_{t}}
                    T(\delta\tau^{\Xi(t,t)}_{k})
                = \revProd_{k=1}^{N(\Xi_{t})}
                    X_{t}
                    T(0)
                = X_{t}^{N(\Xi_{t})}
            $,
        which reduces to $X_{t}^{m}$
        by virtue of $m$-idempotence of $X_{t}$ and since $m \mid N(\Xi)$.
        Taking limits, one trivially obtains $\mathcal{T}(t,t) = X_{t}^{m}$.
        We now prove that
            $\{\mathcal{T}(t,s)\}_{(t,s) \in \Delta}$
        is a pseudo evolution family.

        \paragraph{Algebraic property:}
        Let $(t,s),(s,r)\in\Delta$.
        By the assumed $\toplocSOT$-convergence,
        the strong convergences
            ${\mathcal{T}^{\Xi}(t, s)\underset{\Xi}{\overset{\tinytopSOT}{\longrightarrow}}\mathcal{T}(t,s)}$,
            ${\mathcal{T}^{\Xi}(s, r)\underset{\Xi}{\overset{\tinytopSOT}{\longrightarrow}}\mathcal{T}(s,r)}$,
            and
            ${\mathcal{T}^{\Xi}(t, r)\underset{\Xi}{\overset{\tinytopSOT}{\longrightarrow}}\mathcal{T}(t,r)}$
        hold.
        Since the product expressions in \eqcref{eq:monitoring:sig:article-free-raj-dahya}
        are uniformly bounded and multiplication is $\topSOT$\=/continuous,
        in order to establish that
        $\mathcal{T}(t,r) = \mathcal{T}(t,s)\mathcal{T}(s,r)$,
        it suffices to prove for each $\Gamma_{1},\Gamma_{2},\Gamma_{3}\in\mathbf{P}$
        that some $\Gamma'_{1},\Gamma'_{2},\Gamma'_{3}\in\mathbf{P}$ exist
        such that $\Gamma'_{i} \supseteq \Gamma_{i}$ for each $i \in \{1,2,3\}$
        and

        \begin{restoremargins}
        \begin{equation}
        \label{eq:0:\beweislabel}
            \mathcal{T}^{\Gamma_{3}}(t,r)
            =
                \mathcal{T}^{\Gamma'_{2}}(t,s)
                \mathcal{T}^{\Gamma'_{1}}(s,r)
        \end{equation}
        \end{restoremargins}

        \continueparagraph
        holds.
        Let now $\alpha \in [0,\:1]$ be such,
        that $s - r = \alpha \cdot (t-r)$
        and $t - s = (1-\alpha) \cdot (t-r)$.
        Let
            $
                \Xi
                \coloneqq
                \Gamma_{3} \cup \alpha \Gamma_{1} \cup (\alpha + (1-\alpha)\Gamma_{2})
                \in \filterunit
            $.
        By confinality of $\mathbf{P}$ in $\filterunit$,
        there exists $\Xi' \in \mathbf{P}$ with $\Xi' \supseteq \Xi$.
        And by the self-similarity property \eqcref{eq:self-similarity:sig:article-free-raj-dahya}
        of $\mathbf{P}$,
        there exists
            $\Gamma'_{1},\Gamma'_{2},\Gamma'_{3}\in\mathbf{P}$
        such that

        \begin{shorteqnarray}
            \Gamma'_{3}
            = \alpha \Gamma'_{1} \cup (\alpha + (1-\alpha)\Gamma'_{2})
            \supseteq \Xi'.
        \end{shorteqnarray}

        Since $\Xi' \supseteq \Xi$,
        by construction one has
            $
                \alpha \Gamma'_{1} \cup (\alpha + (1-\alpha)\Gamma'_{2})
                \supseteq
                \alpha \Gamma_{1} \cup (\alpha + (1-\alpha)\Gamma_{2})
            $.
        If $\alpha \neq 0$, this implies $\Gamma'_{1} \supseteq \Gamma_{1}$,
        otherwise we may replace $\Gamma'_{1}$ by $\Gamma_{1}$
        without affecting $\Gamma'_{3}$.
        Either way, we may assume that $\Gamma'_{1} \supseteq \Gamma_{1}$.
        Similarly, we may assume that $\Gamma'_{2} \supseteq \Gamma_{2}$.
        Since
            $\Gamma'_{3} \supseteq \Xi'
            \supseteq \Xi
            \supseteq \Gamma_{3}$,
        all that remains is to demonstrate
        \eqcref{eq:0:\beweislabel}.
        This is a straightforward consequence of
        $\Gamma'_{3} = \alpha \Gamma'_{1} \cup (\alpha + (1-\alpha)\Gamma'_{2})$
        and the construction of $\mathcal{T}^{\Xi}$ for each $\Xi$
        in \eqcref{eq:monitoring:sig:article-free-raj-dahya}.

        \paragraph{Strong\=/continuity:}
        Fix arbitrary
            $(t_{0},s_{0}) \in \Delta$,
            $\xi\in\BanachRaum$,
            and
            $\eps > 0$.
        Let $K \subseteq \Delta$ be a compact neighbourhood of $(t_{0},s_{0})$.
        Since the convergence in \eqcref{eq:pre-evolution:sig:article-free-raj-dahya}
        is uniform on compact subsets of $\Delta$,
        there exists
            $\Xi \in \mathbf{P}$
        such that
            $
                \norm{
                    (
                        \mathcal{T}^{\Xi'}(t, s)
                        -
                        \mathcal{T}(t, s)
                    )
                    \xi
                }
                <
                \eps/4
            $
        for all $(t,s) \in K$
        and all $\Xi' \in \mathbf{P}$
        with $\Xi' \supseteq \Xi$.
        In particular,

            \begin{restoremargins}
            \begin{equation}
            \label{eq:1:\beweislabel}
                \norm{
                    (
                        \mathcal{T}(t, s)
                        -
                        \mathcal{T}(t_{0},s_{0})
                    )
                    \xi
                }
                < 2\cdot\tfrac{\eps}{4}
                    + \norm{
                        (
                            \mathcal{T}^{\Xi}(t, s)
                            -
                            \mathcal{T}^{\Xi}(t_{0}, s_{0})
                        )
                        \xi
                    }
            \end{equation}
            \end{restoremargins}

        \continueparagraph
        for all $(t,s) \in K$.
        By the construction of $\mathcal{T}^{\Xi}$
        in \eqcref{eq:monitoring:sig:article-free-raj-dahya}
        one has

            \begin{restoremargins}
            \begin{equation}
            \label{eq:2:\beweislabel}
            \everymath={\displaystyle}
            \begin{array}[m]{rcl}
                \mathcal{T}^{\Xi}(t, s)
                    &= &\revProd_{k=1}^{N(\Xi)}
                            X_{\tau^{\Xi(t,s)}_{k}}
                            \:
                            T(\delta\tau^{\Xi(t,s)}_{k})\\
                    &= &\revProd_{k=1}^{N(\Xi)}
                            X_{s + (t-s)\tau^{\Xi}_{k}}
                            \:
                            T((t-s)\delta\tau^{\Xi}_{k}),
            \end{array}
            \end{equation}
            \end{restoremargins}

        \continueparagraph
        for each $(t,s) \in K$.
        Since
            ${\interval \ni \tau \mapsto X_{\tau} \in \BoundedOps{\BanachRaum}}$
        and
            ${\realsNonNeg \ni t \mapsto T(t) \in \BoundedOps{\BanachRaum}}$
        are uniformly bounded $\topSOT$\=/continuous maps,
        the product expression in \eqcref{eq:2:\beweislabel} implies that
            ${\Delta \ni (t,s) \mapsto \mathcal{T}^{\Xi}(t,s) \in \BoundedOps{\BanachRaum}}$
        is $\topSOT$\=/continuous.
        By \eqcref{eq:1:\beweislabel} we can thus find a neighbourhood $W \subseteq K$
        of $(t_{0},s_{0})$,
        such that
            $
                \norm{
                    (
                        \mathcal{T}(t, s)
                        -
                        \mathcal{T}(t_{0},s_{0})
                    )
                    \xi
                }
                < \eps
            $
        for $(t, s) \in W$.
        This establishes the $\topSOT$\=/continuity of $\mathcal{T}$.
    \end{proof}

In \Cref{prop:solution-to-quantum-problem-is-evolution-family:sig:article-free-raj-dahya}
we saw the advantage of restricting the monitoring operators $X_{\tau}$
to $m$-idempotent operators.
More generally, we may confine $\mathcal{X}$ and $T$
to certain operator classes
to obtain natural subclasses of \Cref{problem:monitoring:sig:article-free-raj-dahya}.
Say that a set
    $\mathcal{P}$
of bounded (necessarily idempotent) operators
on a Banach space is a \highlightTerm{family of measurements}
if $PQ = Q$ for all $P, Q \in \mathcal{P}$.%
\footnote{%
    The idea here being,
    that once a \usesinglequotes{measurement} is made,
    no further measurement can affect the result
    without first disturbing the state of the system.
}
And say that a family of idempotents $\mathcal{P}$
is \highlightTerm{passive} \wrt to $T$
if
    $\{P\:T(t)\:P\}_{t \in \realsNonNeg}$
satisfies the semigroup law for each $P\in\mathcal{P}$.
Note that if each $X_{\tau} = P$
for a single idempotent $P$ which is passive \wrt $T$,
then the expressions in \eqcref{eq:monitoring:sig:article-free-raj-dahya}
trivially reduce to $
    (\mathcal{X} \ltimes T)^{\Xi}(t,s)
    = P\:T(t-s)\:P
$
for all $\Xi\in\mathbf{P}$, $(t,s)\in\Delta$,
and the monitoring problem is trivially solved.
Hence \Cref{problem:monitoring:sig:article-free-raj-dahya}
only becomes interesting
when considering monitoring operators
subject to temporal change.

\begin{defn}
\makelabel{defn:monitoring-classes:sig:article-free-raj-dahya}
    A continuous monitoring
        $(\BanachRaum, \mathbf{P}, \mathcal{X}, T)$
    shall be called
    a \highlightTerm{continuously monitored quantum process},
    if $T = U(\cdot)\restr{\realsNonNeg}$
    for some $\topSOT$\=/continuous
    representation of $\reals$ on $\BanachRaum$
    via surjective isometries.
    We call a continuously monitored process
    $(\BanachRaum, \mathbf{P}, \mathcal{X}, T)$
    a process continuously monitored via

    \begin{enumerate}[label={\upshape\bfseries \arabic*.}]
    \item\punktlabel{1}
        \highlightTerm{via $m$-idempotents (\resp projections)},
        if $\mathcal{X} = \mathcal{W} = \{W_{\tau}\}_{\tau\in\interval}$
        is a family of $m$-idempotent (\resp idempotent) contractions;

    \item
        \highlightTerm{via measurements},
        if $\mathcal{X} = \mathcal{P} = \{P_{\tau}\}_{\tau\in\interval}$
        is a family of contractive measurements; and

    \item
        \highlightTerm{via passive projections/measurements},
        if $\mathcal{X} = \mathcal{P} = (P_{\tau})_{\tau\in\interval}$
        is a family of contractive projections/measurements
        which is passive
        \wrt $T$.
    \end{enumerate}

    If in \punktcref{1} the $W_{\tau}$ are furthermore surjective isometries,
    we shall speak of \highlightTerm{cycles} (\resp \highlightTerm{reflections} if $m = 2$)
    instead of $m$-idempotents.
\end{defn}

%% ********** END OF FILE: body/sec-7-applications/sec-4-monitoring.tex **********

%% ********************************************************************************
%% FILE: body/sec-7-applications/sec-5-examples.tex
%% ********************************************************************************

\subsection[Examples]{Examples}
\label{sec:applications:examples:sig:article-free-raj-dahya}

\firstparagraph
Before proceeding with our main result,
we present examples of processes continuously monitored via $m$-idempotents
and consider \Cref{problem:monitoring:sig:article-free-raj-dahya} in each case.

\begin{e.g.}[Chernoff approximations]
\makelabel{e.g.:chernoff:sig:article-free-raj-dahya}
    Consider contractive $\Cnought$\=/semigroups,
        $T_{0}$, $T_{1}$, \ldots, $T_{m-1}$
    on a Banach space $\BanachRaum$
    with generators $A_{0}$, $A_{1}$, \ldots, $A_{m-1}$
    respectively
    for some $m \in \naturals$.
    Let $\mathcal{S}_{m}$ denote the set of permutations
    on $\{0,1,\ldots,m-1\}$
    and define
        $
            Q_{\sigma}(\tau)
            \coloneqq
                T_{\sigma(m-1)}(\tau)
                \cdot
                \ldots
                \cdot
                T_{\sigma(1)}(\tau)
                \cdot
                T_{\sigma(0)}(\tau)
        $
    for $\tau\in\realsNonNeg$
    and $\sigma \in \mathcal{S}_{m}$.
    We now appeal to the Smolyanov--Weizsäcker--Wittich generalisation
    of the \highlightTerm{Chernoff approximation theorem}.
    To this end we assume that the closure
        $A$ of $\sum_{i=0}^{m-1}A_{i}$
    generates a $\Cnought$\=/semigroup $T$ on $\BanachRaum$
    and further that each $Q_{\sigma}$ is \highlightTerm{proper}
    in the sense that

    \begin{shorteqnarray}
        \tfrac{Q_{\sigma}(\tau) - \onematrix}{\tau}
        \eta
        \longrightarrow
        A\eta
    \end{shorteqnarray}

    \continueparagraph
    for all $
        \eta
        \in \{T(a)\xi \mid \xi\in\opDomain{A},~a > 0\}
        \eqqcolon \mathcal{D}_{+}
    $.
    Let $\sigma \in \mathcal{S}_{m}$.
    By \cite[Proposition~3]{SmolyanovWeizsaeckerWittich2000chernoff},
    the convergence

        \begin{restoremargins}
        \begin{equation}
        \label{eq:0:\beweislabel}
            \revProd_{k=1}^{N(\Xi)}
                Q_{\sigma}(\delta\tau^{\Xi(t,s)}_{k})
            \underset{\Xi}{\longrightarrow}
                T(t-s)
        \end{equation}
        \end{restoremargins}

    \continueparagraph
    holds strongly for $(t,s) \in \Delta$,
    where the limit is taken over $\Xi \in \filterunit$.
    In fact,
    one can reason that this convergence
    is uniform on compact subsets of $\Delta$.

    To see this, fix
        $\eps > 0$,
        a non-empty compact subset $K \subseteq \Delta$,
        and
        $T(\alpha)\xi \in \mathcal{D}_{+}$,
    where $\alpha > 0$ and $\xi \in \opDomain{A}$.
    View $\opDomain{A}$ with the graph norm $\norm{\cdot}_{A}$ for $A$,
    which makes it a Banach space as $A$ is closed.
    We now define
    $
        R(\tau)
        \coloneq
            \tfrac{Q_{\sigma}(\tau) - \onematrix}{\tau}
            -
            \tfrac{T(\tau) - \onematrix}{\tau}
    $
    for $\tau > 0$ and $R(0) \coloneqq \zeromatrix$.
    Clearly each $R(\tau)$ constitutes a bounded linear operator
    from $(\opDomain{A},\norm{\cdot}_{A})$ to $\BanachRaum$.
    Moreover, by the properness assumption,
        $R(\cdot)T(\alpha)\xi$
    is continuous,
    making
        $\{R(\tau)T(\alpha)\xi \mid \tau\in[0,\:1]\} \subseteq \BanachRaum$
    a compact and thus norm-bounded set.
    By the uniform boundedness principle,
    it follows that
        $
            \{R(\tau)T(\alpha) \mid \tau\in[0,\:1]\}
            \subseteq
            \BoundedOps{\opDomain{A}}{\BanachRaum}
        $
    is bounded in operator norm by some $C \in (0,\:\infty)$.

    By $\topSOT$\=/continuity of $T$
    and $T$\=/invariance of $\opDomain{A}$,
    the map
        $
            a
            \mapsto
            (T(a)\xi,A\:T(a)\xi)
            = (T(a)\xi,T(a)A\:\xi)
        $
    is continuous.
    So setting
        $
            h_{\max}
            \coloneqq
            \max (\{1\} \cup \{t - s \mid (t, s) \in K\})
            \in (0,\:\infty)
        $,
    this makes
        $
            \{T(a)\xi\mid a\in[\alpha,\:\alpha + h_{\max}]\}
            \subseteq
            \opDomain{A}
        $
    compact \wrt $\norm{\cdot}_{A}$.
    In particular,
    a finite set
        $F \subseteq [\alpha,\:\alpha + h_{\max}]$
    exists,
    such that for each $a \in [\alpha,\:\alpha + h_{\max}]$
    some $a' \in [\alpha,\:\alpha + h_{\max}]$ exists
    with
        $
            \norm{(T(a) - T(a'))\xi}_{A}
            < \tfrac{\eps}{2 h_{\max} C}
        $
    and thus

    \begin{shorteqnarray}
        \norm{R(\tau)T(a)\xi}
            &\leq
                &\norm{R(\tau)T(a')\xi}
                +
                \norm{R(\tau)} \cdot \norm{(T(a) - T(a'))\xi}_{A}
            \\
            &\leq
                &\norm{R(\tau)T(a')\xi}
                +
                C \cdot \tfrac{\eps}{2 h_{\max} C}
    \end{shorteqnarray}

    \continueparagraph
    for $\tau\in[0,\:1]$.
    By virtue of the afore mentioned continuity
    of $R(\cdot)\eta$ for each $\eta \in \mathcal{D}_{+}$,
    one may find an appropriately small value $\tau_{0} \in (0,\:1]$,
    such that

    \begin{restoremargins}
    \begin{equation}
    \label{eq:1:\beweislabel}
        \sup_{a \in [\alpha,\:\alpha + h_{\max}]}
            \norm{R(\tau)T(a)\xi}
            \leq
                \max_{a \in F}\norm{R(\tau)T(a)\xi} + \tfrac{\eps}{2 h_{\max}}
            \leq
                \tfrac{\eps}{h_{\max}}
    \end{equation}
    \end{restoremargins}

    \continueparagraph
    for all $\tau \in [0,\:\tau_{0})$.

    Recalling that
        ${
            \delta\Xi
            = \max_{k} \delta\tau^{\Xi}_{k}
            \longrightarrow 0
        }$
    as the partitions $\Xi \in \filterunit$ get finer,
    we may fix an arbitrary $\Xi_{0} \in \filterunit$ with
        $h_{\max}\delta\Xi_{0} < \tau_{0}$.
    Consider arbitrary
        $\Xi \in \filterunit$ with $\Xi \supseteq \Xi_{0}$
        and $(t,s) \in K$.
    Using telescoping expressions,
    one may compute

        \begin{shorteqnarray}
            &&\normLong{
                \Big(
                    \revProd_{k=1}^{N(\Xi)}
                        Q_{\sigma}(\delta\tau^{\Xi(t,s)}_{k})
                    -
                    T(t-s)
                \Big)
                \:T(\alpha)\xi
            }\\
                &=
                    &\normLong{
                        \sum_{k=1}^{N(\Xi)}
                            \revProd_{i=k+1}^{N(\Xi)}
                            Q_{\sigma}(\delta\tau^{\Xi(t,s)}_{i})
                            \cdot
                            (
                                Q_{\sigma}(\delta\tau^{\Xi(t,s)}_{k})
                                -
                                T(\delta\tau^{\Xi(t,s)}_{k})
                            )
                            \cdot
                            \revProd_{i=1}^{k-1}
                                T(\delta\tau^{\Xi(t,s)}_{i})
                            \:T(\alpha)\xi
                    }
                    \\
                &\leq
                    &\sum_{k=1}^{N(\Xi)}
                        \underbrace{
                            \revProd_{i=k+1}^{N(\Xi)}
                                \norm{Q_{\sigma}(\delta\tau^{\Xi(t,s)}_{i})}
                        }_{\leq 1}
                        \cdot
                        \normLong{
                            \underbrace{
                                (
                                    Q_{\sigma}(\delta\tau^{\Xi(t,s)}_{k})
                                    -
                                    T(\delta\tau^{\Xi(t,s)}_{k})
                                )
                            }_{
                                =
                                \delta\tau^{\Xi(t,s)}_{k}
                                R(\delta\tau^{\Xi(t,s)}_{k})
                            }
                            \:T(\alpha + \sum_{i=1}^{k-1}\delta\tau^{\Xi(t,s)}_{i})
                            \:\xi
                        }
                    \\
                &\overset{(\ast)}{\leq}
                    &h_{\max}
                    \cdot
                    \sup_{\tau \in [0,\:\tau_{0})}
                    \sup_{a \in [\alpha,\:\alpha + h_{\max}]}
                        \norm{R(\tau)\:T(a)\:\xi}
                \eqcrefoverset{eq:1:\beweislabel}{\leq}
                    h_{\max} \cdot \tfrac{\eps}{h_{\max}}
                = \eps,
        \end{shorteqnarray}

    \continueparagraph
    whereby the simplifications in ($\ast$) are obtained by observing that
    $
        \delta\tau^{\Xi(t,s)}_{k}
        = (t-s)\delta\tau^{\Xi}_{k}
        \leq h_{\max} \delta\Xi
        \leq h_{\max} \delta\Xi_{0}
        < \tau_{0}
    $
    and
    $
        \sum_{i=1}^{k-1}\delta\tau^{\Xi(t,s)}_{i}
        \leq t-s
        \leq h_{\max}
    $
    for each $k \in \{1,2,\ldots,N(\Xi)\}$.
    Finally, since the product expressions are contractions,
    the desired uniform strong convergence
    in \eqcref{eq:0:\beweislabel} follows
    by density of $\mathcal{D}_{+}$ in $\BanachRaum$.

    Consider now the Banach space
        $
            \tilde{\BanachRaum}
            \coloneqq
            \complex^{m} \otimes_{\text{alg}} \BanachRaum
        $.
    Let $\tilde{T}$ be the contractive $\Cnought$\=/semigroup
    on $\tilde{\BanachRaum}$
    defined by
        $
            \tilde{T}(\cdot)
            \coloneqq
                \sum_{i=0}^{m-1}
                    \ElementaryMatrix{i}{i} \otimes T_{i}(m\:\cdot)
        $.
    Let
        $
            \mathcal{W}
            \coloneqq
            \{W_{\tau}\}_{\tau\in\interval}
        $
    be the family of isometric surjections
    given by
        $
            W_{\tau}
            \coloneqq
            W
            \coloneqq
                \sum_{i=0}^{m-1}
                    \ElementaryMatrix{(i+1) \mod m}{i}
                    \otimes
                    \onematrix
            \in \BoundedOps{\HilbertRaum}
        $
    for each $\tau \in \realsNonNeg$.
    Observe that $W_{\tau}^{m} = \onematrix$
    and thus $W_{\tau}$ is $m$-idempotent
    for each $\tau \in \interval$.

    Let
        $(t,s) \in \Delta$ and $\Xi \in \filterunit^{(m)}$
    be arbitrary.
    By construction of homogenous $m$-partitions
    (see \Cref{e.g.:m-homog-partitions:sig:article-free-raj-dahya}),
    there exists a partition
        $\Xi_{0} \in \filterunit$
    such that
        $
            \Xi
            = \Xi_{0}^{(m)}
            = \bigcup_{k=1}^{N(\Xi_{0})}
                \{
                    \tau^{\Xi_{0}}_{k-1}
                    +
                    \tfrac{r}{m}
                    \cdot
                    \delta\tau^{\Xi_{0}}_{k}
                    \mid
                    r \in \{0,1,2,\ldots,m\}
                \}
        $.
    In particular,
        $N(\Xi) = m \cdot N(\Xi_{0})$
    and
        $
            \delta\tau^{\Xi(t,s)}_{ml-r}
            = (t-s)\:\delta\tau^{\Xi}_{ml-r}
            = (t-s)\:\delta\tau^{\Xi_{0}}_{l}/m
            = \delta\tau^{\Xi_{0}(t,s)}_{l}/m
        $
    for
        $l \in \{1,2,\ldots,N(\Xi_{0})\}$,
        $r \in \{0,1,\ldots,m-1\}$.
    So

        \begin{shorteqnarray}
            (\mathcal{W} \ltimes \tilde{T})^{\Xi}(t, s)
            &= &\revProd_{k=1}^{N(\Xi)}
                W_{\tau^{\Xi(t,s)}_{k}}
                \tilde{T}(\delta\tau^{\Xi(t,s)}_{k})
                \\
            &= &\revProd_{l=1}^{N(\Xi_{0})}
                \underbrace{
                    \prod_{r=0}^{m-1}
                        W_{\tau^{\Xi(t,s)}_{ml-r}}
                        \:\tilde{T}(\delta \tau^{\Xi(t,s)}_{ml - r})
                }_{
                    = (
                        W \: \tilde{T}(\delta \tau^{\Xi_{0}(t,s)}_{l}/m)
                    )^{m}
                }
                \\
            &= &\revProd_{l=1}^{N(\Xi_{0})}
                \Big(
                    \sum_{i=0}^{m-1}
                        \ElementaryMatrix{(i + 1) \mod m}{i}
                        \otimes
                        T_{i}(m\cdot\delta \tau^{\Xi_{0}(t,s)}_{l}/m)
                \Big)^{m}
                \\
            &=
                &\revProd_{l=1}^{N(\Xi_{0})}
                    \sum_{i = 0}^{m-1}
                        \ElementaryMatrix{i}{i}
                        \otimes
                        \underbrace{
                            \Big(
                                \revProd_{j = 0}^{m-1}
                                    T_{(i + j) \mod m}(\delta \tau^{\Xi_{0}(t,s)}_{k})
                            \Big)
                        }_{
                            = Q_{\sigma_{i}}(\delta \tau^{\Xi_{0}(t,s)}_{l})
                        }
                \\
            &=
                &\sum_{i = 0}^{m-1}
                \revProd_{l=1}^{N(\Xi_{0})}
                    \ElementaryMatrix{i}{i}
                    \otimes
                    Q_{\sigma_{i}}(\delta \tau^{\Xi_{0}(t,s)}_{l}),
                \\
        \end{shorteqnarray}

    \continueparagraph
    where
        $\sigma_{i} \in \mathcal{S}_{m}$
    is the permutation defined by
        $\sigma_{i}(j) \coloneqq (i + j) \mod m$
    for $i,j \in \{0,1,\ldots,m-1\}$.
    Applying the Chernoff approximation \eqcref{eq:0:\beweislabel},
    the above converges strongly to
        $
            \sum_{i = 0}^{m-1}
            \ElementaryMatrix{i}{i} \otimes T(t-s)
        $,
    uniformly for $(t,s)$ on compact subsets of $\Delta$.
    Hence
        $(\tilde{\BanachRaum}, \filterunit^{(m)}, \mathcal{W}, \tilde{T})$
    is a \emph{process continuously monitored via cycles},
    and the monitoring problem in this case
    has a positive solution,
    \viz
        $
            \mathcal{W} \ltimes \tilde{T}
            = \{\onematrix \otimes T(t - s)\}_{(t,s) \in \Delta}
        $.
\end{e.g.}

A sufficient condition to ensure the fulfilment of the properness requirement
in the Smolyanov--Weizsäcker--Wittich result,
is to assume that
    $\opDomain{A} \subseteq \opDomain{A_{i}}$ for each $i\in\{0,1,\ldots,m-1\}$.
To see this, let $\xi \in \opDomain{A}$ and $\sigma \in \mathcal{S}_{m}$.
Setting
    $
        Q_{\sigma,i}(\tau)
        \coloneqq
        \revProd_{j=i+1}^{m-1}
            T_{\sigma(j)}(\tau)
    $
for each $i\in\{0,1,\ldots,m-1\}$,
a simple use of telescoping expressions yields

\begin{shorteqnarray}
    \normLong{
        \Big(
            \tfrac{Q_{\sigma}(\tau) - \onematrix}{\tau}
            -
            A
        \Big)\:\xi
    }
    &=
        &\normLong{
            \sum_{i = 0}^{m-1}
            \Big(
                Q_{\sigma,i}(\tau)
                \cdot
                \tfrac{T_{\sigma(i)}(\tau) - \onematrix}{\tau}
                \xi
                -
                A_{\sigma(i)}
                \xi
            \Big)
        }
        \\
    &\leq
        &\sum_{i = 0}^{m-1}
            \norm{
                Q_{\sigma,i}(\tau)
                -
                \onematrix
            }
            \norm{
                A_{\sigma(i)}
                \xi
            }
        +
            \norm{Q_{\sigma,i}(\tau)}
            \normLong{
                \Big(
                    \tfrac{T_{\sigma(i)}(\tau) - \onematrix}{\tau}
                    -
                    A_{\sigma(i)}
                \Big)
                \xi
            },
\end{shorteqnarray}

\continueparagraph
which converges to $0$ for $\tau \searrow 0$.

\begin{e.g.}[Feynman construction]
\makelabel{e.g.:feynman:sig:article-free-raj-dahya}
    The \highlightTerm{Feynman path integral} famously arises
    as a by-product of the construction
    of a unitary $\Cnought$\=/semigroup $T$
    via Chernoff approximations
    applied to $m = 2$ unitary $\Cnought$\=/semigroups
    $T_{0}$ and $T_{1}$
    on a Hilbert space
        $\HilbertRaum = L^{2}(\reals^{l})$, $l\in\naturals$
    with generators
    $A_{0} = \iunit \kappa \boldsymbol{\Delta}$
    and $A_{1} = -\iunit V$
    respectively,
    where $\kappa > 0$ is a constant,
    $\boldsymbol{\Delta}$ denotes the Laplace operator,
    and ${V : \reals^{l} \to \reals}$
    a Borel measurable function referred to as a \highlightTerm{potential}
    (see \exempli
        \cite[\S{}I.8.13]{Goldstein1985semigroups}%
    ).
    If we consider the special case in which the potential is bounded,
    \idest $V \in L^{\infty}(\reals^{l})$,
    one has that $A \coloneqq A_{0} + A_{1}$
    is a bounded perturbation of a closed operator and thus closed.
    In particular $\opDomain{A} = \opDomain{A_{0}} \subseteq \HilbertRaum = \opDomain{A_{1}}$.
    Hence the above sufficient condition holds
    and we can apply the calculations in \Cref{e.g.:chernoff:sig:article-free-raj-dahya}.
    The semigroup $\tilde{T}$ constructed there
    is clearly a unitary $\Cnought$\=/semigroup
    on the Hilbert space
        $\complex^{2} \otimes \HilbertRaum$
    and thus corresponds to a
    continuous unitary representation
        $\tilde{U} \in \Repr{\reals}{\complex^{2} \otimes \HilbertRaum}$.
    Thus
        $(\tilde{\BanachRaum}, \filterunit^{(2)}, \mathcal{W}, \tilde{U})$
    constitutes a \emph{quantum process continuously monitored via reflections}.
    The positive solution
        $
            \mathcal{W} \ltimes \tilde{U}
            = \{\onematrix \otimes T(t-s)\}_{(t,s) \in \Delta}
        $
    of this problem
    indicates that the solution
    to the Schrödinger equation
    emerges from a process involving
    rapid alternation between unitary evolution
    on the bipartite system
        $L^{2}(\complex^{2} \otimes \reals^{l})$,
    and an involution
    which flips the states on the auxiliary part of the system.
\end{e.g.}

Of greater interest in this section is the case $m=1$,
\idest processes continuously monitored via families of projections.
As motivation we consider the following:

\begin{e.g.}[Wave propagation with absorption]
    Consider the Hilbert space
        $\HilbertRaum \coloneqq L^{2}(\reals^{l})$, $l\in\naturals$.
    Let
        $\{O_{\tau}\}_{\tau\in\interval}$
    be a family of measurable subsets of $\reals^{l}$.
    Assume that this is continuous in the sense
    that the measure theoretic difference between
        $O_{\tau}$ and $O_{\tau_{0}}$
    converges to $0$
    for ${\interval \ni \tau \longrightarrow \tau_{0}}$
    and all $\tau_{0} \in \interval$.
    It is easy to see that the family
    of orthogonal projections
        $\mathcal{P} \coloneqq \{P_{\tau}\}_{\tau\in\interval} \in \BoundedOps{L^{2}(\reals^{l})}$
    defined via
        $
            P_{\tau}\:f
            \coloneqq
            (\einser - \einser_{O_{\tau}})\:f
        $
    for $f \in \HilbertRaum$,
        $\tau\in\interval$,
    is $\topSOT$\=/continuous.
    Further let
        $U \in \Repr{\reals}{\HilbertRaum}$
    be the $\topSOT$\=/unitary representation
    with generator
        $B = \frac{\iunit}{2 m_{0}} \sum_{i=1}^{l}(\tfrac{\da}{\da x_{i}})^{2}$
        (defined on some dense subspace)
    for some constant $m_{0} \in (0,\:\infty)$,
    which defines the propagation of a wave in a vacuum
    and satisfies
        $
            (U(t)\:f)(\mathbf{x})
            = (\tfrac{\iunit 2 \pi t}{m_{0}})^{-n/2}
                \int_{\mathbf{y} \in \reals^{l}}
                    e^{
                        \tfrac{\iunit m_{0}\norm{\mathbf{x} - \mathbf{y}}^{2}}{2 t}
                    }
                    f(\mathbf{y})
                \:\dee\mathbf{y}
        $
    for
        $f \in L^{2}(\reals^{l}) \cap L^{1}(\reals^{l})$,
        $\mathbf{x} \in \reals^{l}$,
        and
        $t \in (0,\:\infty)$
    (see \cite[\S{}I.8.13]{Goldstein1985semigroups}).
    Letting $\mathbf{P}$ be a self-similar system of partitions,
    it follows that
        $(\HilbertRaum, \mathbf{P}, \mathcal{P}, U)$
    constitutes a \emph{quantum process continuously monitored via projections}.
    If this instance of the monitoring problem has a positive solution,
    then the (pseudo) evolution family that arises can be interpreted
    as the propagation of a wave in a vacuum
    containing an absorbing obstacle,
    which itself is subject to temporal variation.
\end{e.g.}

%% ********** END OF FILE: body/sec-7-applications/sec-5-examples.tex **********

%% ********************************************************************************
%% FILE: body/sec-7-applications/sec-6-results.tex
%% ********************************************************************************

\subsection[Reduction results]{Reduction results}
\label{sec:applications:results:sig:article-free-raj-dahya}

\firstparagraph
We now arrive at the main results of this section.
Our goal is to reduce evolution problems to monitoring problems.
Since pre-evolutions involve (possibly continuously)
indexed families of semigroups,
it seems intuitive that the framework of the \Second free dilation theorem can be applied here.
Indeed, the key ingredient to achieve the following result
is the second diagonalisation in our presentation of the Trotter--Kato theorem
in \S{}\ref{sec:result-concrete:trotter-kato:sig:article-free-raj-dahya}.

\begin{highlightboxWithBreaks}
\begin{lemm}
\makelabel{lemm:reduction:pre-evolution-to-cts-monitored-proc:sig:article-free-raj-dahya}
    Let $(\BanachRaum, \mathbf{P}, T)$ be a pre-evolution.
    Suppose that
        $T = \{T_{\tau}\}_{\tau \in \interval}$
        is $\toplocSOT$\=/continuous
    in the index set
    and that $\interval$ is compact.
    Then a quantum process
        $(\tilde{\BanachRaum}, \mathbf{P}, \mathcal{P}, U)$
    continuously monitored via passive measurements
    exists such that
        the evolution problem
        for $(\BanachRaum, \mathbf{P}, T)$
    reduces to
        the monitoring problem
        for $(\tilde{\BanachRaum}, \mathbf{P}, \mathcal{P}, U)$.
    If both problems are positively solved,
    then the limits satisfy

        \begin{restoremargins}
        \begin{equation}
        \label{eq:dilation:pre-evolution-cts-monitored-proc:sig:article-free-raj-dahya}
        \everymath={\displaystyle}
        \begin{array}[m]{rcl}
            \mathcal{T}(t,s)
                &=
                    &j
                    \:(\mathcal{P} \ltimes U)(t,s)
                    \:r,\\
            (\mathcal{P} \ltimes U)(t,s)
                &= &r\:\mathcal{T}(t,s)\:j_{s}
        \end{array}
        \end{equation}
        \end{restoremargins}

    \continueparagraph
    for all $(t,s) \in \Delta$,
    where
        $r \in \BoundedOps{\BanachRaum}{\tilde{\BanachRaum}}$
        is an isometric embedding
        and $j, j_{s} \in \BoundedOps{\tilde{\BanachRaum}}{\BanachRaum}$
        are surjective isometries
        with $j\:r = j_{s}\:r = \onematrix$
    for $s \in \interval$.
\end{lemm}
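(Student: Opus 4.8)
The plan is to reduce the entire statement to the \emph{second diagonalisation} of the Trotter--Kato theorem, i.e.\ \Cref{prop:second-diagonalisation:sig:article-free-raj-dahya}, applied with index space $\Omega = \interval$; this is legitimate since $\interval$ is compact and $\{T_\tau\}_{\tau\in\interval}$ is $\toplocSOT$\=/continuous in the index set. It yields a Banach space $\tilde{\BanachRaum}$, an isometric embedding $r\in\BoundedOps{\BanachRaum}{\tilde{\BanachRaum}}$, a strongly continuous family $\{j_\tau\}_{\tau\in\interval}\subseteq\BoundedOps{\tilde{\BanachRaum}}{\BanachRaum}$ of surjective isometries, and an $\topSOT$\=/continuous representation $U\in\Repr{\reals}{\tilde{\BanachRaum}}$ by surjective isometries, with $j_\tau\,U(\theta)\,r = T_\tau(\theta)$ and, by \Cref{rem:second-diagonalisation:properties:sig:article-free-raj-dahya}, $j_\tau\,r = \onematrix$ for all $\tau\in\interval$ and $\theta\in\realsNonNeg$. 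Setting $P_\tau\coloneqq r\,j_\tau$ and $\mathcal{P}\coloneqq\{P_\tau\}_{\tau\in\interval}$, the cited remark says the $P_\tau$ are idempotent contractions forming a strongly continuous family with $P_{\tau'}P_\tau = P_\tau$, so $\mathcal{P}$ is a family of contractive measurements; and passivity with respect to $T\coloneqq U(\cdot)\restr{\realsNonNeg}$ follows from $P_\tau\,U(\theta)\,P_\tau = r\,T_\tau(\theta)\,j_\tau$ together with $j_\tau\,r = \onematrix$ and the semigroup law for $T_\tau$. So $(\tilde{\BanachRaum},\mathbf{P},\mathcal{P},U)$ is the desired quantum process continuously monitored via passive measurements.

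Next I would establish a telescoping identity relating the two families of approximants. Fix $(t,s)\in\Delta$, $\Xi\in\filterunit$, write $N\coloneqq N(\Xi)$, $\tau_k\coloneqq\tau^{\Xi(t,s)}_k$ (so $\tau_0 = s$, $\tau_N = t$), $\delta_k\coloneqq\delta\tau^{\Xi(t,s)}_k$, and substitute $T_{\tau_k}(\delta_k) = j_{\tau_k}\,U(\delta_k)\,r$ into $\mathcal{T}^\Xi(t,s) = \revProd_{k=1}^N T_{\tau_k}(\delta_k)$; the interior factors $r\,j_{\tau_k}$ collapse to $P_{\tau_k}$, yielding, with $B^\Xi\coloneqq U(\delta_N)\,P_{\tau_{N-1}}\,U(\delta_{N-1})\cdots P_{\tau_1}\,U(\delta_1)$,
\begin{displaymath}
    \mathcal{T}^\Xi(t,s) = j_t\,B^\Xi\,r
    \qquad\text{and}\qquad
    (\mathcal{P}\ltimes U)^\Xi(t,s) = \revProd_{k=1}^{N}P_{\tau_k}\,U(\delta_k) = P_t\,B^\Xi = r\,j_t\,B^\Xi .
\end{displaymath}
Since $j\,r = \onematrix$ for every surjective isometry $j$ with this property (in particular $j = j_{\tau_0}$ for an arbitrary fixed $\tau_0\in\interval$), the first gives $\mathcal{T}^\Xi(t,s) = j\,(\mathcal{P}\ltimes U)^\Xi(t,s)\,r$; and since $P_s = r\,j_s$ and $(\mathcal{P}\ltimes U)^\Xi(t,s)\,r = r\,\mathcal{T}^\Xi(t,s)$, one obtains the \emph{exact} identity $(\mathcal{P}\ltimes U)^\Xi(t,s)\,P_s = r\,\mathcal{T}^\Xi(t,s)\,j_s$.

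From the first of these, ``monitoring problem solved $\Rightarrow$ evolution problem solved'' is immediate: $j$ and $r$ being bounded, $(\mathcal{P}\ltimes U)^\Xi\to\mathcal{P}\ltimes U$ (uniformly strongly on compacta of $\Delta$) gives $\mathcal{T}^\Xi = j\,(\mathcal{P}\ltimes U)^\Xi\,r \to \mathcal{T}\coloneqq j\,(\mathcal{P}\ltimes U)(\cdot)\,r$, which is the first line of \eqcref{eq:dilation:pre-evolution-cts-monitored-proc:sig:article-free-raj-dahya}. For the converse I would decompose $(\mathcal{P}\ltimes U)^\Xi(t,s) = (\mathcal{P}\ltimes U)^\Xi(t,s)\,P_s + (\mathcal{P}\ltimes U)^\Xi(t,s)\,(\onematrix - P_s)$: the first summand equals $r\,\mathcal{T}^\Xi(t,s)\,j_s$ and converges to $r\,\mathcal{T}(t,s)\,j_s$ once $\mathcal{T}^\Xi\to\mathcal{T}$; the second is driven to $0$ by noting that its tail acts as $P_{\tau_1}\,U(\delta_1)\,(\onematrix - P_s) = r\,j_{\tau_1}\,U(\delta_1)\,(\onematrix - P_s) = r\big(j_{\tau_1}\,U(\delta_1) - T_{\tau_1}(\delta_1)\,j_s\big)$, together with $j_s(\onematrix - P_s) = 0$ and the fact that $\delta_1 = (t-s)\,\tau^\Xi_1\to 0$ and $\tau_1 = s + (t-s)\,\tau^\Xi_1\to s$ uniformly on compacta as $\Xi$ is refined. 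This produces $(\mathcal{P}\ltimes U)^\Xi(t,s)\to r\,\mathcal{T}(t,s)\,j_s$ uniformly on compacta, which together with the previous paragraph establishes the equivalence of the two problems and the second line of \eqcref{eq:dilation:pre-evolution-cts-monitored-proc:sig:article-free-raj-dahya}, with $r$, $j$, $j_s$ as required.

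The main obstacle is exactly the uniformity claimed in the converse direction: one must push the strong convergences through the \emph{boundary} factors $j_{\tau_1}$, $U(\delta_1)$ and $T_{\tau_1}(\delta_1)$ so that the resulting error is uniform over $(t,s)$ in a compact subset of $\Delta$. This is where compactness of $\interval$ and the $\toplocSOT$\=/continuity hypothesis are used in tandem: together with Arzel\`{a}--Ascoli applied to the continuous maps $\tau\mapsto T_\tau$ and $\tau\mapsto j_\tau$ restricted to compacta, they give equicontinuity of $\{T_\tau:\tau\in\interval\}$ near $0$ and of $\{j_\tau:\tau\in\interval\}$, which --- all operators in sight being contractions, hence uniformly bounded --- lets the product expressions be estimated termwise. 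Everything else is routine algebra.
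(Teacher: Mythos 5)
Your proposal is correct and follows essentially the same route as the paper: the second diagonalisation yields $(\tilde{\BanachRaum}, r, \{j_{\tau}\}_{\tau\in\interval}, U)$ and the passive measurement family $P_{\tau} = r\,j_{\tau}$, your two telescoped identities $\mathcal{T}^{\Xi}(t,s) = j\,(\mathcal{P}\ltimes U)^{\Xi}(t,s)\,r$ and $(\mathcal{P}\ltimes U)^{\Xi}(t,s)\,P_{s} = r\,\mathcal{T}^{\Xi}(t,s)\,j_{s}$ are exactly the paper's intermediate equations, and the converse direction is handled identically --- splitting $(\mathcal{P}\ltimes U)^{\Xi}(t,s)$ against $P_{s}$ and $\onematrix - P_{s}$ and killing the second part through the rightmost factor $P_{\tau_{1}}U(\delta_{1})$ using $\delta\tau^{\Xi}_{1}\to 0$, strong continuity, uniform boundedness, and compactness of $\interval$. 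The only cosmetic difference is that the paper eliminates this tail via the measurement identity $P_{\tau_{1}}(\onematrix - P_{s}) = P_{\tau_{1}} - P_{s}$ rather than your $j_{s}(\onematrix - P_{s}) = 0$, which reduces to the same two continuity estimates.
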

\end{highlightboxWithBreaks}

    \begin{proof}
        We first construct a continuously monitored quantum process
            $(\BanachRaum, \mathbf{P}, \mathcal{P}_{\interval}, U_{\interval})$,
        then demonstrate that
        the evolution problem for $(\BanachRaum, \mathbf{P}, T)$
        has a positive solution
        if and only if
        the monitoring problem does.

        \paragraph{Construction of the monitoring problem:}
            Since the family of contractive $\Cnought$\=/semigroups
                $\{T_{\tau}\}_{\tau\in\interval}$
            is assumed to be $\toplocSOT$\=/continuous in its compact index set,
            the \emph{second diagonalisation} (\Cref{prop:second-diagonalisation:sig:article-free-raj-dahya})
            may be applied,
            which yields
                a Banach space $\tilde{\BanachRaum}$,
                an isometric embedding
                    $r \in \BoundedOps{\BanachRaum}{\tilde{\BanachRaum}}$,
                a strongly continuous family
                    $\{j_{\tau}\}_{\tau \in \interval} \in \BoundedOps{\tilde{\BanachRaum}}{\BanachRaum}$
                of surjective isometries,
            and
                an $\topSOT$\=/continuous representation
                $U_{\interval}\in\Repr{\reals}{\tilde{\BanachRaum}}$
                consisting of surjective isometries on $\tilde{\BanachRaum}$,
            such that the dilation in
                \eqcref{eq:second-diagonalisation:dilation:sig:article-free-raj-dahya}
            holds.
            By \Cref{rem:second-diagonalisation:properties:sig:article-free-raj-dahya},
            the contractions in
                $
                    \mathcal{P}_{\interval}
                    \coloneqq
                        \{P_{\tau} \coloneqq r\:j_{\tau}\}_{\tau\in\interval}
                    \subseteq
                        \BoundedOps{\tilde{\BanachRaum}}
                $
            constitute a strongly continuous family of measurements.
            By applying the dilation \eqcref{eq:second-diagonalisation:dilation:sig:article-free-raj-dahya}
            as well as the properties of the embeddings,
            it is a straightforward exercise to see that
                $
                    \{P_{\tau}\:U_{\interval}(t)\:P_{\tau}\}_{t\in\realsNonNeg}
                    = \{r\:T_{\tau}(t)\:j_{\tau}\}_{t\in\realsNonNeg}
                $
            satisfies the semigroup law
            for each $\tau\in\interval$.
            Thus
                $(\tilde{\BanachRaum}, \mathbf{P}, \mathcal{P}_{\interval}, U_{\interval})$
            constitutes a quantum process continuously monitored via passive measurements.

        \paragraph{Basic observations:}
            Fix now some $\hat{\tau} \in \interval$.
            For $\Xi \in \mathbf{P}$ and $(t,s) \in \Delta$ one computes

                \begin{shorteqnarray}
                    \mathcal{T}^{\Xi}(t, s)
                        &\eqcrefoverset{eq:pre-evolution:sig:article-free-raj-dahya}{=}
                            &\revProd_{k=1}^{N(\Xi)}
                                T_{\tau^{\Xi(t,s)}_{k}}(\delta \tau^{\Xi(t,s)}_{k})
                            \\
                        &\eqcrefoverset{eq:second-diagonalisation:dilation:sig:article-free-raj-dahya}{=}
                            &\revProd_{k=1}^{N(\Xi)}
                                j_{\tau^{\Xi(t,s)}_{k}}
                                \:U_{\interval}(\delta \tau^{\Xi(t,s)}_{k})
                                \:r
                            \\
                        &=
                            &\underbrace{
                                j_{\hat{\tau}}
                                \:r
                            }_{=\onematrix}
                            \:\Big(
                                \revProd_{k=1}^{N(\Xi)}
                                    j_{\tau^{\Xi(t,s)}_{k}}
                                    \:U_{\interval}(\delta \tau^{\Xi(t,s)}_{k})
                                    \:r
                            \Big)
                            \\
                        &=
                            &j_{\hat{\tau}}
                            \:\Big(
                                \revProd_{k=1}^{N(\Xi)}
                                    \underbrace{
                                        r
                                        \:j_{\tau^{\Xi(t,s)}_{k}}
                                    }_{
                                        = P_{\tau^{\Xi(t,s)}_{k}}
                                    }
                                    \:U_{\interval}(\delta \tau^{\Xi(t,s)}_{k})
                            \Big)
                            \:r,
                \end{shorteqnarray}

            \continueparagraph
            whence by \eqcref{eq:monitoring:sig:article-free-raj-dahya}

                \begin{restoremargins}
                \begin{equation}
                \label{eq:1:\beweislabel}
                    \mathcal{T}^{\Xi}(t, s)
                        =
                            j_{\hat{\tau}}
                            \:(\mathcal{P}_{\interval} \ltimes U_{\interval})^{\Xi}(t, s)
                            \:r,
                \end{equation}
                \end{restoremargins}

            \continueparagraph
            holds, and thus also

                \begin{restoremargins}
                \begin{equation}
                \label{eq:2:\beweislabel}
                \everymath={\displaystyle}
                \begin{array}[m]{rcl}
                    (\mathcal{P}_{\interval} \ltimes U_{\interval})^{\Xi}(t, s)\:P_{s}
                    &= &P_{\hat{\tau}}\:(\mathcal{P}_{\interval} \ltimes U_{\interval})^{\Xi}(t, s)\:P_{s}\\
                    &= &r\:j_{\hat{\tau}}\:(\mathcal{P}_{\interval} \ltimes U_{\interval})^{\Xi}(t, s)\:r\:j_{s}\\
                    &= &r\:\mathcal{T}^{\Xi}(t, s)\:j_{s},\\
                \end{array}
                \end{equation}
                \end{restoremargins}

            \continueparagraph
            where the first equality holds
            by virtue of the fact that $\mathcal{P}_{\interval}$ is a family of measurements.

        \paragraph{Proof of the reduction:}
            We show that
                the evolution problem for $(\mathbf{P}, T)$ has a positive solution
                if and only if
                the monitoring problem for $(\mathbf{P}, \mathcal{P}_{\interval}, U_{\interval})$ has a positive solution.
            %% "IF"-direction
            Towards the \usesinglequotes{if}-direction,
            if the $\toplocSOT$\=/limit
                ${
                    \mathcal{P}_{\interval} \ltimes U_{\interval}
                    \coloneqq \lim_{\Xi} (\mathcal{P}_{\interval} \ltimes U_{\interval})^{\Xi}
                }$
            exists,
            then by \eqcref{eq:1:\beweislabel}
            it readily follows that
                $
                    \mathcal{T}^{\Xi}(\cdot, \cdot)
                    = j_{\hat{\tau}}\:(\mathcal{P}_{\interval} \ltimes U_{\interval})^{\Xi}(\cdot, \cdot)\:r
                    \underset{\Xi}{\overset{\tinytoplocSOT}{\longrightarrow}}
                    j_{\hat{\tau}}\:(\mathcal{P}_{\interval} \ltimes U_{\interval})(\cdot, \cdot)\:r
                $.

            %% "ONLY IF"-direction
            Towards the \usesinglequotes{only if}-direction,
            suppose that the uniform strong limit
                ${
                    \mathcal{T}
                    \coloneqq \lim_{\Xi} \mathcal{T}^{\Xi}
                }$
            exists.
            By \eqcref{eq:2:\beweislabel}
            one has that
                $
                    (\mathcal{P}_{\interval} \ltimes U_{\interval})^{\Xi}(t, s)\:P_{s}
                    = r\:\mathcal{T}^{\Xi}(t, s)\:j_{s}
                $
            converges strongly to
                $
                    r\:\mathcal{T}(t,s)\:j_{s}
                $,
            uniformly for $(t,s)$ on compact subsets of $\Delta$.%
            \footnote{%
                For uniform convergence of this product expression,
                we rely on the fact that
                    ${\interval \ni s \mapsto j_{s} \in \BoundedOps{\BanachRaum_{J}}}$
                is $\topSOT$\=/continuous
                and
                    ${\Delta \ni (t,s) \mapsto r\:\mathcal{T}(t,s) \in \BoundedOps{\BanachRaum_{J}}}$
                is uniformly bounded.
            }
            It remains to eliminate the occurrence of $P_{s}$ in this limit.
            Equivalently, it suffices to show that
                ${
                    (\mathcal{P}_{\interval} \ltimes U_{\interval})^{\Xi}(t, s)\:(\onematrix - P_{s})
                    \underset{\Xi}{\overset{\tinytopSOT}{\longrightarrow}}
                    \zeromatrix
                }$
            uniformly for $(t,s)$ on compact subsets of $\Delta$.
            To this end, fix a compact subset $K \subseteq \Delta$.
            Consider arbitrary
            $(t,s) \in K$
            and
            $\Xi \in \mathbf{P}$ with $N(\Xi) \geq 2$.

            Setting
                $\alpha \coloneqq \delta\tau^{\Xi}_{1} \in (0,\:1)$,
                $\Xi_{0} \coloneqq \{0,1\} \in \filterunit$,
                and
                $\Xi^{+} \coloneqq \{\tfrac{\tau^{\Xi}_{k} - \alpha}{1-\alpha}\}_{k=1}^{N(\Xi)} \in \filterunit$,
            one has
                $\Xi = \alpha\Xi_{0} \cup (\alpha + (1-\alpha)\Xi^{+})$.
            Note that $\tau^{\Xi(t,s)}_{1} = s + h\alpha$
            where $h \coloneqq t - s$.
            The product expression in \eqcref{eq:monitoring:sig:article-free-raj-dahya}
            thus reduces to

            \begin{shorteqnarray}
                (\mathcal{P}_{\interval} \ltimes U_{\interval})^{\Xi}(t, s)
                &= &(\mathcal{P}_{\interval} \ltimes U_{\interval})^{\Xi^{+}}(t, \tau^{\Xi(t,s)}_{1})
                        \:P_{\tau^{\Xi(t,s)}_{1}}
                        \:U_{\interval}(\delta \tau^{\Xi(t,s)}_{1})
                    \\
                &= &(\mathcal{P}_{\interval} \ltimes U_{\interval})^{\Xi^{+}}(t, s + h\alpha)
                    \:P_{s + h\alpha}
                    \:U_{\interval}(h\alpha)
            \end{shorteqnarray}

            \continueparagraph
            and since the products in \eqcref{eq:monitoring:sig:article-free-raj-dahya}
            are contractions, one obtains

            \begin{longeqnarray}
                \norm{
                    (\mathcal{P}_{\interval} \ltimes U_{\interval})^{\Xi}(t, s)
                    \:
                    (\onematrix - P_{s})
                    \:\xi
                }
                &\leq &\norm{
                        P_{s + h\alpha}
                        \:U_{\interval}(h\alpha)
                        \:(\onematrix - P_{s})
                        \:\xi
                    }
                \\
                &\leq &\norm{
                        \underbrace{
                            P_{s + h\alpha}
                            \:(\onematrix - P_{s})
                        }_{
                            \overset{(\ast)}{=}
                            P_{s + h\alpha}
                            -
                            P_{s}
                        }
                        \xi
                    }
                    +
                    \norm{
                        P_{s + h\alpha}
                        \:(
                            U_{\interval}(h\alpha)
                            -
                            \onematrix
                        )
                        \:(\onematrix - P_{s})
                        \:\xi
                    }
                \\
                &\leq &\norm{
                        (
                            P_{s + h\alpha}
                            -
                            P_{s}
                        )
                        \:\xi
                    }
                    +
                    \norm{
                        (
                            U_{\interval}(h\alpha)
                            -
                            \onematrix
                        )
                        \:(\onematrix - P_{s})
                        \:\xi
                    }
            \end{longeqnarray}

            \continueparagraph
            for $\xi \in \tilde{\BanachRaum}$,
            where ($\ast$) holds as $\mathcal{P}_{\interval}$ is a family of measurements.
            Setting $h_{\max} \coloneqq \sup_{(t,s) \in K}\abs{t-s} < \infty$,
            the above computation yields

                \begin{shorteqnarray}
                    \sup_{(t,s) \in K}
                        \norm{
                            (\mathcal{P}_{\interval} \ltimes U_{\interval})^{\Xi}(t, s)
                            \:
                            (\onematrix - P_{s})
                            \:\xi
                        }
                    &\leq
                        &\sup_{s \in K}
                        \sup_{h \in [0,\:h_{\max}\delta \tau^{\Xi}_{1}]}
                            \norm{
                                (
                                    P_{s + h}
                                    -
                                    P_{s}
                                )
                                \:\xi
                            }\\
                    &&+
                        \sup_{s \in K}
                        \sup_{h \in [0,\:h_{\max}\delta \tau^{\Xi}_{1}]}
                            \norm{
                                (
                                    U_{\interval}(h)
                                    -
                                    \onematrix
                                )
                                \:(\onematrix - P_{s})
                                \:\xi
                            },
                \end{shorteqnarray}

            \continueparagraph
            which,
            by the $\topSOT$\=/continuity
            and uniform boundedness
            of
                $U_{\interval}$ and $\mathcal{P}_{\interval}$,
            converges to $0$
            as $\Xi\in\mathbf{P}$
            is made finer,
            since
                ${\tau^{\Xi}_{1} \underset{\Xi}{\longrightarrow} 0}$
            and ${\delta \tau^{\Xi}_{1} \underset{\Xi}{\longrightarrow} 0}$.
            The sought after uniform strong convergence
            thus immediately follows.

        \paragraph{Banach space dilation:}
            If the limits in both evolution problems hold,
            then by
                \eqcref{eq:1:\beweislabel}
                and \eqcref{eq:2:\beweislabel}
                as well as the subsequent arguments,
            it follows that
                \eqcref{eq:dilation:pre-evolution-cts-monitored-proc:sig:article-free-raj-dahya}
            holds with
                $j \coloneqq j_{\hat{\tau}}$.
    \end{proof}

\begin{rem}
\makelabel{rem:monitoring-to-pre-evolution:sig:article-free-raj-dahya}
    If the class of evolution problems is widened
    to allow for semigroups $T_{\tau}$ with $T_{\tau}(0)$ being a projection,
    then a counterpart to \Cref{lemm:reduction:pre-evolution-to-cts-monitored-proc:sig:article-free-raj-dahya}
    can be obtained,
    reducing processes continuously monitored via passive measurements to pre-evolutions.
\end{rem}

As an immediate application,
\Cref{lemm:reduction:pre-evolution-to-cts-monitored-proc:sig:article-free-raj-dahya}
yields a quick proof of
\Cref{prop:solution-to-pre-evolution-problem-is-evolution-family:sig:article-free-raj-dahya}:

\def\beweislabel{prop:solution-to-pre-evolution-problem-is-evolution-family:sig:article-free-raj-dahya}
\begin{proof}[of \Cref{prop:solution-to-pre-evolution-problem-is-evolution-family:sig:article-free-raj-dahya}]
    Towards the first algebraic property of an evolution family,
    let $t \in \interval$ be arbitrary.
    One computes
        $
            \mathcal{T}^{\Xi}(t, t)
            = \revProd_{k=1}^{N(\Xi)}
                T_{\tau^{\Xi(t,t)}_{k}}(\delta\tau^{\Xi(t,t)}_{k})
            = \revProd_{k=1}^{N(\Xi)}
                T_{t}(0)
            = \onematrix
        $
    for $\Xi \in \mathbf{P}$,
    whence
        $
            \mathcal{T}(t,t)
            = \lim_{\Xi}\mathcal{T}^{\Xi}(t, t)
            = \onematrix
        $.
    The second algebraic property,
    \viz
        $
            \mathcal{T}(t,r)
            = \mathcal{T}(t,s)
                \:\mathcal{T}(s,r)
        $
        for $(t,s), (s,r) \in \Delta$,
    can be shown exactly as in the proof of
        \Cref{prop:solution-to-quantum-problem-is-evolution-family:sig:article-free-raj-dahya}.

    To establish $\topSOT$\=/continuity, we rely on dilations.
    By assumption,
        $\interval$ is compact,
        $\{T_{\tau}\}_{\tau\in\interval}$ is $\toplocSOT$\=/continuous,
    and
        the evolution problem for $(\BanachRaum, \mathbf{P}, T)$ has a positive solution.
    So by \Cref{lemm:reduction:pre-evolution-to-cts-monitored-proc:sig:article-free-raj-dahya},
    a continuous monitoring
        $(\tilde{\BanachRaum}, \mathbf{P}, \mathcal{P}, U)$
    exists satisfying \eqcref{eq:dilation:pre-evolution-cts-monitored-proc:sig:article-free-raj-dahya}
    and for which the monitoring problem has a positive solution.
    So
        $(\tilde{\BanachRaum}, \mathbf{P}, \mathcal{P}, U)$
    fulfils the conditions of
        \Cref{prop:solution-to-quantum-problem-is-evolution-family:sig:article-free-raj-dahya}
    with $m=1$,
    making $\{(\mathcal{P} \ltimes U)(t,s)\}_{(t,s) \in \Delta}$
    a(n $\topSOT$\=/continuous) pseudo evolution family.
    Via the Banach space dilation in \eqcref{eq:dilation:pre-evolution-cts-monitored-proc:sig:article-free-raj-dahya},
    $\mathcal{T}$ inherits the $\topSOT$\=/continuity of
    $\mathcal{P} \ltimes U$.
\end{proof}

\begin{rem}
    By \Cref{%
        prop:solution-to-pre-evolution-problem-is-evolution-family:sig:article-free-raj-dahya,%
        prop:solution-to-quantum-problem-is-evolution-family:sig:article-free-raj-dahya%
    },
    under natural conditions,
    solutions to the evolution and monitoring problems via $m$-idempotents
    constitute evolution families.
    \Cref{lemm:reduction:pre-evolution-to-cts-monitored-proc:sig:article-free-raj-dahya}
    reveals that any such family
    (1) arising from a $\toplocSOT$\=/continuous pre-evolution
    can be reduced via \emph{Banach space dilations} to
    an evolution family
    (2) arising from a quantum process continuously monitored via passive measurements.
    Continuously monitored processes thus generalise
    a natural class of classically defined time-dependent systems.
    Note that this generalisation might be strict,
    as it is not clear how to emulate
    a process continuously monitored via $m$-idempotents
    using expressions of the form \eqcref{eq:pre-evolution:sig:article-free-raj-dahya}
    when $m \geq 2$
    (\cf the processes for Chernoff approximations
    and the Feynman construction in
    \Cref{%
        e.g.:chernoff:sig:article-free-raj-dahya,%
        e.g.:feynman:sig:article-free-raj-dahya%
    }).
\end{rem}

The means developed to establish the \Second free dilation theorem,
\viz the generalisation of the Trotter--Kato theorem
(see \S{}\ref{sec:result-concrete:trotter-kato:sig:article-free-raj-dahya}),
thus permit the following metaphysical interpretation:
Under modest assumptions,
a fundamental physical phenomenon
can be found to lurk behind any time-dependent system,
\viz unitary evolution continuously monitored by a time-dependent measurement process.

%% ********** END OF FILE: body/sec-7-applications/sec-6-results.tex **********

%% ********** END OF FILE: body/sec-7-applications/.index.tex **********

%% ********** END OF FILE: body/.index.tex **********

%% BACKMATTER:

%% ********************************************************************************
%% FILE: back/.index.tex
%% ********************************************************************************

\null

%% ********************************************************************************
%% FILE: back/thanks.tex
%% ********************************************************************************

\paragraph{Acknowledgement.}
The author is grateful
to Orr~Shalit
    for helpful suggestions to mend the na\"{i}ve approach
    (\cf \Cref{rem:naive-approach:sig:article-free-raj-dahya}),
    including the use of Sarason's and Cooper's theorems
    \cite{%
        Sarason1965Article,%
        Cooper1947Article%
    };
to Yana~Kinderknecht~(Butko)
    for useful information and literature references
    \cite{%
        SmolyanovWeizsaeckerWittich2000chernoff,%
        SmolyanovVWeizsackerWittich2003Incollection,%
        SmolyanovWeizsaeckerWittich2007chernoff,%
        Butko2020chernoff%
    }
    on the Chernoff approximation;
to Tanja~Eisner
    for her detailed feedback;
and to the referee
    for their careful reading
    and keen suggestions
    which helped to improve this paper.

%% ********** END OF FILE: back/thanks.tex **********

%% LITERATURVERZEICHNIS:
\bibliographystyle{siam}

\begin{thebibliography}{10}

\bibitem{Ando1963pairContractions}
{\sc T.~And\^{o}}, {\em {On a pair of commutative contractions}}, Acta Sci.
  Math. (Szeged), 24 (1963), pp.~88--90.

\bibitem{AtkinsonRamsey2017Article}
{\sc S.~Atkinson and C.~Ramsey}, {\em {Unitary dilation of freely independent
  contractions}}, Proc. Amer. Math. Soc., 145 (2017), pp.~1729--1737.

\bibitem{BeskowNilsson1966Article}
{\sc A.~Beskow and J.~Nilsson}, {\em {Concept of wave function and the
  irreducible representations of the Poincar\'{e} group. II. Unstable systems
  and the exponential decay law}}, Arkiv f\"{o}r Fysik, 34 (1967), pp.~561--9.

\bibitem{BhatSkeide2015PureCounterexamples}
{\sc B.~R. Bhat and M.~Skeide}, {\em {Pure semigroups of isometries on Hilbert
  $C^{\ast}$-modules}}, Journal of Functional Analysis, 269 (2015),
  pp.~1539--1562.

\bibitem{Bozejko1989Article}
{\sc M.~Bo\.{z}ejko}, {\em {Positive-definite kernels, length functions on
  groups and a noncommutative von Neumann inequality}}, Studia Math., 95
  (1989), pp.~107--118.

\bibitem{Butko2020chernoff}
{\sc Y.~A. Butko}, {\em {The method of Chernoff approximation}}, in {Semigroups
  of operators---theory and applications}, vol.~325 of {Springer Proc. Math.
  Stat.}, Springer, Cham, 2020, pp.~19--46.

\bibitem{Chernoff1968article}
{\sc P.~R. Chernoff}, {\em {Note on product formulas for operator semigroups}},
  J. Functional Analysis, 2 (1968), pp.~238--242.

\bibitem{ChiconeLatushkin1999Book}
{\sc C.~Chicone and Y.~Latushkin}, {\em {Evolution semigroups in dynamical
  systems and differential equations}}, vol.~70 of {Mathematical Surveys and
  Monographs}, American Mathematical Society, Providence, RI, 1999.

\bibitem{Chouraqui2009MonoidEmbedding}
{\sc F.~Chouraqui}, {\em {Rewriting systems and embedding of monoids in
  groups}}, Groups Complex. Cryptol., 1 (2009), pp.~131--140.

\bibitem{Cooper1947Article}
{\sc J.~L.~B. Cooper}, {\em {One-parameter semigroups of isometric operators in
  Hilbert space}}, Ann. of Math. (2), 48 (1947), pp.~827--842.

\bibitem{Dahya2022complmetrproblem}
{\sc R.~Dahya}, {\em {On the complete metrisability of spaces of contractive
  semigroups}}, Arch. Math. (Basel), 118 (2022), pp.~509--528.

\bibitem{Dahya2022weakproblem}
\leavevmode\vrule height 2pt depth -1.6pt width 23pt, {\em {The space of
  contractive $C_{0}$-semigroups is a Baire space}}, J. Math. Anal. Appl., 508
  (2022).
\newblock Paper No. 125881, 12.

\bibitem{Dahya2023dilation}
\leavevmode\vrule height 2pt depth -1.6pt width 23pt, {\em {Dilations of
  commuting $C_{0}$-semigroups with bounded generators and the von Neumann
  polynomial inequality}}, J. Math. Anal. Appl., 523 (2023).
\newblock Paper No. 127021.

\bibitem{Dahya2024approximation}
\leavevmode\vrule height 2pt depth -1.6pt width 23pt, {\em {Characterisations
  of dilations via approximants, expectations, and functional calculi}}, J.
  Math. Anal. Appl., 529 (2024), p.~127607.
\newblock Paper No. 127607.

\bibitem{Dahya2024interpolation}
\leavevmode\vrule height 2pt depth -1.6pt width 23pt, {\em {Interpolation and
  non-dilatable families of $\mathcal{C}_{0}$-semigroups}}, Banach J. Math.
  Anal., 18 (2024), p.~Paper No. 34.

\bibitem{Eisner2010typicalContraction}
{\sc T.~Eisner}, {\em {A ‘typical’ contraction is unitary}}, Enseign. Math.
  (2), 56 (2010), pp.~403--410.

\bibitem{Eisner2010buchStableOpAndSemigroups}
\leavevmode\vrule height 2pt depth -1.6pt width 23pt, {\em {Stability of
  operators and operator semigroups}}, vol.~209 of {Operator Theory: Advances
  and Applications}, Birkh\"{a}user Verlag, Basel, 2010.

\bibitem{Eisnermaitrai2013typicalOperators}
{\sc T.~Eisner and T.~M\'{a}trai}, {\em {On typical properties of Hilbert space
  operators}}, Israel J. Math., 195 (2013), pp.~247--281.

\bibitem{EisnerZwart2008cogenerator}
{\sc T.~Eisner and H.~Zwart}, {\em {The growth of a $C_{0}$-semigroup
  characterised by its cogenerator}}, J. Evol. Equ., 8 (2008), pp.~749--764.

\bibitem{EngelNagel2000semigroupTextBook}
{\sc K.-J. Engel and R.~Nagel}, {\em {One-parameter semigroups for linear
  evolution equations}}, vol.~194 of {Graduate Texts in Mathematics},
  Springer-Verlag, New York, 2000.

\bibitem{Evans1976ArticlePerturb}
{\sc D.~E. Evans}, {\em {Time dependent perturbations and scattering of
  strongly continuous groups on Banach spaces}}, Math. Ann., 221 (1976),
  pp.~275--290.

\bibitem{ExnerIchinose2005Article}
{\sc P.~Exner and T.~Ichinose}, {\em {A product formula related to quantum Zeno
  dynamics}}, Ann. Henri Poincar\'{e}, 6 (2005), pp.~195--215.

\bibitem{ExnerIchinose2006Inproceedings}
\leavevmode\vrule height 2pt depth -1.6pt width 23pt, {\em {On existence of
  quantum zeno dynamics}}, in {Quantum Information and Computing}, World
  Scientific, mar 2006, p.~72–80.

\bibitem{FacchiLigabo2010Article}
{\sc P.~Facchi and M.~Ligab\`o}, {\em {Quantum {Z}eno effect and dynamics}}, J.
  Math. Phys., 51 (2010), pp.~022103, 16.

\bibitem{Faris1967Article}
{\sc W.~G. Faris}, {\em {Product formulas for perturbations of linear
  propagators}}, J. Functional Analysis, 1 (1967), pp.~93--108.

\bibitem{Goldstein1985semigroups}
{\sc J.~A. Goldstein}, {\em {Semigroups of linear operators \& applications}},
  {Oxford Mathematical Monographs}, The Clarendon Press, Oxford University
  Press, New York, 1985.

\bibitem{Graev1950ArticleFreeProducts}
{\sc M.~I. Graev}, {\em {On free products of topological groups}}, Izv. Akad.
  Nauk SSSR Ser. Mat., 14 (1950), pp.~343--354.

\bibitem{GriegoHersh1971Article}
{\sc R.~Griego and R.~Hersh}, {\em {Theory of random evolutions with
  applications to partial differential equations}}, Trans. Amer. Math. Soc.,
  156 (1971), pp.~405--418.

\bibitem{HarwoodBrunelliSerafini2023Article}
{\sc A.~Harwood, M.~Brunelli, and A.~Serafini}, {\em {Unified collision model
  of coherent and measurement-based quantum feedback}}, Phys. Rev. A, 108
  (2023), pp.~Paper No. 042413, 17.

\bibitem{HermanShaydulinSun2023Article}
{\sc D.~Herman, R.~Shaydulin, Y.~Sun, S.~Chakrabarti, S.~Hu, P.~Minssen,
  A.~Rattew, R.~Yalovetzky, and M.~Pistoia}, {\em {Constrained optimization via
  quantum Zeno dynamics}}, Communications Physics, 6 (2023).

\bibitem{Hillephillips1957faAndSg}
{\sc E.~Hille and R.~S. Phillips}, {\em {Functional analysis and semi-groups}},
  vol.~31 of {American Mathematical Society Colloquium Publications}, American
  Mathematical Society, Providence, R.I., rev.~ed., 1957.

\bibitem{Howland1974Article}
{\sc J.~S. Howland}, {\em {Stationary scattering theory for time-dependent
  Hamiltonians}}, Math. Ann., 207 (1974), pp.~315--335.

\bibitem{Hulanicki1967Article}
{\sc A.~Hulanicki}, {\em {Isomorphic embeddings of free products of compact
  groups}}, Colloq. Math., 16 (1967), pp.~235--241.

\bibitem{Jacobs2014Book}
{\sc K.~Jacobs}, {\em {Quantum Measurement Theory and its Applications}},
  Cambridge University Press, aug 2014.

\bibitem{Kato1953Article}
{\sc T.~Kato}, {\em {Integration of the equation of evolution in a Banach
  space}}, J. Math. Soc. Japan, 5 (1953), pp.~208--234.

\bibitem{Kelley1955Book}
{\sc J.~L. Kelley}, {\em {General topology}}, D. Van Nostrand Co., Inc.,
  Toronto-New York-London, 1955.

\bibitem{Kitano1997Article}
{\sc M.~Kitano}, {\em {Quantum Zeno effect and adiabatic change}}, Phys. Rev.
  A, 56 (1997), pp.~1138--1141.

\bibitem{Krol2009}
{\sc S.~Kr\'{o}l}, {\em {A note on approximation of semigroups of contractions
  on Hilbert spaces}}, Semigroup Forum, 79 (2009), pp.~369--376.

\bibitem{Michael1956ArticleI}
{\sc E.~Michael}, {\em {Continuous selections. I}}, Ann. of Math. (2), 63
  (1956), pp.~361--382.

\bibitem{Michael1956ArticleII}
{\sc E.~Michael}, {\em {Selected Selection Theorems}}, Amer. Math. Monthly, 63
  (1956), pp.~233--238.

\bibitem{Morris1975ArticleLocalCompactness}
{\sc S.~A. Morris}, {\em {Local compactness and free products of topological
  groups}}, J. Proc. Roy. Soc. New South Wales, 108 (1975), pp.~52--53.

\bibitem{Morris1976ArticleLocalInvariance}
\leavevmode\vrule height 2pt depth -1.6pt width 23pt, {\em {Local compactness
  and local invariance of free products of topological groups}}, Colloq. Math.,
  35 (1976), pp.~21--27.

\bibitem{Murphy1990}
{\sc G.~J. Murphy}, {\em {C\textsuperscript{$\ast$}-algebras and operator
  theory}}, Academic Press, Inc., Boston, MA, 1990.

\bibitem{Ordman1974Article}
{\sc E.~T. Ordman}, {\em {Free products of topological groups with equal
  uniformities. I, II}}, Colloq. Math., 31 (1974), pp.~37--43; ibid. 31 (1974),
  45--49.

\bibitem{Parrott1970counterExamplesDilation}
{\sc S.~Parrott}, {\em {Unitary dilations for commuting contractions}}, Pacific
  J. Math., 34 (1970), pp.~481--490.

\bibitem{Pazy1983Book}
{\sc A.~Pazy}, {\em {Semigroups of linear operators and applications to partial
  differential equations}}, vol.~44 of {Applied Mathematical Sciences},
  Springer-Verlag, New York, 1983.

\bibitem{Popescu1999Article}
{\sc G.~Popescu}, {\em {Positive definite kernels on free product semigroups
  and universal algebras}}, Math. Scand., 84 (1999), pp.~137--160.

\bibitem{RybarFilippovZiman2012Article}
{\sc T.~Ryb\'{a}r, S.~N. Filippov, M.~Ziman, and V.~Bu\v{z}ek}, {\em
  {Simulation of indivisible qubit channels in collision models}}, Journal of
  Physics B: Atomic, Molecular and Optical Physics, 45 (2012), p.~154006.

\bibitem{SampatShalit2025ArticleNcOpBalls}
{\sc J.~Sampat and O.~M. Shalit}, {\em {On the classification of function
  algebras on subvarieties of noncommutative operator balls}}, J. Funct. Anal.,
  288 (2025), pp.~Paper No. 110703, 54.

\bibitem{Sarason1965Article}
{\sc D.~Sarason}, {\em {On spectral sets having connected complement}}, Acta
  Sci. Math. (Szeged), 26 (1965), pp.~289--299.

\bibitem{Schaffer1955Article}
{\sc J.~J. Sch\"affer}, {\em {On unitary dilations of contractions}}, Proc.
  Amer. Math. Soc., 6 (1955), p.~322.

\bibitem{ScottMilburn2000Article}
{\sc A.~J. Scott and G.~J. Milburn}, {\em {Quantum nonlinear dynamics of
  continuously measured systems}}, Austral. Math. Soc. Gaz., 27 (2000),
  pp.~222--231.

\bibitem{Shalit2021DilationBook}
{\sc O.~M. Shalit}, {\em {Dilation theory: a guided tour}}, in {Operator
  theory, functional analysis and applications}, vol.~282 of {Oper. Theory Adv.
  Appl.}, Birkh\"{a}user/Springer, Cham, 2021, pp.~551--623.

\bibitem{Slocinski1974}
{\sc M.~S\l{}oci\'{n}ski}, {\em {Unitary dilation of two-parameter semi-groups
  of contractions}}, Bull. Acad. Polon. Sci. S\'{e}r. Sci. Math. Astronom.
  Phys., 22 (1974), pp.~1011--1014.

\bibitem{Slocinski1982}
\leavevmode\vrule height 2pt depth -1.6pt width 23pt, {\em {Unitary dilation of
  two-parameter semi-groups of contractions II}}, Zeszyty Naukowe Uniwersytetu
  Jagiellońskiego, 23 (1982), pp.~191--194.

\bibitem{SmolyanovWeizsaeckerWittich2000chernoff}
{\sc O.~G. Smolyanov, H.~v.~Weizs\"{a}cker, and O.~Wittich}, {\em {Brownian
  motion on a manifold as limit of stepwise conditioned standard Brownian
  motions}}, in {Stochastic processes, physics and geometry: new interplays, II
  (Leipzig, 1999)}, vol.~29 of {CMS Conf. Proc.}, Amer. Math. Soc., Providence,
  RI, 2000, pp.~589--602.

\bibitem{SmolyanovVWeizsackerWittich2003Incollection}
\leavevmode\vrule height 2pt depth -1.6pt width 23pt, {\em {Chernoff's theorem
  and the construction of semigroups}}, in {Evolution equations: applications
  to physics, industry, life sciences and economics ({L}evico {T}erme, 2000)},
  vol.~55 of {Progr. Nonlinear Differential Equations Appl.}, Birkh\"{a}user,
  Basel, 2003, pp.~349--358.

\bibitem{SmolyanovWeizsaeckerWittich2007chernoff}
\leavevmode\vrule height 2pt depth -1.6pt width 23pt, {\em {Chernoff's theorem
  and discrete time approximations of Brownian motion on manifolds}}, Potential
  Anal., 26 (2007), pp.~1--29.

\bibitem{Stroescu1973ArticleBanachDilations}
{\sc E.~Stroescu}, {\em {Isometric dilations of contractions on Banach
  spaces}}, Pacific J. Math., 47 (1973), pp.~257--262.

\bibitem{Nagy1960Article}
{\sc B.~Sz\H{o}kefalvi-Nagy}, {\em {On Sch\"affer's construction of unitary
  dilations}}, Ann. Univ. Sci. Budapest. E\"otv\"os Sect. Math., 3/4 (1960--1),
  pp.~343--346.

\bibitem{Nagy1970}
{\sc B.~Sz\H{o}kefalvi-Nagy and C.~Foia\c{s}}, {\em {Harmonic analysis of
  operators on Hilbert space}}, North-Holland Publishing Co., Amsterdam-London;
  American Elsevier Publishing Co., Inc., New York; Akad\'{e}miai Kiad\'{o},
  Budapest, 1970.
\newblock Translated from the French and revised.

\bibitem{Varopoulos1974counterexamples}
{\sc N.~T. Varopoulos}, {\em {On an inequality of von Neumann and an
  application of the metric theory of tensor products to operators theory}}, J.
  Functional Analysis, 16 (1974), pp.~83--100.

\bibitem{WisemanMilburn2010Book}
{\sc H.~M. Wiseman and G.~J. Milburn}, {\em {Quantum measurement and control}},
  Cambridge University Press, Cambridge, 2010.

\bibitem{ZhangSouzaBrandao2014Article}
{\sc J.~Zhang, A.~M. Souza, F.~D. Brandao, and D.~Suter}, {\em {Protected
  Quantum Computing: Interleaving Gate Operations with Dynamical Decoupling
  Sequences}}, Phys. Rev. Lett., 112 (2014), p.~20802.

\end{thebibliography}
\def\bibname{References}
\bgroup
\footnotesize

\egroup

%% ********** END OF FILE: back/.index.tex **********

\addresseshere
\end{document}

%% ********** END OF FILE: root.tex **********